\documentclass[12pt]{article}
\usepackage[T1]{fontenc}      
\RequirePackage[applemac]{inputenc} 

\oddsidemargin -3mm        
\evensidemargin 7mm
\textwidth 17cm
\topmargin -9mm           
\headsep 0.9in              
\headsep 20pt              
\textheight 23cm
\scrollmode
\usepackage{color}
\usepackage{amsfonts}
\usepackage{amsmath}
\usepackage{amssymb}
\usepackage{graphicx}
\usepackage{subfigure}
\usepackage{float}
\usepackage{parskip}
\RequirePackage[colorlinks, hyperindex, plainpages=false]{hyperref}
\usepackage{hyperref}
\usepackage{amsmath}

\def\P{{\mathbb P}}
\def\E{{\mathbb E}}
\def\Z{{\mathbb Z}}

\def\R{{\mathbb R}}

\def\1{{\mathbf 1}}

\newtheorem{theorem}{Theorem}[section]
\newtheorem{lemma}[theorem]{Lemma}
\newtheorem{corollary}[theorem]{Corollary}
\newtheorem{proposition}[theorem]{Proposition}

\newtheorem{remark}[theorem]{Remark}

\newenvironment{proof}[1][Proof.]{\textbf{#1} }{\hfill $\blacksquare$}
\def\beq{\begin{equation}}
\def\eeq{\end{equation}}
\newcommand{\bei}{\begin{itemize}}
\newcommand{\eei}{\end{itemize}}
\newcommand{\ben}{\begin{enumerate}}
\newcommand{\een}{\end{enumerate}}
\newcommand{\beqn}{\begin{eqnarray}}
\newcommand{\beqnn}{\begin{eqnarray*}}
\newcommand{\eeqn}{\end{eqnarray}}
\newcommand{\eeqnn}{\end{eqnarray*}}
\newcommand{\brm}{\begin{rmk}}
\newcommand{\erm}{\end{rmk}}

\begin{document}
\title{Convergence to a Continuous State Branching Process with jumps and Height Process.}
\author{
I.~Dram\'{e}  \footnote{  \scriptsize{Aix-Marseille Université, CNRS,  Centrale Marseille, I2M, UMR 7373, 13453 Marseille, France. ibrahima.drame@univ-amu.fr
}}
\and
E.~Pardoux \setcounter{footnote}{6}\footnote{ \scriptsize {Aix-Marseille Université, CNRS,  Centrale Marseille, I2M, UMR 7373, 13453 Marseille, France. etienne.pardoux@univ-amu.fr}}
}

\maketitle

\begin{abstract}
In this work, we study asymptotics of the genealogy of Galton-Watson processes. Thus we consider a offspring distribution such that the rescaled Galton-Watson processes converges to a continuous state branching process (CSBP) with jumps. After we show that the rescaled height (or exploration) process of the corresponding Galton-Watson family tree, converges in a functional sense, to the continuous height process that Le Gall and Le Jan  introduced \cite{le1998branching}. 
\end{abstract}

\vskip 3mm
\noindent{\textbf{Keywords}: } Continuous-State Branching Processes; Scaling Limit; Galton-Watson Processes; Lévy Processes; Local time; Height Process;
\vskip 3mm
\section{Introduction}
Continuous state branching processes (or CSBP in short) are the analogues of Galton-Watson (G-W) processes in continuous time and continuous state space. Such classes of processes have been introduced by Jirina \cite{jivrina1958stochastic} and studied by many authors included Grey \cite{grey1974asymptotic}, Lamperti \cite{lamperti1967continuous},  to name but a few. These processes are the only possible weak limits that can be obtained from sequences of rescaled G-W processes (see  \cite{lamperti1967limit}). In Li (\cite{li2010measure} ,\cite{li2012continuous}), it was shown that the CSBP arises naturally as the scaling limit of a sequence of discrete G-W branching processes. 

However, If a scaling limit of discrete-time G-W processes converges to a CSBP, then it has been shown in \cite{duquesne2002random}, Chapter 2, that the genealogical structure of the G-W processes converges too. More precisely, the corresponding rescaled sequences of discrete height process, converges to the height process in continuous time that has been introduced by Le Gall and Le Jan in \cite{le1998branching}.

In this work, we are interested in continuous versions of this correspondence. Indeed we first give a construction of CSBP as scaling limits of continuous time G-W branching processes. To give a precise meaning to the convergence of trees, we will code G-W trees by a continuous exploration process as already defined by Dramé et al. in \cite{drame2016non}, and we will establish the convergence of these (rescaled) continuous process to the continuous height process defined in \cite{schneider2003forcing}, which is also the one defined in Chapter 1 of \cite{duquesne2002random}. 

In \cite{drame2016non} Dramé et al study the convergence of a general continuous time branching processes which describes a population where multiple births are allowed (in the case where the number of children born at a given birth event has a finite moment of order $2+\delta,$ for some $\delta>0$ arbitrarily small). In the present work, we aim to extend those results to G-W trees with possibly infinite variance of the numbers of children born at a given birth event. In this paper we use some recent results concerning the genealogical structure of CSBP that can be found in (\cite{le1998branching},\cite{duquesne2002random}, \cite{schneider2003forcing}). 

The organization of the present paper is as follows : In Section 2 we recall some basic definitions and notions concerning branching processes. In Section 3 and 4, we present our main results and the proofs. We shall assume that all random variables in the paper are defined on the same probability space $( \Omega,\mathcal{F},\mathbb{P})$. We shall use the following notations $\mathbb{Z}_+=\{0,1, 2,... \}$, $\mathbb{N}=\{1, 2,... \}$, $\mathbb{R}=(-\infty, \infty)$ and $\mathbb{R}_+=[0, \infty)$. For $x$ $\in$ $\mathbb{R}_+$, $[x]$ denotes the integer part of $x$. 

\section{Preliminaries}
\subsection{Continuous state branching process}
A CSBP is a $\mathbb{R}_+$-valued strong Markov process starting from the value $x$ at time $0$ whose probabilities $(\mathbb{P}_{x}, \ x\geqslant0)$ is such that for any $x$, $y\geqslant0$,  $\mathbb{P}_{x+y}$ is equal in law to the convolution of $\mathbb{P}_{x}$ and $\mathbb{P}_{y}$. More precisely, a CSBP $X^x=(X_t^x, \ t\geqslant0)$ (with initial condition $X_0^x=x$) is a Markov process taking values in $[0, \infty]$, where $0$ and $\infty$ are two absorbing states, and satisfying the branching property; that is to say, it's Laplace transform satisfies 
\begin{equation*}
   \mathbb{E} \left[ \exp (-\lambda X_t^x)\right] = \exp \left\{ -x u_t(\lambda)\right\}, \quad \mbox{for} \ \lambda\geqslant0, 
\end{equation*}
for some non negative function $u_t$. According to Silverstein \cite{silverstein1968new}, the function $u_t$ is the unique nonnegative solution of the integral equation
\begin{equation}\label{Utlamda}
 u_t(\lambda) = \lambda - \int_{0}^{t} \psi ( u_r(\lambda)) dr, 
\end{equation}
where $\psi$ is called the branching mechanism associated with $X^x$ and is defined by
\begin{equation}\label{PSIEXPRES}
 \psi(\lambda) = b\lambda +{c\lambda^2}+ \int_{0}^{\infty} (e^{-\lambda r}-1+\lambda r) \mu(dr), 
\end{equation}
where $b \in \mathbb{R}$, $c\geqslant0$ and $\mu$ is a $\sigma$-finite measure on $(0, \infty)$ which satisfies 
\begin{equation}\label{R2FINI}
\int_{0}^{\infty} (r\wedge r^2) \mu(dr) < \infty.
\end{equation}
We shall sometimes write $ \psi_{b,c,\mu}$ for the function $\psi$ attached to the triple $(b,c,\mu)$.

Let us recall that $b$ represents a drift term, $c$ is a diffusion coefficient and $\mu$ describes the jumps of the CSBP. The CSBP is then characterized by the triplet $(b, c, \mu)$ and can also be defined as the unique non negative strong solution of a stochastic differential equation. More precisely, from Fu and Li \cite{fu2010stochastic} (see also the results in Dawson-Li \cite{dawson2012stochastic}) we have 
\begin{equation}\label{XT}
X_t^x= X_0^x - b \int_{0}^{t} X_s^x ds + \sqrt{2c} \int_{0}^{t} \sqrt{X_s^x} dW_s + \int_{0}^{t}\int_{0}^{\infty}\int_{0}^{X_{s^-}^x}r \overline{M}(ds, dr, du),
\end{equation}
where $W$ is a standard Brownian motion, $M(ds, dr, du)$ is a Poisson random measure with intensity $ds\mu(dr)du$ independent of $W$, and $\overline{M}$ is the compensated measure of $M$. 
\begin{remark}\label{RmarPSI}
The assumption \eqref{R2FINI} is equivalent to $\psi$ being locally Lipschitz; see the proof of Proposition 1.45 of Li \cite{li2010measure}. This property plays an important role in what follows.
\end{remark}
The following result is Theorem 2.1.8 in Li \cite{li2012continuous}
\begin{proposition}
Suppose that $\psi$ is given by \eqref{PSIEXPRES}. Then there is a Feller transition semigroup $(Q_t)_{t\geqslant0}$ on $\mathbb{R}_{+}$ defined by 
\begin{equation}\label{QTRANSI}
 \int_{0}^{\infty} e^{-\lambda y} Q_t(x,dy) = e^{-x u_t(\lambda)}, \quad \lambda\geqslant0, \ x\geqslant0.
\end{equation}
A Markov process is called a CSBP with branching mechanism $\psi$ if it has transition semigroup $(Q_t)_{t\geqslant0}$ defined by \eqref{QTRANSI}.
\end{proposition}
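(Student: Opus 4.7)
The strategy is to construct $u_t(\lambda)$ from \eqref{Utlamda}, show it is a Bernstein function of $\lambda$ for each $t$, and use \eqref{QTRANSI} to define $Q_t$; then check the semigroup and Feller properties.

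First, since $\psi$ is locally Lipschitz (Remark \ref{RmarPSI}) with $\psi(0)=0$, the ODE $\partial_t u=-\psi(u)$, $u_0=\lambda\ge 0$, has a unique local solution, with $u_t(\lambda)\ge 0$ and $u_t(0)=0$ by comparison with the trivial solution $u\equiv 0$. The lower bound $\psi(u)\ge -|b|u$ (the $cu^2$ and jump terms being nonnegative) yields $u_t(\lambda)\le \lambda e^{|b|t}$ via Gronwall, ruling out blow-up, so $u_t$ is globally defined and $\lambda\mapsto u_t(\lambda)$ is continuous and nondecreasing.

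The main technical step is the Bernstein property of $\lambda\mapsto u_t(\lambda)$. I would proceed by approximation: let $\mu^{(n)}=\mathbf{1}_{\{r\ge 1/n\}}\mu$, which is a finite measure on $(0,\infty)$ by \eqref{R2FINI}, and let $\psi^{(n)}$ be the branching mechanism with data $(b,c,\mu^{(n)})$. Because the jumps now occur at a finite rate, the SDE \eqref{XT} with $\psi^{(n)}$ in place of $\psi$ describes a Feller diffusion perturbed by a compound Poisson term, whose unique strong $[0,\infty)$-valued solution $X^{x,(n)}$ exists classically; Ito's formula applied to $e^{-\lambda X_t^{x,(n)}}$ shows that the Laplace exponent $u_t^{(n)}$ of $X_t^{x,(n)}$ satisfies \eqref{Utlamda} with $\psi^{(n)}$, and is automatically a Bernstein function. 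Using the elementary bound $0\le e^{-\lambda r}-1+\lambda r\le (\lambda r)^2/2$ together with $\int_0^1 r^2\mu(dr)<\infty$, one obtains $\psi^{(n)}\to\psi$ locally uniformly, and ODE continuous dependence then yields $u_t^{(n)}(\lambda)\to u_t(\lambda)$ pointwise. Since pointwise limits of Laplace exponents of $[0,\infty]$-valued probability measures are themselves such Laplace exponents (Helly selection together with uniqueness of the Laplace transform), $e^{-x u_t(\lambda)}$ is the Laplace transform of a unique probability measure $Q_t(x,\cdot)$ on $[0,\infty]$.

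The semigroup identity $Q_{t+s}=Q_tQ_s$ reduces, via Laplace transforms, to the flow property $u_{t+s}(\lambda)=u_t(u_s(\lambda))$, which is immediate from the autonomous nature of \eqref{Utlamda}. Strong continuity at $t=0$ follows from $u_t(\lambda)\to\lambda$ as $t\downarrow 0$, and $Q_t(C_0(\R_+))\subseteq C_0(\R_+)$ holds because $x\mapsto e^{-x u_t(\lambda)}$ is continuous and decays at infinity whenever $u_t(\lambda)>0$, extended to all of $C_0$ by density of linear combinations of exponentials. The main obstacle is the Bernstein step: one must ensure that every approximating $\psi^{(n)}$ produces a genuine $[0,\infty)$-valued CSBP so that $u_t^{(n)}$ really is Bernstein, and that the ODE limit is compatible with this probabilistic structure. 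An attractive alternative---available since the SDE \eqref{XT} is already invoked in the text---is to define $Q_t(x,\cdot)=\mathcal L(X_t^x)$ directly and derive \eqref{QTRANSI} in one stroke by applying Ito's formula to $e^{-\lambda X_t^x}$, which also immediately yields the Feller regularity in $x$.
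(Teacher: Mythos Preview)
The paper does not prove this proposition at all: it is stated with the attribution ``The following result is Theorem 2.1.8 in Li \cite{li2012continuous}'' and no argument is given. So there is nothing to compare against; you are supplying a proof where the authors simply cite the literature.

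Your outline is essentially the standard route and would go through. A couple of small points are worth flagging. First, after passing to the limit you obtain $Q_t(x,\cdot)$ a priori as a probability on $[0,\infty]$; to land on $\mathbb{R}_+$ as the statement requires, you should observe that $u_t(\lambda)\to 0$ as $\lambda\downarrow 0$ (from $u_t(0)=0$ and continuity of the flow), so $\int_{[0,\infty]}e^{-\lambda y}Q_t(x,dy)\to 1$, which rules out mass at $\infty$. This is the conservativeness of the CSBP and uses that $\psi(\lambda)\sim b\lambda$ near $0$. Second, your alternative of defining $Q_t(x,\cdot)$ directly as the law of the SDE solution \eqref{XT} is legitimate in the paper's logical order, since the Fu--Li/Dawson--Li strong existence results are proved independently of the semigroup construction; applying It\^o's formula to $e^{-\lambda X_t^x}$ then gives \eqref{QTRANSI} and the Feller property in one step, which is arguably cleaner than the approximation argument.
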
 
\subsection{The height process}
We shall also interpret below the function $\psi$ defined by \eqref{PSIEXPRES} as the Laplace exponent of a spectrally positive Lévy process $Y$. Lamperti \cite{lamperti1967continuous} observed that CSBPs are connected to Lévy processes with no negative jumps by a simple time-change. More precisely, define
\begin{equation*}
A_s^x=\int_{0}^{s}X_t^x dt, \quad \tau_s=\inf\{t>0, \ A_t^x >s\} \quad \mbox{and} \quad Y_s=X_{\tau_s}^x. 
\end{equation*}
Then $Y_s$ is a Lévy process of the form
\begin{equation}\label{YL\'{e}vy}
Y_s= - b s + \sqrt{2c} B_s + \int_{0}^{s}\int_{0}^{\infty}z \overline{\Pi}(dr, dz),
\end{equation}
where $B$ is a standard Brownian motion and $\overline{\Pi}(ds, dz)= \Pi(ds, dz)- ds\mu(dz)$, $\Pi$ being a Poisson random measure on $\mathbb{R}_{+}^{2}$ independent of $B$ with mean measure $ds\mu(dz)$. We refer the reader to \cite{lamperti1967continuous} and \cite{caballero2009continuous} for a proof of that result. In the sequel of this paper (in subsection 3.2), we will assume that $Y$ is a L\'{e}vy process with no negative jumps, whose Laplace exponent $\psi$ has the form \eqref{PSIEXPRES}, where $b\geqslant0$, $c>0$ and $\mu$ is a $\sigma$-finite measure on $(0,\infty)$ which satisfies \eqref{R2FINI}, and we exclude the case $\int_{(0,1)}r \mu(dr)< \infty$. We note that our standing assumption $c>0$ implies the Grey condition 
 \begin{equation*}\label{GreyCondi}
 \int_{1}^{\infty} \frac{d\lambda}{\psi(\lambda)}< \infty.
 \end{equation*}
This assumption ensures also that the corresponding height process $H$ is continuous, see \cite{duquesne2002random} (recall that if this condition does not hold, the paths of $H$ have a very wild behavior).  
To code the genealogy of the CSBP, Le Gall and Le Jan \cite{le1998branching} introduced the so-called height process, which is a functional of a L\'{e}vy process with Laplace exponent $\psi$; see also Duquesne and Le Gall \cite{duquesne2002random}. In this paper, we will use the new definition of the height process $H$ given by Li et all in \cite{schneider2003forcing}. Indeed, if the L\'{e}vy process $Y$ has the form \eqref{YL\'{e}vy}, then the associated height process is given by 
\begin{equation}\label{CHS1}
c H_s= Y_s - \inf_{0\leqslant r \leqslant s} Y_r - \int_{0}^{s}\int_{0}^{\infty}\left(z + \inf_{r\leqslant u \leqslant s}(Y_u-Y_r) \right)^+\Pi(dr, dz),
\end{equation}
and it has a continuous modification. Note that the height process $H_s$ is the one defined in Chapter 1 of  \cite{duquesne2002random}. We shall need the following result which is Lemma 1.3.2 in \cite{duquesne2002random}.
\begin{lemma}\label{HY}
For every $s>0$,
\begin{equation*}
 \lim_{\varepsilon\mapsto 0}\frac{1}{\varepsilon} \int_{0}^{s}\mathbf{1}_{ \{H_{r} \le \varepsilon \}}dr= -\inf_{0\leqslant r \leqslant s}Y_r.
\end{equation*}
\end{lemma}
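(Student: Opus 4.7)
The plan is to recognise $-I_s := -\inf_{0\le r\le s} Y_r$ as (a constant multiple of) a local time at $0$ of the continuous height process $H$, and to derive the stated occupation-time approximation. The starting observation is that in the defining formula \eqref{CHS1} each integrand $\bigl(z + \inf_{r'\le u\le r}(Y_u-Y_{r'})\bigr)^+$ is non-negative, so
\[
cH_r \;\le\; Y_r - I_r \qquad \text{for every } r\ge 0,
\]
and hence $\{r:\,Y_r-I_r\le c\varepsilon\}\subseteq\{r:\,H_r\le\varepsilon\}$. Classical fluctuation theory for spectrally positive Lévy processes identifies $-I$ with a constant multiple of the local time at $0$ of the reflected strong Markov process $Y-I$; since $c>0$, the Brownian component of $Y$ is non-trivial and the occupation-density approximation takes the explicit form $\varepsilon^{-1}\int_0^s \mathbf{1}_{\{Y_r-I_r\le\varepsilon\}}\,dr \to -I_s/c$ almost surely. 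Combined with the above inclusion this gives the lower bound $\liminf_{\varepsilon\to 0}\varepsilon^{-1}\int_0^s\mathbf{1}_{\{H_r\le\varepsilon\}}\,dr\ge -I_s$.

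For the matching upper bound I would exploit that $H$ is continuous and that its zero set coincides, up to a Lebesgue null set, with the zero set of $Y-I$. The key verification is that $Y_r=I_r$ implies $cH_r=0$, which can be read off directly from \eqref{CHS1}: for each atom $(r',z)$ of $\Pi$ with $r'\le r$ one has $Y_{r'-}\ge I_r = Y_r$, so $\inf_{r'\le u\le r}(Y_u-Y_{r'})\le Y_r-Y_{r'}= Y_r-Y_{r'-}-z\le -z$, and every integrand vanishes. This identification transfers Itô's excursion decomposition from $Y-I$ to $H$, with $-I_s$ playing the role of the local time of $H$ at $0$. Working under the associated excursion measure $\mathbf{n}$ of $H$ away from $0$, one then verifies, using the Laplace functional of $H$ inherited from the underlying CSBP through \eqref{CHS1}, that
\[
\mathbf{n}\!\left[\int_0^\sigma \mathbf{1}_{\{H_r\le\varepsilon\}}\,dr\right] \;=\; \varepsilon + o(\varepsilon) \qquad \text{as } \varepsilon\downarrow 0,
\]
where $\sigma$ denotes the excursion length; summing over the Poisson point process of excursions filling $[0,s]$ then delivers $\varepsilon^{-1}\int_0^s \mathbf{1}_{\{H_r\le\varepsilon\}}\,dr \to -I_s$.

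The main obstacle is precisely that the pointwise bound $cH_r\le Y_r-I_r$ is only one-sided: a small $H_r$ does not force $Y_r-I_r$ to be small, because the integral correction in \eqref{CHS1} can absorb a large $Y_r-I_r$. A direct pointwise comparison therefore cannot supply the upper bound, and one is forced to pass through the excursion measure. The technical heart of the argument is the Laplace-functional computation of $\mathbf{n}[\int_0^\sigma \mathbf{1}_{\{H_r\le\varepsilon\}}\,dr]$, which rests on the Grey condition (ensured here by the assumption $c>0$) and on the precise coupling \eqref{CHS1} between $H$ and the underlying Lévy process $Y$.
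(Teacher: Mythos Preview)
The paper does not prove this lemma at all; it simply quotes it as Lemma~1.3.2 of Duquesne--Le~Gall \cite{duquesne2002random}. Your outline is essentially a sketch of the route taken there: one shows that $-I_s$ serves as the local time of $H$ at level $0$, and the occupation-time approximation is then obtained via excursion theory. Your two pointwise observations are correct and are the right starting ingredients: from \eqref{CHS1} one has $cH_r\le Y_r-I_r$, and if $Y_r=I_r$ then every integrand in the $\Pi$-integral vanishes, so $H_r=0$.

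That said, what you have written is a plan rather than a proof. For the lower bound you invoke the occupation-density formula $\varepsilon^{-1}\int_0^s\mathbf{1}_{\{Y_r-I_r\le\varepsilon\}}\,dr\to -I_s/c$ for the reflected L\'evy process, but you neither justify it nor check the constant; this already requires some work since $Y-I$ has jumps. For the upper bound everything rests on the excursion-measure identity $\mathbf{n}\bigl[\int_0^\sigma\mathbf{1}_{\{H_r\le\varepsilon\}}\,dr\bigr]=\varepsilon+o(\varepsilon)$, which you state but do not derive; in Duquesne--Le~Gall this is where the real analysis lies, and it is precisely the step you yourself flag as the ``technical heart''. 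So your route matches the standard one, but the substantive work remains to be done.
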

Let us fix our notations concerning the local times. We define the local time accumulated by $H$ (the height process associated with the L\'{e}vy process $Y$) at level $t$ up to time $s$: 
\begin{equation}\label{loc}
  L_{s}(t) = \frac{2}{c}\lim_{\varepsilon\mapsto 0}\frac{1}{\varepsilon} \int_{0}^{s} \mathbf{1}_{\{t\le H_{r} < t+ \varepsilon \}}  (H_{r}) dr.
\end{equation}

Combining Lemma \ref{HY} with \eqref{loc} leads to 
\begin{remark}
The scaling of $H$ is such that $$\frac{1}{2} L_{s}(0)= -\frac{1}{c} \inf_{0\leqslant r \leqslant s}Y_r \quad \mbox{and} \quad \frac{1}{2} (L_{s}(H_{r}) - L_{r}(H_{r}))=-\frac{1}{c} \inf_{r\leqslant u \leqslant s}(Y_u-Y_r).$$
\end{remark}
\section{Scaling Limits of continuous time branching processes}
In this section, we obtain the CSBP as a scaling limit of continuous Galton-Watson branching processes. We will start with the general case and then we will treat a special case. Let $N\geqslant1$ be an integer which will eventually go to infinity. 
\subsection{The general case}
In this subsection, we obtain the general form of the branching mechanism of CSBP. We then provide a construction of these processes via an approximation by continuous-time Galton-Watson processes. To this end, let us define $L$ $\in$ $ \mathcal{C}([0, +\infty))$ by 
\begin{equation}\label{Ldeff}
L(u)= \int_{0}^{\infty} (e^{-u r}-1+ u r) \mu(dr), 
\end{equation}
where $\mu$ satisfies \eqref{R2FINI}. 
\begin{remark}
The family \eqref{Ldeff} contains the functions
\begin{equation*}
\ell (u) = const. u^{\gamma},\quad 1< \gamma < 2,
\end{equation*}
that correspond to $ \mu(dr)= const. r^{-(1+\gamma)}dr$ that we will develop in the next subsection but in a more general context.
\end{remark}
We set 
\begin{equation}\label{D1N}
d_{1,N}= \int_{0}^{\infty} r(1-e^{-Nr}) \mu(dr),
\end{equation}
and
\begin{equation}\label{genepi}
f_{1,N}(s)= s + N^{-1}d_{1,N}^{-1} L(N(1-s)), \quad \quad |s| \leqslant 1.
\end{equation}
It is easy to see that $s\rightarrow f_{1,N}(s)$ is an analytic function in $(-1,1)$ satisfying $f_{1,N}(1)=1$ and 
\begin{equation*}
\frac{d^n}{ds^n}f_{1,N}(0)\ge 0, \quad  n \ge 0.
\end{equation*}
Therefore $f_{1,N}$ is a probability generating function. Now, let $\xi_{1,N}$ be a random variable whose generating function is $f_{1,N}$. In what follows, we set $\mu_N=cN+ \beta$, $\lambda_N=cN+\alpha,$ and $d_{2,N}=\mu_N+\lambda_N$, where $\alpha, \beta, c\ge0.$ 

Let us define for $0\le s\le 1$,
\begin{equation}\label{f2N}
f_{2,N}(s)= \frac{1}{d_{2,N}}(\mu_N+ \lambda_N s^2).
\end{equation}
It is easy to check that $f_{2,N}$ is a probability generating function. Let $\xi_{N,2}$ be a random variable whose generating function is $f_{2,N}$. For the rest of this subsection we set, 
\begin{equation}\label{ConstN}
d_{N}= d_{1,N}+d_{2,N}, \quad \mbox{and} \quad b=\beta-\alpha.
\end{equation} 
Let $\epsilon_N$ be a random variable defined by
\begin{equation}\label{EpsilonN}
\mathbb{P}( \epsilon_N= k)= \frac{d_{k,N}}{d_{N}}, \quad  k \in \{1,2 \}.
\end{equation}
We assume that the three variables $\xi_{N,1}$, $\xi_{N,2}$ and $\epsilon_N$ are independent. Let $\eta_N$ be a random variable defined by
\begin{equation}\label{etadeff}
\eta_N= \xi_{N,1} \mathbf{1}_{\{\epsilon_N=1\}}+ \xi_{N,2} \mathbf{1}_{\{\epsilon_N=2\}}.
\end{equation}
Now we consider a continuous time $\mathbb{Z}_+$-valued branching process $Z^{N,x}=\{Z_t^{N,x}, \ t\geqslant0 \}$ which describes the population size at time $t$. In this population, each individual dies independently of the others at the constant rate $d_N$, and gives birth to $\eta_N$ new offspring individuals. In other words, from \eqref{etadeff} the generating function of the branching distribution is 
\begin{align}\label{hndeff}
h_N(s)&= \mathbb{E}(s^{\eta_N})=\mathbb{E}\left(s^{\xi_{N,1}}\mathbf{1}_{\{\epsilon_N=1\}}\right) + \mathbb{E}\left(s^{\xi_{N,2}}\mathbf{1}_{\{\epsilon_N=2\}}\right)=\frac{1}{d_{N}} \big( d_{1,N}f_{1,N}(s)+d_{2,N}f_{2,N}(s)\big). 
\end{align}
Such a process is a Bienaymé-Galton-Watson process in which to each individual is attached a random vector describing her lifetime and her number of offsprings. We assume that those random vectors are independent and identically distributed (i.i.d). The rate of reproduction is governed by a finite measure $\nu_N$ on $\mathbb{Z}_+$, satisfying $\nu_N(1)=0$ and  $\nu_N(k)=  d_{1,N} \mathbb{P}(\xi_{N,1}=k)+ d_{2,N} \mathbb{P}(\xi_{N,2}=k)$, for every $k\geqslant0$. More precisely, each individual lives for an exponential time with parameter $\nu_N(\mathbb{Z}_+)$, and is replaced by a random number of children according to the probability $\nu_N(k) (\nu_N(\mathbb{Z}_+))^{-1}$ for every $k\geqslant0$. Hence the dynamics of the continuous time Markov process $Z^{N,x}$ is entirely characterized by the measure $\nu_N$. We have the following proposition, which can be seen in Athreya-Ney \cite{athreya1972branching}; see also Pardoux \cite{pardoux1975equations}. 
\begin{proposition}\label{generaZN}
The generating function of the process $Z^{N,x}$ is given by 
\begin{equation*}
\mathbb{E}_{1}\left(s^{Z_t^{N,x}}\right)= w_t^N (s), \quad s \in [0,1], 
\end{equation*}
where
\begin{equation*}
 w_t^N (s) = s+ \int_{0}^{t}\Phi_N(w_r^N (s))dr 
\end{equation*}
and the function $\Phi_N$ id defined by 
\begin{align*}
\Phi_N(s)&= \sum_{k=0}^{\infty} (s^k-s) \nu_N(k)\\
&=\nu_N(\mathbb{Z}_+) (h_N(s)-s), \quad s \in [0,1],
\end{align*}
where $\nu_N(\mathbb{Z}_+)=d_{N}$ and $h_N$ is the generating function given by 
\begin{equation*}
h_N(s)= \sum_{k=0}^{\infty} q_k^N s^k, \quad q_k^N= \nu_N(k) (\nu_N(\mathbb{Z}_+))^{-1} \geqslant0 \quad and \quad \sum_{k=0}^{\infty} q_k^N=1.
\end{equation*}
(Recall that $h_N$ was also defined in \eqref{hndeff}).
\end{proposition}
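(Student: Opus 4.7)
The statement is the classical Kolmogorov-type equation for a continuous-time Galton-Watson (or Bienaym\'e-Galton-Watson) process. The plan is to condition on the time of the first branching event and then exploit the branching property.

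First I would fix $N$ and write $v_t(s) := \mathbb{E}_1[s^{Z_t^{N,x}}]$ for $s \in [0,1]$. Since each individual lives for an exponential time of parameter $d_N = \nu_N(\mathbb{Z}_+)$ and is then replaced by $\eta_N$ children with generating function $h_N$, starting from a single individual we can split on the event that the unique progenitor is still alive at time $t$ (probability $e^{-d_N t}$, contribution $s\, e^{-d_N t}$) versus the event that she dies at some time $r \in (0,t]$. On the second event, by the branching property and the i.i.d.\ assumption on the reproduction vectors, conditionally on having $\eta_N = k$ children born at time $r$, the value $Z_t^{N,x}$ is the sum of $k$ independent copies of $Z_{t-r}^{N,x}$, each starting from $1$. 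This yields the renewal equation
\begin{equation*}
v_t(s) = s\, e^{-d_N t} + \int_0^t d_N e^{-d_N r}\, \mathbb{E}\!\left[(v_{t-r}(s))^{\eta_N}\right] dr
       = s\, e^{-d_N t} + \int_0^t d_N e^{-d_N r}\, h_N(v_{t-r}(s))\, dr.
\end{equation*}

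Next I would change variables $u = t - r$ in the integral and multiply both sides by $e^{d_N t}$ to get
\begin{equation*}
e^{d_N t} v_t(s) = s + d_N \int_0^t e^{d_N u} h_N(v_u(s))\, du,
\end{equation*}
from which it is clear that $t \mapsto v_t(s)$ is $C^1$ on $[0,\infty)$ with $v_0(s)=s$ and
\begin{equation*}
\frac{d}{dt} v_t(s) = d_N\big(h_N(v_t(s)) - v_t(s)\big) = \Phi_N(v_t(s)).
\end{equation*}
Integrating this ODE on $[0,t]$ gives exactly $v_t(s) = s + \int_0^t \Phi_N(v_r(s))\, dr$, which is the desired equation for $w_t^N(s) = v_t(s)$. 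The identities $\Phi_N(s) = \sum_k (s^k - s) \nu_N(k) = \nu_N(\mathbb{Z}_+)(h_N(s) - s)$ follow directly from the definitions $q_k^N = \nu_N(k)/\nu_N(\mathbb{Z}_+)$ and $h_N(s) = \sum_k q_k^N s^k$, and the representation of $h_N$ in terms of $f_{1,N}, f_{2,N}$ has already been recorded in \eqref{hndeff}.

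There is no real obstacle beyond care with the renewal argument: the only subtle point is justifying that the integral equation has a unique $[0,1]$-valued solution (so that $v_t(s)$ is well-defined as the generating function), but this is immediate because $h_N$ is Lipschitz on $[0,1]$ (it is a power series with non-negative coefficients summing to $1$), so $\Phi_N$ is Lipschitz there and a standard Picard-type argument yields existence, uniqueness and invariance of $[0,1]$. The result is stated without proof in \cite{athreya1972branching} and \cite{pardoux1975equations}, which are cited, so I would present only the short derivation above rather than the full existence/uniqueness argument.
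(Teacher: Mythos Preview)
Your argument is correct and is exactly the standard first-event decomposition one finds in the references the paper cites. In fact the paper does not give any proof of this proposition at all: it simply states the result and refers to Athreya--Ney \cite{athreya1972branching} and Pardoux \cite{pardoux1975equations}, so your renewal computation is precisely the derivation that those references supply and that the paper takes for granted.
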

We are interest in the scaling limit of the process $Z^{N,x}$ : We will start $Z^{N,x}$ with $Z_0^{N,x}= [Nx]$ for some fixed $x>0$, and study the behaviour of  $X_t^{N,x}= N^{-1}Z_t^{N,x}$. The continuous time process $\{X_t^{N,x}, \ t\geqslant0 \}$ is a Markov process with values in the  set $E_N= \{k / N, \ k\geqslant1 \}$. We denote by $(P_t^N, t\ge0)$ the transition probability of the process $X_t^{N,x}$. For $\lambda\geqslant0$
\begin{align*}
\int_{E_N} e^{-\lambda y}P_t^N(x, dy)&= \mathbb{E}\left( e^{-\lambda X_t^{N,x}}\Big| X_0^{N,x}\right)=  \mathbb{E}_{[Nx]}\left( e^{-\lambda ({Z_t^{N,x}}/{N})}\right) \nonumber\\
&= \exp \left([Nx] \log w_t^N \left(e^{-\lambda /N}\right) \right),
\end{align*}
where $w_t^N$ was given in Proposition \ref{generaZN}. This suggests to define 
\begin{equation}
u_{t}^{N}(\lambda)= N\left(1-w_t^N \left(e^{-\lambda /N}\right)\right).
\end{equation}
The function $u_{t}^{N}$ solves the equation 
\begin{equation}\label{UNtlamba}
 u_t^N(\lambda) + \int_{0}^{t} \psi^N ( u_r^N(\lambda)) dr= N\left(1- e^{-\lambda /N}\right), 
\end{equation}
where $\psi^N(u)= N\Phi_N(1-\frac{u}{N})$. However, from the definition of $\Phi_N$ in Proposition \ref{generaZN}, we have
\begin{equation}\label{PSINDEF}
\psi^N(u) = Nd_{N} \left( h_N\left(1-\frac{u}{N}\right)- \left(1-\frac{u}{N}\right)\right), \quad 0\le u\le N.
\end{equation}
Note that 
\begin{align*}
 X_{0}^{N,x}= {[Nx]}/{N}\longrightarrow x \quad  as \quad   N \rightarrow +\infty. 
\end{align*}
The following Lemma plays a key role in the asymptotic behavior of $X^{N,x}$
\begin{lemma}\label{PsiNversPSi}
The sequence $\psi^N(u)$  converges to $\psi(u)$ defined in \eqref{PSIEXPRES} as $N\longrightarrow \infty$.
\end{lemma}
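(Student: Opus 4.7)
The plan is a direct algebraic computation with essentially no analytical subtleties. Using the decomposition \eqref{hndeff}, I would first write
\begin{equation*}
d_N(h_N(s)-s) = d_{1,N}(f_{1,N}(s)-s) + d_{2,N}(f_{2,N}(s)-s)
\end{equation*}
and compute each summand from the explicit formulas \eqref{genepi} and \eqref{f2N}. The first is immediate: $d_{1,N}(f_{1,N}(s)-s) = N^{-1} L(N(1-s))$. For the second, using $d_{2,N} = \mu_N + \lambda_N$, a short calculation gives
\begin{equation*}
d_{2,N}(f_{2,N}(s)-s) = \mu_N + \lambda_N s^2 - d_{2,N} s = (1-s)(\mu_N - \lambda_N s).
\end{equation*}

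Next I would evaluate both expressions at $s = 1 - u/N$. The substitution makes $N(1-s) = u$, so $L(N(1-s))$ becomes the fixed function $L(u)$ defined in \eqref{Ldeff}. Using $\mu_N - \lambda_N = \beta - \alpha = b$ and $\lambda_N / N = c + \alpha/N$, the second summand simplifies to $(u/N)(b + (c+\alpha/N)u)$. Multiplying the sum by $N$ as required by \eqref{PSINDEF}, one obtains the exact identity
\begin{equation*}
\psi^N(u) = L(u) + bu + \Big(c + \frac{\alpha}{N}\Big) u^2, \qquad 0 \le u \le N,
\end{equation*}
which converges as $N \to \infty$ to $bu + cu^2 + L(u) = \psi(u)$ in view of \eqref{Ldeff} and \eqref{PSIEXPRES}. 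In fact the convergence is uniform on compact subsets of $[0,\infty)$, with explicit error $\alpha u^2 /N$.

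There is no real obstacle here: the functions $f_{1,N}$ and $f_{2,N}$ have been engineered precisely so that the rescaling $s \mapsto 1 - u/N$ produces the drift $b$, the diffusion coefficient $c$ and the jump contribution $L$ of $\psi$ with no residual dependence on $N$. The only point worth noting is that the prefactor $N^{-1} d_{1,N}^{-1}$ in \eqref{genepi} is exactly what makes the $d_{1,N}$ disappear after multiplication by $d_{1,N}$, so one never needs to analyse the asymptotics of $d_{1,N}$ itself; the integrability hypothesis \eqref{R2FINI} enters only indirectly, through the fact that it guarantees that $L$ is finite on $[0,\infty)$ (see Remark \ref{RmarPSI}).
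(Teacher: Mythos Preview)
Your proof is correct and follows essentially the same approach as the paper's own proof: both split $\psi^N$ into the contribution from $f_{1,N}$ and that from $f_{2,N}$, observe that the first equals $L(u)$ exactly, and compute the second to be $bu + cu^2 + \alpha u^2/N$. Your presentation is slightly more explicit (you write down the intermediate identity $d_{2,N}(f_{2,N}(s)-s)=(1-s)(\mu_N-\lambda_N s)$ before substituting $s=1-u/N$), and you additionally note the uniform convergence on compacts with explicit error, but the argument is the same.
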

\begin{proof}
Combinng  \eqref{hndeff} and \eqref{PSINDEF}, we have 
\begin{align*}
\psi^N(u)&= Nd_{1,N}\left( f_{1,N}\left(1-\frac{u}{N}\right)- \left(1-\frac{u}{N}\right)\right)+ Nd_{2,N}\left( f_{2,N}\left(1-\frac{u}{N}\right)- \left(1-\frac{u}{N}\right)\right)\\
&= \psi_{1,N}(u)+ \psi_{2,N}(u).
\end{align*}
From  \eqref{f2N} it is easy to check  that 
\begin{align*}
 \psi_{2,N}(u)&= bu+ cu^2+ \frac{\alpha}{N}u^2 
\end{align*}
Hence, it follows that the sequence $\psi_{2,N}(u)$ converges to $bu+cu^2$ as $N\longrightarrow \infty$. However, from \eqref{Ldeff} and \eqref{genepi} it is easy to see that $\psi_{1,N}(u)= L(u)$. The desired result follows readily by combining the above arguments.
\end{proof}
\begin{proposition}\label{Zenghu}
Let $(t, \lambda)\longrightarrow u_t(\lambda)$ be the unique locally bounded positive solution of \eqref{Utlamda}. Then we have for every $\lambda\geqslant0$, $ u_t^N(\lambda)\longrightarrow u_t(\lambda)$ uniformly on compact sets in $t$, as $N\longrightarrow \infty$. (recall that $u_t^N(\lambda)$ was given in \eqref{UNtlamba})
\end{proposition}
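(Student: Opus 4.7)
The plan is to apply a Gr\"onwall-type argument to the difference of the integral equations satisfied by $u_t^N$ and $u_t$. Subtracting \eqref{Utlamda} from \eqref{UNtlamba} yields
\begin{equation*}
u_t^N(\lambda) - u_t(\lambda) = \bigl[N(1-e^{-\lambda/N}) - \lambda\bigr] + \int_0^t \bigl[\psi(u_r(\lambda)) - \psi^N(u_r^N(\lambda))\bigr]\,dr.
\end{equation*}
The first bracket is a deterministic remainder tending to $0$. Inside the integral I would use the splitting $\psi(u_r) - \psi^N(u_r^N) = [\psi(u_r) - \psi(u_r^N)] + [\psi(u_r^N) - \psi^N(u_r^N)]$, then control the first summand by the local Lipschitz property of $\psi$ (Remark~\ref{RmarPSI}) and the second by the uniform convergence of $\psi^N$ to $\psi$ on a compact interval. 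Both ingredients require an a priori uniform bound on $u_t^N(\lambda)$ on each compact time interval.

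The key preliminary step is to obtain this uniform bound. The computation carried out in the proof of Lemma~\ref{PsiNversPSi} in fact yields the exact identity
\begin{equation*}
\psi^N(u) = bu + cu^2 + \frac{\alpha}{N}u^2 + L(u), \qquad u\ge 0,
\end{equation*}
with $L$ defined by \eqref{Ldeff}. Since $e^{-ur}-1+ur\ge 0$, we have $L(u)\ge 0$ and hence $\psi^N(u)\ge bu$ for every $u\ge 0$ and every $N$. Combined with the ODE $\dot u_t^N = -\psi^N(u_t^N)$ and the initial condition $u_0^N = N(1-e^{-\lambda/N})\le \lambda$, a comparison argument will give
\begin{equation*}
0\le u_t^N(\lambda) \le \lambda e^{|b|t}, \qquad t\ge 0,\ N\ge 1,
\end{equation*}
so that, with $T>0$ fixed, both $u_t^N(\lambda)$ and $u_t(\lambda)$ lie in $[0,M_T]$ for all $t\in[0,T]$, where $M_T := \lambda e^{|b|T}$.

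On $[0,M_T]$ the identity above reads $\psi^N(u) - \psi(u) = \alpha u^2/N$, so $\psi^N\to\psi$ uniformly, and $\psi$ has some Lipschitz constant $K$ on that interval. Setting
\begin{equation*}
\delta_N := \bigl|N(1-e^{-\lambda/N}) - \lambda\bigr| + T\sup_{u\in[0,M_T]}|\psi^N(u) - \psi(u)|,
\end{equation*}
which tends to $0$, the splitting above will yield
\begin{equation*}
|u_t^N(\lambda) - u_t(\lambda)| \le \delta_N + K\int_0^t |u_r^N(\lambda) - u_r(\lambda)|\,dr, \qquad t\in[0,T],
\end{equation*}
and Gr\"onwall's lemma will give $\sup_{t\in[0,T]}|u_t^N(\lambda) - u_t(\lambda)|\le \delta_N e^{KT}\to 0$.

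The main obstacle is the uniform-in-$N$ bound on $u_t^N$: without it, neither the local Lipschitz property of $\psi$ nor the convergence $\psi^N\to\psi$ would be enough to control the integrand uniformly. Once the explicit form of $\psi^N$ extracted from the proof of Lemma~\ref{PsiNversPSi} is made visible, the positivity of $L$ makes this a priori bound essentially automatic, and the remainder of the argument is a routine Gr\"onwall estimate.
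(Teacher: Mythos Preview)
Your proof is correct and follows essentially the same route as the paper: subtract the two integral equations, split $\psi(u_r)-\psi^N(u_r^N)$ into a Lipschitz part and a $\psi^N-\psi$ remainder (which equals exactly $\alpha u^2/N$), and conclude by Gr\"onwall. You are in fact more careful than the paper about justifying the uniform a priori bound $u_t^N(\lambda)\le \lambda e^{|b|t}$ needed to work on a fixed compact interval; the paper only alludes to this by mentioning that $\psi$ is Lipschitz on $[0,\lambda e^{Tb^{-}}]$.
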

\begin{proof}
We take the difference between \eqref{Utlamda} and \eqref{UNtlamba}, and use Lemma \ref{PsiNversPSi} and the fact that $\psi$ is Lipschitz on $[0,\lambda e^{Tb^-} ]$ (see Remark \ref{RmarPSI}) to obtain that for $0\le t\le T$,
\begin{equation*} 
\big| u_t(\lambda)- u_t^N(\lambda)\big| \leqslant K_\lambda \int_{0}^{t} \big| u_s(\lambda)- u_s^N(\lambda)\big| ds + {k}_N(\lambda) + \frac{\alpha}{N}\int_{0}^{t} \left(u_s^N(\lambda) \right)^2 ds,
\end{equation*}
where ${k}_N(\lambda)= \lambda- N\left(1- e^{-\lambda /N}\right)\longrightarrow0$ as $N\longrightarrow \infty$, and $K_\lambda$ is the Lipschitz constant for $\psi$ on $[0,\lambda]$. We conclude from Gronwall's lemma that for every $\lambda\geqslant0$, 
\begin{equation*} 
\lim_{N\longrightarrow \infty} u_t^N(\lambda)= u_t(\lambda)
\end{equation*} 
uniformly on compact sets in $t$.
\end{proof}

Let $D([0,\infty), \mathbb{R}_+)$ denote the space of functions from $[0,\infty)$ into $\mathbb{R}_+$ which are right continuous and have left limits at any $t>0$ (as usual such a function is called càdlàg). We shall always equip the space $D([0,\infty),\mathbb{R}_+)$ with the Skorohod topology. The main limit theorem of this section is the following : 
\begin{theorem}\label{TH1}
 Let $\{X_t^x, \ t\geqslant0 \}$ be the càdlàg CSBP defined in \eqref{XT} with transition semigroup $(Q_t)_{t\geqslant0}$ defined by \eqref{QTRANSI}. Since $X_0^{N,x}$ converges to $X_0^x$, $\{X_t^{N,x}, \ t\geqslant0 \}$ converges to $\{X_t^x, \ t\geqslant0 \}$ in distribution on $D([0,\infty),\mathbb{R}_+)$.
\end{theorem}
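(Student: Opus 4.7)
The plan is to split the proof into two parts: convergence of all finite-dimensional marginals (which is essentially packaged in Proposition \ref{Zenghu}), followed by tightness in the Skorokhod space $D([0,\infty),\mathbb{R}_+)$.

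For the one-dimensional marginals, I would start from the identity recalled just before \eqref{UNtlamba},
\[
\mathbb{E}\bigl[\exp(-\lambda X_t^{N,x})\bigr] = \exp\!\Bigl([Nx]\log\bigl(1-N^{-1}u_t^N(\lambda)\bigr)\Bigr),
\]
and use a Taylor expansion of $\log(1-\cdot)$ together with $[Nx]/N\to x$ and Proposition \ref{Zenghu} to conclude
\[
\mathbb{E}\bigl[\exp(-\lambda X_t^{N,x})\bigr] \longrightarrow \exp(-x u_t(\lambda)) = \mathbb{E}\bigl[\exp(-\lambda X_t^x)\bigr].
\]
To extend to joint marginals at times $0\le t_1<\cdots<t_k$, I would apply the branching/Markov property iteratively: the transition semigroup of $X^{N,x}$ sends $\exp(-\lambda\cdot)$ to $\exp(-u_{t-s}^N(\lambda)\cdot)$, so multi-time Laplace functionals reduce to compositions of the $u_\cdot^N$'s, which converge (again by Proposition \ref{Zenghu} and continuity of the limiting semigroup in $\lambda$) to the corresponding compositions describing the law of $X^x$.

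For tightness in the Skorokhod sense, the semigroups $(Q_t^N)$ and $(Q_t)$ are both Feller and are entirely characterized by Laplace functionals through $u_t^N$ and $u_t$ respectively. Since $\psi^N\to\psi$ pointwise by Lemma \ref{PsiNversPSi}, while $\psi$ is locally Lipschitz by Remark \ref{RmarPSI}, the corresponding generators converge on a core consisting of linear combinations of functions $x\mapsto e^{-\lambda x}$. Combined with $X_0^{N,x}\to X_0^x$ and the Feller property of $(Q_t)$, the general theorem on functional convergence of Markov processes via semigroup convergence (Ethier--Kurtz, Theorem 4.2.5) gives convergence in distribution on $D([0,\infty),\mathbb{R}_+)$. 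An equally valid alternative is to write $X^{N,x}$ as a semimartingale whose characteristics triplet (drift, Gaussian part, compensated jump measure) converges to that of \eqref{XT}, then invoke the Jacod--Shiryaev functional convergence theorem for semimartingales.

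The step I expect to be the main obstacle is tightness itself. The Laplace convergence of Proposition \ref{Zenghu} delivers finite-dimensional convergence almost for free, but controlling path oscillations of $X^{N,x}$ is genuinely delicate when $\mu$ has infinite mass near the origin (as permitted by \eqref{R2FINI}), because the prelimit processes can accumulate many small jumps per unit time. The semigroup route handles this implicitly by reducing everything to the convergence $\psi^N\to\psi$ and the identification of a core; the semimartingale route handles it explicitly by estimating compensated jump martingales. Either way, the fact that the limiting $\psi$ is locally Lipschitz under \eqref{R2FINI} is what makes the Gronwall-type estimate used in the proof of Proposition \ref{Zenghu} (and hence the present theorem) go through.
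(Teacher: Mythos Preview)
Your proposal is correct and follows essentially the same route as the paper. The paper's own proof consists of a single sentence deferring to Proposition \ref{Zenghu} together with Theorem 3.43 in Li \cite{li2010measure} (equivalently Theorem 2.1.9 in \cite{li2012continuous}); Li's argument is precisely the semigroup-convergence approach you outline, namely that the convergence $u_t^N(\lambda)\to u_t(\lambda)$ yields convergence of $Q_t^N f\to Q_t f$ on the class of exponentials $f(x)=e^{-\lambda x}$, whose span is dense in $C_0(\mathbb{R}_+)$, and the Feller property of the limiting semigroup then upgrades this to weak convergence in $D([0,\infty),\mathbb{R}_+)$. One small precision: in Li's proof the convergence is established at the level of the semigroups acting on exponentials rather than the generators, so your phrasing ``generators converge on a core'' is slightly off, but the substance is the same.
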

\begin{proof}
The proof of the theorem follows by Proposition \ref{Zenghu} and  an argument similar to the proof of theorem 3.43 in Li \cite{li2010measure} (see, also Theorem 2.1.9 in Li  \cite{li2012continuous}).
\end{proof}
\subsection{A special case}
We now want to specify the above statement in particular case. In other words, in this case we give a special case of triplet $(b, c, \mu)$ characterizing the branching mechanism. To this end,  let  $f_\gamma$  and $f_2$ be two probability generating functions defined respectively by 
\begin{equation}\label{LESFUNC}
\left\{
    \begin{array}{ll}
   f_\gamma(s)= s+\gamma^{-1} (1-s)^{\gamma} 
   &\\\\ f_2(s)= \frac{1}{2}(1 + s^2), 
    
&
           \end{array}
           \right.
\end{equation}
$s$ $\in$ $[0,1]$, where $\gamma$ $\in$ $(1,2)$. Let $m$ be a probability measure on $(1,2)$.  We define 
 \begin{equation*}
\nu(d\gamma)=  m(d\gamma)+  \delta_2(d\gamma), \quad \gamma \ \in (1, 2],
\end{equation*}
where $\delta_2$ is the Dirac measure at $\gamma=2$.
For the rest of this section we set 
 \begin{equation}\label{RON}
\rho_N= \int_{(1,2]} C_\gamma \gamma N^{\gamma-1} \nu(d\gamma), \quad \mbox{with} \quad C_2= 1/2 \quad \mbox{and} \quad \sup_{1< \gamma < 2}C_\gamma<\infty.
\end{equation}
 Let  $\bar{\eta}_N$ be a random variable with in values in $\mathbb{Z}_+$, and $\bar{h}_N$ its probability generating function defined by 
 \begin{equation}\label{HNDEF}
 \bar{h}_N(s)= \rho_N^{-1} {\int_{(1,2]} C_\gamma \gamma N^{\gamma-1} f_\gamma(s) \nu(d\gamma)}, \quad s \ \in \ [0,1].
\end{equation}
\begin{remark}
The interest of this special case is that not only will we have an explicit measure $\mu$, which is the mean measure of a mixture of $\gamma$-stables processes. In other words, in the case where the measure $\nu$ is given by $\nu=\delta_\gamma$ (where $\delta_\gamma$ is the Dirac measure at $\gamma$), then we find the classical $\gamma$-stable case.
\end{remark}
We consider a continuous time $\mathbb{Z}_+$-valued branching process $\bar{Z}^{N,x}=\{\bar{Z}_t^{N,x}, \ t\geqslant0 \}$ which describes the population size at time $t$. In this population, each individual dies independently of the others at constant rate $\rho_N$, and gives birth to $\bar{\eta}_N$ new offspring individuals.We now define the rescaled continuous time process
\begin{equation*}
\bar{X}_{t}^{N,x}:=N^{-1}\bar{Z}_{t}^{N,x}.
\end{equation*}
In particular, we have that 
\begin{align*}
 \bar{X}_{0}^{N,x}= {[Nx]}/{N}\longrightarrow x \quad  as \quad   N \rightarrow +\infty. 
\end{align*}
Following the same approach as general case, the approximate branching mechanism defined in \eqref{PSINDEF} is obtained by an easy adaptation. In other words, in this case, the equation \eqref{PSINDEF} takes the following form
\begin{equation}\label{PSINBARDEF}
\bar{\psi}^N(u) = N \rho_N \left( \bar{h}_N\left(1-\frac{u}{N}\right)- \left(1-\frac{u}{N}\right)\right), \quad 0\le u\le N.
\end{equation}
We now prove
\begin{lemma}\label{PsiNversPSiSpecial}
The sequence $\bar{\psi}^N(u)$ converges to 
\begin{equation}\label{PSIEXPres}
\bar{\psi}(u)= u^2+ \int_{0}^{\infty} (e^{-r u}-1+r u)\mu(dr)
\end{equation}
 as $N\longrightarrow \infty$,  where $$\mu(dr)= \left( \int_{(1,2)} C_\gamma  \frac{\gamma(\gamma-1)}{\Gamma(2-\gamma)} \frac{m(d\gamma)}{r^{\gamma+1}}\right)dr.$$
\end{lemma}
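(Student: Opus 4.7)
The plan is to observe that, thanks to the very special algebraic form of the generating functions $f_\gamma$, the ``approximate'' branching mechanism $\bar\psi^N$ actually simplifies to an expression that is independent of $N$, so the convergence statement reduces to an identification of two explicit representations of the same function.

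First, I would substitute \eqref{HNDEF} into \eqref{PSINBARDEF} and use $N\rho_N(1-u/N) = \int_{(1,2]} C_\gamma \gamma N^\gamma (1-u/N)\nu(d\gamma)$ to pull everything under a single integral sign:
\begin{equation*}
\bar\psi^N(u) = \int_{(1,2]} C_\gamma \gamma N^\gamma \bigl( f_\gamma(1-u/N) - (1-u/N) \bigr)\, \nu(d\gamma).
\end{equation*}
Then I would exploit the explicit form in \eqref{LESFUNC}, noting that $f_\gamma(s) - s = \gamma^{-1}(1-s)^\gamma$ also holds at $\gamma=2$ (since $\tfrac12(1+s^2) - s = \tfrac12(1-s)^2$). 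This yields $f_\gamma(1-u/N) - (1-u/N) = \gamma^{-1}(u/N)^\gamma$, so the powers of $N$ cancel exactly and
\begin{equation*}
\bar\psi^N(u) = \int_{(1,2]} C_\gamma u^\gamma\, \nu(d\gamma) = C_2 u^2 + \int_{(1,2)} C_\gamma u^\gamma\, m(d\gamma),
\end{equation*}
using $\nu = m + \delta_2$. In particular $\bar\psi^N$ does not actually depend on $N$, and the ``limit'' is immediate.

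The remaining task is to match the right-hand side with the Lévy--Khintchine form of $\bar\psi$. The key identity is the classical stable-Laplace computation
\begin{equation*}
\int_0^\infty (e^{-ru} - 1 + ru)\, \frac{dr}{r^{\gamma+1}} = \frac{\Gamma(2-\gamma)}{\gamma(\gamma-1)}\, u^\gamma, \qquad \gamma \in (1,2),
\end{equation*}
which I would derive by two successive integrations by parts (the boundary terms vanishing precisely because $\gamma \in (1,2)$). Plugging in the definition of $\mu$ and applying Tonelli's theorem (after splitting the integrand on $\{r \le 1/u\}$ and $\{r \ge 1/u\}$ so that one has a signed but integrable integrand and then recombining) one obtains
\begin{equation*}
\int_0^\infty (e^{-ru} - 1 + ru)\, \mu(dr) = \int_{(1,2)} C_\gamma\, \frac{\gamma(\gamma-1)}{\Gamma(2-\gamma)} \cdot \frac{\Gamma(2-\gamma)}{\gamma(\gamma-1)} u^\gamma\, m(d\gamma) = \int_{(1,2)} C_\gamma u^\gamma\, m(d\gamma),
\end{equation*}
which is exactly the jump part of $\bar\psi^N(u)$, the diffusion term being carried by the atom of $\nu$ at $\gamma=2$.

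The main technical obstacle is really just the stable integral identity together with justifying the Fubini exchange; the ``convergence'' itself is vacuous because the cancellation in the second step removes all dependence on $N$. The rest is bookkeeping: verifying that $f_2$ fits into the $f_\gamma$ family at $\gamma=2$, and that the normalization through $\rho_N$ is consistent so that the weights $C_\gamma \gamma N^{\gamma-1}$ lift to the clean mixture measure $\nu$ in the final expression.
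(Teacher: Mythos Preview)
Your proposal is correct and follows essentially the same approach as the paper: both simplify $\bar\psi^N$ using the explicit form $f_\gamma(s)-s=\gamma^{-1}(1-s)^\gamma$, observe that the $N$-dependence cancels completely, and then invoke the stable integral identity together with Fubini to rewrite $\int_{(1,2)}C_\gamma u^\gamma\,m(d\gamma)$ in L\'evy--Khintchine form. The only cosmetic difference is that you treat $\gamma=2$ and $\gamma\in(1,2)$ uniformly by noting that $f_2$ also satisfies $f_2(s)-s=\tfrac12(1-s)^2$, whereas the paper handles the two pieces separately; otherwise the arguments coincide.
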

\begin{proof}
Combining \eqref{LESFUNC}, \eqref{RON}, \eqref{HNDEF} and \eqref{PSINBARDEF}, we have 
\begin{equation*}
\bar{\psi}^N(u)= \bar{\psi}_1^N(u)+ \bar{\psi}_2^N(u)
\end{equation*}
with
\begin{equation}\label{PSI0N}
\bar{\psi}_1^N(u)= N  \int_{(1,2)} C_\gamma \gamma N^{\gamma-1} \left[  f_\gamma \left(1-\frac{u}{N}\right)- \left(1-\frac{u}{N}\right)\right]m(d\gamma)
\end{equation}
and  
\begin{equation*}
\bar{\psi}_2^N(u)= N^2 \left(  f_2\left(1-\frac{u}{N}\right)- \left(1-\frac{u}{N}\right)\right).
\end{equation*}
In the same way as done in the proof of Lemma \ref{PsiNversPSi}, we have that the sequence $\bar{\psi}_2^N(u)$ converges to $u^2$ as $N\longrightarrow \infty$.
However, from \eqref{LESFUNC} and \eqref{PSI0N}, it is easily to see that $\bar{\psi}_1^N(u)= \int_{(1,2)} C_\gamma u^{\gamma} m(d\gamma)$. Now, noting that  
$$u^\gamma= \frac{\gamma(\gamma-1)}{\Gamma(2-\gamma)}  \int_{0}^{\infty} (e^{-r u}-1+r u)\frac{dr}{r^{\gamma+1}}, \quad u\geqslant0 \quad 1<\gamma<2,$$
we deduce from Fubini's Theorem that 
$$\bar{\psi}_1^N(u)=  \int_{0}^{\infty} (e^{-r u}-1+r u)\mu(dr), \quad \mbox{where} \quad \mu(dr)= \left( \int_{(1,2)} C_\gamma  \frac{\gamma(\gamma-1)}{\Gamma(2-\gamma)} \frac{m(d\gamma)}{r^{\gamma+1}}\right)dr.$$
The desired result follows readily by combining the above arguments.
\end{proof}

For the convenience of statement of the result, we assume that $m$ satisfies the following  condition
\begin{equation}\label{Condi}
 \int_{(1,2)} C_\gamma  \frac{\gamma(\gamma-1)}{(2-\gamma)\Gamma(2-\gamma)} m(d\gamma)< \infty.
\end{equation}
Condition \eqref{Condi} combined with Remark \ref{RmarPSI} leads to
\begin{corollary}\label{PSILIP}
The function $\bar{\psi}$ $\in$ $C([0, +\infty))$ with the representation \eqref{PSIEXPres} is locally Lipschitz.
\end{corollary}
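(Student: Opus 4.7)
The plan is to reduce the claim to the criterion stated in Remark \ref{RmarPSI}: once I check that the measure $\mu$ appearing in Lemma \ref{PsiNversPSiSpecial} satisfies the integrability condition \eqref{R2FINI}, the representation \eqref{PSIEXPres} has exactly the form \eqref{PSIEXPRES} (with $b=0$ and $c=1$), and Remark \ref{RmarPSI} directly yields local Lipschitz continuity, hence also continuity on $[0,\infty)$.

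So the core task is to prove
\begin{equation*}
\int_0^1 r^2\,\mu(dr) + \int_1^\infty r\,\mu(dr) < \infty.
\end{equation*}
I would split the verification and apply Fubini (all integrands are nonnegative, so this is justified without further work) to interchange the $r$-integral with the $\gamma$-integral against $m$. For the near-zero part, this gives
\begin{equation*}
\int_0^1 r^2\,\mu(dr) = \int_{(1,2)} C_\gamma\,\frac{\gamma(\gamma-1)}{\Gamma(2-\gamma)} \int_0^1 r^{\,1-\gamma}\,dr\,m(d\gamma) = \int_{(1,2)} C_\gamma\,\frac{\gamma(\gamma-1)}{(2-\gamma)\,\Gamma(2-\gamma)}\,m(d\gamma),
\end{equation*}
which is finite by hypothesis \eqref{Condi}. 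This is where \eqref{Condi} is really used, and is the only mildly nontrivial step.

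For the tail part, the same Fubini trick yields
\begin{equation*}
\int_1^\infty r\,\mu(dr) = \int_{(1,2)} C_\gamma\,\frac{\gamma(\gamma-1)}{\Gamma(2-\gamma)} \int_1^\infty r^{-\gamma}\,dr\,m(d\gamma) = \int_{(1,2)} C_\gamma\,\frac{\gamma}{\Gamma(2-\gamma)}\,m(d\gamma),
\end{equation*}
and this is finite for free: $\sup_{1<\gamma<2} C_\gamma<\infty$ by \eqref{RON}, the function $\gamma\mapsto\gamma/\Gamma(2-\gamma)$ is continuous on $(1,2)$ with limit $1$ at $\gamma=1^+$ and limit $0$ at $\gamma=2^-$ (since $\Gamma(2-\gamma)\sim(2-\gamma)^{-1}$ there), so it is bounded on $(1,2)$, and $m$ is a probability measure.

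Combining the two bounds gives \eqref{R2FINI} for $\mu$. Continuity of $\bar\psi$ on $[0,\infty)$ is then immediate from dominated convergence applied to $(e^{-ru}-1+ru)\mu(dr)$, and local Lipschitzness is exactly the content of Remark \ref{RmarPSI}. The only real obstacle is recognizing the right splitting at $r=1$ and matching the resulting $\gamma$-integrals with the shape of condition \eqref{Condi}; everything else is routine.
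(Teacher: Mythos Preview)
Your proof is correct and follows exactly the approach the paper intends: the paper merely states that ``Condition \eqref{Condi} combined with Remark \ref{RmarPSI}'' yields the corollary, and you have supplied precisely the missing verification that $\mu$ satisfies \eqref{R2FINI} by splitting at $r=1$, applying Fubini, and identifying the resulting $\gamma$-integrals. Your treatment of the tail integral (bounding $\gamma/\Gamma(2-\gamma)$ directly rather than invoking \eqref{Condi} again) is a clean addition to what the paper leaves implicit.
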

The rest is entirely similar to the general case. Therefore, we obtain a similar convergence result.
\begin{theorem}
 Let $\{\bar{X}_t^x, \ t\geqslant0 \}$ be a càdlàg CSBP defined as \eqref{XT} with transition semigroup $(Q_t)_{t\geqslant0}$ defined by \eqref{QTRANSI}. Since $\bar{X}_0^{N,x}$ converges to $\bar{X}_0^x$ in distribution,  then $\{\bar{X}_t^{N,x}, \ t\geqslant0 \}$ converges to $\{\bar{X}_t^x, \ t\geqslant0 \}$, where the triplet $(b,c,\mu)$ is given by $$b=0, \ c=1 \quad and \quad \mu(dr)= \left( \int_{(1,2)} C_\gamma  \frac{\gamma(\gamma-1)}{\Gamma(2-\gamma)} \frac{m(d\gamma)}{r^{\gamma+1}}\right)dr.$$ 
The convergence holds in the sense of weak convergence on $D([0,\infty),\mathbb{R}_+)$.
\end{theorem}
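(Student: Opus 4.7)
The plan is to follow the same template as the proof of Theorem \ref{TH1}, adapting each step to the special-case quantities. First I would encode the scaled process via its Laplace transform: by the generating-function identity analogous to Proposition \ref{generaZN}, set
\begin{equation*}
\bar u_t^N(\lambda) = N\bigl(1 - \bar w_t^N(e^{-\lambda/N})\bigr),
\end{equation*}
so that $\mathbb{E}[\exp(-\lambda \bar X_t^{N,x})] = \exp\bigl(-[Nx]\log(1-\bar u_t^N(\lambda)/N)\bigr)$ and $\bar u_t^N$ satisfies the integral equation
\begin{equation*}
\bar u_t^N(\lambda) + \int_0^t \bar\psi^N(\bar u_r^N(\lambda))\,dr = N\bigl(1-e^{-\lambda/N}\bigr),
\end{equation*}
with $\bar\psi^N$ given by \eqref{PSINBARDEF}. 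This is the direct analogue of \eqref{UNtlamba}.

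Next, I would reproduce the proof of Proposition \ref{Zenghu} for $\bar u_t^N$. Subtracting from the corresponding limiting equation \eqref{Utlamda} (with $\psi=\bar\psi$), using that $\bar\psi$ is locally Lipschitz by Corollary \ref{PSILIP}, and using Lemma \ref{PsiNversPSiSpecial} to control $\bar\psi^N-\bar\psi$, one obtains a Gronwall-type inequality of the form
\begin{equation*}
\bigl|\bar u_t(\lambda) - \bar u_t^N(\lambda)\bigr| \le K_\lambda \int_0^t \bigl|\bar u_s(\lambda)-\bar u_s^N(\lambda)\bigr|\,ds + \bar k_N(\lambda) + R_N(t,\lambda),
\end{equation*}
where $\bar k_N(\lambda)=\lambda-N(1-e^{-\lambda/N})\to 0$ and $R_N(t,\lambda)$ is a remainder controlling the difference $\bar\psi^N-\bar\psi$ evaluated along the bounded trajectories; one has to check a uniform-on-compacts bound on $\bar u_s^N(\lambda)$, which follows from the same comparison argument as in Proposition \ref{Zenghu} since $\bar\psi$ satisfies the local Lipschitz condition provided by the integrability hypothesis \eqref{Condi}. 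Gronwall then yields $\bar u_t^N(\lambda)\to \bar u_t(\lambda)$ uniformly on compacts in $t$, which gives convergence of one-dimensional Laplace transforms, hence of one-dimensional marginals, and by the Markov property and branching property of both the prelimit and limit processes the convergence of all finite-dimensional distributions.

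To upgrade convergence of finite-dimensional distributions to weak convergence on $D([0,\infty),\mathbb{R}_+)$ I would invoke the same general tool cited in the proof of Theorem \ref{TH1}, namely Theorem 3.43 of Li \cite{li2010measure} (equivalently Theorem 2.1.9 of \cite{li2012continuous}), which asserts that given the convergence of branching mechanisms $\bar\psi^N\to\bar\psi$ and of initial states $\bar X_0^{N,x}\to \bar X_0^x$, the rescaled discrete Galton-Watson branching process converges to the CSBP in the Skorohod sense. Alternatively, one can establish tightness by checking Aldous's criterion from the semimartingale decomposition of $\bar X^{N,x}$, whose characteristics are given explicitly by $\bar\psi^N$, and identify any limit point through its Laplace functional using the convergence of $\bar u_t^N$ proved above.

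The main obstacle I would expect is the tightness / regularity step of the last paragraph: since the limiting CSBP has infinite-variation jumps coming from the $\gamma$-stable component (for $\gamma$ close to $1$), the compensation of small jumps must be handled carefully, and the uniform-in-$N$ control of second moments that worked in \cite{drame2016non} under a $2+\delta$ moment assumption is not available here. This is precisely the reason for introducing the local Lipschitz condition \eqref{Condi} on $m$ and citing Li's general tightness theorem, which bypasses the missing moment bound by using the Laplace-transform characterization directly.
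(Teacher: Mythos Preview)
Your proposal is correct and follows essentially the same route as the paper, which simply states that ``the rest is entirely similar to the general case'' and then records the theorem: one adapts Proposition \ref{Zenghu} using Lemma \ref{PsiNversPSiSpecial} in place of Lemma \ref{PsiNversPSi} and Corollary \ref{PSILIP} for the local Lipschitz property, and then invokes Li's Theorem 3.43 exactly as in Theorem \ref{TH1}. One simplification you could note: in this special case $\bar\psi_1^N(u)=\int_{(1,2)}C_\gamma u^\gamma\,m(d\gamma)$ and $\bar\psi_2^N(u)$ are already independent of $N$, so the remainder $R_N$ in your Gronwall inequality vanishes identically and only the initial-condition error $\bar k_N(\lambda)$ survives, which makes the obstacle you discuss at the end a non-issue here.
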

\section{Convergence of the Exploration process}
In this section, we show that the rescaled exploration process of the corresponding Galton-Watson family tree, converges in a functional sense, to the continuous height process associated with the CSBP. In this section, we assume that 
$$c>0, \quad b\ge0 \quad (i.e \ \beta >\alpha),$$
and we renforce \eqref{R2FINI}, and assume that for some $1<p<2$,
\begin{equation*}
{(\bf H)}  :  \quad  \int_{0}^{\infty} (r \vee r^{p})  \mu(dr)< \infty.
\end{equation*} 
Let us rewrite \eqref{YL\'{e}vy} in the following form 
\begin{equation*}
Y_s= Y_s^c+ \int_{0}^{s}\int_{0}^{\infty}z {\Pi}(dr, dz), \quad \mbox{with} \quad Y_s^c= - (b +\int_{0}^{\infty}z \ \mu(dz))s + \sqrt{2c} B_s.
\end{equation*}
Consequently, we can rewrite \eqref{CHS1} in the form 
\begin{equation}\label{CHS2}
c H_s= Y_s^c - \inf_{0\leqslant r \leqslant s} Y_r + \int_{0}^{s}\int_{0}^{\infty}  z\wedge \left(- \inf_{r\leqslant u \leqslant s}(Y_u-Y_r) \right)\Pi(dr, dz)
\end{equation}
\begin{remark}
Note that the last term on the right end side of \eqref{CHS2} is an continuous and increasing process. And we notice also that the second writing of $H$ is possible thanks to the assumption ${(\bf H)}$.
\end{remark}
Let us note that according to an inequality due to Li et all in \cite{schneider2003forcing}, we have
$$\E  \int_{0}^{s}\int_{0}^{\infty}\left(z + \inf_{r\leqslant u \leqslant s}(Y_u-Y_r) \right)^+\Pi(dr, dz) \le C(s) \int_{0}^{\infty}  (z\wedge z^2) \mu(dz).$$
So the first writing of $H$ has a meaning without the supplementary assumption ${(\bf H)}$. But we were not able to establish the convergence of the exploration process without the assumption ${(\bf H)}$. 

The measure $\mu$ will appear many times in this section. It will always refer to a measure on $\mathbb{R}_+$ satisfying ${(\bf H)}$.
\begin{figure}[H]
\centering
\includegraphics[width=4.5in]{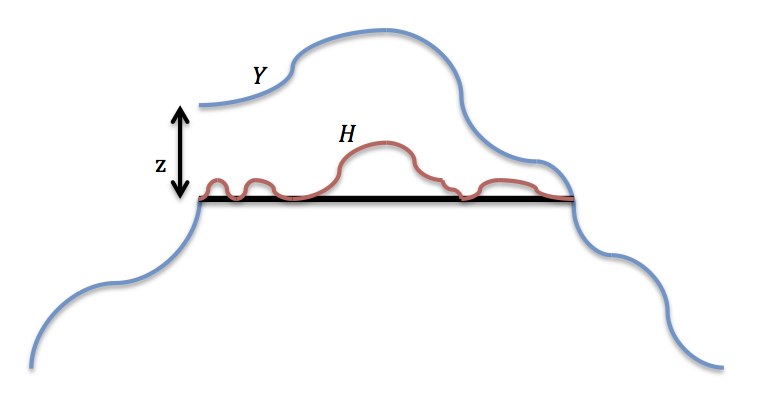}
\caption{Trajectories of $Y$ and $H$.}
\end{figure} 
 Let us state some intermediate results which will be useful in the sequel.
\subsection{Preliminary results}
We notice that one of the aims of this subsection is construct the random measure $\widetilde{\pi}_{1,N}$, which will be specified below in \eqref{GamaN1N2}. It is an complicated construction but essential for the rest.

Let us define 
$L^{1,-}$ and $L^{1,+}$ $\in$ $ \mathcal{C}([0, +\infty))$ by 
\begin{equation}\label{L+L-}
L^{1,-}(u)= \int_{0}^{1} (e^{-u r}-1+ u r) \mu(dr) \quad \mbox{and} \quad L^{1,+}(u)= \int_{1}^{\infty} (e^{-u r}-1+ u r) \mu(dr), 
\end{equation}
where $\mu$ satisfies \eqref{R2FINI}. In what follows, we set 
\begin{equation*}
\alpha_{-,N}= \int_{0}^{1} r(1-e^{-Nr}) \mu(dr), \quad \alpha_{+,N}= \int_{1}^{\infty} r(1-e^{-Nr}) \mu(dr), 
\end{equation*}
\begin{equation*} 
 h_{-,N}(s)= s + \frac{1}{ N\alpha_{-,N}} L^{1,-}(N(1-s)) \quad \mbox{and} \quad h_{+,N}(s)= s + \frac{1}{N\alpha_{+,N}} L^{1,+}(N(1-s)), \quad |s| \leqslant 1.
 \end{equation*}
Note that $d_{1,N}=\alpha_{-,N}+\alpha_{+,N}$, where $d_{1,N}$ was defined in $\eqref{D1N}$. From an adaptation of the argument used after equation \eqref{genepi}, we deduce that $h_{-,N}$ and $h_{+,N}$ are probability generating functions. We define $q_k^{-,N}$ and $q_k^{+,N}$ by
\begin{equation*}
q_k^{-,N}= h_{-,N}^{(k)}(0)/k! \quad \mbox{and} \quad q_k^{+,N}= h_{+,N}^{(k)}(0)/k!,  \quad k=0,1,2,..., 
\end{equation*}
where $h_{-,N}^{(k)}$ and $h_{+,N}^{(k)}$ denote the $k$-th derivative of $h_{-,N}$ and $h_{+,N}$ respectively. Hence it is well known that  $h_{-,N}$ and $h_{+,N}$ can be written as 
\begin{equation*}
h_{-,N}(s)= \sum_{k\ge0} q_k^{-,N} s^k  \quad \mbox{and} \quad h_{-,N}(s)= \sum_{k\ge0} q_k^{+,N} s^k,  \quad |s| \leqslant 1.
\end{equation*}
However, it is easy to check that 
\begin{equation}\label{q0-N}
q_0^{-,N}= \frac{1}{N\alpha_{-,N}} L^{1,-}(N), \quad q_1^{-,N}=0 \quad \mbox{and} \quad q_k^{-,N}= \frac{1}{k! N\alpha_{-,N}}\int_{0}^{1}(Nr)^k e^{-Nr} \mu(dr), \ \mbox{for} \ k\geqslant 2
\end{equation}
and
\begin{equation}\label{q0+N}
q_0^{+,N}= \frac{1}{N\alpha_{+,N}} L^{1,+}(N), \quad q_1^{+,N}=0 \quad \mbox{and} \quad q_k^{+,N}= \frac{1}{k! N\alpha_{+,N}}\int_{1}^{\infty}(Nr)^k e^{-Nr} \mu(dr), \ \mbox{for} \ k\geqslant 2.
\end{equation}
Now, let $\xi_{-,N}$ and $\xi_{+,N}$ be two random variables with the generating functions $h_{-,N}$ and $h_{+,N}$ respectively.
Let $\tilde{\epsilon}_N$ be a random variable defined by
\begin{equation*}
\tilde{\epsilon}_N=
\left\{
    \begin{array}{ll}
   1 \quad \mbox{with probability} \quad \alpha_{+,N}/d_{1,N}, &\\\\ 0   \quad \mbox{with probability} \quad  \alpha_{-,N}/d_{1,N}.
    
&
           \end{array}
           \right.
\end{equation*}
 We assume that the three variables $\xi_{-,N}$, $\xi_{+,N}$ and $\tilde{\epsilon}_N$ are independent. Let $\Gamma_{1,N}$ be a random variable defined by
\begin{equation}\label{GamaN}
\Gamma_{1,N}= \xi_{-,N} \mathbf{1}_{\{\tilde{\epsilon}_N=0\}}+ \xi_{+,N} \mathbf{1}_{\{\tilde{\epsilon}_N=1\}}.
\end{equation}
We denote by $g_{1,N}$ the probability generating function of $\Gamma_{1,N}$.  We deduce from \eqref{GamaN} that 
 \begin{equation*}
g_{1,N}(s)= \sum_{k\ge0} q_k^{1,N} s^k, \quad \quad |s| \leqslant 1,
\end{equation*}
where 
 \begin{equation}\label{qN1}
 q_k^{1,N}= \frac{1}{d_{1,N}} \left[q_k^{-,N}  \alpha_{-,N} + q_k^{+,N} \alpha_{+,N}   \right].
 \end{equation}
 Let us rewrite $g_{1,N}$ in the form
\begin{align}\label{gN1}
g_{1,N}(s)&= \frac{1}{d_{1,N}} \left[ h_{-,N}(s) \alpha_{-,N} +  h_{+,N}(s) \alpha_{+,N}   \right] \nonumber \\
&=s + N^{-1}d_{1,N}^{-1} L(N(1-s)),
\end{align}
where $L$  was defined in \eqref{Ldeff}. We notice that $g_{1,N}=f_{1,N}$, (recall that $f_{1,N}$ was defined in \eqref{genepi}). It is plain as previously that 
\begin{equation}\label{Qn10}
q_0^{1,N}= \frac{1}{Nd_{1,N}} L(N), \quad q_1^{1,N}=0 \quad \mbox{and} \quad q_k^{1,N}= \frac{1}{k! N d_{1,N}}\int_{0}^{\infty}(Nr)^k e^{-Nr} \mu(dr), \ \mbox{for} \ k\geqslant 2.
\end{equation}
Let $\Gamma_{2,N}$ be a random variable whose generating function $f_{2,N},$ which was defined in \eqref{f2N}. Let us define 
 \begin{equation*}
q_0^{2,N}= \frac{\mu_N}{d_{2,N}} , \quad q_2^{2,N}=\frac{\mu_N}{d_{2,N}} \quad \mbox{and} \quad q_k^{2,N}= 0  \  \ \mbox{for all} \ k\notin \{0,2\}.
\end{equation*}
 Hence, it is easy to see that $f_{2,N}$ can be written as 
\begin{equation*}
f_{2,N}(s)= \sum_{k\ge0} q_k^{2,N} s^k.
\end{equation*}
We assume that the three variables $\Gamma_{1,N} $, $ \Gamma_{2,N}$ and $\epsilon_N$ are independent, recall that $\epsilon_N$ was defined in \eqref{EpsilonN}. Let $\tilde{\eta}_N$ be a random variable defined by
\begin{equation*}
\tilde{\eta}_N= \Gamma_{1,N} \mathbf{1}_{\{\epsilon_N=1\}}+ \Gamma_{2,N} \mathbf{1}_{\{\epsilon_N=2\}}.
\end{equation*}
We denote by $f_{N}$ the probability generating function of $\tilde{\eta}_N$. Hence, it is easy to see that  
\begin{equation*}\label{fN(s)}
f_N(s)= \sum_{k\ge0} q_k^{N} s^k,
\end{equation*}
where 
\begin{equation*}\label{qN}
 q_k^{N}= \frac{1}{d_{N}} \left[q_k^{1,N}  d_{1,N} + q_k^{2,N} d_{2,N}   \right].
\end{equation*}
In other words, $ f_N $ can be written in the form
\begin{equation*}\label{fN2(s)}
f_N(s)= \frac{1}{d_{N}} \left[d_{1,N} g_{1,N}(s)   +  d_{2,N}  f_{2,N}(s) \right].
\end{equation*}
We notice that $f_{N}=h_{N}$, (recall that $h_{N}$ was defined in  \eqref{hndeff}). 
Now, let $\Theta^N$ be a random variable with with probability distribution  $q^{1,N}$ and let $\Lambda^{1,N}$ be a random variable with value in $\mathbb{N}$ defined by
\begin{equation*}
\Lambda^{1,N}= \mathcal{U}^N -1, \quad \mbox{where} \quad \mathcal{U}^N \ \sim \Theta^N \big|_{\Theta^N >0} . 
\end{equation*}
It is easy to check that 
\begin{align}\label{Ptheta}
\mathbb{P}(\Lambda^{1,N}=k)&= \frac{ q_{k+1}^{1,N}}{1- q_0^{1,N}}. 
\end{align}
In what follows, we set  $\mathbb{P}(\Lambda^{1,N}=k)=p_{k}^{1,N}.$ However, it is also easy to check that, for all $k$ $\in$ $\mathbb{N}$, 
\begin{equation*}
  0\le p_{k}^{1,N} \le 1 \quad  \mbox{and} \quad \sum_{k\ge1}p_{k}^{1,N}=1.
\end{equation*} 
Therefore, $p^{1,N}$ is a probability distribution on $\mathbb{N}.$ However, from \eqref{qN1} and \eqref{Ptheta}, we deduce that  
\begin{align}\label{pN1}
p_{k}^{1,N}&= \frac{1}{d_{1,N}(1- q_0^{1,N})} \left[q_{k+1}^{-,N}  \alpha_{-,N} + q_{k+1}^{+,N} \alpha_{+,N}   \right] \nonumber \\
&= \frac{1}{d_{1,N}(1- q_0^{1,N})} \left[p_{k}^{-,N}  \alpha_{-,N}(1- q_0^{-,N}) + p_{k}^{+,N} \alpha_{+,N}(1- q_0^{+,N})   \right]
\end{align}
with  
\begin{equation}\label{pN-pN+}
 p_{k}^{-,N}= \frac{ q_{k+1}^{-,N}}{1- q_0^{-,N}} \quad \mbox{and}  \quad p_{k}^{+,N}= \frac{ q_{k+1}^{+,N}}{1- q_0^{+,N}}.
\end{equation}
For the same arguments as previously $p^{-,N}$ and $p^{+,N}$ are probabilities distributions on ${\mathbb{N}}.$ Let $\Gamma_N^-$ and $\Gamma_N^+$ be two random variables with probabilities distributions $p^{-,N}$ and $p^{+,N}$ respectively. Let $\hat{\epsilon}_N$ be a random variable defined by
\begin{equation*}
\hat{\epsilon}_N=
\left\{
    \begin{array}{ll}
   1 \quad \mbox{with probability} \quad \alpha_{+,N}(1- q_0^{+,N})/d_{1,N}(1- q_0^{1,N}), &\\\\ 0   \quad \mbox{with probability} \quad  \alpha_{-,N}(1- q_0^{-,N})/d_{1,N}(1- q_0^{1,N}).
    
&
           \end{array}
           \right.
\end{equation*}
 We assume that the three variables $\Gamma_N^-$, $\Gamma_N^+$ and $\hat{\epsilon}_N$ are independent. From \eqref{pN1} it is easy to see that $\Lambda^{1,N}$ can be written as
\begin{equation}\label{Lamda1N}
\Lambda^{1,N}= \Gamma_N^+ \mathbf{1}_{\{\hat{\epsilon}_N=1\}}+ \Gamma_N^- \mathbf{1}_{\{\hat{\epsilon}_N=0\}}.
\end{equation}
We define $m_{1,N}= \E(\Lambda^{1,N})$ , the expectations of $\Lambda^{1,N}$. From \eqref{Ptheta}, we have the
\begin{remark}\label{RM000}
\begin{equation*}
 m_{1,N}= \frac{q_0^{1,N}}{1- q_0^{1,N}}.
\end{equation*}
\end{remark}
 Let  $\pi_{1,N}$  be a probability measure on $\mathbb{N}$ defined by  $\pi_{1,N}(\{k\})= p_k^{1,N}$. For the rest of this subsection, we set  
\begin{equation}\label{GamaN1N2}
\gamma_{1,N}= \frac{d_{1,N}q_0^{1,N}}{m_{1,N}}=d_{1,N}(1-q_0^{1,N})\quad \mbox{and}  \quad\widetilde{\pi}_{1,N}\left(\left\{ \frac{k}{N}\right\}\right)=N\gamma_{1,N} \pi_{1,N}(\{k\}).
\end{equation} 
In what follows, we will use the bounds 
\begin{equation}\label{Born}
0\le 1-e^{-\lambda} \le 1\wedge \lambda, \quad 0\le e^{-\lambda} -1+ \lambda \le \lambda \wedge \lambda^2, \quad \mbox{for all} \  \lambda \ge 0.  
\end{equation}
 We shall need below the
\begin{lemma}\label{d1Nq01N} 
For all $N\ge1$ $$\int_{0}^{\infty}z\widetilde{\pi}_{1,N}(dz)\le \int_{0}^{\infty} z \mu(dz) < \infty.$$
\end{lemma}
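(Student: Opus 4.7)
The plan is to evaluate the integral $\int_0^\infty z\,\widetilde{\pi}_{1,N}(dz)$ explicitly and then apply the elementary bound $e^{-\lambda}-1+\lambda\le\lambda$ from \eqref{Born} to compare with $\int_0^\infty r\,\mu(dr)$.

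First I would unfold the definition in \eqref{GamaN1N2}. Since $\widetilde{\pi}_{1,N}$ is supported on $\{k/N:\,k\in\mathbb{N}\}$ with mass $N\gamma_{1,N}\,p_k^{1,N}$ at $k/N$, one obtains
\[
\int_0^\infty z\,\widetilde{\pi}_{1,N}(dz)=\sum_{k\ge 1}\frac{k}{N}\cdot N\gamma_{1,N}\,p_k^{1,N}=\gamma_{1,N}\,m_{1,N},
\]
where $m_{1,N}=\mathbb{E}(\Lambda^{1,N})$. Using Remark \ref{RM000} and the identity $\gamma_{1,N}=d_{1,N}q_0^{1,N}/m_{1,N}$ from \eqref{GamaN1N2}, this simplifies to $\gamma_{1,N}m_{1,N}=d_{1,N}q_0^{1,N}$.

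Next I would plug in the closed form of $q_0^{1,N}$ given in \eqref{Qn10}, namely $q_0^{1,N}=L(N)/(Nd_{1,N})$, where $L$ is defined in \eqref{Ldeff}. This yields
\[
\int_0^\infty z\,\widetilde{\pi}_{1,N}(dz)=\frac{L(N)}{N}=\frac{1}{N}\int_0^\infty\bigl(e^{-Nr}-1+Nr\bigr)\,\mu(dr).
\]
Applying the second inequality in \eqref{Born} with $\lambda=Nr$ (specifically the weaker bound $e^{-Nr}-1+Nr\le Nr$), the integrand is dominated by $Nr$, and pulling the factor of $N$ out gives the bound $\int_0^\infty r\,\mu(dr)$.

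The final step is to observe that assumption $(\mathbf{H})$ implies $\int_0^\infty r\,\mu(dr)<\infty$, since $r\le r\vee r^p$. No obstacle is expected here; the entire argument is a chain of substitutions, and the only nontrivial input is matching the purely combinatorial expression $d_{1,N}q_0^{1,N}$ with the analytic quantity $L(N)/N$ via \eqref{Qn10}. Care must be taken with the two equivalent forms of $\gamma_{1,N}$ in \eqref{GamaN1N2} to make the cancellation of $m_{1,N}$ transparent.
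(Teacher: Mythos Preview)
Your proposal is correct and follows essentially the same route as the paper's proof: compute $\int_0^\infty z\,\widetilde{\pi}_{1,N}(dz)=\gamma_{1,N}m_{1,N}=d_{1,N}q_0^{1,N}=L(N)/N$ via \eqref{GamaN1N2} and \eqref{Qn10}, then bound using \eqref{Born}. The only addition you make is spelling out that the finiteness of $\int_0^\infty r\,\mu(dr)$ comes from assumption $(\mathbf{H})$, which the paper leaves implicit.
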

\begin{proof}
We have $$\int_{0}^{\infty}z\widetilde{\pi}_{1,N}(dz)=  N\gamma_{1,N} \sum_{k\ge1} \left(\frac{k}{N} \right) p_k^{1,N}= \gamma_{1,N} m_{1,N}=d_{1,N}q_0^{1,N} ,$$ where we have used \eqref{GamaN1N2}. However from \eqref{Qn10}, we deduce that 
\begin{align*}
d_{1,N}q_0^{1,N}=  \frac{L(N)}{N}.
\end{align*}
However, we deduce from \eqref{Born} that 
\begin{equation*}
\frac{L(N)}{N}= \frac{1}{N} \int_{0}^{\infty} (e^{-Nz}-1+ Nz) \mu(dz) \le \int_{0}^{\infty} z \mu(dz).
\end{equation*}
The desired result follows. 
\end{proof}

The end of the present section will be devoted to the proof of  
\begin{proposition}\label{PiTildversPiAA}
For every continuous function $\varphi$ from $\mathbb{R}_+$ into $\mathbb{R}$ such that $\left|\varphi(z) \right| \le C z$ for some constant $C,$ we have $$\widetilde{\pi}_{1,N}(\varphi) \longrightarrow  \mu (\varphi), {~}as{~}N\longrightarrow \infty. $$ 
\end{proposition}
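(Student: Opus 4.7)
The plan is to give $\widetilde\pi_{1,N}(\varphi)$ an explicit integral representation, identify the resulting inner sum as the expectation of $\varphi$ evaluated at a shifted, rescaled Poisson$(Nr)$ variable, and then conclude by dominated convergence via the law of large numbers.

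The first step is bookkeeping. From \eqref{GamaN1N2} together with the identity $p_k^{1,N}=q_{k+1}^{1,N}/(1-q_0^{1,N})$, the factor $1-q_0^{1,N}$ cancels and one obtains $\widetilde\pi_{1,N}(\{k/N\})=Nd_{1,N}q_{k+1}^{1,N}$. Substituting the closed form \eqref{Qn10} (valid since $k+1\geq 2$) gives
\begin{equation*}
\widetilde\pi_{1,N}(\varphi)=\sum_{k\geq 1}\varphi\!\left(\frac{k}{N}\right)\frac{1}{(k+1)!}\int_0^\infty (Nr)^{k+1}e^{-Nr}\mu(dr).
\end{equation*}
Denoting by $P_N^r$ a Poisson$(Nr)$ random variable, the absolute integrand is controlled by $\sum_{k\geq 1}(k/N)\,(Nr)^{k+1}e^{-Nr}/(k+1)!\leq \mathbb{E}[P_N^r]/N=r$, and $\int_0^\infty r\,\mu(dr)<\infty$ by $(\mathbf{H})$. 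Tonelli's theorem therefore permits swapping the sum and the integral; reindexing with $j=k+1$ yields
\begin{equation*}
\widetilde\pi_{1,N}(\varphi)=\int_0^\infty \mathbb{E}\!\left[\varphi\!\left(\frac{P_N^r-1}{N}\right)\mathbf{1}_{\{P_N^r\geq 2\}}\right]\mu(dr).
\end{equation*}

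The limit is then obtained by two applications of dominated convergence. For each fixed $r>0$, the variance bound $\mathrm{Var}(P_N^r/N)=r/N\to 0$ shows that $P_N^r/N\to r$ in $L^2$, so the family $\{P_N^r/N\}_N$ is uniformly integrable. Since $|\varphi((P_N^r-1)/N)\mathbf{1}_{\{P_N^r\geq 2\}}|\leq CP_N^r/N$, the random variable inside the inner expectation is itself uniformly integrable and, by continuity of $\varphi$ combined with $P_N^r/N\to r$ in probability, it converges in probability to $\varphi(r)$; hence the inner expectation converges to $\varphi(r)$. The same bound also dominates the outer integrand by $Cr$, which is $\mu$-integrable by $(\mathbf{H})$, and a final application of dominated convergence gives $\widetilde\pi_{1,N}(\varphi)\to \int_0^\infty \varphi(r)\,\mu(dr)=\mu(\varphi)$. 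The main delicate point is the uniform integrability step justifying the pointwise convergence of the inner Poisson expectation; everything else is routine once the Poisson representation is in place.
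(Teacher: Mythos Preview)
Your proof is correct and takes a genuinely different, more direct route than the paper.

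The paper proceeds indirectly: it first invokes the convergence of rescaled Galton--Watson random walks to a $\psi_{0,0,\mu}$--L\'evy process (Duquesne--Le Gall, Theorem 2.1.1) to extract a moment condition {\bf (C)} on the measures $\widetilde\pi_{1,N}$, builds an auxiliary finite measure $\bar\pi_N(dz)=(z^2\wedge 1)\widetilde\pi_{1,N}(dz)$, proves its tightness and weak convergence, deduces convergence of $\widetilde\pi_{1,N}(\psi)$ for test functions with $|\psi(z)|\le C(z^2\wedge 1)$, and finally upgrades to $|\varphi(z)|\le Cz$ via a pair of cut-off functions $h_M,g_M$ and a liminf/limsup sandwich. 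You instead exploit the explicit Poisson structure hidden in \eqref{Qn10}: the point masses of $\widetilde\pi_{1,N}$ are exactly the Poisson$(Nr)$ weights averaged against $\mu$, so $\widetilde\pi_{1,N}(\varphi)$ becomes $\int_0^\infty\E[\varphi((P_N^r-1)/N)\mathbf{1}_{\{P_N^r\ge2\}}]\,\mu(dr)$. The inner expectation converges to $\varphi(r)$ by the law of large numbers for Poisson variables together with uniform integrability (your $L^2$ bound on $P_N^r/N$), and the outer integral passes to the limit by dominated convergence with dominator $Cr\in L^1(\mu)$ under {\bf (H)}. Your argument is self-contained and bypasses the branching-process/L\'evy-process machinery entirely; the paper's approach, while longer here, has the advantage of setting up the weak-convergence framework for $\bar\pi_N$ that is reused implicitly in later tightness arguments. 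One cosmetic point: what you call ``Tonelli'' in justifying the interchange is really Fubini applied after the Tonelli bound on the absolute integrand, but the reasoning is clear and correct.
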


We will need the following technical results on Galton-Watson trees. To this end, let $\mathcal{X}^{1,N}= (\mathcal{X}_k^{1,N}, \ k=0,1,2,...)$ be a sequence of Galton-Watson branching process with offspring distribution $(q_k^{1,N})_{k\geqslant 0}$, with offspring generating function $g_{1,N}.$ Recall that $q^{1,N}$ and $g_{1,N}$ were defined in \eqref{qN1} and \eqref{gN1} respectively. We have the following 
 \begin{proposition}\label{Conv1}
 (Theorem 3.43 in Li \cite{li2012continuous})\\
For any $t\geqslant 0$, 
\begin{equation*}
 \left(N^{-1}\mathcal{X}_{[d_{1,N}t]}^{1,N}, \ t\ge0\right)\Longrightarrow\left(\mathcal{X}_t, \ t\ge0\right) {~} in{~} \mathcal{D}([0,\infty)), {~}as{~}N\longrightarrow \infty, 
 \end{equation*}
 where $\mathcal{X}$ is a $\psi_{0,0,\mu}$-CSBP.
 \end{proposition}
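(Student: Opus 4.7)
The plan is to mimic the standard route for proving convergence of rescaled Galton--Watson processes to a CSBP, exploiting the remarkably clean form of the offspring generating function $g_{1,N}$ given in \eqref{gN1}. Fix $x>0$ and assume $\mathcal{X}_0^{1,N}=[Nx]\to x$. By the branching property of discrete Galton--Watson, for every $\lambda\ge0$,
\begin{equation*}
\mathbb{E}\!\left[\exp\!\left(-\lambda N^{-1}\mathcal{X}_n^{1,N}\right)\mid\mathcal{X}_0^{1,N}=[Nx]\right]=\bigl(g_{1,N}^{(n)}(e^{-\lambda/N})\bigr)^{[Nx]},
\end{equation*}
so it suffices to control $v_n^N(\lambda):=N\bigl(1-g_{1,N}^{(n)}(e^{-\lambda/N})\bigr)$. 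Using the identity $g_{1,N}(s)=s+N^{-1}d_{1,N}^{-1}L(N(1-s))$ from \eqref{gN1}, a direct substitution yields the one-step recursion
\begin{equation*}
v_{n+1}^N(\lambda)=v_n^N(\lambda)-d_{1,N}^{-1}\,L\bigl(v_n^N(\lambda)\bigr),\qquad v_0^N(\lambda)=N(1-e^{-\lambda/N}).
\end{equation*}

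Now set $u_t^N(\lambda):=v_{[d_{1,N}t]}^N(\lambda)$. Summing the recursion and rewriting the Riemann sum as an integral gives
\begin{equation*}
u_t^N(\lambda)=v_0^N(\lambda)-\int_0^{[d_{1,N}t]/d_{1,N}}L\!\bigl(u_s^N(\lambda)\bigr)\,ds+O(d_{1,N}^{-1}).
\end{equation*}
Here I would verify that $d_{1,N}\to\infty$: since the standing hypothesis of the paper excludes $\int_{(0,1)}r\,\mu(dr)<\infty$, monotone convergence in \eqref{D1N} forces $d_{1,N}\uparrow\infty$. Since $v_0^N(\lambda)\to\lambda$ and $L=\psi_{0,0,\mu}$ is locally Lipschitz on $[0,\lambda]$ (Remark \ref{RmarPSI}), Gronwall's inequality applied to the difference $|u_t^N(\lambda)-u_t(\lambda)|$, with $u_t(\lambda)$ the unique locally bounded solution of $u_t(\lambda)=\lambda-\int_0^t\psi_{0,0,\mu}(u_s(\lambda))\,ds$, gives $u_t^N(\lambda)\to u_t(\lambda)$ uniformly on compacts in $t$. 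Exponentiating $[Nx]\log(1-u_t^N(\lambda)/N)\to -xu_t(\lambda)$ then yields convergence of one-dimensional Laplace transforms to the CSBP transform \eqref{QTRANSI}.

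To upgrade to finite-dimensional distributions, I would apply the Markov property of $\mathcal{X}^{1,N}$ together with the branching property: conditional on $\mathcal{X}_{[d_{1,N}s]}^{1,N}$, the process at later times is a sum of $\mathcal{X}_{[d_{1,N}s]}^{1,N}$ independent copies started from $1$, and the Laplace transform decomposes as a power. Iterating the one-dimensional argument across finitely many time points $0\le t_1<\cdots<t_k$ yields joint Laplace-transform convergence, and hence convergence of all finite-dimensional distributions to those of the $\psi_{0,0,\mu}$-CSBP $\mathcal{X}$.

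The main obstacle is tightness on $\mathcal{D}([0,\infty),\mathbb{R}_+)$, since finite-dimensional convergence alone does not imply Skorohod convergence. I would establish it using Aldous's criterion applied to the natural martingale decomposition of the rescaled process: the compensator of $N^{-1}\mathcal{X}_{[d_{1,N}\cdot]}^{1,N}$ has a drift controlled by $\psi^N$ uniformly in $N$ on bounded Laplace arguments, and the quadratic variation of the martingale part can be bounded using the second moment structure read off from $g_{1,N}$, giving the required stochastic equicontinuity. Since this convergence is exactly the content of Theorem 3.43 in Li \cite{li2012continuous} (whose hypotheses reduce here to Lemma \ref{PsiNversPSi} applied to the pure-jump branching mechanism $L$), the cleanest presentation is to verify that the convergence $\psi^N\to\psi_{0,0,\mu}$ holds and cite that theorem; otherwise, the tightness argument above has to be carried out by hand.
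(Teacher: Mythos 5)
The paper offers no proof of Proposition \ref{Conv1}: it is stated as a pure citation of Theorem 3.43 in Li. Your sketch reconstructs the standard argument behind that theorem, and its core computation is correct and is in fact the same one the paper performs in Lemma \ref{PsiNversPSi}: since $g_{1,N}(s)=s+N^{-1}d_{1,N}^{-1}L(N(1-s))$ by \eqref{gN1}, the normalized one-step branching mechanism $Nd_{1,N}\bigl(g_{1,N}(1-u/N)-(1-u/N)\bigr)$ equals $L(u)=\psi_{0,0,\mu}(u)$ exactly, so the hypothesis of the cited theorem holds with no limit to take. Your closing remark, that the cleanest presentation is to verify this identity and invoke the theorem, is precisely what the paper does, so at that level the two arguments coincide; your recursion $v_{n+1}^N=v_n^N-d_{1,N}^{-1}L(v_n^N)$ and the a priori bound $0\le v_n^N\le\lambda$ needed for the local Lipschitz/Gronwall step are also correct.

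There is, however, a genuine problem with the step where you ``verify that $d_{1,N}\to\infty$''. You justify it by the exclusion of $\int_{(0,1)}r\,\mu(dr)<\infty$ made in Section 2, but Proposition \ref{Conv1} sits in Section 4, whose standing assumption $(\mathbf{H})$ reads $\int_0^\infty(r\vee r^p)\mu(dr)<\infty$ and therefore forces $\int_0^1 r\,\mu(dr)<\infty$ (for $0<r\le1$ and $p>1$ one has $r\vee r^p=r$). Under $(\mathbf{H})$, monotone convergence in \eqref{D1N} gives $d_{1,N}\uparrow\int_0^\infty r\,\mu(dr)<\infty$, so the Euler mesh $1/d_{1,N}$ in your recursion does not shrink, the discrete iteration does not approximate the solution of $\dot u=-L(u)$, and the error term you write as $O(d_{1,N}^{-1})$ does not vanish. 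Worse, $[d_{1,N}t]=0$ for all $t<1/d_{1,N}$, so the rescaled process is eventually constant on a time interval of positive length, and the stated convergence to a nondegenerate $\psi_{0,0,\mu}$-CSBP cannot hold in that regime; both Li's Theorem 3.43 and Duquesne--Le Gall's Theorem 2.1.1 (invoked immediately afterwards) require the time-scaling sequence to tend to infinity. This defect is inherited from the paper, whose Section 2 exclusion is incompatible with $(\mathbf{H})$, but your proof makes the reliance on $d_{1,N}\to\infty$ explicit and supports it with a hypothesis that is not available where the proposition is stated and used; as written, that step fails.
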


We define another probability measure $\lambda_{1,N}$ on $\{-1,0,1,2,... \}$ by setting $\lambda_{1,N}(k)=q_{k+1}^{1,N}$ for every $k\ge -1$. We denote by $\mathcal{W}^{1,N}=(\mathcal{W}_k^{1,N}, \ k=1,2,...)$ a discrete-time random walk on $\mathbb{Z}$ with jump distribution $\lambda_{1,N}$ and started at $0$. We get the following result, which plays an important role in our approach. 
 \begin{proposition}
 (Theorem 2.1.1 in Duquesne and Le Gall \cite{duquesne2002random})\\
Proposition \ref{Conv1} implies that for any $s\geqslant 0,$ 
\begin{equation}\label{Conv2p}
 \left(N^{-1} \mathcal{W}_{[Nd_{1,N}s]}^{1,N} , \ s\ge0\right)\Longrightarrow\left( \mathcal{W}_s, \ s\ge0\right) {~} in{~} \mathcal{D}([0,\infty)), {~}as{~}N\longrightarrow \infty, 
 \end{equation}
 where $ \mathcal{W}$ is a $\psi_{0,0,\mu}$-L\'{e}vy process.
 \end{proposition}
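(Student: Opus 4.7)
The plan is to transfer the convergence of Proposition \ref{Conv1} for the Galton-Watson processes $\mathcal{X}^{1,N}$ into the claimed convergence for the associated random walks $\mathcal{W}^{1,N}$, using the discrete Lamperti-type correspondence between branching processes and spectrally positive random walks.

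First I would spell out the correspondence. The jump law $\lambda_{1,N}(k)=q^{1,N}_{k+1}$ on $\{-1,0,1,\ldots\}$ is exactly that of the Lukasiewicz exploration walk of a Galton-Watson forest with offspring distribution $(q^{1,N}_k)_{k\ge0}$. Writing $\sigma_n^{1,N}=\sum_{k=0}^{n-1}\mathcal{X}^{1,N}_k$ for the cumulative population over the first $n$ generations, there is a deterministic measurable map allowing one to recover the generation sizes $(\mathcal{X}^{1,N}_n)_{n\ge0}$ from the Lukasiewicz path $(\mathcal{W}^{1,N}_k)_{k\ge0}$, and conversely. At the continuum level this discrete correspondence becomes the classical Lamperti time-change via $\tau(s)=\inf\{t:\int_0^t\mathcal{X}_r\,dr>s\}$, which relates a CSBP with branching mechanism $\psi_{0,0,\mu}$ to a spectrally positive L\'{e}vy process with the same Laplace exponent.

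Next I would check scaling consistency. Space is compressed by $N^{-1}$ and time by $d_{1,N}$ on the GW side, so the cumulative population $\sigma^{1,N}$ scales as $N \cdot d_{1,N}$, which is exactly the random-walk time rescaling $[Nd_{1,N}s]$ appearing in \eqref{Conv2p}. A continuous-mapping argument applied to the (inverse) Lamperti map then turns the weak convergence of $(N^{-1}\mathcal{X}^{1,N}_{[d_{1,N}t]})$ to $\mathcal{X}$ into the weak convergence of $(N^{-1}\mathcal{W}^{1,N}_{[Nd_{1,N}s]})$ to the $\psi_{0,0,\mu}$-L\'{e}vy process $\mathcal{W}$. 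This is precisely the content of Theorem 2.1.1 of Duquesne and Le Gall \cite{duquesne2002random}, which applies verbatim to our offspring law $q^{1,N}$; one may therefore quote that theorem directly.

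The main technical obstacle is to confirm the continuity of the inverse Lamperti map at the limit path in the Skorokhod $J_1$ topology; in particular one needs that the limiting CSBP $\mathcal{X}$ does not stay constant on intervals of positive length and that its running integral is strictly increasing on its support, so that $\tau$ is well defined and continuous. Both properties follow from the non-degeneracy of $\mu$ together with Remark \ref{RmarPSI}, and the argument then proceeds exactly as in \cite{duquesne2002random}.
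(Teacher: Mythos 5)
Your proposal is correct and follows essentially the same route as the paper, which offers no independent proof of this proposition but simply invokes Theorem 2.1.1 of Duquesne and Le Gall; your reduction to that theorem via the Lukasiewicz-path correspondence between the offspring law $(q_k^{1,N})$ and the jump law $\lambda_{1,N}$, together with the check that the time rescaling $[Nd_{1,N}s]$ matches the cumulative-population scaling, is exactly what is intended. One minor remark: Duquesne and Le Gall establish this implication through convergence of Laplace exponents of triangular arrays rather than a continuous-mapping argument on the (inverse) Lamperti transform, so the continuity issues you raise are not actually the crux of their proof, but since you --- like the paper --- ultimately quote their theorem verbatim, this does not affect the validity of your argument.
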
 

Let $f_0$ be a truncation function, that is a bounded continuous function from $\mathbb{R}$ into $\mathbb{R}$ such that $f_0(z) = z$ for every $z$ belonging to a neighborhood of $0.$ By standard results on the convergence of rescaled random walks (see e.g. Theorem II.3.2 in \cite{jacod1985theoremes}, see also the proof of Theorem 2.2.1 in  \cite{duquesne2002random}), the convergence \eqref{Conv2p} implies the two following condition satisfied:
\begin{align*}
{(\bf C)} & :  \quad \lim_{N\rightarrow\infty} Nd_{1,N} \sum_{k\ge1} f_0(\frac{k}{N})^2 \lambda_{1,N}(k)= \int_{0}^{\infty} f_0(z)^2\mu(dz).
\end{align*}
However, from \eqref{Ptheta} and \eqref{GamaN1N2}, we have that
  \begin{align*}
\widetilde{\pi}_{1,N}(f)&=\int_{0}^{\infty} f(z) \widetilde{\pi}_{1,N}(dz)\nonumber\\&= N\gamma_{1,N} \sum_{k\ge1}   p_k^{1,N} f({\frac{k}{N}}) \nonumber\\
&= Nd_{1,N} \sum_{k\ge1}   q_{k+1}^{1,N} f({\frac{k}{N}}) \nonumber \\
&=Nd_{1,N} \sum_{k\ge1} \lambda_{1,N}(k) f({\frac{k}{N}}). 
\end{align*}
Let us define $\bar{f} \ : \mathbb{R}_+\rightarrow\mathbb{R}_+$ by $\bar{f}(z)=z^2 \wedge 1$ and we set for all $h$  $\in$  $\mathcal{C}_b(\mathbb{R}^+)$
\begin{equation}\label{TO}
 \bar{\pi}_{N}(h)=\widetilde{\pi}_{1,N}(h \times \bar{f}).
\end{equation}
Note that  $(\bar{\pi}_{N},\ N\ge1)$ is also a finite measure on $\mathbb{R}_+.$ Furthermore, a simple consequence of ${(\bf C)}$ implies that for any  $h$  $\in$  $\mathcal{C}_b(\mathbb{R}^+)$ such that $h(z)=1$
on a neighborhood of $0,$ 
\begin{equation}\label{TEE}
 \bar{\pi}_{N}(h) \longrightarrow  \bar{\pi} (h), {~}as{~}N\longrightarrow \infty, 
\end{equation}
where $\bar{\pi} (h)=\int_{0}^{\infty} h(z) \bar{\pi}(dz)$ and  $\bar{\pi}(dz)= \bar{f}(z) \mu(dz).$

We shall need below the
\begin{lemma}\label{lemmaA}
For any $\epsilon>0,$ there exists $M>0$ such that
\begin{equation*}
\limsup_{N\rightarrow\infty} \ \bar{\pi}_N(K^c) \le \frac{\epsilon}{2}, 
\end{equation*} 
where $K=(0,M].$
\end{lemma}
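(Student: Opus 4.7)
The plan is a uniform tail estimate: reduce $\bar{\pi}_N(K^c)$ to an integral against $\widetilde{\pi}_{1,N}$, then apply a Markov-style bound together with Lemma \ref{d1Nq01N} to get an $N$-independent control.

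First, I would restrict to $M \ge 1$. Since $\bar{f}(z) = z^2 \wedge 1 = 1$ on $(1,\infty)$, the definition \eqref{TO} gives
$$\bar{\pi}_N(K^c) = \int_{(M,\infty)} \bar{f}(z)\, \widetilde{\pi}_{1,N}(dz) = \widetilde{\pi}_{1,N}\bigl((M,\infty)\bigr).$$
Then, using the elementary bound $\mathbf{1}_{(M,\infty)}(z) \le z/M$ for $z > 0$, I would estimate
$$\widetilde{\pi}_{1,N}\bigl((M,\infty)\bigr) \;\le\; \frac{1}{M}\int_0^\infty z\, \widetilde{\pi}_{1,N}(dz) \;\le\; \frac{1}{M}\int_0^\infty z\, \mu(dz),$$
where the second inequality is precisely Lemma \ref{d1Nq01N}. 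The important point is that this upper bound does not depend on $N$.

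Next, I would check that $C := \int_0^\infty z\, \mu(dz) < \infty$ under assumption $(\bf H)$: the part $\int_0^1 r\, \mu(dr)$ is finite because $r\vee r^p = r$ on $(0,1]$, and the part $\int_1^\infty r\, \mu(dr)$ is finite because $r \le r^p$ on $[1,\infty)$ (using $p > 1$), so it is dominated by $\int_1^\infty r^p\, \mu(dr) < \infty$. Combining the previous estimates, $\bar{\pi}_N(K^c) \le C/M$ for all $N \ge 1$.

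Finally, choosing $M \ge \max(1,\, 2C/\epsilon)$ yields $\bar{\pi}_N(K^c) \le \epsilon/2$ uniformly in $N$, whence the $\limsup$ inequality follows immediately. There is essentially no obstacle here; the lemma is really a direct application of Lemma \ref{d1Nq01N} packaged as a tightness-at-infinity statement, and its usefulness will presumably come later in promoting the convergence of $\bar{\pi}_N$ on test functions to convergence in a stronger (e.g. weak) sense.
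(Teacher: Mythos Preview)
Your argument is correct and in fact more direct than the paper's. The paper proceeds via the convergence statement \eqref{TEE}: it builds a continuous function $h$ equal to $1$ near $0$ and vanishing on $[M,\infty)$, bounds $\bar{\pi}_N(K^c)\le \bar{\pi}_N(1)-\bar{\pi}_N(h)$, takes the limit $N\to\infty$ using \eqref{TEE}, and only then applies a Markov-type tail bound to the \emph{limiting} measure $\mu$, obtaining $\limsup_N \bar{\pi}_N(K^c)\le C/(M-1)$. You instead apply the Markov bound directly at the prelimit level and invoke the uniform first-moment control of Lemma~\ref{d1Nq01N} on $\widetilde{\pi}_{1,N}$; this yields the stronger uniform estimate $\bar{\pi}_N(K^c)\le C/M$ for all $N\ge1$, from which the $\limsup$ is immediate. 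Your route is shorter and avoids the detour through \eqref{TEE}; the paper's route has the minor conceptual point of showing that \eqref{TEE} alone already forces tightness, in keeping with how \eqref{TEE} is used in the subsequent lemmas. Note also that the finiteness of $\int_0^\infty z\,\mu(dz)$ is already part of the statement of Lemma~\ref{d1Nq01N}, so your separate verification under~$(\mathbf{H})$ is not strictly needed.
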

\begin{proof}
Let us define  
 \begin{equation*}
g_1(z)=
\left\{
    \begin{array}{ll}
   z^2 \wedge 1, \quad \mbox{if} \quad z\le M-1, &\\\\  M-z, \quad  \mbox{if} \quad M-1< z < M
   &\\\\  0, \quad  \mbox{if} \quad z \ge M
    
&
           \end{array}
           \right.
     \mbox{and} \quad h(z)=\frac{g_1(z)}{\bar{f}(z)}=
\left\{
    \begin{array}{ll}
1, \quad \mbox{if} \quad z\le M-1, &\\\\  M-z, \quad  \mbox{if} \quad M-1< z < M
   &\\\\  0, \quad  \mbox{if} \quad  z \ge M.
    
&
           \end{array}
           \right.
\end{equation*}
Now, since $ \mathbf{1}_{K^c }\le 1-h \le \mathbf{1}_{[0,M-1]^c}=\mathbf{1}_{ (M-1,\infty)},$ we then obtain 
\begin{equation}\label{Pin1bar}
\bar{\pi}_N(K^c) \le  \bar{\pi}_N(1-h)=  \bar{\pi}_N(1)- \bar{\pi}_N(h).
\end{equation}
However, from \eqref{TEE}, it follows that
\begin{equation}\label{Pin1bar12}
 \bar{\pi}_N(1)- \bar{\pi}_N(h)\longrightarrow  \bar{\pi} (1)-\bar{\pi} (h), {~}as{~}N\longrightarrow \infty.
\end{equation}
It is plain that 
\begin{align*} 
\bar{\pi} (1)-\bar{\pi} (h)= \bar{\pi} (1-h) &\le \bar{\pi} ((M-1,\infty))\\
&\le \mu  ((M-1,\infty))\\
&\le  \frac{1}{M-1} \int_{(M-1,\infty)} z \ \mu(dz)\\
&\le   \frac{1}{M-1} \int_{0}^{\infty} z \ \mu(dz)\\
& \le \frac{C}{M-1}.
\end{align*} 
Combining this with \eqref{Pin1bar} and \eqref{Pin1bar12}, we deduce that 
\begin{equation*}
\limsup_{N\rightarrow\infty} \ \bar{\pi}_N(K^c) \le \frac{C}{M-1}. 
\end{equation*} 
It follows that for any $\epsilon>0,$ there exists $M>0$ such that
\begin{equation*}
\limsup_{N\rightarrow\infty} \ \bar{\pi}_N(K^c) \le \frac{\epsilon}{2}. 
\end{equation*} 
\end{proof}

Consequently, there exists $N_0$ such that for any $N\ge N_0,$  $\bar{\pi}_N(K^c)\le \epsilon. $ We then obtain the
\begin{corollary}
The sequence $(\bar{\pi}_{N},\ N\ge1)$ is tight.
\end{corollary}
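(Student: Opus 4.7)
The plan is to deduce tightness of $(\bar{\pi}_N)_{N\ge 1}$ as a sequence of finite positive Borel measures on $\mathbb{R}_+$ by verifying the standard Prohorov criterion: (i) $\sup_N \bar{\pi}_N(\mathbb{R}_+) < \infty$, and (ii) for every $\epsilon>0$ there is a compact set $K_\epsilon\subset \mathbb{R}_+$ with $\sup_N \bar{\pi}_N(K_\epsilon^c) \le \epsilon$. Condition (ii) is where Lemma \ref{lemmaA} is brought to bear, while (i) will follow by splitting the integral of $\bar f$ into its near-zero and far-from-zero parts.

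For the uniform total-mass bound, I would write $\bar{\pi}_N(\mathbb{R}_+) = \widetilde{\pi}_{1,N}(\bar f)$ and split according to $(0,1]$ versus $(1,\infty)$. On $(1,\infty)$ we have $\bar f(z)=1\le z$, so this piece is bounded by $\widetilde{\pi}_{1,N}(z\,\mathbf{1}_{(1,\infty)})\le \int_0^\infty z\,\mu(dz)<\infty$, by Lemma \ref{d1Nq01N} together with assumption $(\mathbf{H})$. On $(0,1]$ we have $\bar f(z)=z^2$; applying condition $(\mathbf{C})$ with a truncation function $f_0$ that agrees with $z$ on $[0,1]$ yields $\widetilde{\pi}_{1,N}(f_0^2)\to \int f_0(z)^2\mu(dz)$, and since $\widetilde{\pi}_{1,N}(z^2\mathbf{1}_{(0,1]})\le \widetilde{\pi}_{1,N}(f_0^2)$, this piece is also uniformly bounded. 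Combining the two parts gives (i).

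For (ii), Lemma \ref{lemmaA} already asserts that for every $\epsilon>0$ there exist $M>0$ and an index $N_0=N_0(\epsilon)$ such that $\bar{\pi}_N((M,\infty))\le \epsilon$ for every $N\ge N_0$, which I would recognize as the required bound on $K^c$ with $K=(0,M]$, restricted to large $N$. The only remaining step is to absorb the finitely many indices $N\in\{1,\dots,N_0-1\}$: each $\bar{\pi}_N$ is a finite measure on $\mathbb{R}_+$, so there exists $M_N$ with $\bar{\pi}_N((M_N,\infty))\le \epsilon$. Setting $M_\epsilon=\max(M,M_1,\dots,M_{N_0-1})$ and $K_\epsilon=(0,M_\epsilon]$ produces a compact subset of $\mathbb{R}_+$ with $\sup_N \bar{\pi}_N(K_\epsilon^c)\le \epsilon$, and Prohorov's theorem delivers tightness.

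There is no genuine obstacle here: all the analytic work sits in Lemma \ref{lemmaA}, and the corollary is essentially a packaging result. The only thing to watch carefully is that the compact-containment inequality supplied by Lemma \ref{lemmaA} is an asymptotic statement (it uses $\limsup$), so one must explicitly handle the finite initial segment of indices, which is the step that converts an asymptotic tail bound into a genuinely uniform one.
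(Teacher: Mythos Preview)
Your argument is correct and follows the paper's approach: the paper treats the corollary as an immediate consequence of Lemma \ref{lemmaA} and the sentence preceding it, supplying no further proof, so your explicit verification of the uniform total-mass bound and your handling of the finitely many initial indices simply fill in details the paper leaves implicit. One cosmetic slip: $(0,M_\epsilon]$ is not closed in $\mathbb{R}_+=[0,\infty)$, but since each $\bar\pi_N$ is supported on $\{k/N:k\ge1\}$ (and in any case $\bar f(0)=0$), replacing it by $[0,M_\epsilon]$ changes nothing.
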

Consequently there exists a subsequence $\bar{\pi}_{N}$ (which we denote as the whole sequence, as an abuse notation) which converges weakly. We prove the 
\begin{lemma}
As $N\longrightarrow \infty,$ $\bar{\pi}_{N}\left([0,z]\right)\longrightarrow  \bar{\pi} \left([0,z]\right),$ for every continuity point $z$ of $F_{\bar{\pi}}(z)= \bar{\pi} \left([0,z]\right)$.
\end{lemma}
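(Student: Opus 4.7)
My plan is to first upgrade the weak convergence of the subsequence to the identification of its limit as $\bar{\pi}$, and then invoke the standard Portmanteau correspondence between weak convergence of finite measures and convergence of distribution functions at continuity points.

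The main obstacle is the identification step, because \eqref{TEE} provides $\bar{\pi}_N(h)\to\bar{\pi}(h)$ only for $h\in\mathcal{C}_b(\mathbb{R}^+)$ equal to $1$ on a neighborhood of $0$, which a priori is not a large enough class to pin down a measure on $\mathbb{R}_+$. Let $\nu$ denote the weak limit of the subsequence. To bypass this, I would first verify that neither $\nu$ nor $\bar{\pi}$ charges the origin. The bound $\bar{f}(z)=z^2\wedge 1\le z$ on $[0,1]$ together with Lemma \ref{d1Nq01N} yields, for every $\epsilon\in(0,1]$,
\begin{equation*}
\bar{\pi}_N([0,\epsilon])=\int_{[0,\epsilon]}\bar{f}(z)\,\widetilde{\pi}_{1,N}(dz)\le \epsilon\int_0^{\infty}z\,\widetilde{\pi}_{1,N}(dz)\le \epsilon\int_0^{\infty}z\,\mu(dz).
\end{equation*}
Applying the Portmanteau inequality $\nu(G)\le\liminf_N\bar{\pi}_N(G)$ to the open set $G=[0,\epsilon)$ and letting $\epsilon\downarrow 0$ gives $\nu(\{0\})=0$, while $\bar{\pi}(\{0\})=0$ is immediate from $\bar{f}(0)=0$. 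Combining weak convergence with \eqref{TEE} gives $\nu(h)=\bar{\pi}(h)$ for every $h\in\mathcal{C}_b(\mathbb{R}^+)$ equal to $1$ near $0$; taking $h\equiv 1$ shows that the two measures have the same total mass. For any $g\in\mathcal{C}_b(\mathbb{R}^+)$ vanishing on a neighborhood of $0$, the function $h=1-g/(1+\|g\|_\infty)$ lies in that class, so $\nu(g)=\bar{\pi}(g)$. Finally, for an arbitrary $f\in\mathcal{C}_b(\mathbb{R}^+)$ and a continuous cut-off $\chi_k$ equal to $0$ on $[0,1/(2k)]$ and to $1$ on $[1/k,\infty)$, the product $f\chi_k$ vanishes near $0$, and dominated convergence (using $\nu(\{0\})=\bar{\pi}(\{0\})=0$) gives $\nu(f)=\lim_k\nu(f\chi_k)=\lim_k\bar{\pi}(f\chi_k)=\bar{\pi}(f)$, so $\nu=\bar{\pi}$. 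Since every weakly convergent subsequence of the tight sequence shares this limit, the (sub)sequence $\bar{\pi}_N$ converges weakly to $\bar{\pi}$.

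Given that $\bar{\pi}_N\Rightarrow\bar{\pi}$, the Portmanteau theorem yields $\bar{\pi}_N(A)\to\bar{\pi}(A)$ whenever $\bar{\pi}(\partial A)=0$. For $A=[0,z]$ with $z>0$, the boundary in $\mathbb{R}_+$ reduces to the singleton $\{z\}$, and the hypothesis that $z$ is a continuity point of $F_{\bar{\pi}}$ is exactly $\bar{\pi}(\{z\})=0$. This gives $\bar{\pi}_N([0,z])\to\bar{\pi}([0,z])$, completing the proof.
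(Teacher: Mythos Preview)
Your argument is correct, but it proceeds in the opposite direction from the paper. The paper gives a direct sandwich argument using only \eqref{TEE}: it constructs piecewise-linear functions $\underline{g}_\epsilon\le\mathbf{1}_{[0,z]}\le\overline{g}_\epsilon$ in $\mathcal{C}_b(\mathbb{R}_+)$, both equal to $1$ on a neighborhood of $0$, so that \eqref{TEE} applies to each; the bounds $\bar{\pi}(\underline{g}_\epsilon)\le\liminf_N\bar{\pi}_N([0,z])\le\limsup_N\bar{\pi}_N([0,z])\le\bar{\pi}(\overline{g}_\epsilon)$ then collapse as $\epsilon\downarrow 0$ at any continuity point $z$. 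No tightness, no subsequence, and no identification of a weak limit are needed. The paper then \emph{deduces} full weak convergence (the subsequent Corollary) from this lemma plus tightness. You instead establish weak convergence $\bar{\pi}_N\Rightarrow\bar{\pi}$ first---the key extra ingredient being the estimate $\bar{\pi}_N([0,\epsilon])\le\epsilon\int_0^\infty z\,\mu(dz)$ ruling out mass at $0$ for the limit---and then read off the lemma via Portmanteau. Your route is a bit heavier for this one statement but yields the Corollary simultaneously; the paper's route is more elementary and isolates exactly the test-function property (value $1$ near $0$) that drives the result.
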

\begin{proof}
Let us define  
 \begin{equation*}
\underline{g}_\epsilon(z)=
\left\{
    \begin{array}{ll}
    1, \quad \mbox{if} \quad 0\le y \le z-\epsilon, &\\\\  \frac{1}{\epsilon}(z-y), \quad  \mbox{if} \quad z-\epsilon < y < z
   &\\\\  0, \quad  \mbox{if} \quad y \ge z
    
&
           \end{array}
           \right.
     \mbox{and} \quad \overline{g}_\epsilon(z)=
\left\{
    \begin{array}{ll}
1, \quad \mbox{if} \quad 0\le y\le z, &\\\\  \frac{1}{\epsilon}(z+\epsilon-y), \quad  \mbox{if} \quad z < y < z +\epsilon
   &\\\\  0, \quad  \mbox{if} \quad  y \ge z+\epsilon.
    
&
           \end{array}
           \right.
\end{equation*}
From \eqref{TEE}, we deduce easily that $$\bar{\pi}_{N}\left(\underline{g}_\epsilon \right) \longrightarrow \bar{\pi}\left(\underline{g}_\epsilon \right) \quad \mbox{and} \quad \bar{\pi}_{N}\left(\overline{g}_\epsilon \right) \longrightarrow \bar{\pi}\left(\overline{g}_\epsilon\right), {~}as{~}N\longrightarrow \infty.$$ However, we have 
\begin{equation*}
 \bar{\pi}\left(\underline{g}_\epsilon \right) = \lim_{N\rightarrow\infty} \bar{\pi}_{N}\left(\underline{g}_\epsilon \right) \le \liminf_{N\rightarrow\infty} \bar{\pi}_{N}\left([0,z]\right) \le  \limsup_{N\rightarrow\infty} \bar{\pi}_{N}\left([0,z]\right) \le \lim_{N\rightarrow\infty} \bar{\pi}_{N}\left(\overline{g}_\epsilon \right)= \bar{\pi}\left(\overline{g}_\epsilon\right).
\end{equation*}
Therefore, the result follows by letting $\epsilon$ tend to $0,$ since provided $z$ is a continuity point of  $F_{\bar{\pi}}$, $$ \bar{\pi} \left([0,z]\right)= \lim_{\epsilon \rightarrow0}  \bar{\pi}\left(\underline{g}_\epsilon \right)=  \lim_{\epsilon \rightarrow0}  \bar{\pi}\left(\overline{g}_\epsilon\right).$$
\end{proof}

Since all converging subsequences have the same limit, the whole sequence converges :  
\begin{corollary}\label{CorpiNbar}
For any  $h$  $\in$  $\mathcal{C}_b(\mathbb{R}^+)$, $\bar{\pi}_{N}(h) \longrightarrow  \bar{\pi} (h), {~}as{~}N\longrightarrow \infty.$
\end{corollary}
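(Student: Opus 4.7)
The plan is to exploit the two preceding results, the tightness of $(\bar\pi_N)$ established in the previous Corollary and the pointwise convergence of the cumulative distribution functions at continuity points of $F_{\bar\pi}$ established in the previous Lemma, to deduce the full weak convergence of the sequence of finite measures $\bar\pi_N$ to $\bar\pi$. The corollary then follows immediately from the classical characterization of weak convergence of finite measures via bounded continuous test functions.

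First, I would invoke a Prokhorov-type argument. By the preceding Corollary, $(\bar\pi_N)$ is tight, and the total masses $\bar\pi_N(1)$ converge to $\bar\pi(1) < \infty$ by applying \eqref{TEE} to $h \equiv 1$, which trivially equals $1$ on a neighborhood of $0$. Hence the sequence is relatively compact in the topology of weak convergence of finite measures on $\mathbb{R}_+$, so every subsequence of $(\bar\pi_N)$ admits a further subsequence converging weakly to some finite measure $\nu$.

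Next I would identify the limit. The preceding Lemma gives $\bar\pi_N([0,z]) \to \bar\pi([0,z])$ at every continuity point $z$ of $F_{\bar\pi}$. Consequently any subsequential weak limit $\nu$ satisfies $\nu([0,z]) = \bar\pi([0,z])$ on a dense subset of $\mathbb{R}_+$, which, together with right-continuity of distribution functions, forces $\nu = \bar\pi$. Since every subsequential limit coincides with $\bar\pi$, the whole sequence $(\bar\pi_N)$ converges weakly to $\bar\pi$, and the Portmanteau theorem yields $\bar\pi_N(h) \to \bar\pi(h)$ for every $h \in \mathcal{C}_b(\mathbb{R}_+)$, which is precisely the assertion of the corollary.

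The only potentially delicate point is guaranteeing that no mass escapes to infinity when passing from convergence at continuity points of $F_{\bar\pi}$ to convergence against arbitrary bounded continuous test functions; this is exactly what tightness combined with total-mass convergence $\bar\pi_N(1) \to \bar\pi(1)$ provides. Everything else is a routine invocation of standard weak convergence machinery, so I do not anticipate any substantive technical obstacle.
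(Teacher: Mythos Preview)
Your proposal is correct and follows essentially the same approach as the paper: extract weakly convergent subsequences via tightness, identify any subsequential limit with $\bar\pi$ using the previous Lemma on convergence of $F_{\bar\pi_N}$ at continuity points, and conclude that the whole sequence converges. The paper is terser (it simply states ``since all converging subsequences have the same limit, the whole sequence converges''), but your additional observation that $\bar\pi_N(1)\to\bar\pi(1)$ via \eqref{TEE} with $h\equiv1$ is a useful explicit justification that no mass is lost.
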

We can now establish
 \begin{lemma}\label{1infZ2}
 For every continuous function $\psi$ from $\mathbb{R}_+$ into $\mathbb{R}$ such that $\left|\psi(z) \right| \le C (z^2 \wedge 1)$ for some constant $C,$ we have $$\widetilde{\pi}_{1,N}(\psi) \longrightarrow  \mu (\psi), {~}as{~}N\longrightarrow \infty. $$
\end{lemma}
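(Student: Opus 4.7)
The strategy is to lift the weak convergence $\bar{\pi}_N\to\bar{\pi}$ from Corollary \ref{CorpiNbar} to test functions $\psi$ that are only dominated by $C(z^2\wedge 1)=C\bar{f}(z)$. By the defining relation \eqref{TO}, one has $\widetilde{\pi}_{1,N}(\psi)=\bar{\pi}_N(\psi/\bar{f})$ whenever the quotient is well-behaved, and the most direct attempt is to apply Corollary \ref{CorpiNbar} to $h:=\psi/\bar{f}$. This $h$ is bounded by $C$, but may fail to be continuous at $0$ since $\bar{f}(z)=z^2$ vanishes there; this is the sole real obstacle, and I would bypass it by an $\epsilon$-cutoff near the origin.

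Concretely, I would fix a continuous $g_\epsilon:\mathbb{R}_+\to[0,1]$ with $g_\epsilon\equiv 0$ on $[0,\epsilon]$ and $g_\epsilon\equiv 1$ on $[2\epsilon,\infty)$ and split $\psi=\psi g_\epsilon+\psi(1-g_\epsilon)$. The first piece $h_\epsilon:=\psi g_\epsilon/\bar{f}$ lies in $\mathcal{C}_b(\mathbb{R}_+)$: it vanishes on $[0,\epsilon]$, while $\bar{f}\ge \epsilon^2$ on $\{g_\epsilon>0\}$, and $|h_\epsilon|\le C$. Corollary \ref{CorpiNbar} then yields
\begin{equation*}
\widetilde{\pi}_{1,N}(\psi g_\epsilon)=\bar{\pi}_N(h_\epsilon)\longrightarrow \bar{\pi}(h_\epsilon)=\mu(\psi g_\epsilon), \quad \text{as } N\to\infty.
\end{equation*}

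For the remaining piece I would use $|\psi(1-g_\epsilon)(z)|\le C\bar{f}(z)\,\mathbf{1}_{[0,2\epsilon]}(z)$ together with a continuous cutoff $\phi_\epsilon$ sandwiched between $\mathbf{1}_{[0,2\epsilon]}$ and $\mathbf{1}_{[0,3\epsilon]}$ to obtain the uniform-in-$N$ bound
\begin{equation*}
|\widetilde{\pi}_{1,N}(\psi(1-g_\epsilon))|\le C\,\widetilde{\pi}_{1,N}(\bar{f}\,\phi_\epsilon)=C\,\bar{\pi}_N(\phi_\epsilon).
\end{equation*}
Corollary \ref{CorpiNbar} gives $\bar{\pi}_N(\phi_\epsilon)\to \bar{\pi}(\phi_\epsilon)\le \int_0^{3\epsilon}z^2\,\mu(dz)$, and the analogous estimate holds directly for the limit measure, $|\mu(\psi(1-g_\epsilon))|\le C\int_0^{2\epsilon}z^2\,\mu(dz)$. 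Since \eqref{R2FINI} forces $\int_0^1 z^2\,\mu(dz)<\infty$, both error terms vanish as $\epsilon\to 0$ by dominated convergence. I then conclude from the triangle inequality by letting first $N\to\infty$ (which eliminates the main term and freezes the two remainders at their limits) and then $\epsilon\to 0$.
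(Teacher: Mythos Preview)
Your proof is correct and rests on the same identity the paper uses, $\widetilde{\pi}_{1,N}(\psi)=\bar{\pi}_N(\psi/\bar{f})$, followed by Corollary~\ref{CorpiNbar}. The paper's own proof is a one-liner that applies the corollary directly to $h=\psi/\bar f$, tacitly treating this quotient as an element of $\mathcal{C}_b(\mathbb{R}_+)$; you correctly flagged that $h$ need not extend continuously to $0$ (e.g.\ $\psi(z)=z^2\sin(1/z)$) and added the $\epsilon$-cutoff to handle this, making your argument strictly more careful than the paper's. A shorter patch in the same spirit is to observe that $\bar{\pi}(\{0\})=0$ (since $\mu$ lives on $(0,\infty)$ and $\bar f(0)=0$) and invoke the Portmanteau characterization of weak convergence for bounded Borel functions whose discontinuity set is null for the limit measure.
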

\begin{proof}
Recalling \eqref{TO}and Corollary \ref{CorpiNbar}, we have that $$\widetilde{\pi}_{1,N}(\varphi)=\bar{\pi}_{N}\left(\frac{\psi}{\bar{f}}\right) \longrightarrow \bar{\pi} \left(\frac{\psi}{\bar{f}}\right)= \mu (\psi), {~}as{~}N\longrightarrow \infty. $$
\end{proof}

$\bf{Proof \ of \  Proposition}$ \ref{PiTildversPiAA}.
 We shall use in two instances the fact that any continuous function $\psi$ with compact support in 
$(0,+\infty)$ satisfies $|\psi(z)|\le C(z^2\wedge1)$, so that we can apply Lemma \ref{1infZ2} to it.

Let $h_M(z)$ be the continuous function from $\R_+$ into $[0,1]$ defined for any $M\ge1$ by
\[ h_M(z)=\begin{cases}  0,&\text{ if $0\le z\le \frac{1}{2M}$};\\
2M(z-\frac{1}{2M}),&\text{ if $\frac{1}{2M}\le z\le\frac{1}{M}$};\\
1,&\text{ if $\frac{1}{M}\le z\le M$};\\
1-M(z-M),&\text{ if $M\le z\le M+1$};\\
0,&\text{ if $z\ge M+1$}. \end{cases}
\]
Let $\varphi$ be an arbitrary continuous function from $\R_+$ into $\R$ such that, we some constant $C>0$, $|\varphi(z)|\le C z$.
It is plain that 
\begin{equation}\label{eq:encadre}
\E\left| \tilde{\pi}_{1,N}(\varphi)-\tilde{\pi}_{1,N}(\varphi h_M)\right|\le\int_0^{1/M}|\varphi(z)|\tilde{\pi}_{1,N}(dz) +\int_M^\infty |\varphi(z)|\tilde{\pi}_{1,N}(dz).
\end{equation}
From Lemma \ref{1infZ2}, for any $M\ge1$, as $N\to\infty$,
\[ \tilde{\pi}_{1,N}(\varphi h_M)\to\mu(\varphi h_M).\]
Next we define for any $M\ge1$ the continuous function $g_M:\R_+\mapsto[0,1]$ by
\[ g_M(z)=\begin{cases}  0,&\text{ if $0\le z\le \frac{1}{M}$};\\
M(z-\frac{1}{M}),&\text{ if $\frac{1}{M}\le z\le\frac{2}{M}$};\\
1,&\text{ if $\frac{2}{M}\le z\le M-1$};\\
1-M(z-M-1),&\text{ if $M-1\le z\le M$};\\
0,&\text{ if $z\ge M$}. \end{cases}
\]
We now note that
\begin{align*}
\int_0^{1/M}|\varphi(z)|\tilde{\pi}_{1,N}(dz) &+\int_M^\infty |\varphi(z)|\tilde{\pi}_{1,N}(dz)\\
&\le C\int_0^{1/M} z\,\tilde{\pi}_{1,N}(dz)+C\int_M^\infty z\,\tilde{\pi}_{1,N}(dz)\\
&\le C\int_0^\infty z\,\mu(dz)-C\int_0^\infty z\, g_M(z)\, \tilde{\pi}_{1,N}(dz).
\end{align*}

Consequently
\begin{equation}\label{eq:restes}
\limsup_{N\to\infty}\left(\int_0^{1/M}|\varphi(z)|\tilde{\pi}_{1,N}(dz) 
+\int_M^\infty |\varphi(z)|\tilde{\pi}_{1,N}(dz)\right)\\
\le C\int_0^\infty z(1-g_M(z))\mu(dz).  
\end{equation}   
   Finally, combining \eqref{eq:encadre} and \eqref{eq:restes}, we obtain   
   \begin{align*}
   &\mu(\varphi h_M)-C\int_0^\infty z(1-g_M(z))\mu(dz)\\   
   &\le\mu(\varphi h_M)-\limsup_{N\to\infty}\left(\int_0^{1/M}|\varphi(z)|\tilde{\pi}_{1,N}(dz) +\int_M^\infty |\varphi(z)|\tilde{\pi}_{1,N}(dz)\right)\\
   &= \liminf_{N\to\infty}\left(\tilde{\pi}_{1,N}(\varphi h_M)
-\int_0^{1/M}|\varphi(z)|\tilde{\pi}_{1,N}(dz) -\int_M^\infty |\varphi(z)|\tilde{\pi}_{1,N}(dz)\right)\\   
&\le\liminf_{N\to\infty} \tilde{\pi}_{1,N}(\varphi)
\le\limsup_{N\to\infty} \tilde{\pi}_{1,N}(\varphi)\\
&\le \limsup_{N\to\infty}\left(\tilde{\pi}_{1,N}(\varphi h_M)
+\int_0^{1/M}|\varphi(z)|\tilde{\pi}_{1,N}(dz) +\int_M^\infty |\varphi(z)|\tilde{\pi}_{1,N}(dz)\right)\\
&=\mu(\varphi h_M)+\limsup_{N\to\infty}\left(\int_0^{1/M}|\varphi(z)|\tilde{\pi}_{1,N}(dz) +\int_M^\infty |\varphi(z)|\tilde{\pi}_{1,N}(dz)\right)\\
&\le \mu(\varphi h_M)+C\int_0^\infty z(1-g_M(z))\mu(dz).
\end{align*}
The result follows by taking the limit as $M\to\infty$, thanks to the dominated convergence theorem, in the 
system of inequalities
 \begin{align*}
   \mu(\varphi h_M)-C\int_0^\infty z(1-g_M(z))\mu(dz) 
   &\le\liminf_{N\to\infty} \tilde{\pi}_{1,N}(\varphi)\\
&\le\limsup_{N\to\infty} \tilde{\pi}_{1,N}(\varphi)\\
&\le \mu(\varphi h_M)+C\int_0^\infty z(1-g_M(z))\mu(dz).
\end{align*}

 $\hfill \blacksquare$
 
Thanks to these results, we are now in position to study the asymptotic properties of the exploration process.
\subsection{ Tightness and Weak convergence of the Contour process}
Consider $\{H_{s}^N, \ s\geq0\}$, the contour process of the forest of trees representing the population $\{Z_{t}^{N,x}, \ t\geq 0\}$. We define $L_{s}^{N}(t)$, the  (scaled) local time accumulated by $H^{N}$ at level $t$ up to time $s$, as
\begin{equation}\label{TL}
 L_{s}^{N}(t) = \frac{2}{c} \lim_{\varepsilon\mapsto 0}\frac{1}{\varepsilon} \int_{0}^{s}\mathbf{1}_{ \{t\le H_{r}^{N} < t+ \varepsilon \}}dr,
\end{equation}
The motivation of the factor  $2/c$ will be clear after we have taken the limit as $N \rightarrow  +\infty$. $L_{s}^{N}(t)$ equals $2 / c$ times the number of pairs of $t$-crossings of $H^{N}$  between times $0$ and $s$. 
Note that this process is neither right- nor left-continuous as a function of $s$. 

Now, we will need to write precisely the evolution of  $\{H_{s}^{N}, \ s\geq0\}.$ To this end, we first recall \eqref{Ptheta} and \eqref{GamaN1N2}. Let $\{\Lambda_k^{1,N}, \ k\geqslant1 \}$ be a sequence of i.i.d r.v's with as joint law of $\Lambda^{1,N}.$ Let $\{{P}_{s}^{1,N}, \ s\geq 0\}$ be a Poisson process, with intensity $2N \gamma_{1,N}.$ We assume that the two sequences $\{{P}_{s}^{1,N}, \ s\geq 0\}$ and $\{\Lambda_k^{1,N}, \ k\geqslant1 \}$ are independent.  Let $\{S_{k}^{1,N}, \ k\geqslant1\}$ be the sequence of successive jump times of the Poisson process $\{{P}_{s}^{1,N}, \ s\geq 0\}.$  Since $\{\Lambda_k^{1,N}, \ k\geqslant1 \}$ is independent of $\{S_{k}^{1,N}, \ k\geqslant1\},$ it follows from Corollary 3.5, p.265 in \cite{ccinlar2011probability} that $\{(\Lambda_k^{1,N},S_{k}^{1,N}), \ k\geqslant1\}$ forms a Poisson random measure $\Pi^{1,N}$ on $(0, \infty)\times \mathbb{N}$ with mean measure $2N\gamma_{1,N}ds \pi_{1,N}(dz)$, recall that $\pi_{1,N}(\{k\})= \P(\Lambda^{1,N}=k)$. More precisely
\begin{remark}\label{MAPPP}
For fixed $s$, we have 
\begin{equation*}
\sum_{k=1}^{\infty} \Lambda_k^{1,N} \mathbf{1}_{(0,s]}\circ S_{k}^{1,N}= \int_{0}^{s} {\Lambda}_{P_{r}^{1,N}}^{1,N}dP_{r}^{1,N}= \int_{[0,s]\times \mathbb{N}} z \Pi^{1,N}(dr,dz)
\end{equation*}
\end{remark}
This implies that one can write 
\begin{equation}\label{Thiabi}
\frac{1}{N}\int_{[0,s]\times \mathbb{N}} z \Pi^{1,N}(dr,dz)= \int_{[0,s]\times \mathbb{R}_+} z \widetilde{\Pi}^{1,N}(dr,dz)
\end{equation}
where $\widetilde{\Pi}^{1,N}$ is a Poisson random measures on $\mathbb{R}_+^{2}$ with mean measures $2 ds \widetilde{\pi}_{1,N}(dz),$ recall that $\widetilde{\pi}_{1,N}$ was defined in \eqref{GamaN1N2}.
 
 Let $\{{P}_{s}^{\prime,1,N}, \ s\geq 0\},$ $\{{P}_{s}^{N}, \ s\geq 0\}$  and $\{{P}_{s}^{\prime,N}, \ s\geq 0\}$ be three mutually independent Poisson processes, with respective intensities $2Nd_{1,N}q_0^{1,N},$ $2N\lambda_N$ and $2N\mu_N.$ We assume that the four processes $\{{P}_{s}^{1,N}, \ s\geq 0\},$  $\{{P}_{s}^{\prime,1,N}, \ s\geq 0\},$ $\{{P}_{s}^{N}, \ s\geq 0\}$  and $\{{P}_{s}^{\prime,N}, \ s\geq 0\}$ are independent. Let us define $${P}_{s}^{N,-}= {P}_{s}^{\prime,1,N}+{P}_{s}^{\prime,N}\quad \mbox{and} \quad {P}_{s}^{N,+}= {P}_{s}^{1,N}+{P}_{s}^{N},\quad \forall \ s\ge0.$$ It is well known that the random process ${P}^{N,-}$ and ${P}^{N,+}$ are Poisson processes, with intensity $2N(d_{1,N}q_0^{1,N}+\mu_N)$ and $2N(\gamma_{1,N}+\lambda_N)$ respectively. Let $\{V_{s}^N, \ s\geq 0\}$  be the c?àdlà?g $\{-1,1\}$-valued process which  is such that, $s-$almost everywhere,  ${dH_{s}^{N}}/{ds}=2 a_NV_{s}^N,$ with $a_N= N+c^{-1}d_{1,N}q_0^{1,N}.$ The $({\mathbb{R}}_{+} \times \{-1,1\})$-valued process  $\{(H_{s}^{N,}, V_{s}^N), \ s\geq 0\}$ solves the SDE
\begin{align}\label{DEFHNVN}
H_{s}^{N}&=2 a_N\int_{0}^{s}V_{r}^{N}dr,  \nonumber\\
V_{s}^{N}&=1+ 2\int_{0}^{s}\mathbf{1}_{\{V_{r^{-}}^{N}=-1\}}dP_{r}^{N,+} -2\int_{0}^{s}\mathbf{1}_{\{V_{r^{-}}^{N}=+1\}}dP_{r}^{N,-}+ cN \big(L_{s}^{N}(0)- L_{0^{+}}^{N}(0)\big) \nonumber\\ 
&+ 2N \sum_{k>0, S_{k}^{N,+}\leq s} \bigg( \frac{c}{2} \Big(L_{s}^{N}(H_{S_{k}^{N,+}}^{N})-L_{S_{k}^{N,+}}^{N}(H_{S_{k}^{N,+}}^{N})\Big)\bigg)\wedge \frac{ (\Lambda_k^{1,N}-1)}{N},
\end{align}
where the $S_{k}^{N,+}$ are the successive jump times of the process 
\begin{equation}\label{PN+}
\mathcal{P}_{s}^{N,+}= \int_{0}^{s}\mathbf{1}_{\{V_{r^{-}}^{N}=-1\}}dP_{r}^{1,N}.
\end{equation}
For any $k>0$, $\Lambda_k^{1,N}-1$ denotes the number of reflections of $H^N$ above the level $H_{S_{k}^{+}}^N$. We deduce from \eqref {DEFHNVN}
\begin{equation*}
\frac{V_{s}^{N}}{2cN}= \frac{1}{2cN} + K_s^N+\frac{1}{cN}\int_{0}^{s}\mathbf{1}_{\{V_{r^{-}}^{N}=-1\}}dP_{r}^{N}-\frac{1}{cN}\int_{0}^{s}\mathbf{1}_{\{V_{r^{-}}^{N}=+1\}}dP_{r}^{N,-}+ \frac{1}{2}\big(L_{s}^{N}(0)- L_{0^{+}}^{N}(0)\big),
\end{equation*}
where
\begin{align}\label{KNSDEF}
K_s^N&= \frac{1}{cN}\int_{0}^{s}\mathbf{1}_{\{V_{r^{-}}^{N}=-1\}}dP_{r}^{1,N} + \frac{1}{c} \sum_{k>0, S_{k}^{N,+}\leq s} \bigg( \frac{c}{2} \Big(L_{s}^{N}(H_{S_{k}^{N,+}}^{N})-L_{S_{k}^{N,+}}^{N}(H_{S_{k}^{N,+}}^{N})\Big)\bigg)\wedge \frac{ ({\Lambda}_{k}^{1,N}-1)}{N}\nonumber\\
& = \frac{1}{cN}\int_{0}^{s} \left[1+\frac{Nc}{2}(L_{s}^{N}(H_{r}^{N})-L_{r}^{N}(H_{r}^{N}))\wedge ({\Lambda}_{\mathcal{P}_{r}^{N,+}}^{1,N}-1)\right] d\mathcal{P}_{r}^{N,+}\nonumber\\
& =   K_{s}^{1,N} - K_{s}^{2,N}, 
\end{align}
with
\begin{align*}
 \quad K_{s}^{1,N}=  \frac{1}{cN} \int_{0}^{s} {\Lambda}_{\mathcal{P}_{r^{-}}^{N,+}+1}^{1,N}d\mathcal{P}_{r}^{N,+} \quad \mbox{ and} 
\end{align*}
\begin{equation*}\label{KN2S}
K_{s}^{2,N} =\frac{1}{cN} \int_{0}^{s} \left({\Lambda}_{\mathcal{P}_{r^{-}}^{N,+}+1}^{1,N} -1- \frac{cN}{2}(L_{s}^{N}(H_{r}^{N})-L_{r}^{N}(H_{r}^{N})) \right)^{+} d\mathcal{P}_{r}^{N,+}.
\end{equation*}
Observe that  $\mathcal{P}_{s}^{N,+}=\mathcal{P}_{s^-}^{N,+}+1$, for $d\mathcal{P}_{s}^{N,+}$ almost every $s$.  Writing the first line of \eqref {DEFHNVN} as
\begin{equation*}
 H_{s}^{N}= 2a_N\int_{0}^{s}\mathbf{1}_{\{V_{r}^{N}=+1\}}dr - 2a_N\int_{0}^{s}\mathbf{1}_{\{V_{r}^{N}=-1\}}dr,
\end{equation*}
denoting by $ \mathcal{M}^{1,N}$, $ \mathcal{M}^{N}$, $\widetilde{\mathcal{M}}^{1,N}$ and $\widetilde{\mathcal{M}}^{N}$ the four local martingales
\begin{equation}\label{M1+}
\mathcal{M}_s^{1,N} =   K_{s}^{1,N} - \frac{1}{cN}  \int_{0}^{s}\mathbf{1}_{\{V_{r}^{N}=-1\}} {{\Lambda}_{{\mathcal{P}_{r}^{1,N}+1}}^{1,N}} \Big( 2N\gamma_{1,N}\Big)dr, 
\end{equation}
\begin{equation}\label{M1-}
\mathcal{M}_s^{N} =  \frac{1}{cN}  \int_{0}^{s}\mathbf{1}_{\{V_{r^{-}}^{N}=-1\}} \Big(dP_r^N -2N\lambda_{N}dr\Big),
\end{equation}
\begin{equation}\label{M2+}
\widetilde{\mathcal{M}}_s^{1,N} =   \frac{1}{cN} \int_{0}^{s}\mathbf{1}_{\{V_{r^{-}}^{N}=+1\}}\left(d{P}_{r}^{\prime,1,N}-2N d_{1,N}q_0^{1,N} dr\right),
\end{equation}
and
\begin{equation}\label{M2-}
\widetilde{\mathcal{M}}_s^{N} =   \frac{1}{cN} \int_{0}^{s}\mathbf{1}_{\{V_{r^{-}}^{N}=+1\}}\left(d{P}_{r}^{\prime,N}-2N\mu_N dr\right),
\end{equation}
and recalling  \eqref{KNSDEF}, we deduce from \eqref{DEFHNVN} 
\begin{align}\label{2DFHN}
 H_{s}^{N}+\frac{V_{s}^{N}}{2cN}=& \frac{1}{2cN}+\mathcal{M}_s^{1,N} + \mathcal{M}_s^{N}-\widetilde{\mathcal{M}}_s^{1,N}-\widetilde{\mathcal{M}}_s^{N} - K_{s}^{2,N}+ \frac{1}{2}(L_{s}^{N}(0)- L_{0^{+}}^{N}(0))\nonumber \\
 &+ \Phi_s^{1,N}+ J^N(s),
\end{align}
with
\begin{equation}\label{MNa} 
 \Phi_s^{1,N}=  \frac{1}{cN}  \int_{0}^{s}\mathbf{1}_{\{V_{r}^{N}=-1\}} \Big(  {{\Lambda}_{{\mathcal{P}_{r}^{1,N}+1}}^{1,N}} -m_{1,N} \Big)\Big(2N\gamma_{1,N}\Big)dr,
\end{equation}
and 
\begin{equation}\label{UNS}
J^N(s)= \frac{2\alpha}{c}  \int_{0}^{s}\mathbf{1}_{\{V_{r}^{N}=-1\}} dr- \frac{2\beta}{c}  \int_{0}^{s}\mathbf{1}_{\{V_{r}^{N}=+1\}} dr.
\end{equation}

The first following statement is elementary.
\begin{lemma}\label{UNSlem}
The sequence $\{J^N, \  N\ge1 \}$ is tight in  $\mathcal{C}([0,\infty)).$
\end{lemma}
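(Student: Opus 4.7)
The plan is to apply the Arzel\`a--Ascoli theorem on each compact interval $[0,T]$, which reduces tightness in $\mathcal{C}([0,\infty))$ to verifying two ingredients: (i) a uniform bound on $\{J^N(0),\ N\ge 1\}$, and (ii) uniform equicontinuity of the family $\{J^N,\ N\ge 1\}$.

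First I would observe that $J^N(0)=0$ by definition, so (i) is immediate. For (ii), the key point is that the integrands appearing in the definition of $J^N$ are indicator functions, hence bounded by $1$. Concretely, for any $0\le s\le t$,
\begin{equation*}
\bigl|J^N(t)-J^N(s)\bigr| \le \frac{2\alpha}{c}\int_s^t \mathbf{1}_{\{V_r^N=-1\}}\,dr + \frac{2\beta}{c}\int_s^t \mathbf{1}_{\{V_r^N=+1\}}\,dr \le \frac{2(\alpha+\beta)}{c}(t-s).
\end{equation*}
Thus every $J^N$ is Lipschitz continuous with the \emph{same} constant $2(\alpha+\beta)/c$, independently of $N$. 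This gives both the uniform bound $\sup_N\sup_{s\le T}|J^N(s)|\le 2(\alpha+\beta)T/c$ and the equicontinuity required.

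By the Arzel\`a--Ascoli theorem, the family $\{J^N,\ N\ge 1\}$ is relatively compact in $\mathcal{C}([0,T])$ for every $T>0$. Since a sequence in $\mathcal{C}([0,\infty))$ (with the topology of uniform convergence on compacts) is tight if and only if its restriction to each $[0,T]$ is tight in $\mathcal{C}([0,T])$, the result follows. There is no real obstacle here: the statement is essentially the remark that a uniformly Lipschitz family of functions vanishing at the origin is automatically tight in $\mathcal{C}([0,\infty))$; the randomness of $V^N$ plays no role beyond the trivial bound $|\mathbf{1}_{\{V_r^N=\pm 1\}}|\le 1$.
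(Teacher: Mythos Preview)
Your proof is correct. The paper does not actually give a proof of this lemma; it simply introduces it with the sentence ``The first following statement is elementary,'' and your argument is exactly the elementary verification the authors had in mind: the indicators are bounded by $1$, so each $J^N$ is Lipschitz with a common constant $2(\alpha+\beta)/c$, and Arzel\`a--Ascoli gives tightness.
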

Moreover, since $\mathcal{M}_s^{N},$ $\widetilde{\mathcal{M}}_s^{1,N}$ and $\widetilde{\mathcal{M}}_s^{N}$  are purely discontinuous local martingales, we deduce from \eqref{M1-}, \eqref{M2+} and \eqref{M2-} that 
\begin{equation*}
\!\left[\mathcal{M}^{N}\right]_s\!=\!\frac{1}{c^2N^2}\! \int_{0}^{s}\!\mathbf{1}_{\{V_{r}^{N}=-1\}} \!dP_r^N, \left[\widetilde{\mathcal{M}}^{1,N}\right]_s\!=\!\frac{1}{c^2N^2} \int_{0}^{s}\! \mathbf{1}_{\{V_{r}^{N}=+1\}}\!d{P}_{r}^{\prime,1,N}, \ \left[\widetilde{\mathcal{M}}^{N}\right]_s\!=\!\frac{1}{c^2N^2}\! \int_{0}^{s}\! \mathbf{1}_{\{V_{r}^{N}=+1\}}\!d{P}_{r}^{\prime,N},
\end{equation*}
\begin{equation*}
\langle{\mathcal{M}^{N}\rangle}_{s}\!=\!\frac{2\lambda_N}{c^2N}\! \int_{0}^{s}\! \mathbf{1}_{\{V_{r}^{N}=-1\}}\!dr, \langle{\widetilde{\mathcal{M}}^{1,N}\rangle}_{s}\!=\!\frac{2d_{1,N}q_0^{1,N}}{c^2N} \int_{0}^{s}\! \mathbf{1}_{\{V_{r}^{N}=+1\}}\!dr, \langle{\widetilde{\mathcal{M}}^{N}\rangle}_{s}\!=\!\frac{2 \mu_N}{c^2N} \int_{0}^{s}\! \mathbf{1}_{\{V_{r}^{N}=+1\}}\!dr.
\end{equation*}
It follows readily that there exists a constant $C>0$ such that $$\mathbb{E}(\langle \mathcal{M}^{N}\rangle_T) \le CT, \quad \mbox{for all} \ T>0, \ N\ge1.$$ Hence $\mathcal{M}^{N}$ is a square integrable martingale. We can prove similarly that $\widetilde{\mathcal{M}}^{N}$ is a square integrable martingale. However, we deduce from the proof of Lemma \ref{d1Nq01N} that there exists a constant $C>0$ such that 
\begin{equation}\label{mart}
\mathbb{E}\left(\langle\widetilde{\mathcal{M}}^{1,N}\rangle_T\right) \le \frac{2}{c^2N}T \int_{0}^{\infty} z\ \mu(dz), \quad \mbox{for all} \ T>0.
\end{equation}
\begin{corollary}
  $\{ \mathcal{M}_s^{N}, \ s\geq0\},$ $\{\widetilde{\mathcal{M}}_s^{1,N}, \ s\geq0\}$ and $\{\widetilde{\mathcal{M}}_s^{N}, \ s\geq0\}$ are square integrable martingales.
\end{corollary}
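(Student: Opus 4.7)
The plan is to apply the standard criterion that a local martingale $M$ with $\mathbb{E}[\langle M\rangle_T]<\infty$ for every $T>0$ is a true square-integrable martingale on each compact time interval $[0,T]$. All three processes in the statement are compensated Poisson integrals whose integrands are the predictable indicators $\mathbf{1}_{\{V_{r^-}^N=\pm1\}}$, so they are manifestly purely discontinuous local martingales; the only thing to check is the integrability of their predictable brackets, which have already been written down just above the Corollary.

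For $\mathcal{M}^N$, I would use the displayed expression $\langle \mathcal{M}^N\rangle_s=\frac{2\lambda_N}{c^2N}\int_0^s\mathbf{1}_{\{V_r^N=-1\}}dr\le\frac{2\lambda_N}{c^2N}s$; since $\lambda_N=cN+\alpha$ the ratio $\lambda_N/N$ is bounded in $N$, so $\mathbb{E}[\langle\mathcal{M}^N\rangle_T]\le C_1 T<\infty$ for every $T>0$. The identical argument, with $\mu_N=cN+\beta$ playing the role of $\lambda_N$, gives $\mathbb{E}[\langle\widetilde{\mathcal{M}}^N\rangle_T]\le C_2 T<\infty$ and takes care of $\widetilde{\mathcal{M}}^N$.

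The only slightly different case is $\widetilde{\mathcal{M}}^{1,N}$. Here I would invoke the bound $d_{1,N}q_0^{1,N}=L(N)/N\le\int_0^\infty z\,\mu(dz)$, which was established in the proof of Lemma \ref{d1Nq01N} and which is finite thanks to hypothesis $(\mathbf H)$; plugging this into the formula for $\langle\widetilde{\mathcal{M}}^{1,N}\rangle_s$ already displayed gives exactly the bound \eqref{mart}, so $\mathbb{E}[\langle\widetilde{\mathcal{M}}^{1,N}\rangle_T]<\infty$ for every $T$.

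Once the three expected predictable quadratic variations are known to be finite on $[0,T]$ for every $T$, the conclusion that each process is a genuine square-integrable martingale (not merely a local martingale) is a direct application of the standard theory of purely discontinuous local martingales, for instance Theorem I.4.2 in Jacod--Shiryaev. There is no real obstacle in this argument; the only substantive input, already collected in Lemma \ref{d1Nq01N}, is the uniform bound $d_{1,N}q_0^{1,N}\le\int_0^\infty z\,\mu(dz)$ whose finiteness requires $(\mathbf H)$, so the Corollary is essentially a recording of the estimates assembled in the paragraphs that precede it.
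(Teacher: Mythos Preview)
Your proposal is correct and follows essentially the same approach as the paper: the Corollary is stated without a separate proof because the preceding displayed computations of $\langle\mathcal{M}^N\rangle_s$, $\langle\widetilde{\mathcal{M}}^{1,N}\rangle_s$, $\langle\widetilde{\mathcal{M}}^N\rangle_s$ together with the bound $d_{1,N}q_0^{1,N}\le\int_0^\infty z\,\mu(dz)$ from Lemma~\ref{d1Nq01N} already give $\mathbb{E}[\langle\cdot\rangle_T]\le CT$ for each of the three processes. Your write-up makes explicit exactly the steps the paper leaves implicit.
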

We shall need below the
\begin{lemma}\label{Martingale}
There exists a constant $C$ such that for all $T>0$,
\begin{equation*}
\mathbb{E}\left(\sup_{0\leq s\leq T} \left|\widetilde{\mathcal{M}}_s^{1,N}\right| \right) \leq \frac{C}{N}T, \quad \mathbb{E}\left(\sup_{0\leq s\leq T} \left|{\mathcal{M}}_s^{N}\right| \right) \leq CT \quad \mbox{and} \quad \mathbb{E}\left(\sup_{0\leq s\leq T} \left|\widetilde{\mathcal{M}}_s^{N}\right| \right) \leq CT.
\end{equation*}
recall that $C$ denote a constant which may differ from line to line.
\end{lemma}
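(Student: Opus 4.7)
The plan is to apply Doob's $L^2$ maximal inequality (equivalently the $p=2$ case of the Burkholder--Davis--Gundy inequality) to each of the three square integrable martingales. All the preparatory work has already been carried out just above the lemma: the three processes have been identified as square integrable martingales and their predictable quadratic variations have been written down explicitly, and an $L^1$ bound on each has either been recorded (equation \eqref{mart}) or follows at once from the formulas for $\langle\mathcal{M}^N\rangle$ and $\langle\widetilde{\mathcal{M}}^N\rangle$. The proof is therefore essentially a packaging step.

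Concretely, for $\widetilde{\mathcal{M}}^{1,N}$ Doob's inequality yields
\[
\E\Bigl(\sup_{0\le s\le T}|\widetilde{\mathcal{M}}^{1,N}_s|\Bigr)^{2}\le 4\,\E\langle \widetilde{\mathcal{M}}^{1,N}\rangle_T,
\]
and the right-hand side is dominated by $(C/N)T$ via \eqref{mart}. The crucial $1/N$ scaling is precisely the content of Lemma~\ref{d1Nq01N}, which exploits the identity $d_{1,N}q_0^{1,N}=L(N)/N$ together with the assumption $(\mathbf{H})$ to ensure $\int_0^\infty r\,\mu(dr)<\infty$. For $\mathcal{M}^N$, the explicit expression for $\langle\mathcal{M}^N\rangle_s$ displayed just above, combined with the fact that $\lambda_N/N = 1+\alpha/(cN)$ is uniformly bounded in $N$ (since $\lambda_N=cN+\alpha$), gives $\E\langle\mathcal{M}^N\rangle_T\le CT$, from which Doob supplies the second bound. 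The argument for $\widetilde{\mathcal{M}}^N$ is entirely parallel, using $\mu_N/N = 1+\beta/(cN)$.

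I do not anticipate any real obstacle: the only substantive input is the control of $d_{1,N}q_0^{1,N}$ already carried out in Lemma~\ref{d1Nq01N}, which is what makes the first bound decay in $N$. The one bookkeeping point is that Doob's inequality produces a bound on $\E\sup_{s\le T}|M_s|^{2}$ of the stated order in $T$ and $N$; the $L^1$ version in the statement of the lemma follows by Cauchy--Schwarz (or Jensen's inequality $\E|Y|\le(\E|Y|^{2})^{1/2}$), at the cost of adjusting the constant $C$.
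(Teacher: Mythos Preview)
Your approach is exactly the paper's: pass from $L^1$ to $L^2$ via Jensen/Cauchy--Schwarz, apply Doob's $L^2$ maximal inequality, and plug in the explicit bounds on the predictable brackets (in particular \eqref{mart} for $\widetilde{\mathcal{M}}^{1,N}$, and the uniform boundedness of $\lambda_N/N$, $\mu_N/N$ for the other two). One small caveat that applies equally to the paper's own write-up: after taking the square root you actually obtain $\E\sup_{s\le T}|\widetilde{\mathcal{M}}^{1,N}_s|\le C\sqrt{T/N}$ and $\E\sup_{s\le T}|\mathcal{M}^{N}_s|,\ \E\sup_{s\le T}|\widetilde{\mathcal{M}}^{N}_s|\le C\sqrt{T}$, rather than $CT/N$ and $CT$; this is not ``adjusting the constant'' but changing the exponent, though the weaker bounds are all that is used downstream (Corollary~\ref{ConvM} and Proposition~\ref{supHN}).
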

\begin{proof}
We have
\begin{equation*}
\mathbb{E}\left(\sup_{0\leq s\leq T} \left|\widetilde{\mathcal{M}}_s^{1,N}\right| \right) \leq  \left[\mathbb{E}\left(\sup_{0\leq s\leq T}   \left| \widetilde{\mathcal{M}}_s^{1,N}\right| \right)^2\right]^{\frac{1}{2}}.
\end{equation*}
This together with \eqref{mart}, Doob's $L^2$-inequality for martingales implies 
\begin{equation*}
\mathbb{E}\left(\sup_{0\leq s\leq T} \left|\widetilde{\mathcal{M}}_s^{1,N}\right| \right) \leq \frac{C}{N}T. 
\end{equation*}
We can prove similarly that 
\begin{equation*}
 \mathbb{E}\left(\sup_{0\leq s\leq T} \left|{\mathcal{M}}_s^{N}\right| \right) \leq CT \quad \mbox{and} \quad \mathbb{E}\left(\sup_{0\leq s\leq T} \left|\widetilde{\mathcal{M}}_s^{N}\right| \right) \leq CT.
\end{equation*}
\end{proof}

From this proof, we deduce the following  
\begin{corollary}\label{ConvM}
As $N\longrightarrow\infty$, $$\Big(\widetilde{\mathcal{M}}_s^{1,N}, \ s\geq0\Big)\longrightarrow  0 \ \mbox{ in} \ \mbox{probability}, \ \mbox{locally} \ \mbox{uniformly} \ \mbox{in} \ \mbox{s}.$$ 
\end{corollary}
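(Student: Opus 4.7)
The plan is to derive the corollary as a direct consequence of the first estimate in Lemma \ref{Martingale}, combined with Markov's inequality. The key observation is that Lemma \ref{Martingale} gives a bound of order $1/N$ (rather than $O(1)$) for the expected supremum of $\widetilde{\mathcal{M}}^{1,N}$, which is exactly the ingredient needed to force convergence to $0$, whereas the analogous bounds for $\mathcal{M}^{N}$ and $\widetilde{\mathcal{M}}^{N}$ are only $O(1)$ and do not yield such convergence.

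Concretely, fix $T>0$ and $\epsilon>0$. By Markov's inequality applied to the nonnegative random variable $\sup_{0\le s\le T}|\widetilde{\mathcal{M}}_s^{1,N}|$, together with the first estimate of Lemma \ref{Martingale},
\begin{equation*}
\mathbb{P}\left(\sup_{0\le s\le T}\bigl|\widetilde{\mathcal{M}}_s^{1,N}\bigr|>\epsilon\right)\le \frac{1}{\epsilon}\,\mathbb{E}\left(\sup_{0\le s\le T}\bigl|\widetilde{\mathcal{M}}_s^{1,N}\bigr|\right)\le \frac{CT}{N\epsilon}\underset{N\to\infty}{\longrightarrow}0.
\end{equation*}
This establishes that $\sup_{0\le s\le T}|\widetilde{\mathcal{M}}_s^{1,N}|\to 0$ in probability for every fixed $T>0$, which is precisely the notion of convergence to $0$ in probability, locally uniformly in $s$, on $[0,\infty)$.

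There is no real obstacle: the heavy lifting was already done in Lemma \ref{Martingale}, where the factor $1/N$ came from the bound \eqref{mart} on the bracket $\langle \widetilde{\mathcal{M}}^{1,N}\rangle$ via Lemma \ref{d1Nq01N} (the fact that $d_{1,N}q_0^{1,N}$ remains bounded by $\int_0^\infty z\,\mu(dz)$, so the bracket is of order $1/N$). All that remains in the corollary is to translate an $L^1$ bound into convergence in probability, which is automatic via Markov.
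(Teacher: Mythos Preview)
Your proof is correct and follows exactly the approach indicated in the paper, which simply states ``From this proof, we deduce the following'' after Lemma~\ref{Martingale}. The only step you make explicit is the passage from the $L^1$ bound of order $1/N$ to convergence in probability via Markov's inequality, which is precisely what the paper leaves implicit.
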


 It is easy to obtain from Remarks \ref{MAPPP}, \eqref{Thiabi} and  \eqref{KNSDEF} that  
\begin{align}\label{KN2SS}
K_s^{2,N}= \frac{1}{c} \int_{0}^{s}  \int_{0}^{\infty} \mathbf{1}_{\{V_{r^{-}}^{N}=-1\}} \left(z-\frac{1}{N}- \frac{c}{2}(L_{s}^{N}(H_{r}^{N})-L_{r}^{N}(H_{r}^{N})) \right)^{+}  \widetilde{\Pi}^{1,N}(dr,dz). 
\end{align}
 Recall that $\widetilde{\Pi}^{1,N}$ is a Poisson random measures on $\mathbb{R}_+^{2}$ with mean measures $2 ds \widetilde{\pi}_{1,N}(dz).$
 
We shall need below the 
\begin{lemma}\label{LemporK2}
There exist a constant $C>0$ such that for all $T>0$,
$$\E\left(\sup_{0\le s\le T} K_s^{2,N}\right)\le CT.$$
\end{lemma}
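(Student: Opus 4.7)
The plan is to bound the integrand in \eqref{KN2SS} very crudely by $z$, which has two virtues: it removes the $s$-dependence inside the integral, and it makes the resulting upper bound manifestly non-decreasing in $s$, so the supremum becomes a value at $T$.

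The key pointwise inequality is elementary. Since local times are non-decreasing in the time variable, $L_s^N(H_r^N) - L_r^N(H_r^N) \ge 0$ for all $r \le s$, and obviously $1/N \ge 0$. Thus the quantity subtracted from $z$ inside the positive part of \eqref{KN2SS} is non-negative, and for any $z \ge 0$ and $a \ge 0$ one has $(z-a)^+ \le z$. Applied to the integrand,
\begin{equation*}
\mathbf{1}_{\{V_{r^-}^N=-1\}}\left(z-\frac{1}{N}-\frac{c}{2}\bigl(L_s^N(H_r^N)-L_r^N(H_r^N)\bigr)\right)^+\le z.
\end{equation*}
Plugging this back into \eqref{KN2SS}, I get
\begin{equation*}
K_s^{2,N}\le \frac{1}{c}\int_0^s\int_0^\infty z\,\widetilde{\Pi}^{1,N}(dr,dz),
\end{equation*}
and the right-hand side is non-decreasing in $s$. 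Therefore
\begin{equation*}
\sup_{0\le s\le T}K_s^{2,N}\le \frac{1}{c}\int_0^T\int_0^\infty z\,\widetilde{\Pi}^{1,N}(dr,dz).
\end{equation*}

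Taking expectations and using the fact that $\widetilde{\Pi}^{1,N}$ has intensity $2\,ds\,\widetilde{\pi}_{1,N}(dz)$, followed by Lemma \ref{d1Nq01N}, yields
\begin{equation*}
\E\!\left[\sup_{0\le s\le T}K_s^{2,N}\right]\le \frac{2T}{c}\int_0^\infty z\,\widetilde{\pi}_{1,N}(dz)\le \frac{2T}{c}\int_0^\infty z\,\mu(dz),
\end{equation*}
and the last integral is finite by assumption $(\mathbf{H})$ (since $r\le r\vee r^p$). Thus the constant $C=\tfrac{2}{c}\int_0^\infty z\,\mu(dz)$ works, uniformly in $N$.

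There is no real obstacle here: the only place where one might worry is whether the loose bound $(z-a)^+\le z$ costs too much, but because the final estimate only needs to be of order $T$ (not of order $T/N$ or similar), this crude bound is amply sufficient. The role of Lemma \ref{d1Nq01N} is essential, in that it gives a bound on the first moment of $\widetilde{\pi}_{1,N}$ that does not depend on $N$.
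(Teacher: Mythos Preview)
Your proof is correct and follows essentially the same approach as the paper's: bound the integrand in \eqref{KN2SS} by $z$, take the supremum at $T$ since the resulting upper bound is non-decreasing, then take expectations and invoke Lemma~\ref{d1Nq01N} together with assumption $(\mathbf{H})$. The paper's argument is more terse but identical in substance, arriving at the same constant $C=\tfrac{2}{c}\int_0^\infty z\,\mu(dz)$.
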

\begin{proof}
we first note that 
\begin{equation*}
\sup_{0\le s\le T} K_{s}^{2,N} \le \frac{1}{c} \int_{0}^{T}  \int_{0}^{\infty} z \widetilde{\Pi}^{1,N}(dr,dz). 
\end{equation*}
By taking the limit on both side, we obtain 
\begin{equation*}
\E\left(\sup_{0\le s\le T} K_{s}^{2,N} \right) \le \frac{2}{c} T  \int_{0}^{\infty} z \widetilde{\pi}_{1,N}(dz). 
\end{equation*}
Hence the Lemma follows readily from Lemma \ref{d1Nq01N} and assumption ${(\bf H)}$. 
\end{proof}

For $s>0$, define 
$$ \mathcal{P}_{s}^{N,-}= \int_{0}^{s}\mathbf{1}_{\{V_{r^{-}}^{N}=+1\}}d{P}_{r}^{\prime,1,N}.$$ 
Let $(\lambda_{1,N}(s), s\geq0)$ $[$ resp. $(\lambda_{2,N}(s), s\geq0)]$ denote the intensity of the process $(\mathcal{P}_{s}^{N,+}, s\geq0)$ $[$ resp. $(\mathcal{P}_{s}^{N,-}, s\geq0)]$ where $\mathcal{P}^{N,+}$ was defined in \eqref{PN+}. In other words, we have $$\lambda_{1,N} (r)=2N\gamma_{1,N}\mathbf{1}_{\{V_{r^{-}}^{N}=-1\}}\quad \mbox{and} \quad \lambda_{2,N} (r)=2Nd _{1,N} q_0^{1,N}\mathbf{1}_{\{V_{r^{-}}^{N}=+1\}}.$$
For the rest of this subsection we set 
$$ A_{s}^{1,N}=\int_{0}^{s}\lambda_{1,N} (r)dr, \quad A_{s}^{2,N}=\int_{0}^{s}\lambda_{2,N}(r) dr,$$ 
$$\Delta_{s}^{1,N}=N\gamma_{1,N}s,  \quad \Delta_{s}^{2,N}=Nd_{1,N}q_0^{1,N}s  \quad \mbox{and} \quad \overline{\Lambda}_{k}^{1,N}=    {\Lambda}_{k}^{1,N} -m_{1,N}. $$
\begin{remark}\label{ChangeTime}
Note that $\mathcal{P}^{N,+}$ ( resp. $\mathcal{P}^{N,-}$)  can be viewed as time-changed of mutually independent standard Poisson processes $P_{1}$ (resp. $P_{2}$) i.e. $$ \mathcal{P}_{s}^{N,+}=P_{1}\left(A_{s}^{1,N}\right), \ and \ 
 \mathcal{P}_{s}^{N,-}=P_{2}\left(A_{s}^{2,N}\right).$$
\end{remark}
From \eqref{MNa} and Remark \ref{ChangeTime}, we deduce that
\begin{align*}
 \Phi_s^{1,N}&=  \frac{1}{cN}\int_{0}^{s} { \overline{\Lambda}_{{P_1(A_{r}^{1,N})+1}}^{1,N}}dA_{r}^{1,N}= \frac{1}{cN}\int_{0}^{A_{s}^{1,N}}{ \overline{\Lambda}_{{P_1(u)+1}}^{1,N}}du \nonumber \\ 
&= \frac{1}{cN} \bigg(\sum_{k=0}^{P_1(A_{s}^{1,N})} {\overline{\Lambda}_{k+1}^{1,N}} \Xi_k-{ \overline{\Lambda}_{{P_1(A_{s}^{1,N})+1}}^{1,N}}({T}^+(A_{s}^{1,N}) - A_{s}^{1,N})\bigg),
\end{align*}
where $ \Xi_k$ denotes the length of the time interval during which $P_1(u)=k$ and  ${T}^+(A_{s}^{1,N})$ is the first jump time of $P_1$ after $A_{s}^{1,N}$. It is easily seen that $\Xi_k$ has the standard exponential distribution and we notice that  $({\Xi}_1, {\Lambda}_{1}^{1,N}, {\Xi}_2, {\Lambda}_{2}^{1,N},\cdot \cdot \cdot)$ is a sequence of independent random variables. By the same computations, we deduce from \eqref{M1+} 
\begin{equation*}
\mathcal{M}_s^{1,N} = -\frac{1}{cN} \bigg(\sum_{k=0}^{P_1(A_{s}^{1,N})} { {\Lambda}_{k+1}^{1,N}}  \overline{\Xi}_k- {{\Lambda}_{{P_1(A_{s}^{1,N})+1}}^{1,N}}({T}^+(A_{s}^{1,N}) - A_{s}^{1,N})\bigg),
\end{equation*}
where $ \overline{\Xi}_k= {\Xi}_k- \E({\Xi}_k)$.  From \eqref{M2+}, we have also
\begin{equation*}
\widetilde{\mathcal{M}}_s^{1,N}= -\frac{1}{cN} \bigg(\sum_{k=0}^{P_2({A}_{s}^{2,N})}\overline{\Xi}^{\prime}_k- ({T}^-({A}_{s}^{2,N}) - {A}_{s}^{2,N})\bigg),
\end{equation*}
where $ \overline{\Xi}^{\prime}_k= {\Xi}^{\prime}_k - \E({\Xi}^{\prime}_k)$ and where ${\Xi}^{\prime}_k$ denotes the length of the time interval during which $P_2(u)=k$ and  ${T}^-({A}_{s}^{2,N})$ is the first jump time of $P_2$ after ${A}_{s}^{2,N}$. As previously ${\Xi}^{\prime}_k$ has the standard exponential distribution and $({\Xi}^{\prime}_1,{\Xi}_1, {\Lambda}_{1}^{1,N}, {\Xi}^{\prime}_2,{\Xi}_2,{\Lambda}_{2}^{1,N},\cdot \cdot \cdot)$ is a sequence of independent random variables.
  
We notice that 
$$\mathcal{M}_s^{1,N}+ \Phi_s^{1,N}=  \frac{1}{cN} \sum_{k=0}^{P_1(A_{s}^{1,N})}( {\Lambda}_{k+1}^{1,N}-m_{1,N}{\Xi}_k) +\frac{m_{1,N}}{cN} \left({T}^+(A_{s}^{1,N}) - A_{s}^{1,N}\right).$$
 If we define for  $\ell \ge1$ 
 \begin{equation}\label{SnetPHiN}
  S_{\ell}^{1,N}=\sum_{k=0}^{\ell} ( {\Lambda}_{k+1}^{1,N}-m_{1,N}{\Xi}_k)  \quad \mbox{and} \quad   S_{\ell}=\sum_{k=0}^{\ell} \overline{\Xi}^{\prime}_k, 
\end{equation} 
we obtain the following relations
\begin{equation}\label{MN1+MNa}
\mathcal{M}_s^{1,N}+ \Phi_s^{1,N}=\frac{1}{cN} S_{P_1(A_{s}^{1,N})}^{1,N} + \mathbf{Q}^+(A_{s}^{1,N}), 
\end{equation}
 with 
 \begin{equation*}
\mathbf{Q}^+(A_{s}^{1,N})= \frac{m_{1,N}}{cN} \left({T}^+(A_{s}^{1,N}) - A_{s}^{1,N}\right), 
 \end{equation*}
and
\begin{equation}\label{SECONMDEU}
\widetilde{\mathcal{M}}_s^{1,N}= -\frac{1}{cN} S_{P_2(A_{s}^{2,N})} + \mathbf{Q}^-(A_{s}^{2,N})
\end{equation}
with 
$$\mathbf{Q}^-(A_{s}^{2,N}) = \frac{1}{cN}\left({T}^-(A_{s}^{2,N}) - A_{s}^{2,N}\right).$$
We shall need below the  
\begin{lemma}\label{Q+Q-}
As $N\longrightarrow\infty$,  $\mathbf{Q}^+(A_{s}^{1,N})[resp.\ \mathbf{Q}^-(A_{s}^{2,N}) ]\longrightarrow 0 \ \mbox{ in} \ \mbox{probability}, \ \mbox{locally} \ \mbox{uniformly} \ \mbox{in} \ \mbox{s} $.
\end{lemma}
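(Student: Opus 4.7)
The plan is to reduce both assertions to a tail estimate for the residual waiting time of a standard Poisson process. Indeed $T^+(u)-u$ (resp.\ $T^-(u)-u$) is the time remaining until the next jump of $P_1$ (resp.\ $P_2$) after time $u$, and since $u\mapsto T^\pm(u)-u$ is piecewise linear with slope $-1$ between upward jumps occurring exactly at the Poisson jump times,
\[
\sup_{0\le u\le U}\bigl(T^\pm(u)-u\bigr)\le \max_{1\le k\le N^\pm_U+1}\xi_k^\pm,
\]
where $N^\pm_U$ is the number of jumps of $P_{1,2}$ in $[0,U]$ and the $\xi_k^\pm$ are $\mathrm{Exp}(1)$ interarrival times. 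Conditioning on $N^\pm_U$ and using $\P(\max_{k\le n}\xi_k>t)\le n e^{-t}$ yields $\P(\sup_{0\le u\le U}(T^\pm(u)-u)>t)\le (U+1)e^{-t}$.

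Next, from Remark \ref{ChangeTime} and the definitions of $A^{1,N}$ and $A^{2,N}$, we have the deterministic bounds $A_s^{1,N}\le 2N\gamma_{1,N}T$ and $A_s^{2,N}\le 2Nd_{1,N}q_0^{1,N}T$ on $[0,T]$, so substitution into the tail bound gives
\[
\P\Bigl(\sup_{0\le s\le T}|\mathbf{Q}^+(A_s^{1,N})|>\epsilon\Bigr)\le (2N\gamma_{1,N}T+1)\exp\!\Bigl(-\tfrac{cN\epsilon}{m_{1,N}}\Bigr),
\]
with the analogous bound for $\mathbf{Q}^-$ having prefactor $2Nd_{1,N}q_0^{1,N}T+1$ and exponent $-cN\epsilon$. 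Since $d_{1,N}q_0^{1,N}=L(N)/N$ is uniformly bounded by $\int_0^\infty r\,\mu(dr)$ by \eqref{Born} and Lemma \ref{d1Nq01N}, the $\mathbf{Q}^-$ estimate goes to $0$ immediately, as the exponential in $N$ dominates the linear prefactor.

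For $\mathbf{Q}^+$, using $m_{1,N}=q_0^{1,N}/(1-q_0^{1,N})$ and $\gamma_{1,N}=d_{1,N}(1-q_0^{1,N})$ gives $N/m_{1,N}=N\gamma_{1,N}/(d_{1,N}q_0^{1,N})$; since the denominator is bounded, it suffices to establish $N\gamma_{1,N}\to\infty$, for then the exponent in the bound dominates the logarithm of the prefactor. From \eqref{D1N} and \eqref{Ldeff} one computes
\[
N\gamma_{1,N}=Nd_{1,N}-L(N)=\int_0^\infty\bigl(1-(1+Nr)e^{-Nr}\bigr)\mu(dr).
\]
Setting $\phi(x)=1-(1+x)e^{-x}$, one checks that $\phi'(x)=xe^{-x}\ge 0$, so $\phi(Nr)$ is non-negative and monotone increasing in $N$ with pointwise limit $1$; monotone convergence then yields $N\gamma_{1,N}\to\mu((0,\infty))=+\infty$ under the standing hypothesis that $\mu$ is an infinite measure.

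The main obstacle lies in this last step: the convergence $N\gamma_{1,N}\to\infty$ depends crucially on $\mu((0,\infty))=+\infty$, which is guaranteed by the standing exclusion of the case $\int_{(0,1)}r\,\mu(dr)<\infty$ made in Section 3.2 (forcing $\mu((0,1))=\infty$ since $r\le1$ there). Without this, $N\gamma_{1,N}$ would remain bounded and $\mathbf{Q}^+$ would fail to vanish in probability. The Poisson tail bound itself is classical, and the remainder is mere algebra on $m_{1,N}$, $\gamma_{1,N}$ and $q_0^{1,N}$.
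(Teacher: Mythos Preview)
Your argument is sound and self-contained, which is more than the paper offers here: the paper simply cites Lemma~4.24 of \cite{drame2016non} without reproducing the argument. The Poisson residual-time bound $\P\bigl(\sup_{0\le u\le U}(T^\pm(u)-u)>t\bigr)\le (U+1)e^{-t}$ is correct (the key observation being that $\{N_U+1\ge k\}$ depends only on $\xi_1,\dots,\xi_{k-1}$, hence is independent of $\xi_k$), and combining it with the deterministic bounds on $A^{1,N}_s,A^{2,N}_s$ and the algebra relating $m_{1,N},\gamma_{1,N},q_0^{1,N}$ is clean. Your computation $N\gamma_{1,N}=\int_0^\infty\phi(Nr)\,\mu(dr)$ with $\phi(x)=1-(1+x)e^{-x}$ monotone to $1$, giving $N\gamma_{1,N}\to\mu((0,\infty))$, is exactly the right diagnosis.

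There is, however, a real difficulty in how you justify $\mu((0,\infty))=\infty$. You invoke the exclusion of the case $\int_{(0,1)}r\,\mu(dr)<\infty$ stated in Section~2.2 (for subsection~3.2). But the present lemma lives in Section~4, whose standing hypothesis is ${(\bf H)}$: $\int_0^\infty (r\vee r^{p})\,\mu(dr)<\infty$. Since $r\vee r^{p}=r$ on $(0,1)$, ${(\bf H)}$ forces $\int_{(0,1)}r\,\mu(dr)<\infty$, directly contradicting the exclusion you invoke. So that exclusion cannot be treated as a standing assumption here. What remains true is that ${(\bf H)}$ is \emph{compatible} with $\mu((0,\infty))=\infty$ (e.g.\ $\mu(dr)=r^{-3/2}\mathbf{1}_{(0,1)}(r)\,dr$), but does not imply it; and as you yourself observe, if $\mu((0,\infty))<\infty$ then $m_{1,N}/N\to\int r\,\mu(dr)/\mu((0,\infty))>0$ and $\mathbf{Q}^+(A_s^{1,N})$ converges in law to a nondegenerate exponential, so the lemma is genuinely false. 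You have therefore uncovered either an implicit extra hypothesis ($\mu$ infinite) or a gap in the paper's Section~4 assumptions. Your proof is correct once $\mu((0,\infty))=\infty$ is granted; just be explicit that this is an additional requirement not furnished by ${(\bf H)}$ alone, rather than appealing to the Section~2.2 exclusion which is incompatible with ${(\bf H)}$.
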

\begin{proof}
The proof follows an argument similar to the proof of Lemma 4.24 in  \cite{drame2016non}.
\end{proof}

Moreover, we have
\begin{lemma}\label{Q1+}
There exist a constant $C>0$ such that for all $T>0$,
$$\E\left(\sup_{0\le s\le T} \mathbf{Q}^+(A_{s}^{1,N})\right)\le CT.$$
\end{lemma}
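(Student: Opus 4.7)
The plan is to exploit monotonicity of the residual life map of the standard Poisson process $P_1$ together with the deterministic upper bound on $A_s^{1,N}$. Since the integrand in $A_s^{1,N}=\int_0^s 2N\gamma_{1,N}\1_{\{V_r^N=-1\}}\,dr$ is non-negative and bounded above by $2N\gamma_{1,N}$, the map $s\mapsto A_s^{1,N}$ is non-decreasing and satisfies the deterministic bound $A_T^{1,N}\le 2N\gamma_{1,N}T$. Since $T^+(\cdot)$ is itself non-decreasing on $\R_+$ and $A_0^{1,N}=0$, this yields
\begin{equation*}
\sup_{0\le s\le T}\bigl(T^+(A_s^{1,N})-A_s^{1,N}\bigr)\le \sup_{0\le s\le T}T^+(A_s^{1,N})=T^+(A_T^{1,N})\le T^+(2N\gamma_{1,N}T).
\end{equation*}

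The memoryless property of $P_1$ gives $T^+(u)-u\sim\mathrm{Exp}(1)$ for any deterministic $u\ge 0$, whence $\E\bigl[T^+(2N\gamma_{1,N}T)\bigr]=2N\gamma_{1,N}T+1$. Multiplying by the factor $m_{1,N}/(cN)$ and using the identity
\begin{equation*}
m_{1,N}\gamma_{1,N}=d_{1,N}q_0^{1,N}=\frac{L(N)}{N}\le\int_0^\infty z\,\mu(dz)<\infty,
\end{equation*}
which was already established during the proof of Lemma \ref{d1Nq01N} thanks to assumption $({\bf H})$, one concludes
\begin{equation*}
\E\Bigl[\sup_{0\le s\le T}\mathbf{Q}^+(A_s^{1,N})\Bigr]\le \frac{2m_{1,N}\gamma_{1,N}}{c}\,T+\frac{m_{1,N}}{cN}\le C\,T,
\end{equation*}
after absorbing the (uniformly bounded) additive term $m_{1,N}/(cN)$ into the constant $C$.

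The only delicate point is the uniform-in-$N$ boundedness of $m_{1,N}$: under $({\bf H})$ one has $d_{1,N}\to\int_0^\infty r\,\mu(dr)\in(0,\infty)$ by dominated convergence, hence $Nd_{1,N}\to\infty$ and $q_0^{1,N}=L(N)/(Nd_{1,N})\to 0$, so $m_{1,N}=q_0^{1,N}/(1-q_0^{1,N})$ is bounded and in fact tends to $0$. The main expected obstacle is really bookkeeping — tracking how the constants $m_{1,N}$, $\gamma_{1,N}$ and $q_0^{1,N}$ combine to cancel the factor $N$ arising from the deterministic bound on $A_T^{1,N}$; the key identity $m_{1,N}\gamma_{1,N}=L(N)/N$ is precisely what makes the $2N\gamma_{1,N}T$ contribution compatible with a clean $O(T)$ estimate, and assumption $({\bf H})$ is used exactly through this identity.
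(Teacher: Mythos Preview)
Your main line of argument is sound and very close to the paper's: the paper bounds the residual $T^+(u)-u$ by the inter-arrival interval $\Xi_{P_1(u)+1}$ containing $u$ and then crudely bounds the supremum over $s$ by the sum of all such intervals up to $P_1(2\Delta_T^{1,N})$, invoking Wald's identity; you instead drop the non-negative term $A_s^{1,N}$ and use monotonicity of $T^+$. Both routes yield $\frac{m_{1,N}}{cN}\bigl(2N\gamma_{1,N}T+O(1)\bigr)$, and the dominant term $\frac{2m_{1,N}\gamma_{1,N}}{c}T\le CT$ is handled correctly via the identity $m_{1,N}\gamma_{1,N}=L(N)/N$.

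There is, however, a factual error in your last paragraph. You claim $q_0^{1,N}=L(N)/(Nd_{1,N})\to 0$ because $Nd_{1,N}\to\infty$, but you forgot that $L(N)$ diverges at the same rate: since $(e^{-Nr}-1+Nr)/N\to r$ with domination by $r$, dominated convergence gives $L(N)/N\to\int_0^\infty r\,\mu(dr)$, which is also the limit of $d_{1,N}$. Hence $q_0^{1,N}\to 1$ and $m_{1,N}=q_0^{1,N}/(1-q_0^{1,N})\to\infty$, contrary to your assertion. What you actually need is only that $m_{1,N}/N$ is bounded, and this does hold via the very identity you already used:
\[
\frac{m_{1,N}}{N}=\frac{m_{1,N}\gamma_{1,N}}{N\gamma_{1,N}}\le\frac{\int_0^\infty r\,\mu(dr)}{N\gamma_{1,N}},
\]
and $N\gamma_{1,N}=\int_0^\infty\bigl(1-(1+Nr)e^{-Nr}\bigr)\mu(dr)$ is increasing in $N$ and strictly positive (and finite, since $1-(1+\lambda)e^{-\lambda}\le 1\wedge\lambda^2$) for $N\ge 1$. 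A secondary quibble: the additive term $m_{1,N}/(cN)$ cannot literally be absorbed into $CT$ for all $T>0$ (indeed $\E[\mathbf{Q}^+(0)]=m_{1,N}/(cN)>0$), so the honest bound is $C(1+T)$; the paper's application of Wald carries the same hidden $+1$, so this is an imprecision in the stated lemma rather than a defect particular to your argument.
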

\begin{proof}
We notice that 
$$\big|\mathbf{Q}^+(A_{s}^{1,N})\big|\le \frac{m_{1,N}}{cN} \Xi_{P(A_{s}^{1,N})+1},$$ 
this implies 
\begin{align*}
\sup_{0\le s\le T} \big|\mathbf{Q}^+(A_{s}^{1,N})\big|&\le  \frac{m_{1,N}}{cN}  \sup_{0\le k\le P(A_{s}^{1,N})}\Xi_{k+1}\\
&\le  \frac{m_{1,N}}{cN}  \sup_{0\le k\le P(2\Delta_{T}^{1,N})}\Xi_{k+1}\\
&\le  \frac{m_{1,N}}{cN}  \sum_{k=0}^{P(2\Delta_{T}^{1,N})}\Xi_{k+1}
\end{align*}
Hence taking expectation in both side and using wald's identity, we deduce that
\begin{align*}
\E\left( \sup_{0\le s\le T} \big|\mathbf{Q}^+(A_{s}^{1,N})\big| \right)&\le  \frac{m_{1,N}}{cN} \E(P(2\Delta_{T}^{1,N})) \E(\Xi_{1})\\
&=  \frac{2m_{1,N} \gamma_{1,N}}{c} T \\
&=\frac{2}{cN}T\int_{0}^{\infty}z\widetilde{\pi}_{1,N}(dz)
\\& \le CT.
\end{align*}
where we have used the same arguments as in the proof of Lemma \ref{d1Nq01N}. Hence the desired result follows.
\end{proof}

We next define for $\ell \ge1$ 
 \begin{equation}\label{PHINFN}
\Upsilon_{\ell}^N= \frac{1}{cN} S_{\ell}^{1,N} \quad \mbox{and} \quad   F_s^N=  \Upsilon_{P_1(A_{s}^{1,N})}^{N}.
\end{equation} 
Recalling \eqref{MN1+MNa} and let us rewrite \eqref{2DFHN} in the form
\begin{align}\label{2DFHN22}
 H_{s}^{N}+\frac{V_{s}^{N}}{2cN}=& \frac{1}{2cN}+F_s^N+\mathbf{Q}^+(A_{s}^{N,+}) + \mathcal{M}_s^{N}-\widetilde{\mathcal{M}}_s^{1,N}-\widetilde{\mathcal{M}}_s^{N} \nonumber\\
 &+ J^N(s)- K_{s}^{N,2}+ \frac{1}{2}(L_{s}^{N}(0)- L_{0^{+}}^{N}(0)).
 \end{align}
We want to estimate the stochastic process $F^N.$ Thus, we first need to prove some intermediate results.To this end, let us define $X^{1,N}= {\Lambda}_{1}^{1,N}-m_{1,N}{\Xi}_1.$ Recall that $p$ is fixed real number with $1<p<2$. In what follows $X^{1,N}$ will be a random variable satisfying $\E (X^{1,N})=0$, $\E (X^{1,N})^2=\infty$, and $X_1^{1,N},X_2^{1,N},\cdots$ will be independent random variables each having the distribution of $X^{1,N}.$ $C_1,C_2,\cdots$ will be positive constants depending only $p.$ However, we deduce from \eqref{Lamda1N} that  
\begin{align*}
X^{1,N}&= \left[\Gamma_N^+ \mathbf{1}_{\{\hat{\epsilon}_N=1\}} - {\Xi}_1 \E \left(\Gamma_N^+ \mathbf{1}_{\{\hat{\epsilon}_N=1\}}\right) \right]+ \left[\Gamma_N^- \mathbf{1}_{\{\hat{\epsilon}_N=0\}}  - {\Xi}_1 \E \left(\Gamma_N^- \mathbf{1}_{\{\hat{\epsilon}_N=0\}}\right)  \right]\\
&=T^{N,+}+T^{N,-}.
 \end{align*}
For $N\ge 1$ define 
\begin{equation*}
W_2(N)= \E\left([T^{N,-}]^2 \right)\quad \mbox{and} \quad \bar{W}_p(N)= \E\left(\left[T^{N,+}\right]^p \right).
\end{equation*}
We first need the two following Lemmas.  
\begin{lemma}\label{Devis1}
There exist $C$  such that 
\begin{equation*}
 W_2(N) \le  \frac{CN}{\gamma_{1,N}},
\end{equation*}
where $\gamma_{1,N}$ was defined in \eqref{GamaN1N2}.
\end{lemma}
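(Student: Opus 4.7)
The plan is to use the mutual independence of $\Gamma_N^-$, $\hat\epsilon_N$ and $\Xi_1$, together with the moments $\E[\Xi_1]=1$ and $\E[\Xi_1^2]=2$ of a standard exponential, to reduce the second moment of $T^{N,-}$ to a single expectation. Setting $a := \E[\Gamma_N^-\mathbf{1}_{\{\hat\epsilon_N=0\}}]$ and expanding the square in
$T^{N,-}=\Gamma_N^-\mathbf{1}_{\{\hat\epsilon_N=0\}}-\Xi_1 a$, the cross term contributes $-2a^2$ while the $\Xi_1^2$ term contributes $2a^2$; they cancel exactly and one obtains the clean identity
\[
W_2(N) \;=\; \E\!\left[(\Gamma_N^-)^2\,\mathbf{1}_{\{\hat\epsilon_N=0\}}\right] \;=\; \E[(\Gamma_N^-)^2]\,\P(\hat\epsilon_N=0).
\]

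Next I would substitute the explicit forms $\P(\hat\epsilon_N=0)=\alpha_{-,N}(1-q_0^{-,N})/(d_{1,N}(1-q_0^{1,N}))$, $p_k^{-,N}=q_{k+1}^{-,N}/(1-q_0^{-,N})$ from \eqref{pN-pN+}, and the identity $\gamma_{1,N}=d_{1,N}(1-q_0^{1,N})$ from \eqref{GamaN1N2}. The $(1-q_0^{-,N})$ factors collapse and one obtains
\[
W_2(N) \;=\; \frac{\alpha_{-,N}}{\gamma_{1,N}}\sum_{k\ge 1} k^2\, q_{k+1}^{-,N}.
\]
Inserting the Poisson-type expression \eqref{q0-N} for $q_k^{-,N}$ and exchanging sum and integral yields
\[
W_2(N) \;=\; \frac{1}{N\gamma_{1,N}}\int_0^1 e^{-Nr}\left(\sum_{k\ge 1}\frac{k^2 (Nr)^{k+1}}{(k+1)!}\right)\mu(dr).
\]

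The key computation is to evaluate the inner series in closed form. Writing $k^2=k(k+1)-k$ splits it into two standard exponential series, which sum to $e^{Nr}\bigl[(Nr)^2-Nr+1\bigr]-1$, so the integrand reduces to $(Nr)^2-Nr+1-e^{-Nr}$. The elementary inequality $1-e^{-x}\le x$ yields $Nr-1+e^{-Nr}\ge 0$, hence $(Nr)^2-Nr+1-e^{-Nr}\le (Nr)^2$. Since $\int_0^1 r^2\mu(dr)<\infty$ by \eqref{R2FINI}, one concludes
\[
W_2(N) \;\le\; \frac{N}{\gamma_{1,N}}\int_0^1 r^2\,\mu(dr) \;\le\; \frac{CN}{\gamma_{1,N}}.
\]

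There is no real obstacle here: the argument is a careful algebraic reduction followed by a Taylor-type identity. The only delicate step is the closed-form sum, since a naive expansion of $k^2$ leaves uncancelled constants; the splitting $k^2=k(k+1)-k$ makes everything fit together. The final bound is driven entirely by the small-jump integrability $\int_0^1 r^2\mu(dr)<\infty$ guaranteed by \eqref{R2FINI}, which is available without invoking the stronger assumption $(\mathbf{H})$.
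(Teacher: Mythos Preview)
Your proof is correct and follows essentially the same route as the paper: reduce $W_2(N)$ to $\P(\hat\epsilon_N=0)\,\E[(\Gamma_N^-)^2]$, insert the Poisson-type formula \eqref{q0-N} for $q_k^{-,N}$, simplify the resulting series, and conclude from $\int_0^1 r^2\,\mu(dr)<\infty$. Your version is in fact slightly sharper than the paper's in two places --- you obtain the exact identity $W_2(N)=\E[(\Gamma_N^-)^2\mathbf{1}_{\{\hat\epsilon_N=0\}}]$ via the cancellation $-2a^2+2a^2=0$ (the paper only bounds this by $C\,\E[(\Gamma_N^-)^2\mathbf{1}_{\{\hat\epsilon_N=0\}}]$), and you evaluate the series in closed form rather than using the cruder $(k-1)^2\le k(k-1)$ --- and you correctly observe that only \eqref{R2FINI}, not the stronger assumption $(\mathbf{H})$, is needed here.
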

\begin{proof}
We first recall that $\Gamma_N^-$ is a random variable with probability distribution $p^{-,N}$ and where $p^{-,N}$ was defined in \eqref{pN-pN+}. Now, we have 
\begin{align}\label{xt}
W_2(N)&=\E\left([T^{N,-}]^2 \right)\nonumber
\\&= \E \left(\left[\Gamma_N^- \mathbf{1}_{\{\hat{\epsilon}_N=0\}}  - {\Xi}_1 \E \left(\Gamma_N^- \mathbf{1}_{\{\hat{\epsilon}_N=0\}}\right)  \right]^2 \right)\nonumber
\\&\le   \E \left[(\Gamma_N^- )^2\mathbf{1}_{\{\hat{\epsilon}_N=0\}} \right] + 2 \left[ \E \left( \Gamma_N^- \mathbf{1}_{\{\hat{\epsilon}_N=0\}}\right)  \right]^2 \nonumber
\\&\le C \E \left[(\Gamma_N^- )^2\mathbf{1}_{\{\hat{\epsilon}_N=0\}} \right] =C \P (\hat{\epsilon}_N=0)  \E (\Gamma_N^- )^2.
\end{align}
However, recalling \eqref{q0-N}, we deduce from \eqref{pN-pN+} that  
\begin{align*}
 \E (\Gamma_N^- )^2&= \sum_{k\ge1} k^2 p_k^{-,N}= \sum_{k\ge1} k^2 \frac{ q_{k+1}^{-,N}}{1- q_0^{-,N}}=  \frac{1}{1- q_0^{-,N}} \sum_{k\ge2} (k-1)^2 q_{k}^{-,N}\\
&\le   \frac{1}{1- q_0^{-,N}} \sum_{k\ge2} k(k-1) q_{k}^{-,N}\\
&=  \frac{1}{N\alpha_{-,N}\left(1- q_0^{-,N}\right)} \sum_{k\ge2}  \frac{k(k-1)}{k! }\int_{0}^{1}(Nr)^k e^{-Nr} \mu(dr)\\
&= \frac{N}{\alpha_{-,N}\left(1- q_0^{-,N}\right)}\int_{0}^{1} \left( \sum_{k\ge2}  \frac{1}{(k-2)! }(Nr)^{k-2}  \right) e^{-Nr}  r^2 \mu(dr)\\
&= \frac{N}{\alpha_{-,N}\left(1- q_0^{-,N}\right)}\int_{0}^{1} r^2 \mu(dr).
 \end{align*}
 Now, combining this with \eqref{xt} and the fact that $$ \P (\hat{\epsilon}_N=0)=\alpha_{-,N}(1- q_0^{-,N})/d_{1,N}(1- q_0^{1,N}),$$ we deduce that 
 \begin{align*}
 W_2(N)&\le  \frac{CN}{d_{1,N}(1- q_0^{1,N})}\int_{0}^{1} r^2 \mu(dr)\\
 &=  \frac{CN}{\gamma_{1,N}}\int_{0}^{1} r^2 \mu(dr).
 \end{align*}
 Hence the Lemma follows readily from assumption ${(\bf H)}$.
\end{proof}
\begin{lemma}\label{Devis1p}
There exist $C_1$  such that 
\begin{equation*}
\bar{W}_p(N) \le  \frac{C_1N^{p-1}}{\gamma_{1,N}}.
\end{equation*}
\end{lemma}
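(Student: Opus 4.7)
The strategy will be to mimic the proof of Lemma \ref{Devis1}, but replacing the $L^2$ computation there by an $L^p$ estimate based on the elementary inequality $|a+b|^p\le 2^{p-1}(|a|^p+|b|^p)$. First, I would write
\[
\bar{W}_p(N)\le C\,\E\!\left[(\Gamma_N^+)^p\mathbf{1}_{\{\hat{\epsilon}_N=1\}}\right]
+C\,\E\!\left[\Xi_1^p\right]\bigl(\E[\Gamma_N^+\mathbf{1}_{\{\hat{\epsilon}_N=1\}}]\bigr)^p,
\]
and use independence to factor $\E[(\Gamma_N^+)^p\mathbf{1}_{\{\hat{\epsilon}_N=1\}}]=\P(\hat{\epsilon}_N=1)\,\E(\Gamma_N^+)^p$. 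Recalling $\P(\hat{\epsilon}_N=1)=\alpha_{+,N}(1-q_0^{+,N})/(d_{1,N}(1-q_0^{1,N}))$ and $p_k^{+,N}=q_{k+1}^{+,N}/(1-q_0^{+,N})$, one gets
\[
\P(\hat{\epsilon}_N=1)\,\E(\Gamma_N^+)^p
\le \frac{\alpha_{+,N}}{d_{1,N}(1-q_0^{1,N})}\sum_{k\ge2} k^p\,q_k^{+,N}
=\frac{\alpha_{+,N}}{\gamma_{1,N}}\sum_{k\ge2} k^p\,q_k^{+,N}.
\]

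The main task is then to estimate $\sum_{k\ge2}k^p q_k^{+,N}$. Substituting the explicit formula \eqref{q0+N} and applying Fubini yields
\[
\sum_{k\ge2} k^p\,q_k^{+,N}
=\frac{1}{N\alpha_{+,N}}\int_{1}^{\infty}\Bigl(\sum_{k\ge2}\frac{k^p(Nr)^k}{k!}e^{-Nr}\Bigr)\mu(dr)
\le \frac{1}{N\alpha_{+,N}}\int_{1}^{\infty}\E[P_{Nr}^p]\,\mu(dr),
\]
where $P_{\lambda}$ is a Poisson$(\lambda)$ random variable. Since $1<p<2$, Jensen gives $\E[P_{\lambda}^p]\le(\E[P_{\lambda}^2])^{p/2}=(\lambda+\lambda^2)^{p/2}\le C(1+\lambda)^p$, and because $r\ge1$ and $N\ge1$ this is $\le C(Nr)^p$. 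Hence
\[
\sum_{k\ge2} k^p\,q_k^{+,N}
\le \frac{CN^{p-1}}{\alpha_{+,N}}\int_{1}^{\infty}r^p\,\mu(dr),
\]
and by assumption $(\mathbf H)$ the last integral is finite. Plugging this back into the previous display produces the desired bound $CN^{p-1}/\gamma_{1,N}$ for the main term.

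It remains to show that the second term $\bigl(\E[\Gamma_N^+\mathbf{1}_{\{\hat{\epsilon}_N=1\}}]\bigr)^p$ is of smaller order. An argument analogous to the above, but with $k^p$ replaced by $k$, gives $\E[\Gamma_N^+\mathbf{1}_{\{\hat{\epsilon}_N=1\}}]\le C/\gamma_{1,N}$ using $(\mathbf H)$; raising to the power $p$ yields $C/\gamma_{1,N}^p$. Since $\gamma_{1,N}=L(N)/N$ is bounded above by $\int r\,\mu(dr)<\infty$ (this was noted in the proof of Lemma \ref{d1Nq01N}), $1/\gamma_{1,N}^{p-1}$ is bounded, so this term is $\le C/\gamma_{1,N}\le CN^{p-1}/\gamma_{1,N}$. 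Combining the two contributions finishes the proof.

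The main obstacle I expect is step two: obtaining a clean bound of the Poisson $p$-th moment of order $\lambda^p$, and making sure the restriction to $r\ge1$ (where $\lambda=Nr\ge 1$) is what allows the use of the crude Jensen bound instead of a more refined estimate valid near zero. Everything else is bookkeeping that parallels Lemma \ref{Devis1}.
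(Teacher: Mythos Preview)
Your main estimate for the first term is essentially the paper's argument, with a cosmetic difference in how you bound the Poisson $p$-th moment: you use Jensen $\E[P_\lambda^p]\le(\E P_\lambda^2)^{p/2}$, whereas the paper writes $(k/Nr)^p\le (k/Nr)+(k/Nr)^2$ (i.e.\ $y^{p-1}\le 1+y$) and sums explicitly. Both give $C(Nr)^p$ once $Nr\ge1$, and your remark that the restriction $r\ge1$ is exactly what makes this crude bound work is correct.

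There is, however, a genuine gap in your treatment of the second term $\bigl(\E[\Gamma_N^+\mathbf{1}_{\{\hat\epsilon_N=1\}}]\bigr)^p$. Two things go wrong. First, $\gamma_{1,N}\ne L(N)/N$: it is $d_{1,N}q_0^{1,N}$ that equals $L(N)/N$ (see the proof of Lemma~\ref{d1Nq01N}), while $\gamma_{1,N}=d_{1,N}(1-q_0^{1,N})=d_{1,N}-L(N)/N$, and in fact $\gamma_{1,N}\to0$ (this is used in the proof of Lemma~\ref{GN1S}). Second, and more seriously, an \emph{upper} bound on $\gamma_{1,N}$ cannot give an upper bound on $1/\gamma_{1,N}^{p-1}$; for that you would need a \emph{lower} bound on $\gamma_{1,N}$.

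The cleanest fix is the one the paper uses: since $p>1$ and the indicator is $\{0,1\}$-valued, Jensen gives
\[
\bigl(\E[\Gamma_N^+\mathbf{1}_{\{\hat\epsilon_N=1\}}]\bigr)^p\le \E\bigl[(\Gamma_N^+)^p\mathbf{1}_{\{\hat\epsilon_N=1\}}\bigr],
\]
so the second term is simply absorbed into the first and no separate estimate is needed. (If you prefer to keep your route, one can show $N\gamma_{1,N}=\int_0^\infty[1-(1+Nr)e^{-Nr}]\,\mu(dr)$ is nondecreasing in $N$ and hence bounded below by a positive constant, which yields $1/\gamma_{1,N}^{p-1}\le CN^{p-1}$; but the Jensen step is far simpler.)
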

\begin{proof}
In this proof, we shall use the following inequality 
\begin{equation}\label{eq}
 y^{p-1}\le 1+y, \ \mbox{for all}  \ y>0.
\end{equation}
 Recall that $\Gamma_N^+$ is a random variable with probability distribution $p^{+,N}$ and where $p^{+,N}$ was defined in \eqref{pN-pN+}. Now, we have 
 \begin{align}\label{xxt}
\bar{W}_p(N)&=\E\left(\left[T^{N,+}\right]^p \right)\nonumber
\\&= \E \left(\left[\Gamma_N^+ \mathbf{1}_{\{\hat{\epsilon}_N=1\}}  - {\Xi}_1 \E \left(\Gamma_N^+ \mathbf{1}_{\{\hat{\epsilon}_N=1\}}\right)  \right]^p \right)\nonumber
\\&\le 2^{p-1}  \E \left[(\Gamma_N^+ )^p\mathbf{1}_{\{\hat{\epsilon}_N=1\}} \right] + 2^{p-1}\times \Gamma(p+1) \left[ \E \left( \Gamma_N^+ \mathbf{1}_{\{\hat{\epsilon}_N=1\}}\right)  \right]^p \nonumber
\\&\le C \E \left[(\Gamma_N^+ )^p\mathbf{1}_{\{\hat{\epsilon}_N=1\}} \right] =C \P (\hat{\epsilon}_N=1)  \E (\Gamma_N^+ )^p.
\end{align}
However, recalling \eqref{q0+N}, we deduce from \eqref{pN-pN+} that  
\begin{align}\label{etoil}
 \E (\Gamma_N^+ )^p&= \sum_{k\ge1} k^p p_k^{+,N}= \sum_{k\ge1} k^p \frac{ q_{k+1}^{+,N}}{1- q_0^{+,N}}=  \frac{1}{1- q_0^{+,N}} \sum_{k\ge2} (k-1)^p q_{k}^{+,N} \nonumber\\
&\le   \frac{1}{1- q_0^{+,N}} \sum_{k\ge2} k^p q_{k}^{+,N} \nonumber\\
&=  \frac{1}{N\alpha_{+,N}\left(1- q_0^{+,N}\right)} \sum_{k\ge2}  \frac{k^p}{k! }\int_{1}^{\infty}(Nr)^k e^{-Nr} \mu(dr)\nonumber\\
&= \frac{N^p}{N\alpha_{+,N}\left(1- q_0^{+,N}\right)}\int_{1}^{\infty} \left( \sum_{k\ge2}  \left(\frac{k}{Nr} \right)^p \frac{(Nr)^k}{k!}  \right) e^{-Nr}  r^p \mu(dr).
 \end{align}
 However, using \eqref{eq}, we have
 \begin{align*}
  \sum_{k\ge2}  \left(\frac{k}{Nr} \right)^p \frac{(Nr)^k}{k!}  &\le    \sum_{k\ge2}  \left(\frac{k}{Nr} \right) \frac{(Nr)^k}{k!} +   \sum_{k\ge2}  \left(\frac{k}{Nr} \right)^2 \frac{(Nr)^k}{k!}\\
  &=   \sum_{k\ge2}  \frac{1}{(k-1)!} {(Nr)}^{k-1}+ \sum_{k\ge2}  \frac{k}{(k-1)!} {(Nr)}^{k-2}\\
  &\le C e^{Nr}. 
  \end{align*}
 Combining this with \eqref{etoil}, we deduce that  
 \begin{equation*}
  \E (\Gamma_N^+ )^p \le \frac{CN^{p-1}}{\alpha_{+,N}\left(1- q_0^{+,N}\right)}\int_{1}^{\infty} r^p \mu(dr).
 \end{equation*}
  Now, combining this with \eqref{xxt} and the fact that $$ \P (\hat{\epsilon}_N=1)=\alpha_{+,N}(1- q_0^{+,N})/d_{1,N}(1- q_0^{1,N}),$$ we deduce that 
 \begin{align*}
\bar{W}_p(N)&\le  \frac{CN^{p-1}}{d_{1,N}(1- q_0^{1,N})}\int_{1}^{\infty} r^p \mu(dr)\\
 &=  \frac{CN^{p-1}}{\gamma_{1,N}}\int_{1}^{\infty} r^p \mu(dr).
 \end{align*}
 Hence the Lemma follows readily from Assumption ${(\bf H)} .$
\end{proof}

Let $\bf{X}$ be a random variable. For $\ell\ge1$, we define its $\ell$-norm by $\|\bf{X} \|_\ell= \big[\E |\bf{X}|^\ell\big]^{\frac{1}{\ell}}$. The main tool in the following lemma is the existence of positive constants $k_p$ and $K_p$ depending only on $p$ such that for all $\ell \ge 1$, 
 \begin{equation}\label{ZYGMUNG}
 k_p \left\|\left(\sum_{k=0}^{\ell}\left[X_{k}^{1,N}\right]^2 \right)^{\frac{1}{2}} \right\|_p \le  \left\|   S_{\ell}^{1,N} \right\|_p \le  K_p \left\|\left(\sum_{k=0}^{\ell}\left[X_{k}^{1,N}\right]^2 \right)^{\frac{1}{2}} \right\|_p
 \end{equation} 
see Theorem 5 in  \cite{marcinkiewicz1938quelques}. Recall that $S^{1,N}$ was given by $$S_{\ell}^{1,N}=\sum_{k=0}^{\ell} X_{k}^{1,N},$$ where  $\{ X_{k}^{1,N}, k\ge1\}$ is a sequence of i.i.d rv's. Note also that $P_1$ and the sequence $\{ X_{k}^{1,N}, k\ge1\}$ are independent. We now prove
\begin{lemma}\label{Devis2}
There exists a constant $C_2$ such that  for all $T>0$
 \begin{equation*}
\left\|   S_{P_1(2\Delta_{T}^{1,N})}^{1,N} \right\|_p \le C_2 N T. 
 \end{equation*}
\end{lemma}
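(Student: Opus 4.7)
The approach is to decompose each centered jump $X_k^{1,N} = \Lambda_{k+1}^{1,N} - m_{1,N}\Xi_k$ as $X_k^{1,N} = T_k^{N,+} + T_k^{N,-}$, following the decomposition used in the proofs of Lemmas \ref{Devis1} and \ref{Devis1p}. Since the two summands are supported on the complementary events $\{\hat{\epsilon}_N=1\}$ and $\{\hat{\epsilon}_N=0\}$, each sequence $(T_k^{N,\pm})_{k\ge1}$ is iid and centered, and both are independent of the Poisson process $P_1$. Set $N_T := P_1(2\Delta_T^{1,N})$. The triangle inequality in $L^p$ reduces the estimate to bounding
\[
\Bigl\|\sum_{k=0}^{N_T}T_k^{N,+}\Bigr\|_p \quad\text{and}\quad \Bigl\|\sum_{k=0}^{N_T}T_k^{N,-}\Bigr\|_p
\]
separately.

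For the finite-variance contribution I would exploit $W_2(N) \le CN/\gamma_{1,N}$ from Lemma \ref{Devis1} together with the fact that $\|\cdot\|_p \le \|\cdot\|_2$ (valid since $p<2$). Conditioning on $N_T$ and using orthogonality of iid centered random variables gives
\[
\E\Bigl[\Bigl(\sum_{k=0}^{N_T}T_k^{N,-}\Bigr)^{\!2}\,\Big|\,N_T\Bigr] = (N_T+1)\,W_2(N),
\]
so, since $\E N_T = 2N\gamma_{1,N}T$, the resulting $L^2$-bound (and hence $L^p$-bound) is of order $NT^{1/2}$ plus a subdominant term, which is absorbed into $C_2NT$.

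For the heavy-tailed contribution I would apply the Marcinkiewicz--Zygmund inequality \eqref{ZYGMUNG} conditionally on $N_T$; this is legitimate because $N_T$ is independent of the iid sequence $(T_k^{N,+})_{k\ge1}$. Since $p/2<1$, subadditivity of $x \mapsto x^{p/2}$ on $\R_+$ yields $\bigl(\sum_k a_k^2\bigr)^{p/2} \le \sum_k a_k^p$ for $a_k\ge0$, hence
\[
\E\Bigl[\Bigl|\sum_{k=0}^{N_T}T_k^{N,+}\Bigr|^{p}\,\Big|\,N_T\Bigr] \le K_p^p\,(N_T+1)\,\bar{W}_p(N).
\]
Taking expectation and invoking $\bar{W}_p(N) \le C_1 N^{p-1}/\gamma_{1,N}$ from Lemma \ref{Devis1p} together with $\E N_T = 2N\gamma_{1,N}T$ produces an $L^p$-bound of order $NT^{1/p}$, again absorbed into $C_2NT$.

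The main obstacle is handling the random (Poisson) number of summands, since \eqref{ZYGMUNG} is stated for a deterministic index; this is circumvented by conditioning on $N_T$, which is possible thanks to the independence of $P_1$ from $(X_k^{1,N})_k$. The secondary technical point is choosing the correct $L^q$ norm for each piece ($q=2$ for the finite-variance part $T^{N,-}$ and $q=p$ for the heavy-tailed part $T^{N,+}$), so that the separate moment estimates of Lemmas \ref{Devis1} and \ref{Devis1p} combine cleanly with $\E N_T = O(N\gamma_{1,N}T)$ to produce the claimed control.
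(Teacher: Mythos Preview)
Your proposal is correct and follows essentially the same approach as the paper: decompose $X_k^{1,N}=T_k^{N,+}+T_k^{N,-}$, handle the light-tailed piece in $L^2$ and the heavy-tailed piece in $L^p$ via Marcinkiewicz--Zygmund together with the subadditivity of $x\mapsto x^{p/2}$, and use $\E N_T=2N\gamma_{1,N}T$ combined with Lemmas~\ref{Devis1} and~\ref{Devis1p}. The only organisational differences are that the paper applies \eqref{ZYGMUNG} to the full sum first and then splits via the $\ell^2$-Minkowski inequality (rather than splitting the sum at the outset), and invokes Wald's identity where you condition on $N_T$; these are equivalent, and your explicit remark about justifying \eqref{ZYGMUNG} for a random index by independence is in fact a point the paper leaves implicit.
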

\begin{proof}
Recalling the inequality 
\begin{equation}\label{inega}
\left(\sum_{i=1}^{k} y_i \right)^{\frac{p}{2}} \le \sum_{i=1}^{k} y_i^{\frac{p}{2}} , \quad \mbox{for all} \ k\ge 2, \ y_1,...,y_k\ge 0,
\end{equation}
and recall also that $X_{k}^{1,N}=T_k^{N,+}+T_k^{N,-}.$ Using the righthand side of \eqref{ZYGMUNG},
\begin{align*}
\left\|   S_{P_1(2\Delta_{T}^{1,N})}^{1,N} \right\|_p &\le K_p \left[  \left\| \left( \sum_{k=0}^{P_1(2\Delta_{T}^{1,N})}\left[X_{k}^{1,N}\right]^2 \right)^{\frac{1}{2}} \right\|_p \right]
\\&\le  K_p \left[  \left\| \left(\sum_{k=0}^{P_1(2\Delta_{T}^{1,N})} \left[T_k^{N,-}\right]^2 \right)^{\frac{1}{2}} \right\|_p +  \left\|  \left( \sum_{k=0}^{P_1(2\Delta_{T}^{1,N})}\left[T_k^{N,+}\right]^2 \right)^{\frac{1}{2}}\right\|_p \right]
\\& \le  K_p \left[  \left\|\left(\sum_{k=1}^{P_1(2\Delta_{T}^{1,N})} \left[T_k^{N,-}\right]^2 \right)^{\frac{1}{2}} \right\|_2 +\left[ \E \left(\sum_{k=1}^{P_1(2\Delta_{T}^{1,N})}\left[T_k^{N,+}\right]^p \right) \right]^{\frac{1}{p}}    \right]
\\& \le  K_p \left[  \left[ \E \left(P_1(2\Delta_{T}^{1,N}) \right) \E\left(\left[T_1^{N,-}\right]^2\right)\right]^{\frac{1}{2}} + \left[ \E \left(P_1(2\Delta_{T}^{1,N}) \right) \E\left(\left[T_1^{N,+}\right]^p\right)\right]^{\frac{1}{p}} \right]
\\& \le  K_p \left[  \left[ \E(P_1(2\Delta_{T}^{1,N}) ) W_2(N)\right]^{\frac{1}{2}} + \left[ \E (P_1 (2\Delta_{T}^{1,N})) \bar{W}_p(N)\right]^{\frac{1}{p}} \right]
\\& \le K_p \left[  \left( CN\gamma_{1,N} T  \frac{CN}{\gamma_{1,N}} \right)^{\frac{1}{2}} + \left( CN\gamma_{1,N} T \frac{C_1N^{p-1}}{\gamma_{1,N}}  \right)^{\frac{1}{p}} \right]
\\&\le C_2 N T,
\end{align*}
where we have used the Minkowski inequality for the second inequality, \eqref{inega} for the 3th inequality, the Wald identity for the 4th inequality and finally Lemmas \ref{Devis1} and \ref{Devis1p} and the fact that 
$\E(P_{1}(2\Delta_{T}^{1,N}))\le CN\gamma_{1,N} T.$
\end{proof}

Lemma \ref{Devis2} combined with \eqref{PHINFN} leads to
\begin{corollary}\label{CORR2}
There exists a constant $C_3$ such that  for all $T>0$
\begin{equation*}
\left\|   \Upsilon_{P_1\left(2\Delta_{T}^{1,N}\right)}^{N} \right\|_p \le C_3  T.
 \end{equation*}
\end{corollary}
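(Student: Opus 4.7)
The plan is to derive Corollary \ref{CORR2} as an immediate consequence of Lemma \ref{Devis2} together with the definition \eqref{PHINFN}. By definition, $\Upsilon_\ell^N = \frac{1}{cN} S_\ell^{1,N}$, so that in particular
\[
\Upsilon_{P_1(2\Delta_T^{1,N})}^{N} = \frac{1}{cN}\, S_{P_1(2\Delta_T^{1,N})}^{1,N}.
\]
Taking the $p$-norm is homogeneous of degree one, hence
\[
\left\| \Upsilon_{P_1(2\Delta_T^{1,N})}^{N}\right\|_p = \frac{1}{cN} \left\| S_{P_1(2\Delta_T^{1,N})}^{1,N}\right\|_p .
\]

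First I would substitute the estimate from Lemma \ref{Devis2}, which yields
\[
\left\| S_{P_1(2\Delta_T^{1,N})}^{1,N}\right\|_p \le C_2 N T,
\]
so that
\[
\left\| \Upsilon_{P_1(2\Delta_T^{1,N})}^{N}\right\|_p \le \frac{C_2 N T}{cN} = \frac{C_2}{c}\, T.
\]
Setting $C_3 := C_2/c$ (which depends only on $p$ and $c$, both fixed throughout the section) gives the desired inequality, uniformly in $N\ge 1$ and $T>0$.

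There is no real obstacle here: the corollary is a one-line rescaling of Lemma \ref{Devis2}, and the only thing to check is that the constant $c>0$ (the diffusion coefficient of the CSBP, assumed strictly positive in this section) is bounded away from zero, which is built into the standing assumptions. All the substantive work — controlling the $p$-norm of the randomly stopped sum via Lemmas \ref{Devis1} and \ref{Devis1p} and the Marcinkiewicz--Zygmund inequality \eqref{ZYGMUNG} — has already been carried out in Lemma \ref{Devis2}.
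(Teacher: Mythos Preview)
Your proof is correct and matches the paper's approach exactly: the paper simply states that the corollary follows from Lemma \ref{Devis2} combined with the definition \eqref{PHINFN}, which is precisely the rescaling argument you wrote out.
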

We will need the following lemmas
\begin{lemma}\label{SupKN1S}
There exists a constant $C_4$ such that for all $T>0$,
\begin{equation*}
\mathbb{E}\left(\sup_{0\leq s\leq T}\big|{F}_{s}^{N}\big|\right)\leq C_4 T,
\end{equation*}
(recall that $F^{N}$ was defined in \eqref{PHINFN}).
\end{lemma}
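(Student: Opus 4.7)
The plan is to exploit the martingale structure sitting inside $F^N$, combine Doob's $L^p$ maximal inequality with the already established $L^p$ bound from Corollary \ref{CORR2}, and conclude by Jensen's inequality.

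First, I would rewrite $F_s^N$ more explicitly. By \eqref{PHINFN} and \eqref{SnetPHiN},
\begin{equation*}
F_s^N=\frac{1}{cN}\,S^{1,N}_{P_1(A_s^{1,N})}, \qquad S^{1,N}_\ell=\sum_{k=0}^{\ell} X^{1,N}_{k+1},
\end{equation*}
where $X^{1,N}_k=\Lambda^{1,N}_k-m_{1,N}\Xi_k$ are i.i.d., centered (by the very definition of $m_{1,N}$), and independent of the standard Poisson process $P_1$. Hence $(S^{1,N}_\ell)_{\ell\ge 0}$ is a discrete-time martingale with respect to its own filtration. Moreover, since $V^N$ takes values in $\{-1,+1\}$, we have $A_s^{1,N}=\int_0^s 2N\gamma_{1,N}\mathbf{1}_{\{V^N_{r^-}=-1\}}dr\le 2\Delta^{1,N}_s$, so that $A_s^{1,N}\le 2\Delta^{1,N}_T$ for every $s\le T$. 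Because $P_1$ is nondecreasing, this yields the pathwise domination
\begin{equation*}
\sup_{0\le s\le T}|F_s^N|\le \frac{1}{cN}\sup_{0\le \ell\le P_1(2\Delta^{1,N}_T)}\bigl|S^{1,N}_\ell\bigr|.
\end{equation*}

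Next I would apply Doob's $L^p$ maximal inequality to the martingale $(S^{1,N}_\ell)_{\ell\ge 0}$, conditionally on the random horizon $M:=P_1(2\Delta^{1,N}_T)$. Since $M$ is independent of the sequence $\{X^{1,N}_k\}_{k\ge 1}$,
\begin{equation*}
\E\Bigl[\sup_{0\le\ell\le M}\bigl|S^{1,N}_\ell\bigr|^{\,p}\,\Big|\,M\Bigr]\le \Bigl(\tfrac{p}{p-1}\Bigr)^{\!p}\,\E\bigl[\,|S^{1,N}_M|^{\,p}\,\big|\,M\bigr],
\end{equation*}
and taking expectations gives
\begin{equation*}
\E\Bigl[\sup_{0\le\ell\le M}\bigl|S^{1,N}_\ell\bigr|^{\,p}\Bigr]\le \Bigl(\tfrac{p}{p-1}\Bigr)^{\!p}\E\bigl|S^{1,N}_{P_1(2\Delta^{1,N}_T)}\bigr|^{\,p}.
\end{equation*}
By Corollary \ref{CORR2}, the right-hand side is bounded by $\bigl(\tfrac{p}{p-1}\bigr)^{p}(cN C_3 T)^p$.

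Dividing by $(cN)^p$ and invoking Jensen's inequality (to pass from the $L^p$-norm to the $L^1$-norm, as $p>1$) produces
\begin{equation*}
\E\Bigl[\sup_{0\le s\le T}|F_s^N|\Bigr]\le \Bigl(\E\Bigl[\sup_{0\le s\le T}|F_s^N|^{\,p}\Bigr]\Bigr)^{\!1/p}\le \tfrac{p}{p-1}\,C_3\,T=:C_4\,T,
\end{equation*}
which is the desired estimate.

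The only nontrivial point is verifying the legitimacy of applying Doob conditionally on $M$: this rests on the independence of $P_1$ from the sequence $\{X^{1,N}_k\}$, which is ensured by the independence of $\{P_s^{1,N}\}$ and $\{\Lambda^{1,N}_k\}$ assumed before Remark \ref{MAPPP}, and by the fact that the exponential waiting times $\Xi_k$ are built from $P_1$ but enter $X^{1,N}_k$ only through the jump size structure. I do not expect a real obstacle; the main effort has already been spent in Lemmas \ref{Devis1}, \ref{Devis1p} and \ref{Devis2} producing the $L^p$ control with the correct scaling in $N$.
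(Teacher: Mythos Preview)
Your overall strategy matches the paper's proof exactly: bound $A_s^{1,N}$ by $2\Delta_T^{1,N}$, use the discrete-time martingale structure of $(S_\ell^{1,N})_\ell$, apply Doob's $L^p$ maximal inequality up to the random index $M=P_1(2\Delta_T^{1,N})$, and finish with Corollary~\ref{CORR2} and Jensen.

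There is, however, a genuine gap in the justification of the Doob step. You claim that $M$ is independent of the sequence $\{X_k^{1,N}\}$, but this is false: $X_k^{1,N}=\Lambda_k^{1,N}-m_{1,N}\Xi_k$, and the $\Xi_k$ are precisely the interarrival times of $P_1$, so $M=P_1(2\Delta_T^{1,N})$ is a deterministic functional of the $\Xi_k$'s. The independence assumed before Remark~\ref{MAPPP} concerns $P_s^{1,N}$ and $\{\Lambda_k^{1,N}\}$ only; it does not make $P_1$ independent of $\{X_k^{1,N}\}$. Your closing sentence (``the $\Xi_k$ are built from $P_1$ but enter $X_k^{1,N}$ only through the jump size structure'') does not repair this.

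The correct justification, which is what the paper uses, is that $M$ is a \emph{stopping time} for the filtration $\mathcal{F}_\ell=\sigma(\Xi_0,\Lambda_1^{1,N},\Xi_1,\ldots,\Lambda_\ell^{1,N},\Xi_\ell)$: indeed $\{M\le\ell\}=\{\sum_{j\le\ell}\Xi_j>2\Delta_T^{1,N}\}\in\mathcal{F}_\ell$. Since $(S_\ell^{1,N})_\ell$ is an $(\mathcal{F}_\ell)$-martingale, Doob's $L^p$ maximal inequality applies directly to the stopped martingale, yielding
\[
\E\Bigl[\sup_{0\le\ell\le M}\bigl|S_\ell^{1,N}\bigr|^p\Bigr]\le\Bigl(\tfrac{p}{p-1}\Bigr)^p\E\bigl|S_M^{1,N}\bigr|^p,
\]
with no conditioning or independence needed. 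Replace your conditional argument by this stopping-time observation and the proof is complete.
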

\begin{proof}
 We have
\begin{align*}
\mathbb{E}\left(\sup_{0\leq s\leq T} \left|{F}_{s}^{N}\right| \right)&\leq \left[\mathbb{E}\left( \sup_{0\leq k\leq P_1(A_{T}^{1,N})}  \left| \Upsilon_{k}^{N}\right|\right)^p\right]^{\frac{1}{p}}
\\&\leq   \left[\mathbb{E}\left( \sup_{0\leq k\leq P_1(2\Delta_{T}^{1,N})}  \left| \Upsilon_{k}^{N}\right|\right)^p\right]^{\frac{1}{p}}. 
\end{align*}
From \eqref{PHINFN}, it is easy to check that $( \Upsilon_{k}^N, \ k\ge0)$ is a discrete-time martingale. Moreover, note that $P_1(\Delta_{T}^{1,N})$ is a stopping time. Hence, from Doob's inequality we have 
\begin{equation*}
\mathbb{E}\left(\sup_{0\leq s\leq T} \left|{F}_{s}^{N}\right| \right) \leq \frac{p}{p-1} \left\|      \Upsilon_{P_1\left(2\Delta_{T}^{1,N}\right)}^{N} \right\|_p 
\end{equation*}
The result now follows readily from Corollary \ref{CORR2}.
\end{proof}

The following Proposition plays a key role in the asymptotic behaviour of $H^{N}$ 
\begin{proposition}\label{supHN} 
There exist a constant $C>0$ such that for all $T>0$,
$$\E\left(\sup_{0\le s\le T} H_s^N\right)\le CT.$$
\end{proposition}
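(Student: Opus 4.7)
The plan is to take the decomposition \eqref{2DFHN22} as the starting point. Since $V_s^N \in \{-1,+1\}$, bounding $H_s^N$ from above reduces, up to a term of order $1/N$, to bounding the supremum on $[0,T]$ of the absolute values of the nine terms on the right-hand side.

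Seven of these terms are directly handled by estimates already established in the preceding lemmas: Lemma \ref{SupKN1S} for $F^N$, Lemma \ref{Q1+} for $\mathbf{Q}^+$, Lemma \ref{Martingale} for each of $\mathcal{M}^N$, $\widetilde{\mathcal{M}}^{1,N}$ and $\widetilde{\mathcal{M}}^N$, Lemma \ref{LemporK2} for $K^{2,N}$, and the definition \eqref{UNS} of $J^N$ which gives $|J^N(s)| \le (2(\alpha+\beta)/c)T$ pathwise. Adding these contributions accounts for a term of size at most $CT$ in $\E \sup_{s\le T} H_s^N$.

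The crux is thus to bound $\E\bigl(L_T^N(0) - L_{0^+}^N(0)\bigr)$. The strategy is to exploit the fact that $L^N(\cdot, 0)$ only increases when $H^N$ returns to $0$, so that it plays the role of the reflecting term in a Skorokhod-type relation. More precisely, rewrite \eqref{2DFHN22} as
\[ H_s^N + \tfrac{V_s^N}{2cN} = X_s^N + \tfrac12\bigl(L_s^N(0) - L_{0^+}^N(0)\bigr), \]
where $X^N$ collects all the other explicit terms. Using that $H_s^N + V_s^N/(2cN) \ge -1/(2cN)$ and that the local time only grows at the boundary $0$, the Skorokhod reflection principle delivers an estimate of the form
\[ \tfrac12\bigl(L_T^N(0) - L_{0^+}^N(0)\bigr) \le \sup_{0\le s\le T}\bigl(-X_s^N\bigr) + O(1/N). \]
Recycling the bounds of the previous step on the components of $X^N$ then yields $\E L_T^N(0) \le CT$, and combining the two contributions produces the desired estimate.

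The principal obstacle is making the Skorokhod argument rigorous in this discrete setting, since $L^N(\cdot,0)$ is the local time of $H^N$ at $0$ whereas the natural reflected quantity is $H^N + V^N/(2cN)$; the two differ by the $V^N/(2cN)$ fluctuation, which has to be controlled by $O(1/N)$ error terms. Equivalently, one can proceed pathwise from \eqref{DEFHNVN}, observing that each increase of $L^N(0)$ is exactly $2/(cN)$ and is generated by $V^N$ jumping from $-1$ to $+1$ when the contour reflects off zero, which ties the local time to the same drift and martingale terms that were already controlled above.
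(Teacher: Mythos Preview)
Your proposal is correct and follows essentially the same Skorokhod-reflection strategy as the paper. The paper organizes the reflection step slightly more directly: writing $H_s^N=\Sigma_s^N+\tfrac12 L_s^N(0)$ and using that at the last jump time $s_r$ of $L^N(\cdot,0)$ one has $H_{s_r}^N=0$ and $L_s^N(0)=L_{s_r}^N(0)$, it gets $H_s^N=\Sigma_s^N-\Sigma_{s_r}^N\le 2\sup_{r\le T}|\Sigma_r^N|$ in one stroke, which sidesteps the discrete $V^N/(2cN)$ complication you flag and avoids bounding $L^N_T(0)$ separately.
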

\begin{proof}
Let us rewrite \eqref{2DFHN22} in the form 
\begin{equation}\label{HsimaN}
H_{s}^{N}= \Sigma_{s}^{N} +\frac{1}{2}L_{s}^{N} (0),
\end{equation}
where 
\begin{align}\label{WNSTB}
\Sigma_{s}^{N}&=  \frac{1}{2cN}-\frac{V_{s}^{N}}{2cN}+F_s^N+\mathbf{Q}^+(A_{s}^{N,+}) + \mathcal{M}_s^{N}-\widetilde{\mathcal{M}}_s^{1,N}-\widetilde{\mathcal{M}}_s^{N} + J^N(s)-K_{s}^{N,2}- \frac{1}{2} L_{0^{+}}^{N}(0)\nonumber\\
&\le F_s^N+\mathbf{Q}^+(A_{s}^{N,+}) + \mathcal{M}_s^{N}-\widetilde{\mathcal{M}}_s^{1,N}-\widetilde{\mathcal{M}}_s^{N} + J^N(s)-K_{s}^{N,2}.
\end{align}
since, from \eqref{TL}, it is easily checked that $L_{0^{+}}^{N}(0)= 2/cN$. Set $$s_r= \sup \left\{ 0\le r\le s;  \ L_{r}^{N}(0)- L_{r^{-}}^{N}(0)>0\right\},$$
then the fact that $L^{N}(0)$ is  increasing, and increases only on the set of time when $H_{s}^{N}=0$ proves that  $H_{s_r}^{N}=0$ and $L_{s}^{N}(0)= L_{s_r}^{N}(0)$. Hence, we obtain that  
\begin{align*}
H_{s}^{N}&= \Sigma_{s}^{N}- \Sigma_{s_r}^{N} \nonumber\\
&\le \sup_{0\le r \le s} [\Sigma_{s}^{N}- \Sigma_{r}^{N}].
 \end{align*}
It follows that 
\begin{equation}\label{SU}
\sup_{0\le s \le T} H_{s}^{N}  \le 2 \sup_{0\le s \le T} |\Sigma_{s}^{N}|.
\end{equation}
Since, moreover $(K_s^{N,2},\ s\ge0)$ is a process with values in ${\mathbb{R}}_{+}$ see \eqref{KN2SS}, we have from \eqref{WNSTB} that 
\begin{align*}
\sup_{0\le s \le T} | \Sigma_{s}^{N} | &\le \sup_{0\leq s\leq T} |F_{s}^{N}|  \! +\!\sup_{0\leq s\leq T} \mathbf{Q}^+(A_{s}^{N,+})\!+ \!\sup_{0\leq s\leq T} |\mathcal{M}_{s}^{N}| \!+\! \sup_{0\leq s\leq T} |\widetilde{\mathcal{M}}_s^{1,N}| +\! \sup_{0\leq s\leq T} |\widetilde{\mathcal{M}}_s^{N}|\\
&+ \sup_{0\leq s\leq T} |J^N(s) |+\sup_{0\leq s\leq T} K_s^{2,N} 
\end{align*}
Combining this inequality with  \eqref{SU}, we deduce that 
\begin{align*}
\sup_{0\le s\le T} H_{s}^{N}\! \le&  2 \sup_{0\le s\le T} \! | F_{s}^{N}| \!+ \! 2\!\sup_{0\le s\le T}\! \mathbf{Q}^+(A_{s}^{N,+})\!+2\!\sup_{0\leq s\leq T} |\mathcal{M}_{s}^{N}| \!+2\! \sup_{0\leq s\leq T} |\widetilde{\mathcal{M}}_s^{1,N}| +2\! \sup_{0\leq s\leq T} |\widetilde{\mathcal{M}}_s^{N}| \!\\& +2 \!\sup_{0\leq s\leq T} |J^N(s) | +2\sup_{0\leq s\leq T} K_s^{2,N}.
\end{align*}
 Hence taking expectation in both side, we deduce that 
 \begin{align*}
\E\left(\sup_{0\le s\le T} H_{s}^{N}\right) \le&  2 \E\left(\sup_{0\le s\le T} \! | F_{s}^{N}| \right)+  2\E\left(\sup_{0\le s\le T} \mathbf{Q}^+(A_{s}^{N,+})\right)+2 \E\left( \sup_{0\leq s\leq T} |\mathcal{M}_{s}^{N}| \right) +2 \E\left( \sup_{0\leq s\leq T} |\widetilde{\mathcal{M}}_s^{1,N}| \right) \\&+2 \E\left( \sup_{0\leq s\leq T} |\widetilde{\mathcal{M}}_s^{N}| \right) +2 \E\left( \sup_{0\leq s\leq T} |J^N(s) | \right)+2 \E\left(\sup_{0\leq s\leq T} K_s^{2,N}\right).
\end{align*}
This together with \eqref{UNS}, Lemmas \ref{Martingale}, \ref{LemporK2}, \ref{Q1+}, \ref{SupKN1S}, Doob's $L^2$-inequality for martingales implies the result.
\end{proof}

We shall need below the 
\begin{lemma}\label{Ssur2}
For any $s>0$,
\begin{eqnarray*}
\int_{0}^{s}\mathbf{1}_{\{V_{r}^{N}=+1\}}dr\longrightarrow \frac{s}{2}; {~~~}\int_{0}^{s}\mathbf{1}_{\{V_{r}^{N}=-1\}}dr\longrightarrow \frac{s}{2}
\end{eqnarray*}
in probability, as $N\longrightarrow \infty$.
\end{lemma}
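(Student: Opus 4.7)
The plan is to exploit the ODE that relates $H^N$ to $V^N$, namely the first line of \eqref{DEFHNVN}, which gives $H_{s}^{N}=2a_{N}\int_{0}^{s}V_{r}^{N}\,dr$. Since $V^{N}$ takes values in $\{-1,+1\}$, we have the identity
$$V_{r}^{N}=2\mathbf{1}_{\{V_{r}^{N}=+1\}}-1,$$
so integrating yields
$$\int_{0}^{s}\mathbf{1}_{\{V_{r}^{N}=+1\}}\,dr=\frac{s}{2}+\frac{1}{2}\int_{0}^{s}V_{r}^{N}\,dr=\frac{s}{2}+\frac{H_{s}^{N}}{4a_{N}}.$$
Therefore the problem reduces to showing that $H_{s}^{N}/a_{N}\to 0$ in probability.

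For this, I would invoke Proposition \ref{supHN}, which gives a constant $C$ (independent of $N$) such that $\E\bigl(\sup_{0\le r\le s}H_{r}^{N}\bigr)\le C s$. Since $a_{N}=N+c^{-1}d_{1,N}q_{0}^{1,N}\ge N\to\infty$, we obtain
$$\E\left|\frac{H_{s}^{N}}{4a_{N}}\right|\le\frac{Cs}{4N}\longrightarrow 0,$$
so convergence holds in $L^{1}$, and a fortiori in probability. This yields the first assertion. The second follows immediately from the identity
$$\int_{0}^{s}\mathbf{1}_{\{V_{r}^{N}=-1\}}\,dr=s-\int_{0}^{s}\mathbf{1}_{\{V_{r}^{N}=+1\}}\,dr,$$
which also gives convergence in probability to $s/2$.

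There is essentially no obstacle here once Proposition \ref{supHN} is available: the argument is purely algebraic (the $\{-1,+1\}$ identity) combined with the $L^{1}$ bound on $\sup H^{N}$ and the fact that $a_{N}$ grows linearly in $N$. All the real work sits upstream in Proposition \ref{supHN}, whose proof used the martingale estimates of Lemma \ref{Martingale}, the bound on $K^{2,N}$ from Lemma \ref{LemporK2}, the $F^{N}$ estimate from Lemma \ref{SupKN1S} and the control of $\mathbf{Q}^{+}$ from Lemma \ref{Q1+}.
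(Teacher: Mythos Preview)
Your proof is correct and follows essentially the same approach as the paper: both use the identity $H_s^N = 2a_N \int_0^s V_r^N\,dr$ from \eqref{DEFHNVN} together with the fact that $V^N$ is $\{-1,+1\}$-valued to express the occupation times as $s/2 \pm H_s^N/(4a_N)$, and then invoke Proposition~\ref{supHN} and $a_N\ge N\to\infty$ to conclude. The only cosmetic difference is that the paper writes the sum and difference of the two occupation integrals, while you use the identity $V_r^N = 2\mathbf{1}_{\{V_r^N=+1\}}-1$ directly.
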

\begin{proof}
We have (the second line follows from \eqref{DEFHNVN})
\begin{eqnarray*}
\int_{0}^{s}\mathbf{1}_{\{V_{r}^{N}=+1\}}dr+\int_{0}^{s}\mathbf{1}_{\{V_{r}^{N}=-1\}}dr=s,
\end{eqnarray*}
\begin{eqnarray*}
\int_{0}^{s}\mathbf{1}_{\{V_{r}^{N}=+1\}}dr-\int_{0}^{s}\mathbf{1}_{\{V_{r}^{N}=-1\}}dr={(2a_N)}^{-1}H_{s}^{N}.
\end{eqnarray*}
We conclude by adding and substracting the two above identities and using Proposition \ref{supHN}.
\end{proof}

Thus, we have the following result
\begin{lemma}\label{UNSlemConv} 
For any $s>0,$ $J^N(s) \longrightarrow J(s)$ in probability, as $N\rightarrow\infty,$ where $J(s)= (\alpha-\beta)/c.$
\end{lemma}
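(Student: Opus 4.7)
The statement is a direct consequence of Lemma \ref{Ssur2} combined with the explicit expression \eqref{UNS} for $J^N(s)$, so the proof is essentially a one-line calculation rather than a genuinely new argument.

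My plan is to start from the definition
\[
J^N(s)= \frac{2\alpha}{c} \int_{0}^{s}\mathbf{1}_{\{V_{r}^{N}=-1\}}\,dr - \frac{2\beta}{c}\int_{0}^{s}\mathbf{1}_{\{V_{r}^{N}=+1\}}\,dr,
\]
and then invoke Lemma \ref{Ssur2}, which states that both integrals $\int_0^s \mathbf{1}_{\{V_r^N=\pm 1\}}\,dr$ converge to $s/2$ in probability as $N\to\infty$. Since convergence in probability is preserved under continuous (in particular, linear) combinations, I can conclude that
\[
J^N(s) \xrightarrow[N\to\infty]{\mathbb{P}} \frac{2\alpha}{c}\cdot\frac{s}{2} - \frac{2\beta}{c}\cdot\frac{s}{2} = \frac{(\alpha-\beta)s}{c}.
\]
This identifies the limit $J(s)$ (and suggests the factor $s$ is missing in the statement, so the intended limit should read $J(s)=(\alpha-\beta)s/c$, consistent with Lemma \ref{UNSlem} asserting tightness in $\mathcal{C}([0,\infty))$).

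There is no real obstacle here: the only ingredient beyond \eqref{UNS} is Lemma \ref{Ssur2}, which has already been established using \eqref{DEFHNVN} and Proposition \ref{supHN}. The proof is therefore a two-line application of that lemma, with the main point being to record the explicit identification of the limit $(\alpha-\beta)s/c$, which will be used later together with the convergence of $F^N$, $\mathcal{M}^N$, $\widetilde{\mathcal{M}}^{1,N}$, $\widetilde{\mathcal{M}}^N$ and $K^{2,N}$ in the decomposition \eqref{2DFHN22} to identify the limit of the exploration process $H^N$.
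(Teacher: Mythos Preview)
Your proof is correct and matches the paper's approach exactly: the paper simply writes ``Thus, we have the following result'' immediately after Lemma~\ref{Ssur2}, indicating that the lemma is meant to follow directly from the convergence of $\int_0^s\mathbf{1}_{\{V_r^N=\pm1\}}\,dr$ to $s/2$ applied to the definition \eqref{UNS}. Your observation about the missing factor $s$ in the stated limit is also correct; the computation indeed gives $J(s)=(\alpha-\beta)s/c$.
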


Moreover, we have the following result which is Proposition 4.23 in  \cite{drame2016non}.
\begin{lemma}\label{M+PhiNs} 
As $N\longrightarrow\infty$, $$\Big( \mathcal{M}_s^{N}, \widetilde{\mathcal{M}}_s^{N} , \ s\geq0\Big)\Longrightarrow \left( \frac{1}{\sqrt{c}} B_{s}^{1}, \frac{1}{\sqrt{c}}  B_{s}^{2}, \ s\geq0\right) \ in \ {(\mathcal{D}([0,\infty)))}^{2},$$ where $B^{1}$ and $B^{2}$ are two mutually independent standard Brownian motions.
\end{lemma}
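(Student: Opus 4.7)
The plan is to invoke a functional central limit theorem for locally square-integrable càdlàg martingales (for instance Rebolledo's theorem, or Theorem VIII.3.11 in Jacod--Shiryaev, or Theorem 7.1.4 in Ethier--Kurtz), whose hypotheses are (i) convergence of the predictable (co)quadratic variations to deterministic continuous limits, together with (ii) a Lindeberg-type negligibility condition on the jumps. Both criteria are essentially in hand given what is already displayed in the text.

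\textbf{Step 1 (diagonal brackets).} The explicit expressions recorded just before the statement give
\[
\langle \mathcal{M}^{N}\rangle_{s}=\frac{2\lambda_{N}}{c^{2}N}\int_{0}^{s}\mathbf{1}_{\{V_{r}^{N}=-1\}}\,dr,\qquad
\langle \widetilde{\mathcal{M}}^{N}\rangle_{s}=\frac{2\mu_{N}}{c^{2}N}\int_{0}^{s}\mathbf{1}_{\{V_{r}^{N}=+1\}}\,dr.
\]
Since $\lambda_{N}/N=c+\alpha/N\to c$ and $\mu_{N}/N=c+\beta/N\to c$, and since Lemma~\ref{Ssur2} yields $\int_{0}^{s}\mathbf{1}_{\{V_{r}^{N}=\pm1\}}\,dr\to s/2$ in probability, both brackets converge in probability to $s/c$. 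Monotonicity in $s$ (the brackets are nondecreasing) together with continuity of the limit upgrades this pointwise convergence to locally uniform convergence in $s$.

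\textbf{Step 2 (cross bracket).} The two driving Poisson processes $P^{N}$ and $P^{\prime,N}$ are independent, so almost surely they have no common jumps. In addition, the integrands $\mathbf{1}_{\{V_{r^{-}}^{N}=-1\}}$ and $\mathbf{1}_{\{V_{r^{-}}^{N}=+1\}}$ have disjoint supports in $r$. Consequently $[\mathcal{M}^{N},\widetilde{\mathcal{M}}^{N}]_{s}\equiv 0$, hence $\langle\mathcal{M}^{N},\widetilde{\mathcal{M}}^{N}\rangle_{s}\equiv 0$. This identifies the would-be Gaussian limit as having independent components.

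\textbf{Step 3 (Lindeberg/negligibility of jumps).} Every jump of $\mathcal{M}^{N}$ or $\widetilde{\mathcal{M}}^{N}$ has constant size $1/(cN)$, which tends to $0$. Therefore, for any fixed $\varepsilon>0$ and all $N$ large enough, the process $\sum_{r\le s}(\Delta \mathcal{M}_{r}^{N})^{2}\mathbf{1}_{\{|\Delta \mathcal{M}_{r}^{N}|>\varepsilon\}}$ is identically zero, and likewise for $\widetilde{\mathcal{M}}^{N}$. The classical Lindeberg condition is therefore trivially satisfied.

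\textbf{Step 4 (conclusion).} Applying the martingale CLT with the three inputs above, the pair $(\mathcal{M}^{N},\widetilde{\mathcal{M}}^{N})$ converges in distribution on $(\mathcal{D}([0,\infty)))^{2}$, equipped with the Skorohod topology, to a continuous Gaussian martingale whose bracket matrix is $\mathrm{diag}(s/c,s/c)$, which is precisely $(c^{-1/2}B^{1}_{s},c^{-1/2}B^{2}_{s})$ with $B^{1}, B^{2}$ two independent standard Brownian motions. I expect the only mildly delicate point to be the locally uniform upgrade in Step~1; this is why it is useful that the brackets are nondecreasing and that the limit $s\mapsto s/c$ is continuous, so that a Dini-type argument does the job. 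Everything else is routine bookkeeping of what is already written in the preceding pages.
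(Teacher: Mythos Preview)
Your argument is correct and is precisely the standard route: verify the convergence of the predictable brackets via Lemma~\ref{Ssur2}, observe that the cross bracket vanishes because $P^{N}$ and $P^{\prime,N}$ are independent (and the indicator integrands have disjoint supports anyway), note that the jump sizes $1/(cN)$ are asymptotically negligible, and conclude by the martingale functional CLT. The paper does not give its own proof here but simply quotes the result as Proposition~4.23 of \cite{drame2016non}; the argument there is exactly the one you have written.
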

Let us rewrite \eqref{2DFHN} in the following form 
\begin{align}\label{2DFHN11}
 H_{s}^{N}+\frac{V_{s}^{N}}{2cN}=& \frac{1}{2cN}+G_s^N + \mathcal{M}_s^{N}-\widetilde{\mathcal{M}}_s^{1,N}-\widetilde{\mathcal{M}}_s^{N} + J^N(s)+ \frac{1}{2}(L_{s}^{N}(0)- L_{0^{+}}^{N}(0))
 \end{align}
with
\begin{equation*}
G_s^N= \mathcal{M}_s^{1,N}+ \Phi_s^{1,N} - K_{s}^{2,N}.
\end{equation*}
From \eqref{KNSDEF}, \eqref {M1+} and \eqref{MNa}, we deduce that 
\begin{align}\label{Recall}
G_s^N=&  K_{s}^{1,N} - \frac{2}{c}m_{1,N}\gamma_{1,N}\int_{0}^{s}\mathbf{1}_{\{V_{r}^{N}=-1\}}dr- K_{s}^{2,N}\nonumber\\
=&\frac{1}{cN}\mathcal{P}_{s}^{N,+}+\frac{1}{cN} \int_{0}^{s} \left[\left( \frac{cN}{2}(L_{s}^{N}(H_{r}^{N})-L_{r}^{N}(H_{r}^{N})) \right) \wedge \left({\Lambda}_{\mathcal{P}_{r^{-}}^{N,+}+1}^{1,N} -1\right) \right]d\mathcal{P}_{r}^{N,+}\\&-  \frac{2}{c}m_{1,N}\gamma_{1,N}\int_{0}^{s}\mathbf{1}_{\{V_{r}^{N}=-1\}}dr\nonumber\\
=&G_s^{1,N}+G_s^{2,N}-G_s^{3,N} 
\end{align}
with 
\begin{equation}\label{GN1GN3}
G_s^{1,N}=\frac{1}{cN}\mathcal{P}_{s}^{N,+}, \quad G_s^{3,N}=  \frac{2}{c}m_{1,N}\gamma_{1,N}\int_{0}^{s}\mathbf{1}_{\{V_{r}^{N}=-1\}}dr
\end{equation}
and 
\begin{equation}\label{GN2}
G_s^{2,N}= \frac{1}{cN} \int_{0}^{s} \left[\left( \frac{cN}{2}(L_{s}^{N}(H_{r}^{N})-L_{r}^{N}(H_{r}^{N})) \right) \wedge \left({\Lambda}_{\mathcal{P}_{r^{-}}^{N,+}+1}^{1,N} -1\right) \right]d\mathcal{P}_{r}^{N,+}
\end{equation}
We first prove the 
\begin{lemma}\label{GN1S}
As $N\longrightarrow\infty$, $$\Big(G_s^{1,N}, \ s\geq0\Big)\longrightarrow  0 \ \mbox{ in} \ \mbox{probability}, \ \mbox{locally} \ \mbox{uniformly} \ \mbox{in} \ \mbox{s}.$$ 
\end{lemma}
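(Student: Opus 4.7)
The key observation is that $G_s^{1,N} = (cN)^{-1}\mathcal{P}_s^{N,+}$ is non-decreasing in $s$, so that establishing the locally uniform convergence reduces to a pointwise $L^1$ estimate. First I would note that, by definition of $\mathcal{P}^{N,+}$ in \eqref{PN+},
$$
\mathcal{P}_s^{N,+} = \int_0^s \mathbf{1}_{\{V^N_{r^-}=-1\}} dP^{1,N}_r \le P^{1,N}_s,
$$
where $P^{1,N}$ is the Poisson process of intensity $2N\gamma_{1,N}$. Consequently
$$
\E\left(\sup_{0\le s\le T} G_s^{1,N}\right) = \frac{1}{cN}\E(\mathcal{P}_T^{N,+}) \le \frac{1}{cN}\E(P^{1,N}_T) = \frac{2\gamma_{1,N}}{c}\,T.
$$

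The remaining task is then to show $\gamma_{1,N}\to 0$ as $N\to\infty$. By the very definition of $\gamma_{1,N}$ in \eqref{GamaN1N2} and of $q_0^{1,N}$ in \eqref{Qn10}, I would expand
$$
\gamma_{1,N} = d_{1,N} - \frac{L(N)}{N} = \int_0^\infty\! r(1-e^{-Nr})\mu(dr) - \frac{1}{N}\int_0^\infty\!(e^{-Nr}-1+Nr)\mu(dr),
$$
and combine the two integrals into
$$
\gamma_{1,N} = \int_0^\infty \left[\frac{1-e^{-Nr}}{N} - r e^{-Nr}\right]\mu(dr).
$$
Each integrand tends to $0$ pointwise as $N\to\infty$; using $1-e^{-Nr}\le Nr\wedge 1$, the integrand is bounded in absolute value by $(r\wedge N^{-1}) + re^{-Nr}\le 2r$, which is $\mu$-integrable under assumption $({\bf H})$. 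Dominated convergence then yields $\gamma_{1,N}\to 0$. (Alternatively, one may invoke Proposition~\ref{PiTildversPiAA} with $\varphi(z)=z$ to conclude from Lemma~\ref{d1Nq01N} that $d_{1,N}q_0^{1,N}=L(N)/N\to\mu(\mathrm{id})=\lim d_{1,N}$, whence $\gamma_{1,N}=d_{1,N}(1-q_0^{1,N})\to 0$.)

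Combining the two steps, $\E(\sup_{0\le s\le T}G_s^{1,N})\to 0$ for every $T>0$, which gives convergence in $L^1$ locally uniformly in $s$, hence convergence in probability locally uniformly in $s$. The only potential obstacle is the convergence $\gamma_{1,N}\to 0$, but this is a routine dominated convergence argument once the integral representation above is obtained; the rest is just the elementary domination by the Poisson process.
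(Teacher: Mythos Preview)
Your proof is correct, and in fact a little cleaner than the paper's. The paper decomposes $G_s^{1,N}$ into a martingale part $M_s^{\circ,N}$ plus the compensator $\frac{2}{c}\gamma_{1,N}\int_0^s\mathbf{1}_{\{V_r^N=-1\}}dr$, then controls the martingale via Doob's inequality (using $\langle M^{\circ,N}\rangle_s\le \frac{2s}{c^2N}\int_0^\infty z\,\mu(dz)$) and the compensator via $\gamma_{1,N}\to 0$, the latter established through Proposition~\ref{PiTildversPiAA}. You instead exploit the monotonicity of $s\mapsto G_s^{1,N}$ to reduce everything to a single $L^1$ bound $\E(G_T^{1,N})\le \frac{2\gamma_{1,N}}{c}T$, bypassing the martingale step entirely; your direct dominated-convergence computation for $\gamma_{1,N}\to 0$ is also self-contained and does not rely on the heavier Proposition~\ref{PiTildversPiAA}. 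Both arguments hinge on the same fact $\gamma_{1,N}\to 0$; the paper's decomposition is unnecessary here precisely because $G^{1,N}$ is non-negative and increasing, which you noticed and the paper did not use.
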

\begin{proof} 
From \eqref{PN+} we have 
\begin{align*}
G_s^{1,N}=& \frac{1}{cN} \int_{0}^{s}\mathbf{1}_{\{V_{r^{-}}^{N}=-1\}}dP_{r}^{1,N}\\
=&\frac{1}{cN} \int_{0}^{s}\mathbf{1}_{\{V_{r^{-}}^{N}=-1\}}(dP_{r}^{1,N}-2N\gamma_{1,N} dr) +\frac{2}{c}\gamma_{1,N}\int_{0}^{s}\mathbf{1}_{\{V_{r}^{N}=-1\}}dr\\
=&M_s^{\circ,N} +\frac{2}{c}\gamma_{1,N}\int_{0}^{s}\mathbf{1}_{\{V_{r}^{N}=-1\}}dr, {GN1S}
\end{align*}
where $M^{\circ,N}$ is a local martingale (recall that $P^{1,N}$ was a Poisson process with intensity $2N\gamma_{1,N}$). However, we deduce from \ref{GamaN1N2} and the proof of Lemma  \ref{d1Nq01N} that  $$\gamma_{1,N}= d_{1,N}-d_{1,N}q_0^{1,N}= \int_{0}^{\infty}z(1-e^{-Nz})\mu(dz)- \int_{0}^{\infty}z\widetilde{\pi}_{1,N}(dz),$$ recall that $d_{1,N}= \int_{0}^{\infty}z(1-e^{-Nz})\mu(dz).$ It follows from Proposition \ref{PiTildversPiAA} that $\gamma_{1,N} \rightarrow 0$ as $N\rightarrow  \infty.$

However, it is easy to check that 
$$\langle{M^{\circ,N}\rangle}_{s} = \frac{2 \gamma_{1,N}}{c^2N} \int_{0}^{s}\ \mathbf{1}_{\{V_{r}^{N}=-1\}} dr \le\frac{2}{c^2N}s  \int_{0}^{\infty}z \ \mu(dz).$$ Using Doob's inequality, we obtain 
$$\mathbb{E}\left(\sup_{0\leq r\leq s} \left| M_r^{\circ,N}\right| \right)^2 \le C \ \E\left(\langle{M^{\circ,N}\rangle}_{s} \right) \le \frac{Cs}{N},$$
where we have used assumption ${(\bf H)}$ for the last inequality. It follows that $$M_s^{\circ,N} \longrightarrow  0 \ \mbox{ in} \ \mbox{probability}, \ \mbox{locally} \ \mbox{uniformly} \ \mbox{in} \ \mbox{s}.$$ Hence, the desired result follows readily by combining the above arguments. 
\end{proof} 

We shall need below 
\begin{lemma}\label{GN3S}
For any $s>0,$ $G_s^{3,N} \longrightarrow  G_s^3$ in probability, as $N\rightarrow \infty$, where 
$$ G_s^3= \frac{1}{c}s \int_{0}^{\infty}z \ \mu(dz).$$
 \end{lemma}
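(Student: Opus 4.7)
The plan is to reduce $G_s^{3,N}$ to a product of a deterministic term converging to $\int_0^\infty z\,\mu(dz)$ and the random integral $\int_0^s \mathbf{1}_{\{V_r^N=-1\}}\,dr$, which has already been shown in Lemma \ref{Ssur2} to converge to $s/2$ in probability.

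First I would identify $m_{1,N}\gamma_{1,N}$. From the definition \eqref{GamaN1N2} we have $\gamma_{1,N}=d_{1,N}q_0^{1,N}/m_{1,N}$, hence $m_{1,N}\gamma_{1,N}=d_{1,N}q_0^{1,N}$. Next, applying \eqref{Qn10} to $d_{1,N}q_0^{1,N}$ (as already done in the proof of Lemma \ref{d1Nq01N}), one obtains
\[
m_{1,N}\gamma_{1,N}\;=\;d_{1,N}q_0^{1,N}\;=\;\frac{L(N)}{N}\;=\;\int_0^\infty\!\Bigl(\frac{e^{-Nz}-1}{N}+z\Bigr)\mu(dz).
\]

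The second step is a dominated convergence argument. For each $z>0$, $(e^{-Nz}-1)/N\to 0$ as $N\to\infty$, while from \eqref{Born} we have $|e^{-Nz}-1|/N\le z$. The majorant $z$ is $\mu$-integrable on $(0,\infty)$: indeed assumption $({\bf H})$ gives $\int_0^1 r\,\mu(dr)<\infty$ (since $r\vee r^p=r$ for $r\in(0,1]$) and $\int_1^\infty r\,\mu(dr)\le\int_1^\infty r^p\,\mu(dr)<\infty$. Dominated convergence then yields
\[
\frac{L(N)}{N}\;\longrightarrow\;\int_0^\infty z\,\mu(dz)\qquad\text{as }N\to\infty.
\]

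To conclude, we combine these two observations with Lemma \ref{Ssur2}, which asserts that $\int_0^s \mathbf{1}_{\{V_r^N=-1\}}\,dr\to s/2$ in probability. Since
\[
G_s^{3,N}\;=\;\frac{2}{c}\,m_{1,N}\gamma_{1,N}\int_0^s \mathbf{1}_{\{V_r^N=-1\}}\,dr\;=\;\frac{2}{c}\cdot\frac{L(N)}{N}\cdot\int_0^s \mathbf{1}_{\{V_r^N=-1\}}\,dr,
\]
is the product of a deterministic convergent factor by a factor converging in probability, the product converges in probability to
\[
\frac{2}{c}\cdot\int_0^\infty z\,\mu(dz)\cdot\frac{s}{2}\;=\;\frac{s}{c}\int_0^\infty z\,\mu(dz)\;=\;G_s^3,
\]
as desired. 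There is no real obstacle here; the only slightly delicate point is verifying that $z$ is an admissible dominating function for the integral defining $L(N)/N$, which is precisely where assumption $({\bf H})$ (rather than merely \eqref{R2FINI}) is used.
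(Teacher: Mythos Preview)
Your proof is correct and follows essentially the same route as the paper: both identify $m_{1,N}\gamma_{1,N}=d_{1,N}q_0^{1,N}$ via \eqref{GamaN1N2}, show this deterministic factor converges to $\int_0^\infty z\,\mu(dz)$, and then invoke Lemma~\ref{Ssur2} for the random integral. The only difference is in the middle step: the paper rewrites $m_{1,N}\gamma_{1,N}=\int_0^\infty z\,\widetilde{\pi}_{1,N}(dz)$ (as in the proof of Lemma~\ref{d1Nq01N}) and then applies Proposition~\ref{PiTildversPiAA} with $\varphi(z)=z$, whereas you work directly with $L(N)/N$ and use dominated convergence. Your route is slightly more self-contained, since it bypasses the machinery of Proposition~\ref{PiTildversPiAA}; the paper's version has the advantage of reusing a result it has already proved.
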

\begin{proof}  
From \eqref{GN1GN3} and the proof of Lemma \ref{d1Nq01N}, we have 
$$ G_s^{3,N}=  \frac{2}{c}m_{1,N}\gamma_{1,N}\int_{0}^{s}\mathbf{1}_{\{V_{r}^{N}=-1\}}dr= \frac{2}{c}\left( \int_{0}^{\infty}z\widetilde{\pi}_{1,N}(dz) \right) \int_{0}^{s}\mathbf{1}_{\{V_{r}^{N}=-1\}}dr.$$
The result now follows from Proposition \ref{PiTildversPiAA} and Lemma \ref{Ssur2}.
\end{proof} 

 It is easy to obtain from Remarks \ref{MAPPP}, \eqref{Thiabi} and \eqref{GN2} that  
\begin{align}\label{GN2SS}
G_s^{2,N}= \frac{1}{c} \int_{0}^{s}  \int_{0}^{\infty} \mathbf{1}_{\{V_{r^{-}}^{N}=-1\}}  \left(z-\frac{1}{N} \right)  \wedge \left( \frac{c}{2}(L_{s}^{N}(H_{r}^{N})-L_{r}^{N}(H_{r}^{N})) \right) \widetilde{\Pi}^{1,N}(dr,dz). 
\end{align}
Recall that $\widetilde{\Pi}^{1,N}$ is a Poisson random measures on $\mathbb{R}_+^{2}$ with mean measures $2 ds \widetilde{\pi}_{1,N}(dz).$ We now want to estimate the stochastic process $G^{2,N}$. In the next statement, we shall write $\int_a^b$ to mean $\int_{(a,b]},$ except when $b=\infty,$ in which case $\int_{a}^{b}=\int_{(a,\infty)}.$ For $s>0$, define 
\begin{equation*}
Q_{s}^{N}=\frac{1}{c} \int_{0}^{s}  \int_a^b   \left(z-\frac{1}{N} \right)  \wedge \left( \frac{c}{2}(L_{s}^{N}(H_{r}^{N})-L_{r}^{N}(H_{r}^{N})) \right)   \widetilde{\Pi}^{1,N}(dr,dz).
\end{equation*}

Let $\tau$ be a stopping time,  with $\tau \le s$ $a.s.$ We first check that 
\begin{lemma}\label{ParZeng0}
For any $0\le a <b \le \infty,$ we have
\begin{equation*}
\E(Q_{\tau}^{N})= \E (R_{\tau}^{N}),
\end{equation*}
where
\begin{eqnarray*}
R_\tau^N=   \frac{2}{c} \int_{0}^{\tau}   dr \int_a^b  \left(z-\frac{1}{N} \right)  \wedge \left(\frac{c}{2}L_{\tau-r}^{N}(0)-\frac{1}{N} \right) \widetilde\pi_{1,N}(dz).
\end{eqnarray*}
\end{lemma}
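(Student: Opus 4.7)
The plan is to compute $\E[Q_\tau^N]$ by combining the compensation formula for the Poisson random measure $\widetilde\Pi^{1,N}$ (whose mean measure is $2\,dr\,\widetilde\pi_{1,N}(dz)$) with the strong Markov property of $(H^N, V^N)$. The main obstruction to a direct application of the compensation formula is that the integrand $(z - 1/N) \wedge \frac{c}{2}(L_\tau^N(H_r^N) - L_r^N(H_r^N))$ is not predictable in $r$: it depends on the trajectory of $H^N$ between times $r$ and $\tau$ through $L_\tau^N(H_r^N)$.

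To overcome this, I would first decompose $Q_\tau^N$ as a sum over the atoms of $\widetilde\Pi^{1,N}$, which by \eqref{Thiabi} and Remark \ref{MAPPP} occur at $(S_k^{1,N}, \Lambda_k^{1,N}/N)$ for $k \ge 1$, where the $S_k^{1,N}$ are the jump times of the Poisson process $P^{1,N}$, hence stopping times of the ambient filtration. The key step is then a strong Markov argument at these stopping times: conditionally on $\mathcal{F}_{S_k^{1,N}}$, the shifted process $(H^N_{S_k^{1,N}+u}, V^N_{S_k^{1,N}+u})_{u\ge 0}$ has the same law as an independent copy of $(H^N, V^N)$ started from its current state. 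Consequently, the local time at level $H_{S_k^{1,N}}^N$ accumulated between times $S_k^{1,N}$ and $\tau$ has the same conditional law given $\mathcal{F}_{S_k^{1,N}}$ as $L^N_{\tau - S_k^{1,N}}(0) - L^N_{0^+}(0)$; and since $L^N_{0^+}(0) = 2/(cN)$ by the scaling in \eqref{TL}, the quantity $\frac{c}{2}(L_\tau^N(H_r^N) - L_r^N(H_r^N))$ is conditionally distributed, given $\mathcal{F}_r$, as $\frac{c}{2} L_{\tau-r}^N(0) - \frac{1}{N}$.

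Combining this identification with the compensator expression for $\widetilde\Pi^{1,N}$, applied after conditioning at each atom, should yield
$$\E[Q_\tau^N] = \frac{2}{c}\,\E\!\left[\int_0^\tau dr \int_a^b \left(z - \frac{1}{N}\right)\wedge\left(\frac{c}{2}L_{\tau-r}^N(0) - \frac{1}{N}\right)\widetilde\pi_{1,N}(dz)\right] = \E[R_\tau^N].$$

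The main obstacle is to justify rigorously the interplay of the compensation formula with the strong Markov argument when $\tau$ is a stopping time, since $\tau - S_k^{1,N}$ is not in general a stopping time of the shifted filtration. The cleanest route would be to first establish the identity for a deterministic time $t$ in place of $\tau$, in which case the compensation formula applies directly and the strong Markov property can be used through the tower property at each $S_k^{1,N}$, and then to extend to bounded stopping times $\tau$ by approximating $\tau$ by stopping times with finitely many values and passing to the limit via dominated convergence, using the integrability provided by Lemma \ref{d1Nq01N} and assumption ${(\bf H)}$.
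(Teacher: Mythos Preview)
Your proposal is correct and follows essentially the same route as the paper: both write $Q_\tau^N$ as a sum over the atoms of $\widetilde\Pi^{1,N}$, invoke the strong Markov property at each atom to identify the conditional law of the local time increment $\frac{c}{2}(L_\tau^N(H_r^N)-L_r^N(H_r^N))$ with that of $\frac{c}{2}L^N_{\tau-r}(0) - \frac{1}{N}$, and then conclude via the compensator $2\,dr\,\widetilde\pi_{1,N}(dz)$. The only minor differences are that the paper first assumes $a>0$ (so the atoms form a locally finite enumerable family of stopping times) and then lets $a\downarrow 0$ by monotone convergence, and that the paper carries out the Markov step directly for the stopping time $\tau$ rather than first treating deterministic times and approximating as you suggest.
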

\begin{proof} \textit{FISRT STEP} Suppose  $a>0$. We can write the restriction of $ \widetilde\Pi^{1,N}$ to $[0,s]\times[a,+\infty)$ as $ \widetilde\Pi^{1,N}=\sum_{i=1}^{\infty} \delta_{(I_i^N,Z_i^N)}$, where $0<I_1^N<I_2^N\cdots$ are stopping times. Let $\mathcal{F}_s^N=\sigma\{ H_r^N, \ 0\leqslant r \leqslant s\}$. Since $Z_i^N$ is $\mathcal{F}_{I_i^N}^N$-mesurable, we have 
\begin{align*}
\E(Q_{\tau}^{N}) &= \frac{1}{c} \sum_{i=1}^{\infty} \E \left[   \mathbf{1}_{\{{I_i^N}\le \tau,\ a\le Z_i^N\le b\}}\left(Z_i^N-\frac{1}{N} \right)  \wedge \left(\frac{c}{2}(L_{\tau}^{N}(H_{I_i^N}^{N})-L_{I_i^N}^{N}(H_{I_i^N}^{N}))\right)  \right]
\\ &= \frac{1}{c} \sum_{i=1}^{\infty} \E \left[  \E \Big\{ \mathbf{1}_{\{{I_i^N}\le \tau,\ a\le Z_i^N\le b \}}  \left(Z_i^N-\frac{1}{N} \right)  \wedge \left(\frac{c}{2}(L_{\tau}^{N}(H_{I_i^N}^{N})-L_{I_i^N}^{N}(H_{I_i^N}^{N}))\right) \Big|\mathcal{F}_{I_i^N}^N \Big\} \right]
\\ &= \frac{1}{c} \sum_{i=1}^{\infty} \E \left[  \E \Big\{ \mathbf{1}_{\{{I_i^N}\le \tau,\ a\le z\le b \}}  \left(z-\frac{1}{N} \right)  \wedge\left(\frac{c}{2}(L_{\tau}^{N}(H_{I_i^N}^{N})-L_{I_i^N}^{N}(H_{I_i^N}^{N}))\right) \Big|\mathcal{F}_{I_i^N}^N \Big\} \Big| z=Z_i^N \right]
\\ &= \frac{1}{c} \sum_{i=1}^{\infty} \E \left[  \E \Big\{ \mathbf{1}_{\{ r \le \tau,\ a\le z\le b\}}  \left(z-\frac{1}{N} \right)  \wedge\left(\frac{c}{2} (L_{\tau-r}^{N}(0)-L_{0}^{N}(0))\right)  \Big\} \Big| z=Z_i^N,r=I_i^N \right]
\\ &= \frac{1}{c} \E \int_{0}^{\infty} \int_a^b \E \left[  \mathbf{1}_{\{ r \le \tau,\ a\le z\le b\}} \left(z-\frac{1}{N} \right)  \wedge  \left(\frac{c}{2}L_{\tau-r}^{N}(0)-\frac{1}{N}\right) \right]  \widetilde\Pi^{1,N}(dr,dz)
\\ &=  \frac{2}{c} \int_{0}^{\infty} \int_a^b  \E \left[ \mathbf{1}_{\{ r \le \tau,\ a\le z\le b\}}  \left(z-\frac{1}{N} \right)  \wedge \left(\frac{c}{2}L_{\tau-r}^{N}(0)-\frac{1}{N}\right) \right]  dr\widetilde\pi_{1,N}(dz)
\\ &= \frac{2}{c} \E \int_{0}^{\tau}  dr \int_a^b    \left(z-\frac{1}{N} \right)  \wedge\left( \frac{c}{2}L_{\tau-r}^{N}(0)-\frac{1}{N}\right) \widetilde\pi_{1,N}(dz)
\\ &= \E (R_{\tau}^{N}),
\end{align*}
where we have used the fact that for any $u\ge0,$ $L_s^N(u)-L_r^N(u) \stackrel{(d)}{=} L_{s-r}^N(0)-L_{0}^N(0)$ for the 4th equality, the fact that $L_{0}^{N}(0)=L_{0^{+}}^{N}(0)= 2/cN$  for the 5th equality.

\textit{SECOND STEP} : We now treat the case $a=0$. It follows from the above result that for any $k\ge1$, 
\begin{align*}
\frac{1}{c} \E  \int_{0}^{\tau}  \int_{1/k}^b   \left(z-\frac{1}{N} \right)  \wedge \left( \frac{c}{2}(L_{\tau}^{N}(H_{r}^{N})-L_{r}^{N}(H_{r}^{N})) \right)   \widetilde{\Pi}^{1,N}(dr,dz)&=\\  \frac{2}{c} \E \int_{0}^{\tau} dr \int_{1/k}^b  \left(z-\frac{1}{N} \right)  \wedge \left(\frac{c}{2}L_{\tau-r}^{N}(0)-\frac{1}{N} \right) \widetilde\pi_{1,N}(dz). 
 \end{align*}
We can take the limit in that identity as $k\rightarrow \infty$, thanks to the monotone convergence theorem.
\end{proof}

Recall the definition \eqref{GN2SS} of the stochastic process $G^{2,N}$. We have
\begin{lemma}\label{CORR}
For any stopping time $\tau$ such that $\tau \le s$ a.s, $s>0$ arbitrary, 
\begin{equation*} 
\sup_{N\ge1}  \E (G_{\tau}^{2,N}) \le  Cs.
\end{equation*}
where $C$ is an arbitrary constant. 
\end{lemma}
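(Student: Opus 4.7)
The strategy is to use the representation \eqref{GN2SS} of $G^{2,N}_s$, bound the integrand pointwise by the size $z$ of the atoms of $\widetilde{\Pi}^{1,N}$, and then reduce the problem to controlling $\E\int_0^s\int_0^\infty z\,\widetilde{\Pi}^{1,N}(dr,dz)$ uniformly in $N$ via Lemma \ref{d1Nq01N}.

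First I observe that the integrand in \eqref{GN2SS} is non-negative and bounded above by $z$. Indeed, from \eqref{Qn10} one has $q^{1,N}_1=0$ and \eqref{Ptheta} then forces $p^{1,N}_0=0$, so $\widetilde{\pi}_{1,N}$ is supported on $\{k/N:k\ge 1\}$; in particular every atom $(r,z)$ of the Poisson random measure $\widetilde{\Pi}^{1,N}$ satisfies $z\ge 1/N$, hence $z-1/N\ge 0$. Combined with the monotonicity in $s$ of $L^N_s(t)$ (so that $L^N_\tau(H^N_r)-L^N_r(H^N_r)\ge 0$ for $r\le\tau$), this shows that the integrand of \eqref{GN2SS} takes values in $[0,z-1/N]\subset[0,z]$. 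Using the assumption $\tau\le s$ a.s., I obtain the pathwise bound
\begin{equation*}
0\le G^{2,N}_\tau\le\frac1c\int_0^s\!\!\int_0^\infty z\,\widetilde{\Pi}^{1,N}(dr,dz).
\end{equation*}

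Next I take expectations and apply the compensation formula for the Poisson random measure $\widetilde{\Pi}^{1,N}$ with intensity $2\,dr\,\widetilde{\pi}_{1,N}(dz)$, which yields
\begin{equation*}
\E\bigl(G^{2,N}_\tau\bigr)\le\frac{2s}{c}\int_0^\infty z\,\widetilde{\pi}_{1,N}(dz).
\end{equation*}

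Finally, I invoke Lemma \ref{d1Nq01N}, which asserts precisely that $\int_0^\infty z\,\widetilde{\pi}_{1,N}(dz)\le \int_0^\infty z\,\mu(dz)$; the right-hand side is finite thanks to assumption ${(\bf H)}$. Setting $C:=(2/c)\int_0^\infty z\,\mu(dz)$ gives $\E(G^{2,N}_\tau)\le Cs$ uniformly in $N\ge1$, as required.

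There is no serious obstacle: the stopping-time hypothesis is used only through the bound $\tau\le s$, the indicator $\mathbf{1}_{\{V^N_{r^-}=-1\}}$ is dropped by monotonicity, and the whole argument rests on the crude inequality $(z-1/N)\wedge(\cdot)\le z$ together with the uniform first-moment control of $\widetilde{\pi}_{1,N}$ supplied by Lemma \ref{d1Nq01N}. The one point one must be careful about is verifying that the integrand in \eqref{GN2SS} is genuinely non-negative, which is why the observation $z\ge 1/N$ on the support of $\widetilde{\Pi}^{1,N}$ is highlighted above; alternatively one can argue directly from the original definition \eqref{GN2}, where $\Lambda^{1,N}_k\ge 1$ makes non-negativity manifest.
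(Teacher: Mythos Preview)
Your proof is correct and follows essentially the same approach as the paper: bound the integrand in \eqref{GN2SS} by $z$, use $\tau\le s$, pass to the mean measure $2\,dr\,\widetilde\pi_{1,N}(dz)$, and conclude via Lemma~\ref{d1Nq01N} and assumption~$(\mathbf{H})$. You simply supply more justification (non-negativity of the integrand, support of $\widetilde\pi_{1,N}$) than the paper's terse two-line argument.
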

\begin{proof}
From \eqref{GN2SS}, we have that  
\begin{align*}
\E(G_{\tau}^{2,N}) &\le \frac{2}{c} \int_{0}^{s}   dr \int_{\mathbb{R}_+} z \widetilde\pi_{1,N}(dz) \\
 &\le Cs,
 \end{align*}
where we have used assumption ${(\bf H)}$ for the last inequality. 
\end{proof}

We need the following 
\begin{lemma}\label{3.50}
For any $s>0$, there exists $C>0$ such that 
\begin{equation*}
\sup_{N\ge1}\E \left( L_{s}^{N}(0) \right) \le C s.
\end{equation*}
\end{lemma}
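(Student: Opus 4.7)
The plan is to exploit the decomposition \eqref{HsimaN}, namely $H_s^N = \Sigma_s^N + \tfrac{1}{2}L_s^N(0)$, which upon taking expectations gives
$$\tfrac{1}{2}\E(L_s^N(0)) = \E(H_s^N) - \E(\Sigma_s^N).$$
Since $L_s^N(0)\ge 0$, it suffices to bound each of $\E(H_s^N)$ and $-\E(\Sigma_s^N)$ by a linear-in-$s$ quantity uniformly in $N$. The first bound is immediate from Proposition \ref{supHN}: $\E(H_s^N)\le \E(\sup_{0\le r\le s}H_r^N)\le Cs$.

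The main task is therefore to control $-\E(\Sigma_s^N)$ using the explicit expression \eqref{WNSTB}. Taking expectations term by term, I would first observe that the three local martingales $\mathcal{M}^N$, $\widetilde{\mathcal{M}}^{1,N}$ and $\widetilde{\mathcal{M}}^N$ are in fact square integrable true martingales (by the Corollary preceding Lemma \ref{Martingale}, together with the estimate \eqref{mart}), so that their expectations vanish. The purely deterministic/bounded pieces $\tfrac{1}{2cN}-\tfrac{V_s^N}{2cN}-\tfrac{1}{2}L_{0^+}^N(0)$ are of order $1/N$ and hence uniformly bounded in $N$. The two signed remainders $F_s^N$ and $J^N(s)$ are handled by Lemma \ref{SupKN1S}, which gives $|\E(F_s^N)|\le \E|F_s^N|\le C_4 s$, and by the explicit formula \eqref{UNS}, which yields $|\E(J^N(s))|\le \tfrac{2(\alpha+\beta)}{c}s$.

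The last two pieces are $-\mathbf{Q}^+(A_s^{N,+})$ and $+K_s^{N,2}$ in $-\Sigma_s^N$. Since $\mathbf{Q}^+\ge 0$, the former contributes nonpositively to $-\E(\Sigma_s^N)$ and may simply be dropped. The latter is nonnegative and controlled by Lemma \ref{LemporK2}, giving $\E(K_s^{N,2})\le Cs$. Summing all the contributions yields $-\E(\Sigma_s^N)\le C's$ uniformly in $N$, and combined with the bound on $\E(H_s^N)$ this proves the lemma (with the constant $C$ possibly depending on $s$ so as to absorb the $O(1/N)$ remainder).

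There is no genuine analytic obstacle here — all the delicate estimates have already been established in the preceding lemmas, and the argument is essentially bookkeeping. The one point worth highlighting is that the local martingale $\mathcal{M}^{1,N}$, which is not known to be a true martingale, never needs to be integrated directly: it has been absorbed (together with $\Phi^{1,N}$) into the random walk term $F_s^N$, whose expectation is controlled via the $L^p$-martingale bounds of Corollary \ref{CORR2} and Lemma \ref{SupKN1S}. This is the structural reason why the bound survives despite the infinite second moment of $\Lambda^{1,N}$.
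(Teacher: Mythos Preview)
Your proposal is correct and follows essentially the same route as the paper: the paper's one-line proof (``immediate consequence of \eqref{HsimaN} and the arguments in the proof of Proposition \ref{supHN}'') is precisely the decomposition $\tfrac12 L_s^N(0)=H_s^N-\Sigma_s^N$ together with the term-by-term estimates already assembled for Proposition \ref{supHN}. Your version is more explicit in tracking signs (dropping $-\mathbf Q^+\le0$, using zero mean of the true martingales) rather than passing through $\sup|\Sigma^N|$, but the content is the same; your closing remark that $\mathcal M^{1,N}$ is handled only through $F^N$ via the $L^p$ bounds is exactly the mechanism the paper relies on.
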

\begin{proof}
This is an immediate consequence of \eqref{HsimaN} and the arguments in the proof of Proposition \ref{supHN}. 
\end{proof}

We are now in a position to prove tightness of the stochastic process $G^{2,N}$. To this end, let $\{ \tau_N, \ N\ge 1\}$ be a sequence of stopping times in $[0,s]$. We have the  
\begin{proposition}\label{TightKN2}
For any $s>0$, we have 
\begin{equation*}
 \lim_{t \rightarrow 0} \  \limsup_{N \rightarrow \infty} \ \E \left( G_{\tau_N+t}^{2,N}- G_{\tau_N}^{2,N} \right) =0.
\end{equation*}
\end{proposition}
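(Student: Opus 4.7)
The plan is to split the non-negative increment as
\[
G^{2,N}_{\tau_N+t}-G^{2,N}_{\tau_N}=A^N+B^N,
\]
where $A^N$ collects the jumps of $\widetilde\Pi^{1,N}$ falling in the new time slot $(\tau_N,\tau_N+t]$ (with the upper time $\tau_N+t$ kept in the local-time term), and $B^N$ captures the growth of the local-time term $\frac{c}{2}(L^N_\cdot(H^N_r)-L^N_r(H^N_r))$ when the upper time is shifted from $\tau_N$ to $\tau_N+t$ on the old range $[0,\tau_N]$. Since $s\mapsto L^N_s(u)$ is non-decreasing and $y\mapsto(z-1/N)\wedge y$ is non-decreasing, both $A^N$ and $B^N$ are non-negative, so it suffices to establish $\lim_{t\to 0}\limsup_N\E(A^N)=0$ and $\lim_{t\to 0}\limsup_N\E(B^N)=0$.

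For $A^N$, crudely bounding the integrand by $z$ and using the compensator of $\widetilde\Pi^{1,N}$ together with Lemma \ref{d1Nq01N} gives
\[
\E(A^N)\le \frac{2t}{c}\int_0^\infty z\,\widetilde\pi_{1,N}(dz)\le Ct.
\]
For $B^N$, the $1$-Lipschitz property of $y\mapsto(z-1/N)\wedge y$ yields
\[
0\le B^N\le \frac{1}{c}\int_0^{\tau_N}\!\!\int_0^\infty \mathbf{1}_{\{V^N_{r^-}=-1\}}\;z\wedge \frac{c}{2}\bigl(L^N_{\tau_N+t}(H^N_r)-L^N_{\tau_N}(H^N_r)\bigr)\,\widetilde\Pi^{1,N}(dr,dz).
\]
Applying the conditioning argument of Lemma \ref{ParZeng0} (jump by jump in $\widetilde\Pi^{1,N}$), and then the strong Markov property at $\tau_N$ to replace the post-$\tau_N$ local-time increment at height $H^N_r$ by an independent copy $\ell^N(t):=\widetilde L^N_t(0)-\widetilde L^N_0(0)$, leads to
\[
\E(B^N)\le \frac{2}{c}\E\int_0^{\tau_N}\!\!dr\int_0^\infty z\wedge \frac{c}{2}\ell^N(t)\,\widetilde\pi_{1,N}(dz).
\]

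To conclude, split the inner integral at a level $A>0$:
\[
\int_0^\infty z\wedge \frac{c}{2}\ell^N(t)\,\widetilde\pi_{1,N}(dz)\le \int_0^A z\,\widetilde\pi_{1,N}(dz)+\frac{c}{2}\ell^N(t)\,\widetilde\pi_{1,N}\bigl((A,\infty)\bigr).
\]
The Markov-type bound $\widetilde\pi_{1,N}((A,\infty))\le A^{-1}\int_A^\infty z\,\widetilde\pi_{1,N}(dz)\le C/A$ (Lemma \ref{d1Nq01N}) combined with the local-time estimate $\E\ell^N(t)\le Ct$ (obtained from Lemma \ref{3.50} via the strong Markov property) gives
\[
\E(B^N)\le Cs\left(\int_0^A z\,\widetilde\pi_{1,N}(dz)+\frac{t}{A}\right).
\]
By Proposition \ref{PiTildversPiAA}, $\int_0^A z\,\widetilde\pi_{1,N}(dz)\to \int_0^A z\,\mu(dz)$ as $N\to\infty$, so
\[
\limsup_{N\to\infty}\E(B^N)\le Cs\left(\int_0^A z\,\mu(dz)+\frac{t}{A}\right).
\]
Choosing $A=\sqrt{t}$ makes both terms on the right vanish as $t\to 0$ (the first by $\int_0^1 z\,\mu(dz)<\infty$ and dominated convergence), and combining with the bound on $\E(A^N)$ yields the claim.

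\textbf{Main obstacle.} The delicate step is the justification of the expectation formula displayed for $B^N$: the integrand involves $L^N_{\tau_N+t}(H^N_r)$, which is not $\mathcal{F}^N_r$-measurable, so one cannot apply Fubini for compensated Poisson measures directly. One needs a double conditioning—first at each Poisson jump time $I^N_i\wedge\tau_N$ (as in the proof of Lemma \ref{ParZeng0}) and then at the stopping time $\tau_N$—together with the identity in distribution for local-time increments, and care is required to handle the relative ordering of $I^N_i$, $\tau_N$ and $\tau_N+t$.
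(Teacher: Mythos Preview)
Your decomposition $G^{2,N}_{\tau_N+t}-G^{2,N}_{\tau_N}=A^N+B^N$ and the treatment of $A^N$ are exactly what the paper does (there $A^N,B^N$ are called $J_1^{N,\tau_N,t},J_2^{N,\tau_N,t}$). For $B^N$ the paper takes a slightly different but equivalent route: instead of first applying the Lipschitz bound and then invoking a second strong-Markov step at $\tau_N$, it applies the Lemma~\ref{ParZeng0} substitution to \emph{each} of the two terms in the difference, subtracts, performs the change of variable $r\mapsto\tau_N-r$, and only then uses $(z'\wedge a)-(z'\wedge b)\le z'\wedge(a-b)$. This yields directly
\[
\E(B^N)\le \frac{2}{c}\int_0^s dr\int_0^\infty (z-\tfrac1N)\wedge\bigl[\tfrac{c}{2}\E L^N_t(0)\bigr]\,\widetilde\pi_{1,N}(dz)
\le \frac{2s}{c}\int_0^\infty (z\wedge Ct)\,\widetilde\pi_{1,N}(dz),
\]
and Proposition~\ref{PiTildversPiAA} applies to the \emph{continuous} function $\varphi(z)=z\wedge Ct$ (which satisfies $|\varphi|\le Cz$), giving $\limsup_N\E(B^N)\le \frac{2s}{c}\int_0^\infty(z\wedge Ct)\,\mu(dz)\to 0$ by dominated convergence. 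This avoids both your split at level $A$ and the extra conditioning at $\tau_N$ that you correctly flag as delicate.

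Two small remarks on your version: (i) your appeal to Proposition~\ref{PiTildversPiAA} for $\int_0^A z\,\widetilde\pi_{1,N}(dz)\to\int_0^A z\,\mu(dz)$ is not literally valid since $z\mathbf 1_{(0,A]}$ is discontinuous---you would need to sandwich by continuous cutoffs (or just use $z\wedge Ct$ as above and skip the split); (ii) your strong-Markov replacement of $L^N_{\tau_N+t}(H^N_r)-L^N_{\tau_N}(H^N_r)$ by an independent copy $\ell^N(t)$ of the level-$0$ local time needs the observation that the process at time $\tau_N$ is generically \emph{not} at level $H^N_r$, so one only gets a stochastic upper bound, not an identity in law. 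The paper's ordering (substitute first, subtract after) sidesteps this.
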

\begin{proof}
From \eqref{KN2SS}, we have 
\begin{align*}
G_{\tau_N+t}^{2,N} -G_{\tau_N}^{2,N}&= \frac{1}{c} \int_{0}^{\tau_N+t}  \int_{\mathbb{R}_+} \mathbf{1}_{\{V_{r^{-}}^{N}=-1\}}  \left(z-\frac{1}{N} \right)  \wedge \left( \frac{c}{2}(L_{\tau_N+t}^{N}(H_{r}^{N})-L_{r}^{N}(H_{r}^{N})) \right)  \widetilde\Pi^{1,N}(dr,dz)\\
&-\frac{1}{c} \int_{0}^{\tau_N}  \int_{\mathbb{R}_+} \mathbf{1}_{\{V_{r^{-}}^{N}=-1\}}  \left(z-\frac{1}{N} \right)  \wedge \left( \frac{c}{2}(L_{\tau_N}^{N}(H_{r}^{N})-L_{r}^{N}(H_{r}^{N})) \right)   \widetilde\Pi^{1,N}(dr,dz)\\
&=\frac{1}{c} \int_{\tau_N}^{\tau_N+t}  \int_{\mathbb{R}_+} \mathbf{1}_{\{V_{r^{-}}^{N}=-1\}}  \left(z-\frac{1}{N} \right)  \wedge \left( \frac{c}{2}(L_{\tau_N+t}^{N}(H_{r}^{N})-L_{r}^{N}(H_{r}^{N})) \right)  \widetilde\Pi^{1,N}(dr,dz)\\
&+\frac{1}{c} \int_{0}^{\tau_N}  \int_{\mathbb{R}_+} \mathbf{1}_{\{V_{r^{-}}^{N}=-1\}} \bigg[   \left(z-\frac{1}{N} \right)  \wedge \left( \frac{c}{2}(L_{\tau_N+t}^{N}(H_{r}^{N})-L_{r}^{N}(H_{r}^{N})) \right) \\
&-  \left(z-\frac{1}{N} \right)  \wedge \left( \frac{c}{2}(L_{\tau_N}^{N}(H_{r}^{N})-L_{r}^{N}(H_{r}^{N})) \right) \bigg] \widetilde\Pi^{1,N}(dr,dz)\\
&=J_1^{N,\tau_N, t}+ J_2^{N,\tau_N, t}
\end{align*}
It follows that
\begin{equation*}
0 \le G_{\tau_N+t}^{2,N}- G_{\tau_N}^{2,N}  \le J_1^{N,\tau_N, t} + J_2^{N,\tau_N, t}.
\end{equation*}
The Proposition is now a consequence of two next lemmas.
\end{proof}
\begin{lemma}
For any $s>0$, 
\begin{equation*}
 \lim_{t \rightarrow 0} \  \limsup_{N \rightarrow \infty} \ \E \left(J_1^{N,\tau_N, t}\right)=0.
\end{equation*}
\end{lemma}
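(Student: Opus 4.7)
The plan is to bound $J_1^{N,\tau_N,t}$ by a simpler expression in which the minimum is replaced by its first argument, then compensate the resulting Poisson integral over the (bounded) random time interval $(\tau_N,\tau_N+t]$, and finally pass to the limit via Lemma \ref{d1Nq01N} and the integrability provided by assumption $(\bf H)$.

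First, using the elementary inequality $a\wedge b\le a$ with $a=z-1/N$ (and then $z-1/N\le z$), we obtain the pointwise bound
\begin{equation*}
0\le J_1^{N,\tau_N,t}\le \frac{1}{c}\int_{\tau_N}^{\tau_N+t}\int_{\mathbb{R}_+}z\,\widetilde\Pi^{1,N}(dr,dz).
\end{equation*}
The integrand is nonnegative and predictable, $\tau_N+t\le s+t$ is a bounded stopping time, and the intensity measure of $\widetilde\Pi^{1,N}$ is $2\,dr\,\widetilde\pi_{1,N}(dz)$ with $\int z\,\widetilde\pi_{1,N}(dz)<\infty$ by Lemma \ref{d1Nq01N}. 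A standard application of the compensation formula for Poisson random measures (argued exactly as in the first step of the proof of Lemma \ref{ParZeng0}, with $\tau_N$ and $\tau_N+t$ in place of $\tau$) then gives
\begin{equation*}
\E\!\left[\int_{\tau_N}^{\tau_N+t}\int_{\mathbb{R}_+}z\,\widetilde\Pi^{1,N}(dr,dz)\right]
=2\,\E[(\tau_N+t)-\tau_N]\int_{\mathbb{R}_+}z\,\widetilde\pi_{1,N}(dz)
=2t\int_{\mathbb{R}_+}z\,\widetilde\pi_{1,N}(dz).
\end{equation*}

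Combining the two displays, we get
\begin{equation*}
\E\!\left(J_1^{N,\tau_N,t}\right)\le\frac{2t}{c}\int_{\mathbb{R}_+}z\,\widetilde\pi_{1,N}(dz)\le\frac{2t}{c}\int_{\mathbb{R}_+}z\,\mu(dz),
\end{equation*}
where the second inequality is again Lemma \ref{d1Nq01N}. By assumption $(\bf H)$, $\int_0^\infty z\,\mu(dz)<\infty$, hence
\begin{equation*}
\limsup_{N\to\infty}\ \E\!\left(J_1^{N,\tau_N,t}\right)\le\frac{2t}{c}\int_{\mathbb{R}_+}z\,\mu(dz),
\end{equation*}
and letting $t\to 0$ yields the claim. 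The only slightly delicate point is the compensation formula on the stochastic interval $(\tau_N,\tau_N+t]$; this is handled by decomposing $\int_{\tau_N}^{\tau_N+t}=\int_0^{\tau_N+t}-\int_0^{\tau_N}$ and applying optional sampling (both stopping times are bounded by $s+t$), or equivalently by reproducing verbatim the conditioning-on-$\mathcal{F}^N_{I_i^N}$ argument used in Lemma \ref{ParZeng0}, which is the main technical input we will reuse.
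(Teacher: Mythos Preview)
Your proof is correct and follows essentially the same route as the paper: bound the integrand by $z$, compensate the Poisson integral over the length-$t$ stochastic interval, and invoke Lemma~\ref{d1Nq01N} together with assumption $(\bf H)$ to obtain the uniform-in-$N$ bound $\frac{2t}{c}\int_0^\infty z\,\mu(dz)$. The only cosmetic difference is that the paper first applies the Lemma~\ref{ParZeng0}-type identity (replacing the local-time increment by $\frac{c}{2}L^{N}_{\tau_N+t-r}(0)-\frac{1}{N}$) and then bounds by $z$, whereas you bound by $z$ before compensating; your order of operations is slightly more direct and avoids carrying the local-time term through an intermediate step.
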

\begin{proof}
From an adaptation of the argument of Lemma \ref{ParZeng0}, we deduce that 
\begin{align*}
\E \left( J_1^{N,\tau_N, t}\right)&\le \frac{2}{c} \ \E \int_{\tau_N}^{\tau_N+t}  dr \int_{\mathbb{R}_+}  \left(z-\frac{1}{N} \right)  \wedge \left(\frac{c}{2}L_{\tau_N+t-r}^{N}(0)-\frac{1}{N}  \right) \widetilde\pi_{1,N}(dz)\\
&=  \frac{2}{c}\ \E \int_{0}^{t}  dr \int_{\mathbb{R}_+}  z \ \widetilde\pi_{1,N}(dz).
\end{align*}
The rest is entirely similar to the proof of Lemma \ref{CORR}.
\end{proof}

\begin{lemma}
For any $s>0$, 
\begin{equation*}
 \lim_{t \rightarrow 0} \  \limsup_{N \rightarrow \infty} \  \E \left( J_2^{N,\tau_N, t}\right)= 0.
\end{equation*}
\end{lemma}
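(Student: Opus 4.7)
My plan is to closely parallel Lemma \ref{CORR} and the conditioning argument in Lemma \ref{ParZeng0}. First, since $s\mapsto L_s^N(H_r^N)$ is non-decreasing, the bracketed expression in the integrand defining $J_2^{N,\tau_N,t}$ is non-negative, and an elementary case analysis of $(c\wedge B)-(c\wedge A)$ for $0\le A\le B$ yields the pointwise upper bound
\begin{equation*}
z\wedge \frac{c}{2}\bigl(L_{\tau_N+t}^N(H_r^N)-L_{\tau_N}^N(H_r^N)\bigr).
\end{equation*}
Repeating the $\mathcal{F}_{I_i^N}^N$-conditioning of Lemma \ref{ParZeng0} at each Poisson atom $(I_i^N,Z_i^N)$ of $\widetilde{\Pi}^{1,N}$, and using that past the stopping time $I_i^N$ the local-time increment at level $H_{I_i^N}^N$ is distributionally equivalent to the local time at $0$ of an independent copy of $H^N$, I obtain
\begin{equation*}
\E\bigl(J_2^{N,\tau_N,t}\bigr)\le \frac{2}{c}\,\E\int_0^{\tau_N}\!\!dr\int_0^\infty z\wedge \frac{c}{2}\bigl(L_{\tau_N+t-r}^N(0)-L_{\tau_N-r}^N(0)\bigr)\widetilde\pi_{1,N}(dz).
\end{equation*}

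Next I substitute $u=\tau_N-r$, bound $\tau_N\le s$, and split the $z$-integration at an arbitrary $\epsilon>0$, using $z\wedge X\le z$ on $[0,\epsilon]$ and $z\wedge X\le X$ on $(\epsilon,\infty)$, to get
\begin{equation*}
\E\bigl(J_2^{N,\tau_N,t}\bigr)\le \frac{2s}{c}\int_0^\epsilon z\,\widetilde\pi_{1,N}(dz)+\widetilde\pi_{1,N}((\epsilon,\infty))\int_0^s\!\bigl(\E L_{u+t}^N(0)-\E L_u^N(0)\bigr)du.
\end{equation*}
Proposition \ref{PiTildversPiAA} then gives $\limsup_N\int_0^\epsilon z\,\widetilde\pi_{1,N}(dz)\le \int_0^\epsilon z\,\mu(dz)$, Lemma \ref{d1Nq01N} together with Markov's inequality gives the uniform tail bound $\widetilde\pi_{1,N}((\epsilon,\infty))\le \epsilon^{-1}\int_0^\infty z\,\mu(dz)$, and a change of variable combined with Lemma \ref{3.50} gives
\begin{equation*}
\int_0^s\bigl(\E L_{u+t}^N(0)-\E L_u^N(0)\bigr)du=\int_s^{s+t}\!\E L_v^N(0)\,dv-\int_0^t\!\E L_v^N(0)\,dv\le Ct(s+t).
\end{equation*}
Hence $\limsup_{N\to\infty}\E(J_2^{N,\tau_N,t})\le \tfrac{2s}{c}\int_0^\epsilon z\,\mu(dz)+C\epsilon^{-1}t(s+t)$; letting first $t\to 0$ and then $\epsilon\to 0$ (using assumption $(\mathbf{H})$ so that $\int z\,\mu(dz)<\infty$) will conclude.

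The most delicate step, which I expect to be the main technical obstacle, is the strong-Markov reduction: both $\tau_N$ and $\tau_N+t$ are stopping times and the increment $L_{\tau_N+t}^N(H_{I_i^N}^N)-L_{\tau_N}^N(H_{I_i^N}^N)$ must be controlled at both endpoints simultaneously. The argument of Lemma \ref{ParZeng0} should extend because it suffices to condition on $\mathcal{F}_{I_i^N}^N$: past the Poisson jump time $I_i^N$, both $L_{\tau_N}^N(H_{I_i^N}^N)-L_{I_i^N}^N(H_{I_i^N}^N)$ and $L_{\tau_N+t}^N(H_{I_i^N}^N)-L_{I_i^N}^N(H_{I_i^N}^N)$ can each be identified in distribution with local-time increments at level $0$ of an independent copy of $H^N$, and differencing yields the claimed form. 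Everything else is the familiar truncation-and-Fubini bookkeeping encountered in Lemma \ref{CORR}.
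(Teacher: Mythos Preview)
Your proposal is correct and follows essentially the same route as the paper: both proofs use the pointwise bound $(c\wedge B)-(c\wedge A)\le c\wedge(B-A)$ on the bracketed integrand, then invoke the strong-Markov/conditioning argument of Lemma~\ref{ParZeng0} to reduce the level-$H_r^N$ local-time increments to level-$0$ increments, and finally control these via Lemma~\ref{3.50} and Proposition~\ref{PiTildversPiAA}. The only noticeable difference is in the concluding estimate: the paper pulls the expectation inside the minimum by concavity, uses the identity $\E\{L_{t+r}^N(0)-L_r^N(0)\}=\E L_t^N(0)$, and applies Proposition~\ref{PiTildversPiAA} directly to the continuous function $z\mapsto z\wedge Ct$ followed by dominated convergence, whereas you split the $z$-integral at $\epsilon$ and use the telescoping identity $\int_0^s(\E L_{u+t}^N(0)-\E L_u^N(0))\,du=\int_s^{s+t}\E L_v^N(0)\,dv-\int_0^t\E L_v^N(0)\,dv$; your variant is slightly more elementary in that it sidesteps the stationarity-of-increments claim for $L^N(0)$, at the small cost of needing a continuous-approximation argument to justify $\limsup_N\int_0^\epsilon z\,\widetilde\pi_{1,N}(dz)\le\int_0^\epsilon z\,\mu(dz)$ from Proposition~\ref{PiTildversPiAA}.
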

\begin{proof}
We have 
\begin{align}\label{JN2}
\E \left( J_2^{N,\tau_N, t}\right)  &\le  \frac{1}{c} \ \E \int_{0}^{\tau_N}   \int_{\mathbb{R}_+}   \bigg[    \left(z-\frac{1}{N} \right)  \wedge \left( \frac{c}{2}(L_{\tau_N+t}^{N}(H_{r}^{N})-L_{r}^{N}(H_{r}^{N})) \right) \nonumber\\
&-  \left(z-\frac{1}{N} \right)  \wedge \left( \frac{c}{2}(L_{\tau_N}^{N}(H_{r}^{N})-L_{r}^{N}(H_{r}^{N})) \right) \bigg]  \widetilde\Pi^{1,N}(dr,dz) \nonumber \\
&= \E (J_3^{N,\tau_N, t}).
\end{align}
From an adaptation of the argument of Lemma \ref{ParZeng0}, we deduce that 
\begin{align*}\label{J3N}
\E (J_3^{N,\tau_N, t}) =&  \frac{2}{c} \E \int_{0}^{\tau_N} \! dr \! \int_{\mathbb{R}_+} \!  \bigg[  \!  \left(z-\frac{1}{N} \right) \wedge   \left(\frac{c}{2}L_{\tau_N+t-r}^{N}(0)-\frac{1}{N}\right)\\
&-   \left(z-\frac{1}{N} \right) \wedge   \left(\frac{c}{2}L_{\tau_N-r}^{N}(0)-\frac{1}{N}\right) \! \bigg]  \widetilde\pi_{1,N}(dz)  \nonumber
\\
=&  \frac{2}{c} \ \E \int_{0}^{\tau_N}  dr \! \int_{\mathbb{R}_+} \!  \bigg[  \left(z-\frac{1}{N} \right) \wedge  \left(\frac{c}{2}L_{t+r}^{N}(0)-\frac{1}{N}\right)
\\&- \left(z-\frac{1}{N} \right) \wedge  \left( \frac{c}{2}L_{r}^{N}(0)-\frac{1}{N}\right) \! \bigg] \widetilde\pi_{1,N}(dz) \nonumber
\\
\le&  \frac{2}{c} \ \E \int_{0}^{\tau_N}  dr \! \int_{\mathbb{R}_+} \!  \left(z-\frac{1}{N} \right) \wedge \left[\frac{c}{2} \left(L_{t+r}^{N}(0)- L_{r}^{N}(0) \right) \right]    \widetilde\pi_{1,N}(dz) 
 \nonumber \\
\le&   \frac{2}{c} \int_{0}^{s}  dr \int_{\mathbb{R}_+} \!  \left(z-\frac{1}{N} \right) \wedge \left[\frac{c}{2} \E\Big\{L_{t+r}^{N}(0)- L_{r}^{N}(0) \Big\} \right]   \widetilde\pi_{1,N}(dz) 
\nonumber \\
=&  \frac{2}{c} \int_{0}^{s}  dr \int_{\mathbb{R}_+} \!  \left(z-\frac{1}{N} \right) \wedge \left[\frac{c}{2} \E\Big(L_{t}^{N}(0) \Big) \right]  \widetilde\pi_{1,N}(dz) 
\end{align*}
It follows from \eqref{JN2} and Lemma \ref{3.50}  that
\begin{align*}
\E \left( J_2^{N,\tau_N, t}\right) & \le \frac{2}{c} \int_{0}^{s}  dr \int_{\mathbb{R}_+} \!  \left(z \wedge Ct\right)   \widetilde\pi_{1,N}(dz).  
 \end{align*}
Thanks to Proposition \ref{PiTildversPiAA}, by taking the limit on both side, we then obtain   
\begin{align*}
\limsup_{N \rightarrow \infty} \E \left( J_2^{N,\tau_N, t}\right) & \le \frac{2}{c} \int_{0}^{s}  dr \int_{\mathbb{R}_+} \!  \left(z \wedge Ct\right)   \mu(dz).  
 \end{align*}
Now, from the dominated convergence theorem, we deduce that 
\begin{equation*}
 \lim_{t \rightarrow 0} \  \limsup_{N \rightarrow \infty} \  \E \left( J_2^{N,\tau_N, t}\right)=0,
\end{equation*}
\end{proof}

Thus, Lemma \ref{CORR} combined with Proposition \ref{TightKN2} and Aldous' tightness criterion in \cite{aldous1978stopping} (see e.g. Theorem 16.10 in \cite{patrick1999convergence}) leads to 
\begin{corollary}
The sequence $\{ G^{2,N}, \ N\ge 1\}$ is tight in $\mathcal{D}([0,\infty)).$
\end{corollary}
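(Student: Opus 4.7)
The plan is to verify the two hypotheses of Aldous' tightness criterion in $\mathcal{D}([0,\infty))$, namely a compact containment condition for the one-dimensional marginals and a modulus of continuity bound along stopping times. Both ingredients are already packaged in the two preceding results, so the proof is essentially a short assembly.

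First I would establish compact containment. Applying Lemma \ref{CORR} to the deterministic stopping time $\tau\equiv t$ (with the outer horizon $s=t$), we obtain $\sup_{N\ge 1}\mathbb{E}(G_{t}^{2,N})\le C t$, so Markov's inequality yields
\begin{equation*}
\sup_{N\ge 1}\mathbb{P}\bigl(G_{t}^{2,N}>K\bigr)\le \frac{C t}{K}\longrightarrow 0 \quad \text{as } K\to\infty.
\end{equation*}
This shows that $\{G_{t}^{2,N}\}_{N\ge 1}$ is tight in $\mathbb{R}_+$ for every fixed $t>0$.

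Next I would verify the Aldous jump condition. The key structural observation is that from \eqref{GN2SS}, $G^{2,N}$ is a pathwise nondecreasing process, since it is the integral of a nonnegative integrand against a positive random measure. Consequently, for any sequence of stopping times $\{\tau_N\}$ with $\tau_N\le s$ and every $t>0$,
\begin{equation*}
\bigl|G_{\tau_N+t}^{2,N}-G_{\tau_N}^{2,N}\bigr|=G_{\tau_N+t}^{2,N}-G_{\tau_N}^{2,N}\ge 0.
\end{equation*}
For arbitrary $\varepsilon>0$, Markov's inequality then gives
\begin{equation*}
\mathbb{P}\bigl(|G_{\tau_N+t}^{2,N}-G_{\tau_N}^{2,N}|>\varepsilon\bigr)\le \frac{1}{\varepsilon}\,\mathbb{E}\bigl(G_{\tau_N+t}^{2,N}-G_{\tau_N}^{2,N}\bigr),
\end{equation*}
and Proposition \ref{TightKN2} shows the right-hand side satisfies
\begin{equation*}
\lim_{t\to 0}\limsup_{N\to\infty}\mathbb{E}\bigl(G_{\tau_N+t}^{2,N}-G_{\tau_N}^{2,N}\bigr)=0.
\end{equation*}
Taking a supremum over all $\{\tau_N\}$ bounded by $s$, this is precisely the Aldous criterion as stated in Theorem 16.10 of \cite{patrick1999convergence}.

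There is no real obstacle at this stage: the two preceding results were designed exactly to match the hypotheses of Aldous' criterion, and the monotonicity of $G^{2,N}$ is what makes the $L^1$-bound on increments upgrade to a control in probability without any extra work. All of the genuine difficulty already sits in Lemma \ref{CORR} and Proposition \ref{TightKN2}, where the interplay between the Poisson measure $\widetilde\Pi^{1,N}$, the compensator $\widetilde\pi_{1,N}$, and the estimate on local times from Lemma \ref{3.50} had to be handled carefully.
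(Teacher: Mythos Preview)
Your proof is correct and follows exactly the approach the paper indicates: combine Lemma~\ref{CORR} (for compact containment of marginals) and Proposition~\ref{TightKN2} (for the Aldous stopping-time condition), then invoke Aldous' criterion as in Theorem~16.10 of \cite{patrick1999convergence}. You make explicit what the paper leaves implicit---namely the monotonicity of $G^{2,N}$ and the use of Markov's inequality to pass from the $L^1$ increment bound to convergence in probability---but these are routine details, and the argument is the same.
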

Recalling \eqref {Recall}, we can rewrite \eqref{2DFHN11} in the form 
\begin{equation}\label{THNGB} 
 H_{s}^{N}= \mathcal{B}_s^N + \frac{1}{2}L_{s}^{N}(0)
\end{equation}
where
\begin{align}\label{MathBNs}
 \mathcal{B}_s^N=  \frac{1}{2cN}-\frac{V_{s}^{N}}{2cN}+  G_s^{1,N}+G_s^{2,N}-G_s^{3,N}+ \mathcal{M}_s^{N}-\widetilde{\mathcal{M}}_s^{1,N}-\widetilde{\mathcal{M}}_s^{N} -\frac{1}{2} L_{0^{+}}^{N}(0)+ J^N(s).
  \end{align}

  We have the following result.
\begin{lemma}
\label{TIGRSs} 
The sequence $\{\mathcal{B}^N, \ N\ge  1\}$ is tight in $\mathcal{D}([0,\infty))$.
\end{lemma}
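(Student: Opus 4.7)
The plan is to show that each summand in the decomposition \eqref{MathBNs} of $\mathcal{B}^N$ is tight in $\mathcal{D}([0,\infty))$, with all but one of them being in fact $\mathcal{C}$-tight (or converging uniformly on compacts to zero or to a continuous limit). Once this is done, tightness of $\mathcal{B}^N$ in $\mathcal{D}([0,\infty))$ follows from the standard fact that a finite sum of tight sequences of càdlàg processes is tight as soon as all but one of them are $\mathcal{C}$-tight (see e.g. Chapter VI of Jacod--Shiryaev).

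First I would dispose of the cheap terms. The deterministic quantity $\frac{1}{2cN}-\frac{1}{2}L_{0^+}^N(0)=-\frac{1}{2cN}$, and the bounded term $\frac{V_s^N}{2cN}$, are uniformly of order $1/N$; both vanish uniformly in $s$ and are trivially $\mathcal{C}$-tight. The sequences $G^{1,N}$ and $\widetilde{\mathcal{M}}^{1,N}$ have already been shown to converge to $0$ in probability locally uniformly in $s$ (Lemma \ref{GN1S} and Corollary \ref{ConvM}), so they are $\mathcal{C}$-tight. By Lemma \ref{M+PhiNs}, $(\mathcal{M}^N,\widetilde{\mathcal{M}}^N)$ converges in distribution in $\mathcal{D}([0,\infty))^2$ to a pair of continuous Brownian motions; since the limits are continuous and the convergence is in Skorohod topology with continuous limit, both coordinates are in particular $\mathcal{C}$-tight. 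Lemma \ref{UNSlem} gives the $\mathcal{C}$-tightness of $J^N$ directly.

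Next, for $G^{3,N}$ I would use the identities $m_{1,N}\gamma_{1,N}=d_{1,N}q_0^{1,N}=L(N)/N$ that appear inside the proof of Lemma \ref{d1Nq01N}, together with the uniform bound $L(N)/N\le\int_0^\infty z\,\mu(dz)<\infty$ provided there. Since
\[
G_s^{3,N}=\frac{2}{c}m_{1,N}\gamma_{1,N}\int_0^s\mathbf{1}_{\{V_r^N=-1\}}\,dr
\]
is therefore Lipschitz in $s$ with a Lipschitz constant bounded uniformly in $N$, and vanishes at $s=0$, the family $\{G^{3,N}\}$ is equicontinuous and uniformly bounded on every compact interval. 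By Arzelà--Ascoli it is relatively compact in $\mathcal{C}([0,\infty))$, hence $\mathcal{C}$-tight.

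The only genuinely non-trivial contribution is the jump term $G^{2,N}$, whose tightness in $\mathcal{D}([0,\infty))$ has already been established in the corollary following Proposition \ref{TightKN2}, via Lemma \ref{CORR} and Aldous' criterion. Adding up these seven contributions and invoking the criterion recalled above yields the claim. The real work therefore occurred in the previous subsection in the analysis of $G^{2,N}$; the present lemma is essentially bookkeeping, and no new estimate is required.
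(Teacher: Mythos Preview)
Your argument is correct and is precisely the kind of decomposition-and-sum argument the paper has in mind; the paper's own proof is only a pointer to the analogous Lemma 4.37 in \cite{drame2016non}, whose proof proceeds by exactly this strategy of checking that each summand in \eqref{MathBNs} is tight (all $\mathcal{C}$-tight except for the jump term $G^{2,N}$, which is handled by Aldous' criterion via Lemma \ref{CORR} and Proposition \ref{TightKN2}). Nothing is missing.
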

\begin{proof}
The proof follows by an argument similar to the proof of Lemma 4.37 in  \cite{drame2016non}.
\end{proof}

Recall \eqref{THNGB}. We now deduce the tightness of $H^{N}$ from the above results concerning $ \mathcal{B}^N$, without having to worry about the local time terms.
\begin{proposition}\label{HNTIG}
The sequence $\{H^{N}, \ N\geq1\}$ is tight  in $\mathcal{C}([0,\infty))$.
\end{proposition}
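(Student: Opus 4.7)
The plan is to deduce tightness of $\{H^N\}$ in $\mathcal{C}([0,\infty))$ from the $\mathcal{D}([0,\infty))$-tightness of $\{\mathcal{B}^N\}$ granted by Lemma~\ref{TIGRSs}, via the Skorokhod reflection structure of \eqref{THNGB}. The process $H^N$ is non-negative and continuous (piecewise linear with slopes $\pm 2a_N$, by \eqref{DEFHNVN}), while $\tfrac{1}{2}L^N(0)$ is non-decreasing, starts at $1/(cN)$, and increases only at the zero-hitting times of $H^N$, each such time contributing an atom of size $1/(cN)$ (see \eqref{TL}). Setting $\tilde{\mathcal{B}}^N_s := \mathcal{B}^N_s + 1/(cN)$, the identity \eqref{THNGB} exhibits $\big(H^N,\ \tfrac{1}{2}L^N(0)-1/(cN)\big)$ as the Skorokhod reflection of the càdlàg input $\tilde{\mathcal{B}}^N$.

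I would then invoke the Lipschitz property of the Skorokhod map in the uniform norm, which gives the pathwise bound
\[
|H^N_s - H^N_r| \le 2 \sup_{r \le u \le s} |\tilde{\mathcal{B}}^N_u - \tilde{\mathcal{B}}^N_r|, \qquad 0 \le r \le s.
\]
Moreover, since $H^N$ is continuous and $\tfrac{1}{2}L^N(0)$ is a pure-jump non-decreasing process with atoms of size $1/(cN)$, the algebraic identity $\mathcal{B}^N = H^N - \tfrac{1}{2}L^N(0)$ forces
\[
\sup_{s \ge 0} |\Delta \mathcal{B}^N_s| \le 1/(cN),
\]
which vanishes as $N \to \infty$. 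Consequently, on any compact interval $[0,T]$, the uniform modulus of continuity of $\tilde{\mathcal{B}}^N$ differs from its $J_1$-modulus by at most $1/(cN)$.

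Combining the two observations above with the $J_1$-tightness of $\mathcal{B}^N$ from Lemma~\ref{TIGRSs}, I would derive, for every $\varepsilon>0$ and $T<\infty$,
\[
\lim_{\delta \to 0}\ \limsup_{N \to \infty}\ \mathbb{P}\Big(\sup_{|s-r| \le \delta,\ 0 \le r, s \le T} |\tilde{\mathcal{B}}^N_s - \tilde{\mathcal{B}}^N_r| \ge \varepsilon\Big) = 0.
\]
By the Lipschitz estimate above, the same statement transfers to $H^N$, which, together with $H^N_0 = 0$, is exactly the Arzelà--Ascoli tightness criterion in $\mathcal{C}([0,\infty))$.

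The main obstacle I foresee is justifying cleanly the deterministic control $|\Delta \mathcal{B}^N| \le 1/(cN)$. The abstract argument above derives it from the continuity of $H^N$ together with the jump structure of $L^N(0)$, but it deserves verification against the explicit decomposition \eqref{MathBNs}: individually, the components $G^{1,N}$, $G^{2,N}$, $-V^N/(2cN)$, and the martingales $\mathcal{M}^N$, $\widetilde{\mathcal{M}}^{1,N}$, $\widetilde{\mathcal{M}}^N$ carry jumps, some of which (in particular those of $G^{2,N}$, whose atoms may be of order one) are not individually small. These large contributions must cancel exactly at all jump times that do not correspond to a zero-hitting of $H^N$, a cancellation forced by the SDE \eqref{DEFHNVN} but requiring careful Poisson-by-Poisson bookkeeping.
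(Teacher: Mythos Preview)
Your approach is correct and matches the paper's (which defers to Proposition~4.39 of \cite{drame2016non}, using exactly the Skorokhod-type decomposition \eqref{THNGB} you exploit): deduce $C$-tightness of $H^N$ from $\mathcal{D}$-tightness of $\mathcal{B}^N$ via the reflection structure, after observing that the jumps of $\mathcal{B}^N$ vanish.

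Your ``main obstacle'' is not one: the algebraic route you sketch is already a complete proof of $\sup_s|\Delta\mathcal{B}^N_s|=O(1/N)$. Since $H^N$ is continuous (an integral, by \eqref{DEFHNVN}) and $\mathcal{B}^N=H^N-\tfrac12 L^N(0)$, one has $\Delta\mathcal{B}^N_s=-\tfrac12\Delta L^N_s(0)$ identically, and each increment of $L^N(0)$ at a zero of $H^N$ has size $2/(ca_N)\le 2/(cN)$. There is therefore no need for the component-by-component bookkeeping you contemplate; the large jumps of $G^{2,N}$ and of $V^N/(2cN)$ are guaranteed to cancel by continuity of $H^N$, without further inspection. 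Combined with Lemma~\ref{TIGRSs} and the standard fact that $\mathcal{D}$-tightness plus asymptotically negligible jumps yields $C$-tightness (hence control of the uniform modulus), your reflection inequality finishes the argument.
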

\begin{proof}
The proof follows by an argument similar to the proof of Proposition 4.39 in  \cite{drame2016non}.
\end{proof}

Recall that  $\widetilde{\Pi}^{1,N}$ is a Poisson random measures on $\mathbb{R}_+^{2}$ with mean measures $2 ds \widetilde{\pi}_{1,N}(dz).$ Let \\$D([0,\infty)^2, \mathbb{R}_+)$ denote the space of functions from $[0,\infty)^2$ into $\mathbb{R}_+$ which are right continuous and have left limits at any $(s,z)$ $\in$ $[0,\infty)^2$ (as usual such a function is called c?àdlà?g). We shall always equip the space $D([0,\infty)^2,\mathbb{R}_+)$ with the Skorohod topology. 

The following result is Proposition 2.15 in (\cite{tankov2003financial}, p. 60).
\begin{lemma}\label{PiTild}
The sequence $\{\widetilde{\Pi}^{1,N}, \ N\ge1\}$ converges in distribution in $D([0,\infty)^2, \mathbb{R}_+)$ iff the mean measure $2ds \widetilde{\pi}_{1,N}(dz)$ converges to a measure  $2ds\mu(dz)$. Then $\widetilde{\Pi}^{1,N} \Longrightarrow \widetilde{\Pi},$ in $D([0,\infty)^2, \mathbb{R}_+),$ where $\widetilde{\Pi}$ is a Poisson random measure on $\mathbb{R}_{+}^{2}$ with mean measure $2ds\mu(dz).$
\end{lemma}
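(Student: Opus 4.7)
The plan is to prove this via the classical Laplace-functional characterization of Poisson random measures. Recall that a Poisson random measure $\Pi$ on $\R_+^2$ with intensity $\nu$ is characterized by
\[
\E\bigl[\exp(-\langle\Pi,f\rangle)\bigr] = \exp\!\left(-\int (1-e^{-f(s,z)})\,\nu(ds,dz)\right),\qquad f\geq 0\text{ measurable,}
\]
and $\widetilde{\Pi}^{1,N}$ already satisfies this identity with $\nu=2ds\,\widetilde{\pi}_{1,N}(dz)$. So the whole problem reduces to proving convergence of Laplace functionals on a sufficiently rich class of test functions, and then invoking a standard convergence theorem for random measures.

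For the sufficiency direction, I would fix an arbitrary continuous $f\ge 0$ with compact support in $[0,T]\times(0,\infty)$. Then $g(s,z):=1-e^{-f(s,z)}$ is continuous, compactly supported, and bounded. Since $g(s,z)$ vanishes for $z$ in a neighborhood of $0$, for each fixed $s\in[0,T]$ the function $z\mapsto g(s,z)$ satisfies $|g(s,z)|\le C_f\, z$, so Proposition~\ref{PiTildversPiAA} yields $\int g(s,z)\widetilde{\pi}_{1,N}(dz)\to \int g(s,z)\mu(dz)$. Dominated convergence in $s$ (the integrand is uniformly bounded and supported in $[0,T]$) then gives
\[
\int (1-e^{-f(s,z)})\,2ds\,\widetilde{\pi}_{1,N}(dz)\longrightarrow \int (1-e^{-f(s,z)})\,2ds\,\mu(dz),
\]
hence the Laplace functionals converge to that of a Poisson random measure $\widetilde{\Pi}$ with intensity $2ds\,\mu(dz)$. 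The standard theorem characterizing weak convergence of point measures via Laplace functionals on continuous compactly supported test functions then gives $\widetilde{\Pi}^{1,N}\Rightarrow\widetilde{\Pi}$ on $D([0,\infty)^2,\R_+)$.

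For the necessity direction, if $\widetilde{\Pi}^{1,N}\Rightarrow \widehat{\Pi}$, then for every $f$ continuous and compactly supported in $[0,T]\times(0,\infty)$ the Laplace functional of $\widehat{\Pi}$ must be the limit of $\exp(-\int(1-e^{-f})\,2ds\,\widetilde{\pi}_{1,N}(dz))$. In particular, this forces convergence of $\int(1-e^{-f})\,2ds\,\widetilde{\pi}_{1,N}(dz)$ to some limit; replacing $f$ by $\lambda f$ and differentiating at $\lambda=0^+$ then identifies the vague limit of the mean measures on $[0,\infty)\times(0,\infty)$ with $2ds\,\mu(dz)$, which also pins down $\widehat{\Pi}$ as the Poisson random measure $\widetilde{\Pi}$.

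The main technical obstacle is the behaviour near $z=0$: the total mass of $\widetilde{\pi}_{1,N}$ need not be uniformly bounded and the limit $\mu$ need not be finite near $0$, so one cannot test against constant-on-$[0,T]\times\R_+$ functions. This is precisely why Proposition~\ref{PiTildversPiAA} was established for the class $|\varphi(z)|\le Cz$: the function $1-e^{-f(s,\cdot)}$ is of this type whenever $f$ vanishes near $\{z=0\}$, which is exactly the set of test functions needed to apply the Laplace functional characterization of Poisson random measures on $\R_+\times(0,\infty)$. This explains why the cited statement (Proposition~2.15 in Tankov--Cont) applies directly in our setting.
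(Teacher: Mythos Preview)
The paper does not prove this lemma at all: it is stated as a direct citation of Proposition~2.15 in Cont--Tankov, \emph{Financial Modelling with Jump Processes}, and no argument is given. So there is no ``paper's own proof'' to compare against; the authors treat it as a black-box criterion linking convergence of a sequence of Poisson random measures to convergence of their intensity measures.

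Your sketch via Laplace functionals is the standard route to such a result and is essentially correct in spirit. Two remarks, though. First, you are partly conflating the abstract lemma with its application: when you invoke Proposition~\ref{PiTildversPiAA} in the sufficiency direction, you are really verifying the \emph{hypothesis} of the lemma (that the intensities converge), which is the content of the paper's Corollary immediately following the lemma, not the lemma itself. The lemma as stated is the general equivalence ``intensities converge $\Leftrightarrow$ Poisson measures converge,'' and its proof should not need anything specific to $\widetilde{\pi}_{1,N}$. Second, the convergence asserted is in $D([0,\infty)^2,\R_+)$ for the two-parameter cumulative process $(s,z)\mapsto\widetilde{\Pi}^{1,N}((0,s]\times(0,z])$, which is slightly stronger than vague convergence of random measures; your Laplace-functional argument gives finite-dimensional convergence, but the passage to functional convergence in the Skorohod topology on $[0,\infty)^2$ requires an additional tightness step (e.g.\ via the Bickel--Wichura modulus, as the paper does later for $\widetilde{M}^N$). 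You wave this through with ``the standard theorem,'' which is fine as a pointer but is exactly the part that Cont--Tankov's proposition is being cited for.
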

Lemma \ref{PiTild} combined with Proposition \ref{PiTildversPiAA} leads to
\begin{corollary}\label{CorImp}
As $N\rightarrow \infty,$ $\widetilde{\Pi}^{1,N} \Longrightarrow \widetilde{\Pi}$ in $D([0,\infty)^2, \mathbb{R}_+),$ where $\widetilde{\Pi}$ is a Poisson random measure on $\mathbb{R}_{+}^{2}$ with mean measure $2ds\mu(dz),$  where $\mu$ was defined in \eqref{R2FINI}.
\end{corollary}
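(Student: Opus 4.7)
The proof is essentially a direct combination of Lemma \ref{PiTild} and Proposition \ref{PiTildversPiAA}, so I only need to spell out how the hypothesis of the former follows from the latter.

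First I would note that the mean measure of $\widetilde{\Pi}^{1,N}$ is $2\,ds\,\widetilde{\pi}_{1,N}(dz)$, while the mean measure of the candidate limit $\widetilde{\Pi}$ is $2\,ds\,\mu(dz)$. Since the $ds$ factors agree, the convergence of mean measures on $[0,\infty)^2$ reduces to the convergence $\widetilde{\pi}_{1,N}\to\mu$ in the appropriate sense on $\mathbb{R}_+$.

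Next I would apply Proposition \ref{PiTildversPiAA}: for every continuous $\varphi:\mathbb{R}_+\to\mathbb{R}$ satisfying $|\varphi(z)|\le Cz$, we have $\widetilde{\pi}_{1,N}(\varphi)\to\mu(\varphi)$ as $N\to\infty$. In particular, this class of test functions contains every continuous function with compact support in $(0,\infty)$ (for which $|\varphi(z)|\le Cz$ trivially holds), which gives vague convergence on $(0,\infty)$, and it also handles the behavior at infinity through the linear bound, controlling the total mass sufficiently for the criterion of Lemma \ref{PiTild} to apply.

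Finally, invoking Lemma \ref{PiTild} with this convergence of mean measures yields $\widetilde{\Pi}^{1,N}\Longrightarrow\widetilde{\Pi}$ in $D([0,\infty)^2,\mathbb{R}_+)$, where $\widetilde{\Pi}$ is a Poisson random measure on $\mathbb{R}_+^2$ with mean measure $2\,ds\,\mu(dz)$.

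There is essentially no obstacle here, since all the analytic work has already been done in Proposition \ref{PiTildversPiAA}. The only point that deserves a moment of care is checking that the test-function class appearing in Proposition \ref{PiTildversPiAA} (continuous with linear growth bound $|\varphi(z)|\le Cz$) is rich enough to imply the mode of convergence required in Lemma \ref{PiTild}; but the linear bound at $0$ rules out any mass escaping to zero (thanks to $\int_0^\infty(z\wedge z^2)\mu(dz)<\infty$ together with assumption ${(\bf H)}$), and the linear bound at infinity controls the tails, so the hypothesis of Lemma \ref{PiTild} is met.
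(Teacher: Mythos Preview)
Your proof is correct and follows exactly the paper's approach: the paper simply states that the corollary is obtained by combining Lemma \ref{PiTild} with Proposition \ref{PiTildversPiAA}. Your additional discussion of why the test-function class in Proposition \ref{PiTildversPiAA} suffices for the hypothesis of Lemma \ref{PiTild} is more detailed than what the paper provides, but entirely in the same spirit.
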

In what follows, we set
\begin{equation}\label{tildmap}
\zeta_r^N=\mathbf{1}_{\{V_{r^{-}}^{N}=-1\}} \quad \mbox{and}  \quad   \widetilde{M}^N \big((0,s]\times(0,z]\big)=\zeta_r^N \widetilde{\Pi}^{1,N}\big((0,s]\times(0,z]\big). 
\end{equation}
We need to prove
\begin{proposition}\label{D20In}
As $N\rightarrow \infty,$ $\widetilde{M}^N \Longrightarrow \Pi$ in $D([0,\infty)^2, \mathbb{R}_+),$ where $\Pi$ is a Poisson random measure on $\mathbb{R}_{+}^{2}$ with mean measure $ds\mu(dz),$ where $\mu$ was defined in \eqref{R2FINI}.
\end{proposition}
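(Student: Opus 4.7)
The strategy is to prove convergence of the Laplace functionals of $\widetilde M^N$ to those of the target Poisson random measure $\Pi$, using the predictable compensator of $\widetilde M^N$ and the Dol\'eans--Dade exponential identity. Since $\zeta_r^N=\mathbf 1_{\{V_{r^-}^N=-1\}}$ is predictable with values in $\{0,1\}$, and $\widetilde\Pi^{1,N}$ has compensator $2\,dr\,\widetilde\pi_{1,N}(dz)$, the random measure $\widetilde M^N(dr,dz)=\zeta_r^N\,\widetilde\Pi^{1,N}(dr,dz)$ admits the predictable compensator
\[ \widetilde\nu^N(dr,dz)=2\,\zeta_r^N\,dr\,\widetilde\pi_{1,N}(dz). \]
For any continuous $\varphi\ge 0$ supported in $[0,T]\times[a,b]$ with $0<a<b<\infty$, the Dol\'eans--Dade exponential
\[ Z_t^N=\exp\Bigl(-\int_0^t\!\int\varphi\,d\widetilde M^N-\int_0^t\!\int(e^{-\varphi}-1)\,\widetilde\nu^N\Bigr) \]
is a positive local martingale; the compact support of $\varphi$ together with the uniform bound $\widetilde\nu^N([0,T]\times[a,b])\le C$ (from Proposition \ref{PiTildversPiAA} applied to a continuous bump majorising $\mathbf 1_{[a,b]}$) makes it a true martingale, so $\E Z_T^N=1$.

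The main analytical step is to show that for every such $\varphi$,
\[ I^N(\varphi):=\int_0^T\!\int_0^\infty\varphi(r,z)\,\widetilde\nu^N(dr,dz)\ \longrightarrow\ \int_0^T\!\int_0^\infty\varphi(r,z)\,dr\,\mu(dz)\ \text{ in probability.} \]
By Stone--Weierstrass and linearity it suffices to treat $\varphi(r,z)=f(r)g(z)$ with $f\in C([0,T])$ and $g$ continuous with compact support in $(0,\infty)$, in which case $I^N(\varphi)=2\,\widetilde\pi_{1,N}(g)\int_0^T f(r)\zeta_r^N\,dr$. Proposition \ref{PiTildversPiAA} applies to $g$ (the compact support in $(0,\infty)$ implies $|g(z)|\le Cz$), yielding $\widetilde\pi_{1,N}(g)\to\mu(g)$. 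For the time integral, Lemma \ref{Ssur2} gives $T_N(u):=\int_0^u\zeta_r^N\,dr\to u/2$ in probability pointwise in $u$; since $T_N$ is nondecreasing and $1$-Lipschitz while the limit is continuous, this convergence is uniform on $[0,T]$ in probability, and hence $\int_0^T f(r)\,dT_N(r)\to\tfrac{1}{2}\int_0^T f(r)\,dr$ by continuity of the Stieltjes integral.

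Setting $C^N:=\int(e^{-\varphi}-1)\,\widetilde\nu^N=-I^N(1-e^{-\varphi})$, the previous step (applied to the compactly supported $1-e^{-\varphi}$) yields $C^N\longrightarrow -\int(1-e^{-\varphi})\,dr\,\mu(dz)$ in probability, and $|C^N|$ is deterministically bounded by a constant depending only on $\|\varphi\|_\infty$, $T$ and $\sup_N\widetilde\pi_{1,N}([a,b])<\infty$. From $\E Z_T^N=1$, bounded convergence then gives
\[ \E\bigl[\exp\bigl(-\widetilde M^N(\varphi)\bigr)\bigr]\longrightarrow\exp\Bigl(-\int(1-e^{-\varphi(r,z)})\,dr\,\mu(dz)\Bigr), \]
which is exactly the Laplace functional of the PRM with intensity $dr\,\mu(dz)$. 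The class of nonnegative continuous $\varphi$ with compact support in $[0,\infty)\times(0,\infty)$ being separating, the limiting distribution of $\widetilde M^N$ is identified with $\Pi$. Convergence in $D([0,\infty)^2,\R_+)$ is then obtained as in Lemma \ref{PiTild}: the counting function $(s,z)\mapsto\widetilde M^N((0,s]\times(0,z])$ is monotone in both variables and dominated by that of $\widetilde\Pi^{1,N}$, whose tightness is given by Corollary \ref{CorImp}.

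The main obstacle is the joint treatment of the random temporal weight $\zeta_r^N$ and the spatial measure $\widetilde\pi_{1,N}$ in the limit of $I^N(\varphi)$: Lemma \ref{Ssur2} provides only a scalar statement about occupation times and Proposition \ref{PiTildversPiAA} concerns only the spatial measure, so these two pieces of information must be glued together on genuinely two-variable test functions. The factorization $\varphi=f\otimes g$ afforded by Stone--Weierstrass, together with the monotonicity and Lipschitz property of $T_N$ (which upgrades the pointwise limit of Lemma \ref{Ssur2} to a uniform one), is what couples the two convergences cleanly. Verifying that the Dol\'eans--Dade exponential is a true, rather than merely local, martingale is a secondary but routine concern handled by the compact support of $\varphi$ and the integrability of $\widetilde\nu^N$ on that support.
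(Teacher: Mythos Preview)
Your argument is correct and follows a genuinely different route from the paper. The paper proceeds in two stages: it first shows tightness of the finite-dimensional marginals of $\widetilde M^N$ (via domination by $\widetilde\Pi^{1,N}$), and then identifies the limit by passing to the limit in the compensated-counting-process martingale identity for each strip $(z_{i-1},z_i]$, invoking a bare-hands characterization of mutually independent Poisson processes (the lemma you did not see, stating that point processes whose compensators are linear and which never jump simultaneously are independent Poisson). Functional tightness is then obtained from the Bickel--Wichura modulus inequality $w_\delta''(\widetilde M^N)\le w_\delta''(\widetilde\Pi^{1,N})$.

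Your approach replaces the two-step identification by a single Laplace-functional computation via the Dol\'eans--Dade exponential, which is cleaner: once you know $\int(1-e^{-\varphi})\,d\widetilde\nu^N\to\int(1-e^{-\varphi})\,dr\,\mu(dz)$ in probability with a deterministic bound, the martingale identity $\E Z_T^N=1$ immediately yields the Laplace functional of the PRM in the limit, bypassing any ad hoc Poisson characterization. The cost is that you must argue (as you do) that the exponential is a true martingale and that the random factor $e^{-C^N}$ can be pulled out of the expectation in the limit; both are easy here because of the compact support of $\varphi$ in $(0,\infty)$ and the resulting uniform bound on $\widetilde\pi_{1,N}([a,b])$. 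Your tightness step in $D([0,\infty)^2)$ is essentially the same as the paper's. Both proofs rest on the same two inputs, Proposition~\ref{PiTildversPiAA} and Lemma~\ref{Ssur2}; your Stone--Weierstrass reduction to tensor products is the natural device for combining them, whereas the paper works directly with rectangles.
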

We first establish a few lemmas
\begin{lemma}
As $N\rightarrow \infty,$ $\widetilde{M}^N \longrightarrow \Pi$  in the sense of finite-dimensional marginals.
\end{lemma}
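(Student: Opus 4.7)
The strategy is to prove convergence of the Laplace functionals. It suffices to show that, for every nonnegative continuous test function $\phi$ with compact support in $(0,\infty)\times(0,\infty)$,
\[
\E\bigl[\exp(-\widetilde{M}^N(\phi))\bigr]\longrightarrow \exp\Bigl(-\int_0^\infty\!\!\int_0^\infty (1-e^{-\phi(r,z)})\,dr\,\mu(dz)\Bigr);
\]
this is the Laplace functional of the target Poisson measure $\Pi$, and finite-dimensional convergence at $\Pi$-continuity sets follows by a standard Portmanteau-type argument for point processes.

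To compute the left-hand side, I will use the exponential martingale associated with the predictable integrand $(r,z)\mapsto\zeta_r^N\phi(r,z)$ against the Poisson random measure $\widetilde{\Pi}^{1,N}$, whose compensator is $2\,dr\,\widetilde{\pi}_{1,N}(dz)$. Since $\zeta_r^N$ is the left-limit of a c\`adl\`ag process it is predictable, and since $\phi\geq 0$ the resulting exponential process is bounded; the stochastic exponential identity then produces
\[
\E\bigl[\exp(-\widetilde{M}^N(\phi))\bigr]= \E\Bigl[\exp\Bigl(-2\int_0^\infty \zeta_r^N\,\widetilde{\pi}_{1,N}(g_r)\,dr\Bigr)\Bigr],\qquad g_r(z)=1-e^{-\phi(r,z)},
\]
after using $\zeta_r^N\in\{0,1\}$ to identify $1-\exp(-\phi(r,z)\zeta_r^N)$ with $\zeta_r^N(1-\exp(-\phi(r,z)))$.

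I next show that the random exponent converges in probability to $\int_0^\infty\mu(g_r)\,dr$. Since $\phi$ has compact support in $(0,\infty)\times(0,\infty)$, the family $\{g_r\}$ is jointly continuous, compactly supported, and satisfies $|g_r(z)|\leq C z$ uniformly in $r$. I split
\[
2\int_0^\infty \zeta_r^N\,\widetilde{\pi}_{1,N}(g_r)\,dr = 2\int_0^\infty \zeta_r^N\,\mu(g_r)\,dr + 2\int_0^\infty \zeta_r^N\,[\widetilde{\pi}_{1,N}(g_r)-\mu(g_r)]\,dr.
\]
The first term converges in probability to $\int_0^\infty\mu(g_r)\,dr$ by Lemma \ref{Ssur2} extended to the bounded continuous deterministic integrand $r\mapsto\mu(g_r)$ via step-function approximation. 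For the second, the bracketed quantity is purely deterministic: Proposition \ref{PiTildversPiAA} gives $\widetilde{\pi}_{1,N}(g_r)\to\mu(g_r)$ pointwise in $r$, while Lemma \ref{d1Nq01N} combined with $|g_r(z)|\leq Cz$ and the compactness of the $r$-support yields a uniform bound $\widetilde{\pi}_{1,N}(g_r)\leq C\int z\,\mu(dz)$; the deterministic dominated convergence theorem then gives $\int_0^\infty|\widetilde{\pi}_{1,N}(g_r)-\mu(g_r)|\,dr\to 0$, so the second term vanishes. Bounded convergence for the outer expectation finishes the argument.

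The main delicate point is the exponential identity of the second paragraph. Because $\zeta^N$ depends on $\widetilde{\Pi}^{1,N}$ through $V^N$, one cannot simply condition on the trajectory of $V^N$ and apply the standard Laplace formula for a Poisson random measure. The correct derivation proceeds through the stochastic exponential of the compensated integral $\int\phi\,\zeta^N\,(\widetilde{\Pi}^{1,N}-\hat{\widetilde{\Pi}}^{1,N})$, which is a bounded (hence true) martingale of initial value $1$; this is the only place where the joint dependence of $\zeta^N$ and $\widetilde{\Pi}^{1,N}$ requires genuine work.
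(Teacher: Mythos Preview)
Your Laplace-functional route is a legitimate and more direct alternative to the paper's approach (which only proves tightness of the finite-dimensional marginals in this lemma and defers the identification of the limit to the subsequent proof of Proposition~\ref{D20In} via the martingale characterization of Lemma~\ref{Pardou}). However, the identity you display,
\[
\E\bigl[\exp(-\widetilde{M}^N(\phi))\bigr]
= \E\Bigl[\exp\Bigl(-2\int_0^\infty \zeta_r^N\,\widetilde{\pi}_{1,N}(g_r)\,dr\Bigr)\Bigr],
\]
does not follow from the stochastic exponential and is false in general when $\zeta^N$ depends on $\widetilde{\Pi}^{1,N}$. The Dol\'eans--Dade exponential of the compensated integral being a bounded martingale of mean $1$ gives only
\[
\E\Bigl[\exp\bigl(-\widetilde{M}^N(\phi)\bigr)\,
\exp\Bigl(2\int_0^\infty \zeta_r^N\,\widetilde{\pi}_{1,N}(g_r)\,dr\Bigr)\Bigr]=1,
\]
i.e.\ the expectation of a \emph{product}, not a product (or equality) of expectations. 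You explicitly note that one cannot condition on $V^N$ and treat the measure as Poisson, yet the displayed identity is precisely what such an illicit conditioning would yield.

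The fix is short and uses exactly the ingredients you have already assembled. Write $Y_N:=2\int_0^\infty \zeta_r^N\,\widetilde{\pi}_{1,N}(g_r)\,dr$ and $c:=\int_0^\infty \mu(g_r)\,dr$. Your third paragraph shows $Y_N\to c$ in probability, and Lemma~\ref{d1Nq01N} together with the compact support of $\phi$ gives a uniform bound $0\le Y_N\le K$. Hence $e^{Y_N}\to e^{c}$ in $L^1$, and since $0\le e^{-\widetilde{M}^N(\phi)}\le 1$,
\[
\bigl|\,\E[e^{-\widetilde{M}^N(\phi)}e^{Y_N}] - e^{c}\,\E[e^{-\widetilde{M}^N(\phi)}]\,\bigr|
\le \E\bigl|e^{Y_N}-e^{c}\bigr|\longrightarrow 0.
\]
Combining with $\E[e^{-\widetilde{M}^N(\phi)}e^{Y_N}]=1$ yields $\E[e^{-\widetilde{M}^N(\phi)}]\to e^{-c}$, which is the Laplace functional of $\Pi$. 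So replace your second displayed equation by the product identity above and insert this one-line Slutsky-type step; the rest of your argument stands.
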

\begin{proof}
Let $\bar{s}, \bar{z}>0.$ We have that 
\begin{align*}
\sup_{0\le s\le \bar{s}, \ 0\le z\le \bar{z}} \left\{  \widetilde{M}^N \big((0,s]\times(0,z]\big) \right\} =& \widetilde{M}^N \big((0,\bar{s}]\times(0,\bar{z}]\big)\\
\le& \widetilde{\Pi}^{1,N}\big((0,\bar{s}]\times(0,\bar{z}]\big).
\end{align*}
Hence, from Corollary \ref{CorImp} , it follows easily that for any  $\bar{s}, \bar{z}>0,$ the sequence of random variable $\Big\{ \sup_{0\le s\le \bar{s}, \ 0\le z\le \bar{z}} \widetilde{M}^N \big((0,s]\times(0,z]\big), \ N\ge1 \Big\}$ is tight. This implies immediately that the finite-dimensional marginals are tight. In other words, for all $k\ge1,$ $(s_1,z_1),..., (s_k,z_k),$ the sequence \\$\Big\{ \widetilde{M}^N \big(A_1\big),..., \widetilde{M}^N \big(A_k\big), \ N\ge1 \Big\}$ is tight, with $A_i=(0,s_i]\times(0,z_i],$ for $i=1,...,k.$ Hence at least along a subsequence (but we do not distinguish between the notation for the subsequence and for the sequence), $$ \Big( \widetilde{M}^N \big(A_1\big),..., \widetilde{M}^N \big(A_k\big) \Big) \Longrightarrow \Big( \Pi \big(A_1\big),..., \Pi \big(A_k\big) \Big).$$
\end{proof}

Now we want to identify the limit $\Pi.$ To this end, let us first state a basic result on point process, which will be useful in the sequel. 
\begin{lemma}\label{Pardou}
Assume that there exist a filtration $\{ \mathcal{G}_s, \ s\ge0\}$ such that, for $i=1,...,k,$ the processes $\{  \mathcal{N}_i(s), \ s\ge0\}$ are point processes and $C_i$ are real non negative satisfying: $M_i(s)=\mathcal{N}_i(s)-C_i s$ is a $\mathcal{G}_s$-martingale and $s\rightarrow \sum_{i=1}^{k}  \mathcal{N}_i(s)$ is also a point process. Then for $i=1,...,k,$ the processes $\{ \mathcal{N}_i(s), \ s\ge0\}$ are mutually independent Poisson processes, with respective intensities $C_i.$
\end{lemma}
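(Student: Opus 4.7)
The plan is to combine Watanabe's characterization of the Poisson process with the no-simultaneous-jumps consequence of the simplicity of $\sum_i \mathcal{N}_i$, then conclude via an exponential-martingale computation that identifies the joint characteristic functional as that of independent Poissons.

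First, I would apply Watanabe's theorem individually to each coordinate: since $\mathcal{N}_i$ is a simple point process whose $\mathcal{G}_s$-compensator is the deterministic continuous function $s\mapsto C_i s$, the process $\mathcal{N}_i$ is a Poisson process with intensity $C_i$. Next, I would extract the crucial structural consequence of the hypothesis that $s\mapsto \sum_{i=1}^k \mathcal{N}_i(s)$ is itself a point process (hence simple): no two distinct coordinates $\mathcal{N}_i,\mathcal{N}_j$ can jump simultaneously, since otherwise the sum would have a jump of size at least $2$.

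To upgrade marginal Poisson distribution to joint independence, I would fix $u_1,\dots,u_k\in \mathbb{R}$ and consider the process
\begin{equation*}
Y(s)=\prod_{j=1}^k\exp\bigl(iu_j\mathcal{N}_j(s)\bigr).
\end{equation*}
Because the $\mathcal{N}_j$'s have no common jumps, at each jump time of $\mathcal{N}_j$ (and only then) $Y$ jumps by $Y(s^-)(e^{iu_j}-1)$, whence
\begin{equation*}
dY(s)=Y(s^-)\sum_{j=1}^k (e^{iu_j}-1)\,d\mathcal{N}_j(s)
=Y(s^-)\sum_{j=1}^k (e^{iu_j}-1)\,dM_j(s)+Y(s^-)\sum_{j=1}^k (e^{iu_j}-1)C_j\,ds,
\end{equation*}
so that $Z(s):=Y(s)\exp\bigl(-s\sum_j(e^{iu_j}-1)C_j\bigr)$ is a bounded $\mathcal{G}_s$-local martingale, hence a true martingale with $\mathbb{E}[Z(s)]=Z(0)=1$. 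This yields
\begin{equation*}
\mathbb{E}\Bigl[\prod_{j=1}^k e^{iu_j\mathcal{N}_j(s)}\Bigr]=\prod_{j=1}^k \exp\bigl((e^{iu_j}-1)C_j s\bigr),
\end{equation*}
which is the joint characteristic function of independent Poisson random variables with parameters $C_j s$. The same argument applied on intervals $(s,t]$ (using the tower property with conditioning on $\mathcal{G}_s$) gives joint independence of the increments and the factorized Poisson form, hence the announced mutual independence.

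The main technical point — and really the only nontrivial step — is justifying that the cross-terms in the computation of $dY$ vanish: this uses precisely the hypothesis that the sum is a simple point process, which forbids simultaneous jumps in different coordinates. Once this is in hand, the rest is a routine exponential-martingale computation; everything else (Watanabe's theorem, boundedness and hence true-martingale property of $Z$) is standard.
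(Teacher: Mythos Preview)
Your proof is correct and follows essentially the same route as the paper's own argument: both exploit the no-simultaneous-jumps consequence of the simplicity of $\sum_i \mathcal{N}_i$ and then run an exponential-martingale computation (the paper uses the Laplace transform $\exp(-\sum_i\alpha_i\mathcal{N}_i)$ while you use the characteristic function, but the calculus is identical). Your preliminary appeal to Watanabe's theorem for the marginals is harmless but redundant, since the joint exponential-martingale identity already yields the marginal Poisson laws by setting all but one $u_j$ to zero.
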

\begin{proof}
It is enough to show that for any $0<r<s,$ for all $\alpha_i$ $\in$ $\mathbb{R},$ $i=1,...,k,$
$$\E^{\mathcal{G}_r} \exp \Big\{ -\sum_{i=1}^k \alpha_i \left( \mathcal{N}_i(s)-\mathcal{N}_i(r)\right) \Big\}= \prod_{i=1}^k  \exp \left\{-C_i (s-r)(1-e^{-\alpha_i}) \right\}.$$
In the following calculation, we will exploit the fact that $\mathcal {N} _1 (s),...,\mathcal {N} _k (s)$ never jump at the same time, which follows from the fact that $s\rightarrow \sum_{i=1}^{k}  \mathcal{N}_i(s)$ is a point process. $$ \exp \Big(- \sum_{i=1}^k \alpha_i \mathcal{N}_i(s) \Big)=\exp \Big(- \sum_{i=1}^k \alpha_i \mathcal{N}_i(r) \Big)+ \sum_{j=1}^k \int_{(r,s]} (e^{-\alpha_j}-1)e^{- \sum_{i} \alpha_i \mathcal{N}_i(u^-)} d\mathcal{N}_j(u).$$ Hence 
\begin{align*}
\E^{\mathcal{G}_r} \Big[ \exp \Big(- \sum_{i=1}^k \alpha_i \mathcal{N}_i(s) \Big)- \exp \Big(- \sum_{i=1}^k \alpha_i \mathcal{N}_i(r) \Big) \Big]=& \sum_{j=1}^k \E^{\mathcal{G}_r}  \int_{(r,s]} (e^{-\alpha_i}-1)e^{- \sum_{i} \alpha_i \mathcal{N}_i(u^-)} d\mathcal{N}_j(u)\\
 =& \sum_{j=1}^k C_j \E^{\mathcal{G}_r}  \int_{(r,s]} (e^{-\alpha_j}-1)e^{- \sum_{i} \alpha_i \mathcal{N}_i(u^-)} du.
\end{align*}
Consequently
$$\E^{\mathcal{G}_r} \exp \Big\{- \sum_{i=1}^k \alpha_i \left( \mathcal{N}_i(s)-\mathcal{N}_i(r)\right) \Big\}=  \exp \Big\{-(s-r)\sum_{i=1}^k C_i(1-e^{-\alpha_i}) \Big\}.$$ The desired result follows.
\end{proof}
 
$\bf{Proof \ of \  Proposition}$ \ref{D20In}. Now, from the convergence of finite-dimensional marginals, we deduce that for any $k\ge1,$ $0<z_1<...<z_k,$
\begin{align}\label{Margfini}
\Big( \widetilde{M}^N\big((0,s]\times(0,z_1]\big),\widetilde{M}^N\big((0,s]\times(z_1,z_2]\big),..., \widetilde{M}^N\big((0,s]\times(z_{k-1},z_k]\big)\Big) \Longrightarrow \nonumber \\\Big( \Pi \big((0,s]\times(0,z_1]\big), \Pi \big((0,s]\times(z_1,z_2]\big),..., \Pi \big((0,s]\times(z_{k-1},z_k]\big) \Big), \quad \mbox{as} \quad N\rightarrow \infty.
\end{align}
Let us define
$\widehat{\mathcal{M}}^N(s)=\left( \begin{array}{ccc} & \widetilde{M}^N\big((0,s]\times(0,z_1]\big)&\\&\cdot& \\&\cdot&\\&\cdot& \\&\widetilde{M}^N\big((0,s]\times(z_{k-1},z_k]\big)& \end{array}\right). $

Note that $\widehat{\mathcal{M}}^N(s)$ [resp. $ \widetilde{M}^N\big((0,s]\times(0,z_k]\big)$] is a point process with value in $\mathbb{R}^k$ (resp. $\mathbb{R}$). However, let $r,T>0.$ For $\ell\ge1,$ let $\phi$ $\in$ $\mathcal{C}_b(\mathbb{R}^\ell,\mathbb{R})$ and let $f_1,...,f_{\ell}$ be a sequence of function whose support is included in $(0,r]\times(0,T].$ It is easily seen that 
$$\widetilde{M}^N\big((r,s]\times(z_{i-1},z_i]\big)- 2 \widetilde{\pi}_{1,N}(z_{i-1},z_i]\int_{r}^{s} \zeta_u^Ndu$$ is a martingale (see e.g. chap 6 in \cite{ccinlar2011probability}). It is also plain that 
$$ \E \bigg\{\left(\widetilde{M}^N\big((r,s]\times(z_{i-1},z_i]\big)- 2 \widetilde{\pi}_{1,N}(z_{i-1},z_i]\int_{r}^{s} \zeta_u^Ndu \right)\times \phi \left(\widetilde{M}^N(f_1),..., \widetilde{M}^N(f_{\ell}) \right) \bigg\}=0.$$
It follows from Proposition \ref{PiTildversPiAA}, Lemma \ref{Ssur2} and \eqref{Margfini} that 
\begin{align*}
\lim_{N\rightarrow\infty} &\E \bigg\{\left(\widetilde{M}^N\big((r,s]\times(z_{i-1},z_i]\big)- 2 \widetilde{\pi}_{1,N}(z_{i-1},z_i]\int_{r}^{s} \zeta_u^Ndu \right)\times \phi \left(\widetilde{M}^N(f_1),..., \widetilde{M}^N(f_{\ell}) \right) \bigg\}\\
&= \E \bigg\{\left(\Pi \big((r,s]\times(z_{i-1},z_i]\big)- \mu(z_{i-1},z_i] (s-r) \right)\times \phi \left(\Pi(f_1),..., \Pi(f_{\ell}) \right) \bigg\}=0.
\end{align*}
Now let $\mathcal{G}$ be the filtration defined by 
$$\mathcal{G}_r= \sigma \bigg\{\phi \left(\Pi(f_1),..., \Pi(f_{\ell}) \right), \ \forall \ \ell \ge1, \ \forall \ \phi \ \in \mathcal{C}_b(\mathbb{R}^\ell,\mathbb{R}), \ \forall  (f_1,...,f_{\ell}) \subset \ (0,r]\times(0,T] \bigg\}.$$
 Hence, it is easy to check that for any $2\le i \le k,$ $\Pi \big((0,s]\times(z_{i-1},z_i]\big)- s\mu(z_{i-1},z_i]$ is a $\mathcal{G}_s$-martingale. Moreover, $\left\{ \Pi \big((0,s]\times(z_{i-1},z_i]\big) \right\}_{2\le i\le k}$ are point processes and $ \Pi \big((0,s]\times(z_{1},z_k]\big)$ is also a point process. We deduce from Lemma \ref{Pardou} that for $i=1,...,k,$ the processes $ \Pi \big((0,s]\times(z_{i-1},z_i]\big)$ are mutually independent Poisson processes, with respective intensities $\mu(z_{i-1},z_i].$ Summarizing, we obtain that $\Pi$ is a point process on $(0,\infty)^2$ such that for any $k>0,$  $0<z_1<...<z_k,$ $\Pi \big((0,s]\times(z_1,z_2]\big),..., \Pi \big((0,s]\times(z_{k-1},z_k]\big)$ are mutually independent Poisson process, with respective intensities $\mu(z_1,z_2],...,\mu(z_{k-1},z_k].$ Consequently, $\Pi$ is a Poisson random measure on $\mathbb{R}_+^2$ with mean measure $ds \mu(dz).$
 
 Now, it remains to show the functional convergence of $\widetilde{M}^N.$ To this end, combining the definition of Bickel and Wichura in \cite{bickel1971convergence}, p. 1663 of $w_\delta^{\prime\prime},$ the modulus of continuity and Corollary \ref{CorImp}, we deduce that $\widetilde{\Pi}^{1,N}$ satisfy condition (10) of Corollary in \cite{bickel1971convergence}. Moreover, since  $w_\delta^{\prime\prime}(\widetilde{M}^N) \le w_\delta^{\prime\prime}(\widetilde{\Pi}^{1,N}),$ then $\widetilde{M}^N$ satisfy the same condition (10). Hence the desired result follows by combining the above results with Corollary in \cite{bickel1971convergence}. $\hfill \blacksquare$
 
\begin{corollary}
We have moreover  
$$\begin{pmatrix} 
\mathcal{M}_s^{N} -\widetilde{\mathcal{M}}_s^{N} \\\\
\widetilde{M}^N
\end{pmatrix} \Longrightarrow \begin{pmatrix} 
\sqrt{\frac{2}{c}}B_s \\\\
\Pi
\end{pmatrix}$$
as $N\rightarrow \infty,$ Where $(B_s, s\ge0)$ is a standard brownian motion independent of the Poisson random measure $\Pi$.
\end{corollary}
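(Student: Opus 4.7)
The plan is to combine the already established marginal convergences (Lemma \ref{M+PhiNs} for $\mathcal{M}^N-\widetilde{\mathcal{M}}^N$, Proposition \ref{D20In} for $\widetilde{M}^N$) with an orthogonality argument that forces independence in the limit. First I would note that tightness of the joint sequence follows immediately from tightness of each marginal, since tightness is preserved under products on the Skorohod space. Thus it suffices to identify any subsequential limit as the prescribed pair.

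Let $(Y,\Pi^\star)$ denote any weak subsequential limit. From Lemma \ref{M+PhiNs} we have $\mathcal{M}^N-\widetilde{\mathcal{M}}^N\Rightarrow \frac{1}{\sqrt c}(B^1-B^2)\stackrel{d}{=}\sqrt{2/c}\,B$, where $B$ is a standard Brownian motion, so $Y=\sqrt{2/c}\,B$ in distribution. From Proposition \ref{D20In}, $\Pi^\star$ is a Poisson random measure on $\R_+^2$ with mean measure $ds\,\mu(dz)$. The only content left is that $Y$ and $\Pi^\star$ are independent.

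For independence I would use the following key structural property at the prelimit level: the three driving Poisson processes $P^N$, $P^{\prime,N}$ and $P^{1,N}$ are mutually independent, so almost surely no two of them jump simultaneously. Consequently the purely discontinuous martingale $\mathcal{M}^N-\widetilde{\mathcal{M}}^N$ (whose jumps occur at jump times of $P^N$ and $P^{\prime,N}$) and the random measure $\widetilde{M}^N$ (whose atoms lie at jump times of $P^{1,N}$) have disjoint jump sets. Moreover, the maximum jump size of $\mathcal{M}^N-\widetilde{\mathcal{M}}^N$ equals $1/(cN)\to 0$, so the limit $Y$ is continuous. This disjointness of jumps, combined with the fact that the predictable quadratic variation of the martingale converges to the deterministic function $(2/c)s$ (since $\lambda_N/N\to c$, $\mu_N/N\to c$, and Lemma \ref{Ssur2} gives $\int_0^s\mathbf{1}_{\{V_r^N=\pm 1\}}dr\to s/2$), places us exactly in the setting of the martingale functional central limit theorem of Jacod--Shiryaev (Theorem VIII.3.11 in their book), which in this situation yields not only that $Y$ is a Brownian motion with variance $2/c$ but also that it is independent of the limiting point measure $\Pi^\star$.

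The main obstacle is making the independence conclusion rigorous. Concretely, I would argue it by showing that for any bounded continuous functional $F$ on $D([0,\infty),\R)$ and any bounded measurable functional $G$ on the space of point measures with $G(\widetilde{M}^N)$ depending only on finitely many test sets, one has
\begin{equation*}
\E\Big[F\big(\mathcal{M}^N-\widetilde{\mathcal{M}}^N\big)\,G(\widetilde{M}^N)\Big]-\E\Big[F\big(\mathcal{M}^N-\widetilde{\mathcal{M}}^N\big)\Big]\,\E\big[G(\widetilde{M}^N)\big]\longrightarrow 0,
\end{equation*}
by a martingale problem argument: using that $\widetilde M^N$-atoms and the jumps of $\mathcal{M}^N-\widetilde{\mathcal{M}}^N$ are disjoint, the process $\mathcal{M}^N-\widetilde{\mathcal{M}}^N$ remains a local martingale in the enlarged filtration containing the information of $\widetilde{M}^N$; passing to the limit and using the continuity together with the deterministic bracket identifies the conditional law of $Y$ given $\Pi^\star$ as that of $\sqrt{2/c}\,B$. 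This yields the announced independence and completes the identification of the joint limit.
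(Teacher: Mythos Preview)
Your proposal is correct and follows essentially the same route as the paper: joint tightness from marginal tightness, identification of the marginals via Lemma \ref{M+PhiNs} and Proposition \ref{D20In}, and independence via a martingale characterization in the joint filtration (what the paper calls an ``easy extension of Lemma \ref{Pardou}'', and what you obtain by passing the martingale property and the deterministic bracket $(2/c)s$ to the limit). The only cosmetic difference is that the paper points to its in-house Lemma \ref{Pardou} for the independence step, while you invoke Jacod--Shiryaev; the underlying exponential-martingale/L\'evy-characterization argument is the same.
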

\begin{proof}
The fact that the vector converges weakly along a subsequence follows from tightness. We have identified the limit of the first (resp. of the second) coordinate in Lemma \ref{M+PhiNs} (resp. in Proposition \ref{D20In}). The fact that the two components of the limit are independent follows from an easy extension of Lemma \ref{Pardou}. Finally the whole sequence converges, since the limit is unique.   
\end{proof}

Recall \eqref{CHS1}. We have 
\begin{equation*}
c H_s= Y_s - \inf_{0\leqslant r \leqslant s} Y_r - \int_{0}^{s}\int_{0}^{\infty}\left(z + \inf_{r\leqslant u \leqslant s}(Y_u-Y_r) \right)^+\Pi(dr, dz),
\end{equation*}
where
$$Y_s= - b s + \sqrt{2c} B_s + \int_{0}^{s}\int_{0}^{\infty}z \overline{\Pi}(dr, dz).$$
Let $Y_s^k$ be the Lévy process defined by 
\begin{equation*}
Y_s^k=- b s + \sqrt{2c} B_s + \int_{0}^{s}\int_{1/k}^{\infty}z \overline{\Pi}(dr, dz)
\end{equation*}
Let $H_s^k$ be the height process associated to the Lévy process $Y_s^k$. In other words, $H_s^k$ is 
given by 
\begin{equation}\label{cHsk}
c H_s^k= Y_s^{k} - \inf_{0\leqslant r \leqslant s} Y_r^k + \int_{0}^{s}\int_{1/k}^{\infty}\left(z + \inf_{r\leqslant u \leqslant s}(Y_u^k-Y_r^k) \right)^+  \Pi(dr, dz)
\end{equation}
We first prove
\begin{lemma}\label{Lemint}
As $k\rightarrow \infty$, $Y_s^k \rightarrow Y_s$ in mean square, locally uniformly with respect to $s$.
\end{lemma}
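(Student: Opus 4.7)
The plan is to express the difference $Y_s - Y_s^k$ as a single compensated Poisson stochastic integral, and then use Doob's $L^2$-inequality together with the isometry property to bound its supremum in mean square by an integral of $z^2$ against $\mu$ over the small jumps, which vanishes by dominated convergence thanks to assumption $(\mathbf{H})$ (or even just \eqref{R2FINI}).

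Concretely, the first step is to write
\begin{equation*}
Y_s - Y_s^k = \int_0^s \int_0^{1/k} z \, \overline{\Pi}(dr, dz),
\end{equation*}
which is a purely discontinuous square-integrable martingale in $s$ (for fixed $k$), since $\int_0^1 z^2 \mu(dz) < \infty$ by \eqref{R2FINI}. Next, I would apply Doob's $L^2$-inequality on the interval $[0,T]$:
\begin{equation*}
\mathbb{E}\left( \sup_{0 \le s \le T} |Y_s - Y_s^k|^2 \right) \le 4 \, \mathbb{E}\left( |Y_T - Y_T^k|^2 \right).
\end{equation*}
By the standard isometry for compensated Poisson integrals,
\begin{equation*}
\mathbb{E}\left( |Y_T - Y_T^k|^2 \right) = T \int_0^{1/k} z^2 \, \mu(dz).
\end{equation*}

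Finally, since $\int_0^1 z^2 \, \mu(dz) < \infty$ (a consequence of \eqref{R2FINI}), the dominated convergence theorem yields $\int_0^{1/k} z^2 \, \mu(dz) \to 0$ as $k \to \infty$. Combining these estimates,
\begin{equation*}
\mathbb{E}\left( \sup_{0 \le s \le T} |Y_s - Y_s^k|^2 \right) \le 4T \int_0^{1/k} z^2 \, \mu(dz) \longrightarrow 0, \quad k \to \infty,
\end{equation*}
which is exactly the locally uniform mean-square convergence claimed in the lemma. There is essentially no obstacle here; the only subtlety is making sure one has the right integrability at $0$ for the small jumps, which is guaranteed by \eqref{R2FINI} (and a fortiori by $(\mathbf{H})$).
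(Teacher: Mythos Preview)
Your proof is correct and follows essentially the same approach as the paper: write $Y_s - Y_s^k$ as the compensated Poisson integral over $(0,1/k)$, apply Doob's $L^2$-inequality, use the isometry to get the bound $4T\int_0^{1/k} z^2\,\mu(dz)$, and let $k\to\infty$. The paper's proof is just a more compressed version of exactly this argument.
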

\begin{proof}
It is plain that 
\begin{align*}
\E\left(\sup_{0\le r\le s}|Y_s-Y_s^k|^2\right) &\le \E\left[\sup_{0\le r\le s} \left|\int_{0}^{r}\int_{0}^{1/k}z \overline{\Pi}(dr, dz) \right|^2\right] \\
&\le 4s \int_{0}^{1/k}z^2 \mu(dz)\\
&\longrightarrow 0, \quad \mbox{as} \quad  k\rightarrow \infty,
\end{align*}
where we have used Doob's inequality. The result follows.
\end{proof}

We shall need below
\begin{proposition}\label{4.48}
For any $s>0$, $H_s^k \rightarrow H_s$ in probability, locally uniformly in $s$.
\end{proposition}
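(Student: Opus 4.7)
The plan is to establish the stronger conclusion $\E\sup_{s\in[0,T]}|H_s-H_s^k|\to 0$ for every $T>0$, from which the stated local-uniform convergence in probability follows at once. Writing $J_r^s(Z):=\inf_{u\in[r,s]}(Z_u-Z_r)$ for the running infimum increment of a path $Z$, and subtracting the representation of $cH_s^k$ in \eqref{cHsk} from that of $cH_s$ in \eqref{CHS1}, the difference decomposes as
\[
c(H_s-H_s^k)=(Y_s-Y_s^k)-\Bigl(\inf_{0\le r\le s}Y_r-\inf_{0\le r\le s}Y_r^k\Bigr)-A_k^-(s)-A_k^+(s),
\]
where $A_k^-(s)=\int_0^s\!\int_0^{1/k}(z+J_r^s(Y))^+\,\Pi(dr,dz)$ gathers the contribution of the small jumps of $Y$ that are absent from $Y^k$, and $A_k^+(s)=\int_0^s\!\int_{1/k}^\infty\bigl[(z+J_r^s(Y))^+-(z+J_r^s(Y^k))^+\bigr]\,\Pi(dr,dz)$ is the discrepancy of the surviving large jumps when the driving process is changed from $Y$ to $Y^k$.

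Set $\delta_k^T:=\sup_{0\le r\le T}|Y_r-Y_r^k|$, which tends to $0$ in $L^2$ by Lemma~\ref{Lemint}. The first two pieces in the decomposition are uniformly dominated on $[0,T]$ by $\delta_k^T$ (using the elementary $|\inf a-\inf b|\le\sup|a-b|$). Since $J_r^s(Y)\le0$ gives $(z+J_r^s(Y))^+\le z$, we also have $\sup_{s\le T}A_k^-(s)\le\int_0^T\!\int_0^{1/k}z\,\Pi(dr,dz)$, whose expectation is $T\int_0^{1/k}z\,\mu(dz)$; this vanishes as $k\to\infty$ by dominated convergence since assumption $(\mathbf H)$ implies $\int_0^\infty z\,\mu(dz)<\infty$.

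The only delicate term is $A_k^+$: the Poisson intensity $\mu([1/k,\infty))$ on the large-jump region blows up as $k\to\infty$, so the crude bound $|A_k^+|\le 2\delta_k^T\cdot\#\{\text{jumps of size }\ge 1/k\}$ is useless, as it multiplies a vanishing factor by a diverging one. The key ingredient is the elementary inequality $|(z-a)^+-(z-b)^+|\le z\wedge|a-b|$ ($z,a,b\ge 0$), applied with $a=-J_r^s(Y)$, $b=-J_r^s(Y^k)$, together with $\bigl||J_r^s(Y)|-|J_r^s(Y^k)|\bigr|\le 2\delta_k^T$; this sharpens the pointwise bound to
\[
\sup_{s\le T}|A_k^+(s)|\le\int_0^T\!\int_{1/k}^\infty(z\wedge 2\delta_k^T)\,\Pi(dr,dz).
\]
Because $\delta_k^T$ is measurable with respect to $\sigma(B,\Pi|_{(0,1/k)})$ while the restriction $\Pi|_{[1/k,\infty)}$ is independent of this $\sigma$-field, conditioning on $\delta_k^T$ and applying Campbell's formula yields $\E\sup_{s\le T}|A_k^+(s)|\le T\,\E\,g(2\delta_k^T)$, where $g(x):=\int_0^\infty(z\wedge x)\,\mu(dz)$. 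It therefore suffices to check that $g$ is continuous at $0$ with $g(0)=0$: for $0<x\le 1$ one has $g(x)\le\int_0^x z\,\mu(dz)+x\,\mu([x,1])+x\,\mu([1,\infty))$, where the first term vanishes by dominated convergence since $\int_0^1 z\,\mu(dz)<\infty$, the third is obvious, and for the middle, $F(x):=\mu([x,1])$ is decreasing with $\int_0^1 F(x)\,dx=\int_0^1 z\,\mu(dz)<\infty$, which forces $xF(x)\to 0$. Since $g$ is bounded by $\int_0^\infty z\,\mu<\infty$ and $\delta_k^T\to 0$ in probability, bounded convergence gives $\E g(2\delta_k^T)\to 0$. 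Assembling the four estimates proves $\E\sup_{s\le T}|H_s-H_s^k|\to 0$, as desired.
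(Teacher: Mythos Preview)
Your proof is correct and follows the same four-term decomposition as the paper, with the first two pieces and the small-jump term $A_k^-$ handled identically. The difference lies in the treatment of the large-jump discrepancy $A_k^+$. The paper invokes an adaptation of Lemma~\ref{ParZeng0}, i.e.\ the stationary-increment property of the L\'evy process, to replace $\Pi$ by its intensity $\mu$ and the running infimum $J_r^s(Y)$ by an independent copy of $\inf_{0\le u\le s-r}Y_u$; it then bounds the expectation for each fixed $s$ and appeals to dominated convergence. You instead exploit the independence of $\Pi|_{[1/k,\infty)}$ from $\delta_k^T$ (which is measurable in $\Pi|_{(0,1/k)}$) and apply Campbell's formula after conditioning. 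This is a cleaner and more elementary route: it avoids the Markov/stationarity machinery, and it yields the supremum bound $\E\sup_{s\le T}|A_k^+(s)|\le T\,\E\,g(2\delta_k^T)$ directly, whereas the paper's argument as written only gives pointwise (fixed-$s$) control of the integral term and leaves the passage to ``locally uniformly in $s$'' implicit. One minor remark: your three-term analysis of $g(x)$ near $0$ is correct but unnecessary under assumption $(\mathbf H)$, since $z\wedge x\le z$ with $\int_0^\infty z\,\mu(dz)<\infty$ gives $g(x)\to 0$ by dominated convergence in one line.
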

\begin{proof}
From Lemma \ref{Lemint} follows that $$Y_s^{k} - \inf_{0\leqslant r \leqslant s} Y_r^k \longrightarrow Y_s - \inf_{0\leqslant r \leqslant s} Y_r$$ in mean square, locally uniformly in $s$. We now consider the last term in \eqref{cHsk} and prove pointwise convergence. We first notice that 
\begin{align*}
 \E \int_{0}^{s}\int_{0}^{1/k} \left(z + \inf_{r\leqslant u \leqslant s}(Y_u-Y_r) \right)^+\Pi(dr, dz) \le& s \int_{0}^{1/k}z \mu(dz)\\
&\longrightarrow 0, \quad \mbox{as} \quad  k\rightarrow \infty.
\end{align*}
From an adaptation of the argument of Lemma \ref{ParZeng0}, we deduce that 
\begin{align*}
 &\E \int_{0}^{s}\int_{1/k}^{\infty}  \bigg| \left(z + \inf_{r\leqslant u \leqslant s}(Y_u-Y_r) \right)^+ -\left(z + \inf_{r\leqslant u \leqslant s}(Y_u^k-Y_r^k) \right)^+ \bigg|\Pi(dr, dz) \\&=
 \E \int_{0}^{s}dr \int_{1/k}^{\infty}  \bigg| \left(z + \inf_{0\leqslant r \leqslant s}Y_r \right)^+ -\left(z + \inf_{0\leqslant r \leqslant s} Y_r^k \right)^+  \bigg|\mu(dz)\\
 &\le  \E \int_{0}^{s}dr \int_{1/k}^{\infty}  z\wedge \bigg| \inf_{0\leqslant r \leqslant s}Y_r-\inf_{0\leqslant r \leqslant s} Y_r^k \bigg|\mu(dz)\\
&\le  \int_{0}^{s}dr \int_{1/k}^{\infty}  z\wedge \E \left\{ \left| \inf_{0\leqslant r \leqslant s}Y_r-\inf_{0\leqslant r \leqslant s} Y_r^k \right| \right\}\mu(dz)
\end{align*}
We deduce from Lemma \ref{Lemint} that 
$$  \E \left| \inf_{0\leqslant r \leqslant s}Y_r-\inf_{0\leqslant r \leqslant s} Y_r^k \right| \longrightarrow 0, \quad \mbox{as} \quad  k\rightarrow \infty.$$
The desired result follows from the dominated convergence theorem.
\end{proof}

From \eqref{GN2SS}, we have  
\begin{align*}
G_s^{2,N}= \frac{1}{c} \int_{0}^{s}  \int_{0}^{\infty} \left(z-\frac{1}{N} \right) \widetilde{M}^N(dr,dz)-   \frac{1}{c} \int_{0}^{s}  \int_{0}^{\infty} \left(z-\frac{1}{N} - \frac{c}{2}(L_{s}^{N}(H_{r}^{N})-L_{r}^{N}(H_{r}^{N})) \right)^+ \widetilde{M}^N(dr,dz). 
\end{align*}
Let us now rewrite \eqref{THNGB}  in the form 
\begin{equation}\label{final}
 cH_{s}^{N}= Y_s^N + \frac{1}{2}L_{s}^{N}(0) - \int_{0}^{s}  \int_{0}^{\infty} \left(z-\frac{1}{N} - \frac{c}{2}(L_{s}^{N}(H_{r}^{N})-L_{r}^{N}(H_{r}^{N})) \right)^+ \widetilde{M}^N(dr,dz)
\end{equation}
with
$$Y_s^N= \varepsilon_s^N+ cJ^N(s)+ B_s^N+\int_{0}^{s}  \int_{0}^{\infty} \left(z-\frac{1}{N} \right) \widetilde{M}^N(dr,dz)-2\left( \int_{0}^{\infty}z\widetilde{\pi}_{1,N}(dz) \right) \int_{0}^{s}\mathbf{1}_{\{V_{r}^{N}=-1\}}dr $$
and
$$  \varepsilon_s^N=\frac{1}{2N}-\frac{V_{s}^{N}}{2N}+ cG_s^{1,N} -c\widetilde{\mathcal{M}}_s^{1,N} -\frac{c}{2} L_{0^{+}}^{N}(0) \quad \mbox{and} \quad  B_s^N=c\left(\mathcal{M}_s^{N}-\widetilde{\mathcal{M}}_s^{N}\right).$$

By combining Corollary \ref{ConvM} with Lemma \ref{GN1S}, we have prove 
\begin{lemma}\label{EPLN}
As $N\longrightarrow\infty,$ $\varepsilon_s^N\longrightarrow  0 \ \mbox{ in} \ \mbox{probability}, \ \mbox{locally} \ \mbox{uniformly} \ \mbox{in} \ \mbox{s}.$
\end{lemma}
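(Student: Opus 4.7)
The plan is to decompose $\varepsilon_s^N$ into four pieces and show that each one tends to $0$ in probability, locally uniformly in $s$. More precisely, I would write
\[
\varepsilon_s^N = \underbrace{\frac{1}{2N}-\frac{V_{s}^{N}}{2N}}_{=: A_s^N} + \underbrace{cG_s^{1,N}}_{=: B_s^N} \underbrace{-\,c\widetilde{\mathcal{M}}_s^{1,N}}_{=: C_s^N} \underbrace{-\,\frac{c}{2} L_{0^{+}}^{N}(0)}_{=: D^N},
\]
and handle each term separately.

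For $A_s^N$, since $V_s^N$ takes values in $\{-1,+1\}$, we have the deterministic bound $|A_s^N|\le 1/N$ for every $s\ge 0$, so $A_s^N\to 0$ uniformly in $s$. For $D^N$, recall that $L_{0^{+}}^{N}(0)=2/(cN)$ (this was noted just after \eqref{HsimaN}), hence $D^N = -1/N \to 0$ deterministically. For $B_s^N = cG_s^{1,N}$, Lemma \ref{GN1S} gives exactly that $G_s^{1,N}\to 0$ in probability, locally uniformly in $s$. Finally, for $C_s^N = -c\widetilde{\mathcal{M}}_s^{1,N}$, Corollary \ref{ConvM} provides that $\widetilde{\mathcal{M}}_s^{1,N}\to 0$ in probability, locally uniformly in $s$.

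Combining these four convergences, it follows that for any $T>0$,
\[
\sup_{0\le s\le T} |\varepsilon_s^N| \;\le\; \frac{1}{N} + c\sup_{0\le s\le T} |G_s^{1,N}| + c\sup_{0\le s\le T} |\widetilde{\mathcal{M}}_s^{1,N}| + \frac{1}{N} \xrightarrow[N\to\infty]{\mathbb{P}} 0,
\]
which is exactly the claimed uniform convergence on compacts in probability. There is no genuine obstacle here: the statement is essentially a bookkeeping consequence of results already proved, namely Lemma \ref{GN1S} and Corollary \ref{ConvM}, plus the trivial bounds on $A_s^N$ and $D^N$. The mildly delicate point is simply to keep track of the fact that $L_{0^{+}}^{N}(0) = 2/(cN)$, so that the constant term $-(c/2)L_{0^{+}}^{N}(0)$ is indeed of order $1/N$.
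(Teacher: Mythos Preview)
Your proof is correct and follows essentially the same approach as the paper, which simply states that the lemma follows by combining Corollary~\ref{ConvM} with Lemma~\ref{GN1S}. You have spelled out the trivial handling of the remaining terms $A_s^N$ and $D^N$, but the substance of the argument is identical.
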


Let us define $$\varepsilon_s^{k,N}=\frac{1}{2N}-\frac{V_{s}^{k,N}}{2N}+ cG_s^{k,1,N} -c\widetilde{\mathcal{M}}_s^{k,1,N} -\frac{c}{2} L_{0^{+}}^{N}(0), $$
$$Y_s^{k,N}= \varepsilon_s^{k,N}+ cJ^N(s)+ B_s^N+\int_{0}^{s}  \int_{1/k}^{\infty} \left(z-\frac{1}{N} \right) \widetilde{M}^N(dr,dz)-2\left( \int_{1/k}^{\infty}z\widetilde{\pi}_{1,N}(dz) \right) \int_{0}^{s}\mathbf{1}_{\{V_{r}^{k,N}=-1\}}dr$$ and 
\begin{equation}\label{CHNKS1}
 cH_{s}^{k,N}= Y_s^{k,N} + \frac{c}{2}L_{s}^{k,N}(0) +  \int_{0}^{s}  \int_{1/k}^{\infty} \left(z-\frac{1}{N} - \frac{c}{2}(L_{s}^{k,N}(H_{r}^{k,N})-L_{r}^{k,N}(H_{r}^{k,N})) \right)^+ \widetilde{M}^N(dr,dz)
\end{equation}
We first prove 
\begin{lemma}\label{GNkMNk}
For any $s>0$, as $N\rightarrow \infty$,
$$\sup_{k\ge1}\E\left(\sup_{0\le r\le s}|G_s^{k,1,N}|\right) + \sup_{k\ge1}\E\left(\sup_{0\le r\le s}|\widetilde{\mathcal{M}}_s^{k,1,N}|\right) \longrightarrow 0.$$

\end{lemma}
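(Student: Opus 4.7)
My plan is to mimic the proofs of Lemma \ref{GN1S} and the estimate on $\widetilde{\mathcal{M}}^{1,N}$ in Lemma \ref{Martingale}, and to check that all bounds hold uniformly in $k$. The quantities $G^{k,1,N}$ and $\widetilde{\mathcal{M}}^{k,1,N}$ are the natural analogues of $G^{1,N}$ and $\widetilde{\mathcal{M}}^{1,N}$ when the underlying random measure is restricted to jumps in $[1/k,\infty)$, so the role of $\gamma_{1,N}$ and $d_{1,N}q_0^{1,N}$ is played by
\[
\gamma_{1,N}^k=\int_{1/k}^\infty r(1-e^{-Nr})\mu(dr)-\frac{1}{N}\int_{1/k}^\infty(e^{-Nr}-1+Nr)\mu(dr),
\]
and $d_{1,N}^k q_0^{k,1,N}=\frac{1}{N}\int_{1/k}^\infty(e^{-Nr}-1+Nr)\mu(dr)$.

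First, for $\widetilde{\mathcal{M}}^{k,1,N}$, its predictable quadratic variation is dominated by $\frac{2 d_{1,N}^k q_0^{k,1,N}}{c^2 N}\,s$, and the bound
\[
d_{1,N}^k q_0^{k,1,N}\le \int_0^\infty r\,\mu(dr)<\infty,
\]
which follows from the same elementary inequalities as in Lemma \ref{d1Nq01N} together with assumption ${(\bf H)}$, is uniform in both $k$ and $N$. Doob's $L^2$-inequality then yields
\[
\E\left(\sup_{0\le r\le s}\bigl|\widetilde{\mathcal{M}}^{k,1,N}_r\bigr|\right)\le 2\sqrt{\E\langle \widetilde{\mathcal{M}}^{k,1,N}\rangle_s}\le C\sqrt{\frac{s}{N}},
\]
uniformly in $k\ge 1$, which tends to $0$ as $N\to\infty$.

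Second, for $G^{k,1,N}$ I reproduce the decomposition from the proof of Lemma \ref{GN1S}: write
\[
G^{k,1,N}_r=M^{\circ,k,N}_r+\frac{2}{c}\gamma_{1,N}^k\int_0^r \mathbf{1}_{\{V^{k,N}_u=-1\}}\,du,
\]
where $M^{\circ,k,N}$ is a square-integrable local martingale with $\langle M^{\circ,k,N}\rangle_s\le \frac{2\gamma_{1,N}^k}{c^2 N}s$. The martingale term is controlled by Doob as above. The only nontrivial point is therefore to show that the drift vanishes uniformly in $k$, i.e. $\sup_{k\ge 1}\gamma_{1,N}^k\to 0$ as $N\to\infty$; this is the main obstacle.

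The key observation is that, after the algebraic simplification already used implicitly in the proof of Lemma \ref{GN1S},
\[
\gamma_{1,N}^k=\int_{1/k}^\infty f_N(r)\,\mu(dr),\qquad f_N(r):=\frac{1-e^{-Nr}}{N}-re^{-Nr},
\]
and $f_N\ge 0$ on $\mathbb{R}_+$: indeed $f_N(0)=0$ and $f_N'(r)=Nre^{-Nr}\ge 0$. Consequently
\[
\sup_{k\ge 1}\gamma_{1,N}^k\le \int_0^\infty f_N(r)\,\mu(dr)=\gamma_{1,N},
\]
and the right-hand side tends to $0$ as $N\to\infty$, as was established in the proof of Lemma \ref{GN1S} via Proposition \ref{PiTildversPiAA}. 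The same bound $\gamma_{1,N}^k\le\gamma_{1,N}$ also makes the estimate on $\langle M^{\circ,k,N}\rangle_s$ uniform in $k$, so combining both pieces gives $\sup_{k\ge 1}\E(\sup_{0\le r\le s}|G^{k,1,N}_r|)\to 0$ and finishes the proof.
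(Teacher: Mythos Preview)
Your proof is correct and follows the same decomposition as the paper's: split $G^{k,1,N}$ into a martingale part controlled by Doob's inequality plus the drift $\frac{2}{c}\gamma_{1,N}^k\int_0^r\mathbf{1}_{\{V^{k,N}_u=-1\}}du$, and handle $\widetilde{\mathcal{M}}^{k,1,N}$ exactly as in Lemma~\ref{Martingale}. Your explicit computation $f_N(r)=\frac{1-e^{-Nr}}{N}-re^{-Nr}\ge 0$, yielding $\sup_{k\ge1}\gamma_{1,N}^k\le\gamma_{1,N}\to 0$, makes the uniformity in $k$ fully transparent, whereas the paper simply invokes Proposition~\ref{PiTildversPiAA} at that step without spelling out why the bound is uniform.
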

\begin{proof}
From the proof of Lemma \ref{GN1S}, we have 
$$G_s^{k,1,N}=M_s^{k,\circ,N} +\frac{2}{c}\gamma_{k,1,N}\int_{0}^{s}\mathbf{1}_{\{V_{r}^{N}=-1\}}dr,$$ where 
$$\langle{M^{k,\circ,N}\rangle}_{s} \le\frac{2}{c^2N}s \int_{1/k}^{\infty}z \ \mu(dz) \quad \mbox{and} \quad \gamma_{k,1,N}=\int_{1/k}^{\infty}z(1-e^{-Nz})\mu(dz)- \int_{1/k}^{\infty}z\widetilde{\pi}_{1,N}(dz).$$  
Hence using Jensen's and Doob's inequalities, it is plain that 
$$\E\left(\sup_{0\le r\le s}|G_s^{k,1,N}|\right) \le \frac{Cs}{N}+\int_{1/k}^{\infty}z(1-e^{-Nz})\mu(dz)- \int_{1/k}^{\infty}z\widetilde{\pi}_{1,N}(dz).$$ The first assertion follows easily from Proposition \eqref{PiTildversPiAA}. However, from the proof of Lemma \ref{Martingale}, we can prove similarly the second assertion. 
\end{proof}

We need to prove 
\begin{lemma}\label{EPSILONk}
For any $s>0$, as $N\rightarrow \infty$,
$$\sup_{k\ge1}\E\left(\sup_{0\le r\le s}\left\{ |\varepsilon_r^{N}|+|\varepsilon_r^{k,N}|\right\} \right) \longrightarrow 0.$$
 
\end{lemma}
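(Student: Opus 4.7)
The plan is to bound each of the five constituent terms of $\varepsilon_r^N$ and $\varepsilon_r^{k,N}$ separately and to assemble these bounds using the triangle inequality. Because the two expressions share an identical structure, differing only in that the truncation of the jumps from below by $1/k$ is enforced (or not) in the two martingale–type summands, the reductions are parallel.

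First I would dispose of the deterministic and ``algebraic'' pieces. Since $V_r^N, V_r^{k,N}\in\{-1,+1\}$, one has the pointwise bound
\[
\Bigl|\tfrac{1}{2N}-\tfrac{V_r^N}{2N}\Bigr|\le\tfrac{1}{N},\qquad \Bigl|\tfrac{1}{2N}-\tfrac{V_r^{k,N}}{2N}\Bigr|\le\tfrac{1}{N},
\]
uniformly in $r$ and $k$. From \eqref{TL} one reads off $L^N_{0^+}(0)=2/(cN)$, so the last summand in either expression contributes exactly $1/N$. All these pieces therefore have expected supremum of order $1/N$, uniformly in $k$.

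Next I would handle the martingale terms $c\widetilde{\mathcal{M}}_r^{1,N}$ and $c\widetilde{\mathcal{M}}_r^{k,1,N}$. For the former, Lemma \ref{Martingale} gives directly
\[
\E\Bigl(\sup_{0\le r\le s}|\widetilde{\mathcal{M}}_r^{1,N}|\Bigr)\le \tfrac{Cs}{N}\longrightarrow 0.
\]
For the latter, Lemma \ref{GNkMNk} asserts exactly the uniform-in-$k$ convergence $\sup_{k\ge1}\E(\sup_{0\le r\le s}|\widetilde{\mathcal{M}}_r^{k,1,N}|)\to 0$, which is what is needed.

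For the summand $cG_r^{1,N}$, I would invoke the decomposition exhibited in the proof of Lemma \ref{GN1S}:
\[
G_r^{1,N}=M_r^{\circ,N}+\tfrac{2}{c}\gamma_{1,N}\!\int_0^r \mathbf{1}_{\{V_u^N=-1\}}du,
\]
apply Doob's $L^2$-inequality to the first term together with the bound $\langle M^{\circ,N}\rangle_s\le Cs/N$, and observe that $\gamma_{1,N}\to 0$ in view of Lemma \ref{d1Nq01N} and Proposition \ref{PiTildversPiAA}, so that both pieces have vanishing expected supremum. This already supplies the stronger (i.e. $L^1$) strengthening of Lemma \ref{EPLN} that controls $\E(\sup|\varepsilon^N_r|)$. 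For $cG_r^{k,1,N}$ the analogous bound is given verbatim by the first estimate in Lemma \ref{GNkMNk}, again \emph{uniformly in} $k$.

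Combining Steps 1–3 via the triangle inequality yields
\[
\sup_{k\ge1}\E\Bigl(\sup_{0\le r\le s}\{|\varepsilon_r^N|+|\varepsilon_r^{k,N}|\}\Bigr)\le\tfrac{Cs}{N}+c\,\Bigl[\E\sup_{0\le r\le s}|G_r^{1,N}|+\sup_{k\ge1}\E\sup_{0\le r\le s}|G_r^{k,1,N}|\Bigr]+c\,\Bigl[\E\sup_{0\le r\le s}|\widetilde{\mathcal{M}}_r^{1,N}|+\sup_{k\ge1}\E\sup_{0\le r\le s}|\widetilde{\mathcal{M}}_r^{k,1,N}|\Bigr],
\]
and each summand on the right tends to $0$ as $N\to\infty$. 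The only nontrivial point is the uniformity in $k$ for the $G^{k,1,N}$ and $\widetilde{\mathcal{M}}^{k,1,N}$ terms, but this is precisely what Lemma \ref{GNkMNk} delivers; the rest is bookkeeping. The main obstacle, if any, would be that uniformity in $k$ in the $G$-term relies on controlling $\sup_{k}\gamma_{k,1,N}$, which is where the integrand on $[1/k,\infty)$ of Proposition \ref{PiTildversPiAA} enters in the proof of Lemma \ref{GNkMNk}; once that is granted, the present lemma is essentially an assembly argument.
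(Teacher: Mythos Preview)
Your proof is correct and follows essentially the same route as the paper: bound $|\varepsilon_r^{k,N}|$ by the sum of its five constituents via the triangle inequality, dispose of the deterministic $O(1/N)$ pieces, and invoke Lemma~\ref{GNkMNk} for the $G^{k,1,N}$ and $\widetilde{\mathcal{M}}^{k,1,N}$ terms (uniformly in $k$) together with the corresponding untruncated estimates for $\varepsilon^N$. In fact you are more explicit than the paper, which simply writes the inequality $\sup_{0\le r\le s}|\varepsilon_r^{k,N}|\le c(\sup|G_r^{k,1,N}|+\sup|\widetilde{\mathcal{M}}_r^{k,1,N}|)$ (tacitly absorbing the $O(1/N)$ terms) and then cites Lemmas~\ref{EPLN} and~\ref{GNkMNk}; your observation that one actually needs the $L^1$ strengthening of Lemma~\ref{EPLN} (which you supply via the decomposition in the proof of Lemma~\ref{GN1S}) is a useful clarification.
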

\begin{proof}
We have 
$$\sup_{0\le r\le s}|\varepsilon_r^{k,N}| \le c \left( \sup_{0\le r\le s}|G_s^{k,1,N}| + \sup_{0\le r\le s}|\widetilde{\mathcal{M}}_s^{k,1,N}| \right).$$ The result now follows by combining the above arguments with Lemmas  \ref{EPLN} and \ref{GNkMNk}.
\end{proof}

We shall need 
\begin{lemma}\label{Lem2int1}
For any $s>0$, there exists a function $C:\Z_+\mapsto\R_+$ such that $C(n)\to0$ as $n\to\infty$
and
$$\E\left(\sup_{0\le r\le s}|Y_r^N-Y_r^{k,N}|\right) \le C(N)+ C(k). $$
\end{lemma}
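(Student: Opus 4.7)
The plan is to decompose the difference $Y_s^N - Y_s^{k,N}$ and control each piece by either a quantity vanishing as $N\to\infty$ (uniformly in $k$) or a quantity vanishing as $k\to\infty$ (uniformly in $N$). Writing out the two definitions, the drift terms $cJ^N(s)$ and the common brownian piece $B_s^N$ cancel exactly, leaving
\begin{equation*}
Y_s^N - Y_s^{k,N} = (\varepsilon_s^N - \varepsilon_s^{k,N}) + \int_0^s\!\int_0^{1/k}\!\!\Bigl(z-\tfrac{1}{N}\Bigr)\widetilde{M}^N(dr,dz) + D_s^{k,N},
\end{equation*}
where $D_s^{k,N}$ gathers the two compensator terms of the form $\int_0^s \mathbf{1}_{\{V^{\cdot}_r=-1\}}dr$ multiplied by integrals of $z\,\widetilde\pi_{1,N}(dz)$.

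First, for the $\varepsilon$-part I would invoke Lemma \ref{EPSILONk} directly:
$\E\sup_{0\le r\le s}|\varepsilon_r^N-\varepsilon_r^{k,N}| \le \sup_{k\ge 1}\E\sup_{0\le r\le s}(|\varepsilon_r^N|+|\varepsilon_r^{k,N}|)$ is a function of $N$ alone that tends to $0$ as $N\to\infty$, giving a clean $C(N)$ bound independent of $k$.

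Second, for the small-jump stochastic integral, since $\widetilde{M}^N$ is a non-negative random measure dominated by $\widetilde\Pi^{1,N}$ which has intensity $2\,dr\,\widetilde\pi_{1,N}(dz)$, I bound the supremum over $r\le s$ of the absolute value by $\int_0^s\!\int_0^{1/k} z\,\widetilde\Pi^{1,N}(dr,dz)$ and take expectation to get $2s\int_0^{1/k}z\,\widetilde\pi_{1,N}(dz)$. I split this as $2s\int_0^{1/k}z\,\mu(dz)+2s\int_0^{1/k}z\,(\widetilde\pi_{1,N}-\mu)(dz)$. The first term is a pure $C(k)$ contribution, vanishing as $k\to\infty$ by dominated convergence under assumption $({\bf H})$. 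For the second, Proposition \ref{PiTildversPiAA} and Lemma \ref{d1Nq01N} yield the bound $\int_0^\infty z\,\widetilde\pi_{1,N}(dz)\to\int_0^\infty z\,\mu(dz)$, from which the remainder is shown to be at most a quantity depending only on $N$ and tending to $0$.

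Third, for the compensator discrepancy $D_s^{k,N}$, I write it as the sum of a small-jump piece $-2\int_0^{1/k}z\,\widetilde\pi_{1,N}(dz)\int_0^s\mathbf{1}_{\{V_r^N=-1\}}dr$, which is trivially bounded by $2s\int_0^{1/k}z\,\widetilde\pi_{1,N}(dz)$ and handled exactly as in the previous step, plus a large-jump piece $-2\int_{1/k}^\infty z\,\widetilde\pi_{1,N}(dz)\int_0^s(\mathbf{1}_{\{V_r^N=-1\}}-\mathbf{1}_{\{V_r^{k,N}=-1\}})dr$. Here the prefactor is uniformly bounded (by $\int_0^\infty z\,\mu(dz)<\infty$), so the task reduces to showing that the expected absolute value of the occupation-time difference tends to $0$ as $N\to\infty$ uniformly in $k$. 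This I would get from Lemma \ref{Ssur2} applied to both $V^N$ and $V^{k,N}$ (the latter obtained by the same argument together with the natural $k$-analog of Proposition \ref{supHN}), upgraded to $L^1$ convergence by a uniform integrability argument based on the trivial bound $\int_0^s\mathbf{1}_{\{\cdot\}}dr\le s$.

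The main obstacle I anticipate is this last point: unlike the small-jump contributions, the coefficient $\int_{1/k}^\infty z\,\widetilde\pi_{1,N}(dz)$ does not vanish as $k\to\infty$, so the $C(k)$-term must come entirely from the occupation-time difference, which in turn requires a rate of convergence in Lemma \ref{Ssur2} that is uniform in $k$. Once this uniformity is established, the three pieces combine through the triangle inequality, and setting $C(n)$ equal to the maximum of the three vanishing functionals produces the desired bound $\E\sup_{0\le r\le s}|Y_r^N-Y_r^{k,N}|\le C(N)+C(k)$.
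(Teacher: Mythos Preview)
Your decomposition and your treatment of the $\varepsilon$-term via Lemma~\ref{EPSILONk} coincide with the paper's proof. The difference is in how the remaining two pieces are handled, and there the paper is much more direct.

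For the small-jump stochastic integral and the compensator, the paper does not split $\widetilde\pi_{1,N}$ against $\mu$ as you do, nor does it introduce an occupation-time difference. It simply bounds each piece in absolute value by $2s\int_0^{1/k}z\,\widetilde\pi_{1,N}(dz)$ (using $\lvert z-\tfrac1N\rvert\le z$ and $\int_0^s\mathbf{1}_{\{\cdot\}}\,dr\le s$), and passes straight to $\mu$ to arrive at
\[
\E\Big(\sup_{0\le r\le s}|Y_r^N-Y_r^{k,N}|\Big)\ \le\ \E\Big(\sup_{0\le r\le s}\{|\varepsilon_r^N|+|\varepsilon_r^{k,N}|\}\Big)\ +\ 4s\int_0^{1/k}z\,\mu(dz),
\]
which is already of the form $C(N)+C(k)$.

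The ``main obstacle'' you single out --- the term $\int_{1/k}^\infty z\,\widetilde\pi_{1,N}(dz)\int_0^s(\mathbf 1_{\{V_r^N=-1\}}-\mathbf 1_{\{V_r^{k,N}=-1\}})\,dr$ --- is entirely absent from the paper's argument. Note that in the paper's definition of $Y^{k,N}$ every other building block ($\widetilde M^N$, $J^N$, $B_s^N$) carries the plain superscript $N$, and the paper's own one-line bound on the compensator is only valid if the indicator there is $\mathbf 1_{\{V_r^{N}=-1\}}$ as well. Under that reading (which is the one consistent with the proof), your third step reduces to the trivial small-jump estimate and no uniform-in-$k$ strengthening of Lemma~\ref{Ssur2} is needed. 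If instead one takes the $V^{k,N}$ in the definition literally, then your concern is real and the paper's proof, as written, does not address it; your proposed route via a $k$-uniform version of Lemma~\ref{Ssur2} would then be the natural fix, though it requires verifying the analogue of Proposition~\ref{supHN} uniformly in $k$.
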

\begin{proof}
It is plain that 
$$\sup_{0\le r\le s}|Y_r^N-Y_r^{k,N}| \le \sup_{0\le r\le s}|\varepsilon_r^{N}-\varepsilon_r^{k,N}|+\int_{0}^{s}  \int_{0}^{1/k} \left(z-\frac{1}{N} \right) \widetilde{M}^N(dr,dz)+ 2s \int_{0}^{1/k}z\widetilde{\pi}_{1,N}(dz).$$ It follows that
\begin{align*}
\E\left(\sup_{0\le r\le s}|Y_r^N-Y_r^{k,N}|\right) \le \E\left(\sup_{0\le r\le s}\left\{ |\varepsilon_r^{N}|+|\varepsilon_r^{k,N}|\right\}\right) + 4 s \int_{0}^{1/k}z \mu(dz)
\end{align*}
The desired result follows by combining this with Lemma \ref{EPSILONk}. 
\end{proof}

We prove the
\begin{lemma}\label{Lem2int}
For any $s>0$, there exists a function $C:\Z_+\mapsto\R_+$ such that $C(n)\to0$ as $n\to\infty$
and
$$ \E\left(\sup_{0\le r\le s}|L_r^N(0)-L_r^{k,N}(0)|\right) \le C(N)+C(k). 
$$
\end{lemma}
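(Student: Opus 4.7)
My strategy is to represent both $L^N(0)$ and $L^{k,N}(0)$ as Skorokhod reflections of explicit driving processes, and then to bound the supremum of their difference by the supremum of the difference of the drivers, which is mostly already controlled by Lemma \ref{Lem2int1}. Starting from \eqref{final} I write $cH_s^N = R_s^N + \tfrac{1}{2}L_s^N(0)$ with
$$R_s^N := Y_s^N - \int_0^s\!\int_0^\infty\!\Big(z - \tfrac{1}{N} - \tfrac{c}{2}\big(L_s^N(H_r^N) - L_r^N(H_r^N)\big)\Big)^{\!+}\,\widetilde{M}^N(dr, dz),$$
and similarly from \eqref{CHNKS1} for the $(k,N)$-version. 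Since $H^N \ge 0$, $L^N(0)$ is nondecreasing and increases only on $\{H^N = 0\}$, Skorokhod's reflection identity (up to the $O(N^{-1})$ initial jump $L_{0^+}^N(0) = 2/(cN)$) gives $\tfrac{1}{2}L_s^N(0) = \sup_{0\le r\le s}(-R_r^N)^+$, and likewise for $L^{k,N}(0)$. By the $1$-Lipschitz property of the sup,
$$\sup_{0\le r\le s}\big|L_r^N(0) - L_r^{k,N}(0)\big| \le 2\sup_{0\le r\le s}|R_r^N - R_r^{k,N}| + O(N^{-1}).$$

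I decompose $R^N - R^{k,N}$ into three pieces. The first is $Y^N - Y^{k,N}$, whose supremum expectation is already of the form $C(N) + C(k)$ by Lemma \ref{Lem2int1}. The second is the small-jump integral $\int_0^s\!\int_0^{1/k}(\cdot)^+\,\widetilde{M}^N(dr, dz)$ present in $R^N$ only; bounding $(\cdot)^+ \le z$ and using the intensity $2\,\widetilde{\pi}_{1,N}(dz)\,dr$ gives expectation at most $2s\int_0^{1/k}z\,\widetilde{\pi}_{1,N}(dz)$, which by Proposition \ref{PiTildversPiAA} converges to $2s\int_0^{1/k}z\,\mu(dz)$ as $N \to \infty$, a quantity vanishing as $k \to \infty$ thanks to assumption ${(\bf H)}$. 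The third piece is the integral over $z > 1/k$ in which the two integrands differ only through the substitution $(L^N, H^N) \leftrightarrow (L^{k,N}, H^{k,N})$.

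The main obstacle is this third piece. Using $|a^+ - b^+| \le |a - b|$, its integrand is bounded by $\tfrac{c}{2}\big|(L_s^N(H_r^N) - L_r^N(H_r^N)) - (L_s^{k,N}(H_r^{k,N}) - L_r^{k,N}(H_r^{k,N}))\big|$, a comparison of local times at different, random height levels, which is delicate because $L^N(H_r^N)$ is not itself the quantity we wish to bound. My plan is to use the remark following Lemma \ref{HY}, which identifies $\tfrac{c}{2}(L_s^N(H_r^N) - L_r^N(H_r^N))$ with (the discrete analogue of) $-\inf_{r\le u\le s}(Y_u^N - Y_r^N)$, to reduce the local-time increments to infima of the $Y$-processes already controlled by Lemma \ref{Lem2int1}. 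Any residual self-referential dependence on $\E\sup_r|L_r^N(0) - L_r^{k,N}(0)|$ is absorbed by a Gronwall-type closure, since the prefactor involves $\widetilde{\pi}_{1,N}((1/k,\infty))$ multiplied by $s$ and can be made small by combining a short-time argument with the uniform integrability already established. Gathering the three contributions yields the claimed bound $C(N) + C(k)$ with $C(n) \to 0$ as $n \to \infty$.
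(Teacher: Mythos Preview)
Your Skorokhod-reflection setup and the handling of the first two pieces are fine; the difficulty is entirely in the third piece, and there it does not close.

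The plan to identify $\tfrac{c}{2}\big(L_s^N(H_r^N)-L_r^N(H_r^N)\big)$ with a running infimum of $Y^N$ via the remark following Lemma~\ref{HY} is not available: that remark concerns the \emph{limit} height process, and the very statement you are trying to prove (for level $0$) is exactly the discrete analogue of that identification, so invoking it for positive random levels is circular. Your fallback Gronwall closure also fails: the prefactor you isolate is essentially $s\,\widetilde{\pi}_{1,N}((1/k,\infty))$, which by Proposition~\ref{PiTildversPiAA} converges to $s\,\mu((1/k,\infty))$. Since $\mu$ is infinite near $0$, this quantity blows up as $k\to\infty$, so it cannot be made $<1$ uniformly; a short-time argument does not help because the bound you need must hold for every fixed $s>0$ with an error $C(N)+C(k)$ that vanishes with $N$ and $k$, not with $s$.

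The paper avoids the third piece entirely by one observation you are missing: at any time $r$ at which $H_r^N=0$, \emph{every} reflection above a positive level initiated before $r$ has been completed, so the last integral in \eqref{final} vanishes and $R_r^N=Y_r^N$ at those times. Since $L^N(0)$ increases only at such times, the Skorokhod supremum is effectively $\sup_{r\le s}(-Y_r^N)^+$, up to the discrepancy between the increment $1/N$ of $\tfrac{c}{2}L^N(0)$ at each visit to $0$ and the corresponding overshoot of $-Y^N$ below its previous minimum, which is $\mathrm{Exp}(N+\rho(k,N))$. The paper controls this discrepancy by a direct second-moment computation on the sum of $\sim N$ i.i.d.\ terms $(\xi_i-\tfrac{1}{N})$, yielding $\E\big(\sup_{r\le s}|\tfrac{c}{2}L_r^{k,N}(0)+\inf_{u\le r}Y_u^{k,N}|\big)\to 0$ uniformly in $k$. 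Once $\tfrac{c}{2}L^N(0)\approx -\inf Y^N$ and $\tfrac{c}{2}L^{k,N}(0)\approx -\inf Y^{k,N}$, the difference of local times is controlled by $\sup|Y^N-Y^{k,N}|$, which is exactly Lemma~\ref{Lem2int1}. This bypasses any need to compare $R^N$ with $R^{k,N}$ at times where the integral term is nonzero.
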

\begin{proof}
We first note that, whenever $H_s^N$ (resp. $H_s^{k,N}$) hits $0$, the last term in the corresponding formula \eqref{final} for $H_s^N$  (resp. \eqref{CHNKS1} for $H_s^{k,N}$) equals zero, so that at each time  $s$ where $L_s^N(0)$ (resp. $L_s^{k,N}(0)$) increases, 
$$ cH_s^N= Y_s^N+ \frac{c}{2}L_s^N(0), \quad \mbox{resp.} \quad cH_s^{k,N}= Y_s^{k,N}+ \frac{c}{2}L_s^{k,N}(0).$$ The result will follow from Lemma \ref{Lem2int1}, and the fact that as $N\rightarrow \infty$,
\begin{align*}
\E&\left(\sup_{0\le r\le s} \left|\frac{c}{2}L_s^N(0)+ \inf_{0\le u\le r} Y_u^N\right | \right)\longrightarrow 0,\\
\sup_{k\ge1}\E&\left(\sup_{0\le r\le s} \left|\frac{c}{2}L_s^{k,N}(0)+ \inf_{0\le u\le r} Y_u^{k,N}\right |\right) \longrightarrow 0.
\end{align*}
We will establish only the second statement. At each time $s$ when  $H_s^{k,N}$ hits zero, $cH_s^{k,N}+ \frac{V_s^{k,N}}{2N}$ makes a jump of size $1/N$, which is the increase in  $\frac{c}{2}L_s^{k,N}(0)$. However, when $H_s^{k,N}$ hits zero,  $Y_s^{k,N}$ continues to go down to a negative value which, due to the properties of the Poisson processes, has an exponential law with the parameter $N + \rho(k,N)$, where $$\sup_{k\ge1} \frac{\rho(k,N)}{N}\longrightarrow 0,\quad \mbox{as} \ N \rightarrow \infty.$$ Consequently,  each visit of $H_s^{k,N}$ to zero yields an increase in $-\inf_{0\le r\le s}Y_r^{k,N}$  which is an exponential random variable with the parameter $N+\rho(k,N)$. The r.v.'s corresponding to successive visits of $H_s^{k,N}$ to zero
are mutually independent. The number of those up to time $s$ is of the order of $N$, since of $\E L_s^{k,N}(0)\le Cste$. Hence it follows from a martingale argument that, if $\xi_i$ denote mutually independent Exp($N+\rho(k,N)$) random variables,   
\begin{align*}
\E&\left(\sup_{0\le r\le s} \left|\frac{c}{2}L_s^{k,N}(0)+ \inf_{0\le u\le r} Y_u^{k,N}\right|^2 \right)   \\
&\le \E \left\{\left| \sum_{i=1}^{CN} (\xi_i-\frac{1}{N})\right|^2 \right\}\\
&\le CN Var (\xi_1)+ C^2N^2 \left| \frac{1}{N+\rho(k,N)}-\frac{1}{N} \right|^2\\
&\le \frac{C}{N}+ C^2 \left( \frac{\rho(k,N)}{N}\right)^2\\
&\rightarrow 0, 
\end{align*}
as $N \rightarrow \infty$, uniformly w.r.t. $k$. 
\end{proof}

We have also
\begin{proposition}\label{4.53}
For any $s>0$, there exists a function $C:\Z_+\mapsto\R_+$ such that $C(n)\to0$ as $n\to\infty$
and 
$$\E\left(\sup_{0\le r\le s}|H_r^N-H_r^{k,N}|\right) \le C(N)+ C(k).$$
\end{proposition}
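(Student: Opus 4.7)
The plan is to subtract the two explicit representations \eqref{final} and \eqref{CHNKS1}, and to control each resulting piece using the already-proved ingredients. Writing
\begin{equation*}
c\bigl(H_s^N - H_s^{k,N}\bigr) = \bigl(Y_s^N - Y_s^{k,N}\bigr) + \frac{c}{2}\bigl(L_s^N(0) - L_s^{k,N}(0)\bigr) + \mathcal{R}_s^{N,k},
\end{equation*}
where $\mathcal{R}_s^{N,k}$ collects the difference of the two Poisson stochastic integrals, the first two terms on the right are controlled on $[0,s]$ in expected supremum by Lemma \ref{Lem2int1} and Lemma \ref{Lem2int}, each of them yielding a bound of the required form $C(N)+C(k)$. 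Hence the task reduces to establishing the same type of bound on $\E\bigl(\sup_{0\le r\le s}|\mathcal{R}_r^{N,k}|\bigr)$.

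To handle $\mathcal{R}^{N,k}$, I first split the full integral $\int_0^s\int_0^\infty$ appearing in \eqref{final} as $\int_0^s\int_0^{1/k}+\int_0^s\int_{1/k}^\infty$. Using the elementary bound $(z-\tfrac{1}{N}-a)^+ \le z$ valid for $a\ge0$, the small-jump piece is pointwise dominated by $\int_0^s\int_0^{1/k} z\,\widetilde{M}^N(dr,dz)$, whose expectation is at most $2s\int_0^{1/k} z\,\widetilde{\pi}_{1,N}(dz)\le 2s\int_0^{1/k} z\,\mu(dz)$ thanks to Lemma \ref{d1Nq01N}. This quantity converges to $0$ as $k\to\infty$ uniformly in $N$, and so can be absorbed into $C(k)$.

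The main obstacle is the large-jump difference $\int_0^s\int_{1/k}^\infty$ of the two integrands. Invoking the $1$-Lipschitz property of $x\mapsto x^+$, it is dominated by
\begin{equation*}
\frac{c}{2}\int_0^s\int_{1/k}^\infty \bigl|\, [L_s^N(H_r^N)-L_r^N(H_r^N)] - [L_s^{k,N}(H_r^{k,N})-L_r^{k,N}(H_r^{k,N})]\, \bigr|\,\widetilde{M}^N(dr,dz),
\end{equation*}
which couples the local-time differences with the possibly distinct random levels $H_r^N$ and $H_r^{k,N}$. My plan is to proceed in the spirit of the proof of Lemma \ref{Lem2int}: at times when $H^N$ (respectively $H^{k,N}$) returns to the level $H_r^N$ (respectively $H_r^{k,N}$), the reflection integrand in \eqref{final} (respectively \eqref{CHNKS1}) vanishes at that level, so the local-time increment admits, modulo an $O(1/N)$ discretization error uniform in $k$, a representation as the running infimum of $Y^N-Y_r^N$ (respectively $Y^{k,N}-Y_r^{k,N}$) over $[r,s]$, in the spirit of the identity quoted in Remark~2.6. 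The integrand is thereby dominated, up to this uniform $O(1/N)$ error, by $(2/c)\sup_{0\le u\le s}|Y_u^N-Y_u^{k,N}|$, and integrating against $\widetilde{M}^N$ whose total mass on $[0,s]\times(1/k,\infty)$ has expectation bounded uniformly in $N$ (via Lemma \ref{d1Nq01N}), one more application of Lemma \ref{Lem2int1} closes the argument with the required $C(N)+C(k)$ bound.
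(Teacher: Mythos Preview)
Your decomposition into $(Y^N-Y^{k,N})+\frac{c}{2}(L^N(0)-L^{k,N}(0))+\mathcal{R}^{N,k}$ and the treatment of the small-jump piece match the paper. The gap is in your handling of the large-jump difference $\mathcal{R}^{N,k}$, at two points.

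First, the pathwise running-infimum representation you invoke for the local-time increment at the random level $H_r^N$ is not justified by the argument of Lemma~\ref{Lem2int}. That lemma works at level~$0$ precisely because when $H^N$ touches $0$ every reflection term in \eqref{final} is saturated. At a positive level $H_r^N$ there may remain unsaturated reflections from earlier jumps at lower levels, so one cannot read off $\frac{c}{2}[L_s^N(H_r^N)-L_r^N(H_r^N)]$ from $-\inf_{r\le u\le s}(Y_u^N-Y_r^N)$ up to $O(1/N)$ pathwise. The paper circumvents this by working in expectation and using Lemma~\ref{ParZeng0}: the strong Markov/homogeneity identity there replaces the local-time increment at the random level by the local time at level~$0$ under the mean measure $\widetilde{\pi}_{1,N}$, after which Lemma~\ref{Lem2int} applies directly.

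Second, even granting your integrand bound, your closing step does not produce a $C(N)+C(k)$ estimate. After the $1$-Lipschitz step your integrand no longer depends on $z$, so integration in $z$ produces the total mass $\widetilde{M}^N([0,s]\times(1/k,\infty))$, whose expectation is of order $\widetilde{\pi}_{1,N}((1/k,\infty))$; since $\mu$ is an infinite measure this blows up as $k\to\infty$ (Lemma~\ref{d1Nq01N} only controls $\int z\,\widetilde{\pi}_{1,N}(dz)$, not the total mass). Moreover, you are bounding the expectation of a product of two dependent random quantities, which is not the product of expectations. The paper avoids both issues by keeping the $z\wedge(\cdot)$ structure in the bound, namely
\[
\E(\mathcal{R}_s^{N,k})\ \le\ 2\int_0^s dr\int_{1/k}^\infty z\wedge\Big[\tfrac{c}{2}\,\E\big|L_r^N(0)-L_r^{k,N}(0)\big|\Big]\,\widetilde{\pi}_{1,N}(dz),
\]
so that the inner quantity is controlled by Lemma~\ref{Lem2int} and the outer $z$-integral is handled by dominated convergence against the $\mu$-integrable majorant $z$.
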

\begin{proof}
We now consider the last term in \eqref{CHNKS1} and prove pointwise convergence. We first notice that 
\begin{align*}
\E \int_{0}^{s}\int_{0}^{1/k} \left(z-\frac{1}{N} -\frac{c}{2}(L_{s}^{N}(H_{r}^{N})-L_{r}^{N}(H_{r}^{N})) \right)^+ \widetilde{M}^N(dr,dz) &\le 2s\int_{0}^{\infty}z\widetilde{\pi}_{1,N}(dz)\\&\le 2s \int_{0}^{1/k}z \mu(dz)\\
&\longrightarrow 0, \quad \mbox{as} \quad  k\rightarrow \infty.
\end{align*}
For the rest of this proof, we set $$\Delta_{s,r}^N=\frac{c}{2}(L_{s}^{N}(H_{r}^{N})-L_{r}^{N}(H_{r}^{N})) \quad \mbox{and} \quad \Delta_{s,r}^{k,N}= \frac{c}{2}(L_{s}^{k,N}(H_{r}^{k,N})-L_{r}^{k,N}(H_{r}^{k,N})).$$ From an adaptation of the argument of Lemma \ref{ParZeng0}, we deduce that 
\begin{align*}
 &\E \int_{0}^{s}\int_{1/k}^{\infty}  \bigg| \left(z-\frac{1}{N} - \Delta_{s,r}^N \right)^+- \left(z-\frac{1}{N} - \Delta_{s,r}^{k,N} \right)^+  \bigg|\widetilde{M}^N(dr,dz)  \\
&\le\E \int_{0}^{s}\int_{1/k}^{\infty}  \bigg| \left(z-\frac{1}{N} - \Delta_{s,r}^{N}\right)^+- \left(z-\frac{1}{N} - \Delta_{s,r}^{k,N} \right)^+  \bigg|\widetilde{\Pi}^{1,N}(dr,dz)
 \\&=
2 \E \int_{0}^{s}dr \int_{1/k}^{\infty}  \bigg|  \left(z- \frac{c}{2}L_{r}^{N}(0)\right)^+ - \left(z- \frac{c}{2}L_{r}^{k,N}(0)\right)^+ \bigg|\widetilde\pi_{1,N}(dz) 
 \\
&\le  2 \E  \int_{0}^{s}dr \int_{1/k}^{\infty}  z \wedge \left[\frac{c}{2} \Big|L_{
r}^{N}(0)- L_{r}^{k,N}(0) \Big| \right]   \widetilde\pi_{1,N}(dz) 
 \nonumber \\
&\le 2 \E \int_{0}^{s}dr \int_{1/k}^{\infty}  \! z\wedge \left[\frac{c}{2} \E\Big| L_{
r}^{N}(0)- L_{r}^{k,N}(0) \Big| \right]   \widetilde\pi_{1,N}(dz). 
\end{align*}
We deduce from Lemma \ref{Lem2int} that 
$$ \E\left(|L_r^N(0)-L_r^{k,N}(0)|\right) \longrightarrow 0, \quad \mbox{as} \quad N \mbox{ and }  k\rightarrow \infty.$$
The desired result follows from the dominated convergence theorem.
\end{proof}

We can establish 
\begin{lemma}\label{Lint}
For all $k\ge1$, $H^{k,N}\Longrightarrow H^k$ in $\mathcal{C}([0,\infty))$ as $N\rightarrow \infty.$
\end{lemma}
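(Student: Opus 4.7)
The plan is to pass to the limit in the defining equation \eqref{CHNKS1} for $H^{k,N}$ as $N\to\infty$, with $k$ fixed. The key reason truncation is introduced is that the restricted jump measure $\Pi|_{\R_+\times(1/k,\infty)}$ is finite on compact sets, so the atoms of $Y^k$ with size $\ge 1/k$ form a locally finite set of times, allowing trajectory-wise arguments that would fail at small jumps.

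First, I would establish the joint convergence
$$\bigl(Y^{k,N},\;\widetilde M^N|_{\R_+\times(1/k,\infty)}\bigr)\Longrightarrow\bigl(Y^k,\;\Pi|_{\R_+\times(1/k,\infty)}\bigr).$$
The convergence of the driving Lévy-like processes $Y^{k,N}$ to $Y^k$ is obtained by assembling the ingredients already derived: Lemma \ref{EPSILONk} forces $\varepsilon^{k,N}\to 0$ uniformly on compacts; the martingale part $B^N=c(\mathcal M^N-\widetilde{\mathcal M}^N)$ converges to $\sqrt{2c}\,B$ by Lemma \ref{M+PhiNs}; the drift contribution $cJ^N$ converges to $-bs$ by Lemma \ref{UNSlemConv} combined with Lemma \ref{Ssur2}; finally, the truncated jump integral $\int_0^s\int_{1/k}^\infty(z-1/N)\widetilde M^N(dr,dz)$ together with the compensator $-2\bigl(\int_{1/k}^\infty z\,\widetilde\pi_{1,N}(dz)\bigr)\int_0^s\mathbf 1_{\{V_r^N=-1\}}dr$ converges to $\int_0^s\int_{1/k}^\infty z\,\overline\Pi(dr,dz)$, via Proposition \ref{D20In}, Proposition \ref{PiTildversPiAA} and Lemma \ref{Ssur2}. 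Joint convergence follows from the underlying Poisson construction and the already-established independence in the limit.

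Next, I would prove tightness of $\{H^{k,N}\}_N$ in $\mathcal{C}([0,\infty))$, following verbatim the proof of Proposition \ref{HNTIG}, with each lemma of Section 4.2 replaced by its immediate truncated analogue (the bounds are uniform in $k$ under assumption $(\mathbf H)$). By Prokhorov's theorem and Skorohod's representation, I may assume, along a subsequence and on a suitable probability space, that $H^{k,N}\to\widetilde H^k$ uniformly on compacts almost surely, jointly with the convergences above.

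Finally, I would identify $\widetilde H^k$ with $H^k$ by passing to the limit term-by-term in \eqref{CHNKS1}. By Lemma \ref{HY} applied to $Y^k$, $L_s^{k,N}(0)\to -(2/c)\inf_{0\le r\le s}Y_r^k$. The central difficulty, which will be the main obstacle, is handling the integral
$$\int_0^s\!\!\int_{1/k}^\infty\!\!\Bigl(z-\tfrac{1}{N}-\tfrac{c}{2}\bigl(L_s^{k,N}(H_r^{k,N})-L_r^{k,N}(H_r^{k,N})\bigr)\Bigr)^{\!+}\!\widetilde M^N(dr,dz).$$
Because the limit measure $\Pi|_{\R_+\times(1/k,\infty)}$ has only finitely many atoms $(r_i,z_i)$ in $[0,T]\times[1/k,M]$, on the Skorohod-coupled space this integral reduces to a finite sum, and it suffices to show that at each such atom $L_s^{k,N}(H_{r_i}^{k,N})-L_{r_i}^{k,N}(H_{r_i}^{k,N})\longrightarrow -(2/c)\inf_{r_i\le u\le s}(Y_u^k-Y_{r_i}^k)$. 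This identity is the functional counterpart of the local-time remark after \eqref{loc}, and would be established by an excursion-theoretic or Tanaka-type argument combined with the uniform convergence $H^{k,N}\to\widetilde H^k$; a small technical issue is the tail contribution from large $z$, which is controlled by dominated convergence since $\int_{1/k}^\infty z\,\mu(dz)<\infty$ under $(\mathbf H)$. Once this identification is made, the limiting equation coincides with \eqref{cHsk}, and pathwise uniqueness for that fixed-point equation (a consequence of the Lipschitz structure with only finitely many jumps) yields $\widetilde H^k = H^k$, completing the proof.
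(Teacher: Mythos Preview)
Your strategy---establish joint convergence of the driving data, prove tightness of $H^{k,N}$, then identify the limit by passing to the limit in \eqref{CHNKS1}---is a legitimate route, but it differs structurally from the paper's proof and leaves the hardest step underspecified.

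The paper exploits the fact that, once truncated at level $1/k$, the jump measure is locally finite, so the jump times $I_1^N<I_2^N<\cdots$ can be enumerated. It then argues \emph{piecewise and inductively}: on $[0,I_1^N)$ there are no big jumps, so the equation for $H^{k,N}$ is exactly the one treated in \cite{drame2016non}, Proposition~4.40, and that reference directly yields $H^{k,N}\Rightarrow H^k$ and $L^{k,N}(0)\to -\tfrac{2}{c}\inf Y^k$ on that interval. At time $I_1^N$ the process begins to reflect above the level $H^{k,N}_{I_1^N}$ until the accumulated local time at that level reaches $Z_1^N$; but this is again the situation of \cite{drame2016non} with the origin shifted, so the same result applies. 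Iterating over the finitely many jumps on any compact time interval finishes the proof. The key device is thus a reduction to the already-proved no-jump case, applied successively on each inter-jump interval.

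Your proposal instead tries to pass to the limit globally in \eqref{CHNKS1}. The step you flag as ``the main obstacle''---showing that $L_s^{k,N}(H_{r_i}^{k,N})-L_{r_i}^{k,N}(H_{r_i}^{k,N})\to -\tfrac{2}{c}\inf_{r_i\le u\le s}(Y_u^k-Y_{r_i}^k)$---is precisely the content of the lemma, and you defer it to ``an excursion-theoretic or Tanaka-type argument''. This is not yet a proof: the local times are at \emph{moving} levels $H_{r_i}^{k,N}$, and their convergence is entangled with the convergence of $H^{k,N}$ itself, creating a circularity that must be broken. The paper breaks it by the inductive structure (on each piece the level is fixed and the result is already known from \cite{drame2016non}); your outline does not explain how you would break it. If you want to salvage your approach, you would effectively have to reproduce the piecewise argument inside the identification step, at which point it collapses into the paper's proof.
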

\begin{proof}
We first notice that in this situation $\mu$ is a finite measure. We can rewrite $ \widetilde{M}^{N}=\sum_{i=1}^{\infty} \delta_{(I_i^N,Z_i^N)}$ (resp. $ \Pi=\sum_{i=1}^{\infty} \delta_{(I_i,Z_i)}$), where $I_1^N,I_2^N,\cdots$ (resp. $I_1,I_2,\cdots$ ) are stopping times. Let us rewrite \eqref{cHsk} (resp. \eqref{CHNKS1}) in the form 
\begin{equation*}
cH_s^k= Y_s^{k,c} - \inf_{0\leqslant r \leqslant s} Y_r^k +  \frac{1}{c} \sum_{i=1}^{\infty}  \mathbf{1}_{\{{I_i}\le s,\ Z_i \ge \frac{1}{k}\}} Z_i  \wedge \left(- \inf_{I_i \leqslant u \leqslant s}(Y_u^k-Y_{I_i^N}^k) \right) 
\end{equation*}
resp.
\begin{equation*}
 H_{s}^{k,N}= Y_s^{k,N} + \frac{1}{2}L_{s}^{k,N}(0) + \frac{1}{c} \sum_{i=1}^{\infty} \mathbf{1}_{\{{I_i^N}\le s,\ Z_i^N\ge \frac{1}{k}\}}\left(Z_i^N-\frac{1}{N} \right)  \wedge \left(\frac{c}{2}(L_{s}^{k,N}(H_{I_i^N}^{k,N})-L_{I_i^N}^{k,N}(H_{I_i^N}^{k,N}))\right)  
\end{equation*}
Until the first jump, we have exactly the same situation as that in Dramé et all in \cite{drame2016non}, Proposition 4.40. So in particular $L_s^{k,N}(0)\rightarrow L_s^k(0)=-\frac{2}{c}\inf_{0\leqslant r \leqslant s} Y_r^k$. From the first jump $(I_1^N,Z_1^N)$, the process $H_{s}^{k,N}$ is reflected above the level  $H_{I_1^N}^{k,N}$ until the local time associated with this level accumulated from time $I_1^N$ reaches the level $Z_1^N$. Again the approximate equation is the same as in \cite{drame2016non}, with the level $0$  moved to the level $H_{I_1^N}^{k,N}$. The proof follows the same reasoning. The proof of the lemma follows by repeating this same argument. 
\end{proof}

We are now ready to state the main result.
\begin{theorem}\label{THGama}
As $N \longrightarrow\infty$,
\begin{align*}
\bigg(H^{N}, Y^N \bigg)&\Rightarrow \bigg(H, Y\bigg) \ in \ {(\mathcal{C}([0,\infty)))}^{2}
\end{align*}
as $N\rightarrow \infty$, where $H$ is the unique weak solution of the SDE 
\begin{equation*}
c H_s= Y_s - \inf_{0\leqslant r \leqslant s} Y_r - \int_{0}^{s}\int_{0}^{\infty}\left(z + \inf_{r\leqslant u \leqslant s}(Y_u-Y_r) \right)^+\Pi(dr, dz),
\end{equation*}
 where $Y$ was defined in \eqref{YL\'{e}vy}.
\end{theorem}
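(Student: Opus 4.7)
The plan is to use a diagonal approximation argument that exploits the truncated processes $H^{k,N}$ and $Y^{k,N}$ as bridges between $(H^N,Y^N)$ and $(H,Y)$. All the main ingredients have been prepared in the preceding lemmas and propositions; what remains is to orchestrate them. Roughly speaking, I will show the convergence $(H^{k,N},Y^{k,N})\Rightarrow(H^k,Y^k)$ for each fixed $k$, then send $k\to\infty$ using $L^1$ bounds that are uniform in $N$.

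First, I would establish joint tightness of $(H^N,Y^N)$ in $(\mathcal{C}([0,\infty)))^2$. Tightness of $H^N$ is Proposition \ref{HNTIG}. For $Y^N$, I use its decomposition (given just after \eqref{final}), whose terms are individually tight by Lemma \ref{EPLN} (the $\varepsilon^N$ piece), Lemma \ref{UNSlem} (the $J^N$ piece), Corollary to Lemma \ref{Martingale} combined with Lemma \ref{M+PhiNs} (the $B^N$ piece), and Corollary \ref{CorImp} together with Proposition \ref{PiTildversPiAA} and Lemma \ref{Ssur2} (the compensated Poisson integral and its drift compensator). Consequently, along some subsequence $(H^N,Y^N)$ converges weakly, and the task is to identify the limit.

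Next, for each fixed $k\ge 1$, I argue that $(H^{k,N},Y^{k,N})\Rightarrow(H^k,Y^k)$ as $N\to\infty$. The convergence $H^{k,N}\Rightarrow H^k$ is Lemma \ref{Lint}, and an essentially identical argument (using Proposition \ref{D20In} and Lemma \ref{M+PhiNs} for the martingale and Poisson components, plus Lemma \ref{EPSILONk} for the $\varepsilon^{k,N}$ term) gives joint convergence to $(H^k,Y^k)$, where $H^k$ solves the truncated equation \eqref{cHsk} driven by $Y^k$. This works cleanly because the driving random measure in the truncated setting is finite, so there is no issue with passing to the limit in the nonlinear term involving local times.

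Finally, I combine the previous two steps via the triangle inequality
\begin{align*}
|\E\varphi(H^N,Y^N)-\E\varphi(H,Y)| \le{}& |\E\varphi(H^N,Y^N)-\E\varphi(H^{k,N},Y^{k,N})|\\
&+|\E\varphi(H^{k,N},Y^{k,N})-\E\varphi(H^k,Y^k)|\\
&+|\E\varphi(H^k,Y^k)-\E\varphi(H,Y)|,
\end{align*}
valid for every bounded Lipschitz $\varphi$. Proposition \ref{4.53} and Lemma \ref{Lem2int1} bound the first term by $C(N)+C(k)$ where $C(n)\to 0$, uniformly in $N$; the third term tends to zero as $k\to\infty$ by Proposition \ref{4.48} and Lemma \ref{Lemint}; and the middle term tends to zero as $N\to\infty$ for fixed $k$ by the previous step. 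Choosing $k$ large first and then $N$ large yields the result. The uniqueness of the weak solution of the limiting SDE (guaranteed by the continuity of $H$ established by Le Gall--Le Jan under the Grey condition implied by $c>0$) ensures that all convergent subsequences share the same limit, so the whole sequence converges.

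The main obstacle is the third bound, namely the uniform-in-$N$ closeness of $(H^N,L^N(0))$ and $(H^{k,N},L^{k,N}(0))$ for large $k$, because the nonlinear term in \eqref{final} couples the jump measure to the local time of $H^N$ itself. This is precisely where the compensation formula of Lemma \ref{ParZeng0} is indispensable: it converts the nonlinear Poisson integral into a deterministic bound involving only $\E L^N_\cdot(0)$ (controlled by Lemma \ref{3.50}), which can then be estimated using the moment bound $\int(z\wedge z^p)\mu(dz)<\infty$ from assumption ${(\bf H)}$. Once Propositions \ref{4.53} and \ref{4.48} are in hand the diagonal argument is purely mechanical, but the real work lies in those two approximation steps.
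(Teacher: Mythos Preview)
Your proposal is correct and follows essentially the same diagonal approximation strategy as the paper: bridge $(H^N,Y^N)$ to $(H,Y)$ via the truncated processes $(H^{k,N},Y^{k,N})$ and $(H^k,Y^k)$, invoking Proposition~\ref{4.53}, Lemma~\ref{Lint}, and Proposition~\ref{4.48} for the three legs. The only cosmetic difference is that you phrase the final step as a triangle inequality for bounded Lipschitz test functions, whereas the paper extracts a subsequential limit $H'$ and then shows $H'=H$ by transferring the probability bound $\P(\sup_r|H^N_r-H^{k,N}_r|>\delta)\le\delta$ across the limit in $N$; these are equivalent packagings of the same argument. One small wording point: the bound $C(N)+C(k)$ on the first leg is not ``uniform in $N$'' but rather requires both $N$ and $k$ large, so in your final step you should choose $k$ large enough that $C(k)$ and the third term are small, then $N$ large enough that $C(N)$ and the middle term are small.
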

\begin{proof}
We only prove that $H^N\Longrightarrow H$, in order to simplify our notations. The proof for the pair $(H^N, Y^N)$ follows the exact same reasoning. It follows from tightness that along a subsequence,
$$\begin{pmatrix} 
H^{N}  \\\\
{H}^{k,N}
\end{pmatrix} \Longrightarrow \begin{pmatrix} 
H^{\prime} \\\\
H^k
\end{pmatrix}$$
as $N\rightarrow \infty.$ All we want to show is that $H=H^{\prime}$. This will follow from a combination of Proposition \ref{4.48}, Proposition \ref{4.53} and Lemma \ref{Lint}. Indeed, from Proposition \ref{4.53}, for any $\delta>0$, we can choose $N_{\delta}$ and $k_{\delta}$ such that for all $N\ge N_{\delta}$, $k\ge k_{\delta}$, $$\P \left( \sup_{0\le r\le s}|H_r^N-H_r^{k,N}| > \delta\right)\le \delta.$$ Consequently, if $\phi$ $\in$ $\mathcal{C}_b(\mathbb{R}_+, [0,1])$ satisfies  $\mathbf{1}_{[ 0, \delta] } (x) \le \phi(x) \le \mathbf{1}_{[ 0, 2\delta] } (x)$,  it follows from Lemma \ref{Lint}
$$\E\left[ \phi \left(\sup_{0\le r\le s}|H_r^{\prime}-H_r^{k}|\right)\right] = \lim_{N\rightarrow \infty} \E\left[ \phi \left(\sup_{0\le r\le s}|H_r^N-H_r^{k,N}|\right)\right] \ge 1-\delta.$$ Consequently, whenever $k\ge k_{\delta}$, 
$$\P \left( \sup_{0\le r\le s}|H_r^{\prime}-H_r^{k}| \le 2\delta\right)\ge1- \delta.$$ Hence Proposition \ref{4.48}, $H_r^{\prime}=H_r.$
\end{proof}
\begin{remark}
From our convergence results, we can as in \cite{schneider2003forcing} deduce the well known second Ray-Knight theorem, in the subcritical and critical cases (i.e $\alpha\le \beta).$
\end{remark}

\frenchspacing
\bibliographystyle{plain}

\end{document}